\newtheorem{theorem}{Theorem}[section]
\newtheorem{lemma}{Lemma}[section]
\newtheorem{corollary}{Corollary}[section]
\newtheorem{proposition}[theorem]{Proposition}
\theoremstyle{definition}
\newtheorem{definition}{Definition}[section]
\theoremstyle{remark}
\newtheorem{remark}{Remark}[section]
\title[Elliptic fibrations]{Some observations about isogenies between $K3$ surfaces }
\begin{document}

\author[M.-J. BERTIN]{Marie Jos\'e BERTIN }


\curraddr{Sorbonne Universit\'e \\Institut
de Math\'ematiques de Jussieu-Paris Rive Gauche \\ Case 247\\ 4 Place Jussieu, 75252 PARIS, Cedex 85, France}

\email{marie-jose.bertin@imj-prg.fr}

\author[O. LECACHEUX]{Odile LECACHEUX}


\curraddr{Sorbonne Universit\'e\\ Institut
de Math\'ematiques de Jussieu-Paris Rive Gauche \\Case 247 \\4 Place Jussieu, 75252 PARIS, Cedex 85, France}

\email{odile.lecacheux@imj-prg.fr}

\subjclass[2010]{11F23, 11G05, 14J28 (Primary); 14J27} 
\date{\today}

\begin{abstract} Even if there are too many elliptic fibrations to investigate and describe on the singular $K3$ surface $Y_{10}$ of discriminant $72$ and belonging to the Ap\'ery-Fermi pencil ($Y_k$), we find on it many interesting properties. For example some of its elliptic fibrations with $3$-torsion section induce by $3$-isogeny either an elliptic fibration of $Y_2$, the unique $K3$ surface of discriminant $8$, or an elliptic fibration of other $K3$ surfaces of discriminant $72$.

\end{abstract}

\maketitle

\section{Introduction}
Bertin's results \cite{B1}, \cite{B2} on Mahler measure of $K3$  surfaces suggest a link between the two $K3$ surfaces $Y_2$ and $Y_{10}$ of the Ap\'ery-Fermi pencil $Y_k$ defined by the equation

\[(Y_k) \qquad X+\frac{1}{X}+Y+\frac{1}{Y}+Z+\frac{1}{Z}=k.\]

In particuliar, it was proved that the Mahler measures of the polynomials defining $Y_2$ and $Y_{10}$ are expressed with the $L$-series of their transcendental lattices $T(Y_2)$ and $T(Y_{10})$ with
\[T(Y_2)=\begin{pmatrix}
	2 & 0\\
	0 & 4\\
\end{pmatrix} \qquad T(Y_{10})=\begin{pmatrix}
	6 & 0\\
	0 & 12\\
\end{pmatrix} \]
and that their transcendental $L$-series are the same.

These $K3$ surfaces $Y_2$ and $Y_{10}$ belong to the same class in Sch\"{u}tt's classification \cite{Sc}, since the discriminant of $Y_2$ is $8$ while the discriminant of $Y_{10}$ is $3^2\times 8$ and the newform in the two transcendental $L$-series are the same up to twist.

Even more, from
results of Kuwata \cite{Kuw}  and Shioda \cite{Sh} (theorem 2.1), the equality $T\left(  Y_{10}\right)  =T\left(  Y_{2}\right)
\left[  3\right]  $ , reveals a relation between $Y_{2}$ and $Y_{10}.$ Indeed,
starting with an elliptic fibration of $Y_2$ with two singular fibers $II^*$ and Weierstrass equation
\[(Y_2)_h\qquad y^2=x^3+\alpha x+h+\frac{1}{h}+\beta,\]
a base change $h=u^3$ gives a Weierstrass equation denoted $(Y_2)_h^{(3)}$ of an elliptic fibration of a $K3$ surface with transcendental lattice $T(Y_2)[3]$, which is precisely $Y_{10}.$ If, instead of the previous base change, we use the base change $h=u^2$, we obtain a Weierstrass equation  $(Y_2)_h^{(2)}$ of an elliptic fibration of a $K3$ surface with transcendental lattice $T(Y_2)[2]$ which is precisely the Kummer surface $K_2$. The idea, previously developped in \cite{B-L} when searching $2$-isogenies between some elliptic fibrations of $Y_2$ and its Kummer $K_2$, suggests possible $3$-isogenies between some elliptic fibrations of $Y_2$  and $Y_{10}$.

To answer this question it was tempting to study $Y_{10}$ as Bertin and Lecacheux did for $Y_2$ \cite{BL}, that is determine all the non isomorphic elliptic fibrations of $Y_{10}$ together with their Weierstrass equations. However such a study is more tricky and leads only to partial results.

The first difficulty is the application of the Kneser-Nishiyama method to determine all the elliptic fibrations. Concerning $Y_2$, these fibrations are given by primitive embeddings of a root lattice, namely $A_1 \oplus D_5$, in the various Niemeier lattices, which are only primitive embeddings in the root part of Niemeier lattices. 
This is no longer the case for $Y_{10}$ since we have to embed the lattice $M=A_1 \oplus A_2 \oplus N$ with $N= \big (\begin{smallmatrix}
	-2 & 0 & 1\\
	0 & -2 & 1\\
	1 & 1 & -4\\
\end{smallmatrix}\big)$. Since $N$ is not a root lattice, we have to consider primitive embeddings into Niemeier lattices and not only in their root part. This fact added to the facts that $M$ is composed of three irreducible lattices, $A_1$ and $A_2$ embedding primitively in all the other root lattices, give a huge amount of such embeddings thus of elliptic fibrations. This difficulty is explained in section 3.

Such a situation has been encountered by Braun, Kimura and Watari \cite{BKW}. But their examples\
 were simpler since in their case $M=A_5 \oplus (-4)$. 
   And even in that simpler case, probably Braun, Kimura and Watari encountered so many cases that they restricted to primitive embeddings containing $E_6$, $E_7$ or $E_8$.

Moreover, to answer our motivating question, we must know Weierstrass equations of $3$-torsion elliptic fibrations of $Y_{10}$. In \cite{B-L}, Bertin and Lecacheux obtained all elliptic fibrations, called generic, of the Ap\'ery-Fermi pencil together with a Weierstrass equation. However, their specializations, object of section 4, give only a few elliptic fibrations and Weierstrass equations of $Y_{10}$. In particuliar, all the $10$ extremal fibrations of $Y_{10}$ given in Shimada-Zhang \cite{Shim} are missing.

Hence, follows in section 5 the determination of primitive embeddings into Niemeier lattices giving these extremal fibrations. 
Their corresponding Weierstrass equations are also exhibited.

An important property of $Y_{10}$ contrary to $Y_2$ must be underlined. We can find on the $K3$ surface $Y_{10}$ high rank fibrations up to rank $7$. Examples are given in section 6.

In \cite{B-L}, the $2$-isogenies of $Y_2$ are divided in two classes, the Morisson-Nikulin ones, i.e. $2$-isogenies from $Y_2$ to its Kummer $K_2$, and the others, called van Geemen-Sarti involutions. It was also proved that these latter are in fact "self-isogenies", meaning either they preserve the same elliptic fibration ("PF self-isogenies") or they exchange two elliptic fibrations of $Y_2$ ("EF self-isogenies"). In section 7 we ask for a similar result concerning $Y_{10}$, even if we do not know all the $2$-torsion elliptic fibrations. Specialized Morisson-Nikulin involutions remain Morisson-Nikulin involutions and specialized van-Geemen-Sarti involutions are "self-isogenies". But in addition we found a Morisson-Nikulin involution from $Y_{10}$ to its Kummer $K_{10}$ not obtained by specialization.

Finally, in the last section 8, we prove the main result, our first motivation, that is, all the $3$-isogenies from elliptic fibrations of $Y_2$ are $3$-isogenies from $Y_2$ to $Y_{10}$. It remains a natural question: what about the other $3$-isogenies from elliptic fibrations of $Y_{10}$? Indeed we found $3$-isogenies from $Y_{10}$ to two other $K3$ surfaces with respective transcendental lattices $[4 \quad 0 \quad 18]=\begin{pmatrix} 4& 0\\0&18\end{pmatrix}$ and $[2 \quad 0 \quad 36]$.

In the same section we use the elliptic fibration $(Y_2)_h^{(3)}$ to construct elliptic fibrations of $Y_{10}$ of high rank (namely $7$ the highest we found)  and by the $2$-neighbour method a rank $4$ elliptic fibration with a $2$-torsion section
defining the Morisson-Nikulin involution exhibited in section $7$.

This famous Ap\'ery-Fermi pencil is identified by Festi and van Straten \cite{FS} to be a pencil of $K3$ surfaces $\mathcal{D}_s$ appearing in the $2$-loop diagrams in Bhabha scattering. These authors remark that $\mathcal{D}_1$ is a $K3$ surface with transcendental lattice isometric to $\langle 2 \rangle \oplus \langle 4 \rangle $, that is $\mathcal{D}_1$ is $Y_2$. From the relation $k=2-4s$ \cite{FS}, we may also remark that $\mathcal{D}_{-2}$ and $\mathcal{D}_3$ are the $K3$ surface $Y_{10}$ studied in the present paper.

\section{Definitions and results}
We recall briefly what is useful for the understanding of the paper.

 A rank $r$ \textbf{lattice} is a free $\mathbb Z$-module $S$ of rank $r$ together with a symmetric bilinear form $b$.

A lattice $S$ is called {\textbf{even}} if $x^2:=b(x,x)$ is even for all $x$ from $S$.
For any integer $n$ we denote by ${\langle \boldsymbol{n} \rangle}$ the lattice $\mathbb Z e$ where $e^2=n$.

If $e=(e_1,\ldots, e_r)$ is a $\mathbb Z$-basis of a lattice $S$, then the matrix 
$G(e)=(b(e_i,e_j))$ is called the \textbf{Gram matrix} of $S$ with respect to $e$.
An injective homomorphism of lattices is called an embedding.

An embedding $i:S \rightarrow S'$ is called {\textbf{primitive}} if $S'/i(S)$ is a free group.
A sublattice is a subgroup equipped with the induced bilinear form. A sublattice $S'$ of a lattice $S$ is called primitive if the identity map $S' \rightarrow S$ is a primitive embedding.
The \textbf{primitive closure} of $S$ inside $S'$ is defined by
$\overline{S} =\{x\in S' / mx\in S \hbox{  for some positive integer }m \}.$
A lattice $M$ is an \textbf{ overlattice} of $S$ if $S$ is a sublattice of $M$ such that the index $[M:S]$ is finite.

By $S_1\oplus S_2$ we denote the orthogonal sum of two lattices defined in the standard way. We write $S^n$ for the orthogonal sum of $n$ copies of a lattice $S$. The \textbf{ orthogonal complement of a sublattice} $S$ of a lattice $S'$ is denoted $(S)_{S'}^{\perp}$ and defined by 
$(S)_{S'}^{\perp}=\{x\in S'/ b(x,y)=0 \,\,\,\hbox{for all  }y\in S\}$.

\subsection{Discriminant forms}

Let $L$ be a non-degenerate lattice. 
The \textbf{dual lattice} $L^*$ of $L$ is defined by
$$L^*:=\text{Hom}(L,\mathbb Z) =\{x\in L \otimes \mathbb Q /\,\,\, b(x,y)\in \mathbb Z \hbox{  for all }y \in L \}.$$
and the \textbf{discriminant group $G_L$} by
$$G_L:=L^*/L.$$
This group is finite if and only if $L$ is non-degenerate. In the latter case, its order is equal to the absolute value of the lattice determinant $\mid \det (G(e)) \mid$ for any basis $e$ of $L$.
A lattice $L$ is \textbf{unimodular} if $G_L$ is trivial.

Let $G_L$ be the discriminant group of a non-degenerate lattice $L$. The bilinear form on $L$ extends naturally to a $\mathbb Q$-valued symmetric bilinear form on $L^*$ and induces a symmetric bilinear form 
$$b_L: G_L \times G_L \rightarrow\mathbb Q / \mathbb Z.$$
If $L$ is even, then $b_L$ is the symmetric bilinear form associated to the quadratic form defined by
$$
\begin{matrix}
q_L: G_L  &\rightarrow  & \mathbb Q/2\mathbb Z\\
q_L(x+L) & \mapsto & x^2+2\mathbb Z.
\end{matrix}
$$
The latter means that $q_L(na)=n^2q_L(a)$ for all $n\in \mathbb Z$, $a\in G_L$ and $b_L(a,a')=\frac {1}{2}(q_L(a+a')-q_L(a)-q_L(a'))$, for all $a,a' \in G_L$, where $\frac {1}{2}:\mathbb Q/2 \mathbb Z \rightarrow \mathbb Q / \mathbb Z$ is the natural isomorphism.
The pair $\boldsymbol{(G_L,b_L)}$ (resp. $\boldsymbol{(G_L,q_L)}$) is called the \textbf{discriminant bilinear} (resp. \textbf{quadratic}) \textbf{form} of $L$.

When the even lattice $L$ is given by its Gram matrix, we can compute its discriminant form using the following lemma as explained in Shimada \cite{Shim1}.
\begin{lemma}\label{lem:gram}
Let $A$ the Gram matrix of $L$ and $U$, $V \in Gl_n(\mathbb Z)$ such that
\[ UAV=D=\begin{pmatrix}
        d_1 &  & 0\\
         & \ddots & \\
        0 &  & d_n\\
\end{pmatrix}
\]
with $1=d_1=\ldots =d_k < d_{k+1} \leq \ldots \leq d_n$. Then
\[G_L\simeq \oplus_{i>k} \mathbb Z/(d_i).\]
Moreover the $i$th row vector of $V^{-1}$, regarded as an element of $L^*$ with respect to the dual basis $e_1^*$, ..., $e_n^*$ generate the cyclic group $\mathbb Z/(d_i)$.
\end{lemma}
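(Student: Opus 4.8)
The plan is to follow Shimada's treatment \cite{Shim1} and reduce the computation of the discriminant form to Smith normal form, exactly as the statement is phrased. First I would fix a $\mathbb Z$-basis $e=(e_1,\ldots,e_n)$ of $L$, so that $A=G(e)$ is the Gram matrix, and introduce the dual basis $e_1^*,\ldots,e_n^*$ of $L^*$ characterized by $b(e_i,e_j^*)=\delta_{ij}$; in coordinates relative to $e$ this dual basis is given by the columns of $A^{-1}$, since $A$ is the change-of-basis matrix sending dual coordinates to primal coordinates. The point of choosing $U,V\in GL_n(\mathbb Z)$ with $UAV=D$ diagonal is that $U$ and $V$ amount to unimodular changes of basis on $L^*$ and $L$ respectively, under which the pairing $L\times L\to\mathbb Z$, and hence the inclusion $L\hookrightarrow L^*$, is put in diagonal form.

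Concretely, I would argue as follows. Write $f_j=\sum_i V_{ij}e_i$; since $V\in GL_n(\mathbb Z)$, the vectors $f=(f_1,\ldots,f_n)$ form a new $\mathbb Z$-basis of $L$. Dually, let $g=(g_1,\ldots,g_n)$ be the basis of $L^*$ whose coordinate vectors with respect to $e_1^*,\ldots,e_n^*$ are the rows of $U$, i.e. $g_i=\sum_j U_{ij}e_j^*$; because $U\in GL_n(\mathbb Z)$ this is again a $\mathbb Z$-basis of $L^*$. A direct computation with $b(e_i,e_j^*)=\delta_{ij}$ gives $b(g_i,f_j)=(UAV)_{ij}=D_{ij}=d_i\delta_{ij}$. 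Hence in the bases $g$ of $L^*$ and $f$ of $L$, the natural inclusion $L\hookrightarrow L^*$ sends $f_i\mapsto d_i g_i$ (this uses that $b(\,\cdot\,,\,\cdot\,):L\times L^*\to\mathbb Z$ is a perfect pairing, so $f_i$ is determined by its pairings against the $g_j$). Therefore
\[
G_L=L^*/L=\bigoplus_{i=1}^n \mathbb Z g_i \big/ \bigoplus_{i=1}^n \mathbb Z\, d_i g_i \;\simeq\; \bigoplus_{i=1}^n \mathbb Z/(d_i),
\]
and the summands with $d_i=1$ are trivial, leaving $G_L\simeq\bigoplus_{i>k}\mathbb Z/(d_i)$. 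This also shows that the class of $g_i$ generates the $\mathbb Z/(d_i)$-summand.

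It then remains to identify $g_i$ in the original dual basis: by definition $g_i=\sum_j U_{ij}e_j^*$, which is the $i$th row of $U$. But the relation $UAV=D$ with $A$ symmetric can be rewritten, and to match the statement one checks $U = D V^{-1} A^{-1}$ is not quite what is wanted; instead, transposing $UAV=D$ gives $V^{\mathsf T}A U^{\mathsf T}=D$, and one compares row/column roles. The cleanest route is to observe that since we only need a set of generators, and $g_i = \sum_j U_{ij} e_j^*$ generate $G_L$ with $g_i$ of order $d_i$, it suffices to check $U_{ij}$ equals the $(i,j)$ entry of $V^{-1}$; indeed from $UAV=D$ we get $U = D V^{-1} A^{-1}$, and since $A^{-1}$ sends $e^*$-coordinates to the description of $L^*\subset L\otimes\mathbb Q$ in the $e$-basis, the element with $e$-coordinates equal to the $i$th row of $V^{-1}$ represents the same class in $G_L$ (up to the unit $d_i$, which acts invertibly on a generator of $\mathbb Z/(d_i)$ only after noting $\gcd$ issues — so more care is needed precisely here).

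I expect the main obstacle to be exactly this last bookkeeping step: keeping straight the dualization, which matrix ($U$ versus $V^{-1}$, rows versus columns) gives the generators, and verifying that the stated row-of-$V^{-1}$ vectors indeed have order $d_i$ rather than some proper divisor. The existence and structure part (the isomorphism $G_L\simeq\bigoplus\mathbb Z/(d_i)$) is essentially immediate from Smith normal form once the perfect pairing $L\times L^*\to\mathbb Z$ is set up correctly; the delicate point is the explicit identification of generators, which is why the lemma singles it out, and where I would spend the bulk of the careful argument, cross-checking against a small example such as $A=\bigl(\begin{smallmatrix}-2&1\\1&-4\end{smallmatrix}\bigr)$.
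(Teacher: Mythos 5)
The paper states this lemma without proof (it is imported from Shimada's lecture notes), so your attempt can only be measured against what a correct argument must contain. Your overall strategy --- Smith normal form applied to the inclusion $L\hookrightarrow L^*$ --- is the right one, but the step on which everything hinges is wrong. With $g_i=\sum_k U_{ik}e_k^*$ and $f_j=\sum_l V_{lj}e_l$ one gets $b(g_i,f_j)=\sum_{k,l}U_{ik}V_{lj}\,b(e_k^*,e_l)=(UV)_{ij}$, \emph{not} $(UAV)_{ij}$: the Gram matrix never enters, because $b(e_k^*,e_l)=\delta_{kl}$. Consequently the claim that the inclusion sends $f_i\mapsto d_i g_i$ is unjustified (and even if the pairing were $d_i\delta_{ij}$, expanding $f_j$ in the basis $g$ of $L^*$ would produce the matrix $(V^{\mathsf T}AV)D^{-1}$, not $D$). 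The final paragraph, where you try to reconcile rows of $U$ with rows of $V^{-1}$ via $U=DV^{-1}A^{-1}$, does not close this gap, as you yourself acknowledge.

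The repair is short. Work in $e^*$-coordinates on $L^*$: since $e_i=\sum_j A_{ij}e_j^*$, the sublattice $L\subset L^*\cong\mathbb Z^n$ is the row span $\mathbb Z^nA=\mathbb Z^nU^{-1}DV^{-1}=\mathbb Z^nDV^{-1}$. Taking as new $\mathbb Z$-basis of $L^*$ the rows $g_i$ of $V^{-1}$ (legitimate since $V^{-1}\in GL_n(\mathbb Z)$), one gets $L=\bigoplus_i d_i\mathbb Z g_i$, whence $G_L\simeq\bigoplus_i\mathbb Z/(d_i)$ with the class of $g_i$ of exact order $d_i$; discarding the trivial summands with $d_i=1$ gives the statement. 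Equivalently, keep your $f_j$ but set $g_i=\sum_k U_{ik}e_k$ in the \emph{primal} basis; then $b(g_i,f_j)=(UAV)_{ij}=d_i\delta_{ij}$, so $g_i/d_i$ is the basis of $L^*$ dual to $f$, and converting $g_i/d_i$ to $e^*$-coordinates (right multiplication by $A$, using $UA=DV^{-1}$) gives $\tfrac{1}{d_i}\,\mathrm{row}_i(UA)=\mathrm{row}_i(V^{-1})$, which is exactly the generator named in the lemma. You can confirm the orientation of the bookkeeping against the paper's own use of the lemma in Section 3, where for $N$ with $d_3=12$ the generator $a=-e_1^*+3e_3^*$ is read off from the third row $(-1,0,3)$ of the displayed $V^{-1}$.
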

\subsection{Root lattices}
In this section we recall only what is needed for the understanding of the paper. For proofs and details one can refer to Bourbaki \cite{Bo} or Martinet \cite{Ma}.
 
Let $L$ be a negative-definite even lattice. We call $e\in L$ a \textbf{root} if $q_L(e)=-2$. Put $\Delta(L):=\{e\in L/ q_L(e)=-2\}$. Then the sublattice of $L$ spanned by $\Delta(L)$ is called the \textbf{ root type} of $L$ and is denoted by $ \mathbf{L_{\hbox{root}}}$.

The \textbf{ lattices} $A_n=\langle a_1,a_2,\ldots ,a_n \rangle$ ($n\geq 1$), $D_l=\langle d_1,d_2,\ldots ,d_l \rangle$ ($l\geq 4$), $E_p=\langle e_1,e_2,\ldots ,e_p \rangle$ ($p=6,7,8$) defined by the following \textbf{Dynkin diagrams} are called the \textbf{root lattices}. All the vertices $a_j$, $d_k$, $e_l$ are roots and two vertices $a_j$ and $a_j'$ are joined by a line if and only if $b(a_j,a_j')=1$. We use Bourbaki's numbering \cite{Bo} and in brackets Conway-Sloane definitions \cite{CS}.

Denote $\epsilon_i$ the vectors of the canonical basis of $\mathbb R^n$ with the usual scalar product.

The lattice  $A_n$ can be  represented by the set of points in $\mathbb R^{n+1}$ with integer coordinates whose sum is zero,
and the lattice $D_l$ as the set of points of $\mathbb R^l$ with integer coordinates of even sum.
We can represent $E_8$  in the \textit{even coordinate system} \cite{CS} p.120 $E_8=D_8^+=D_8\cup (v+D_8)$ where $v=\frac{1}{2}\sum_{i=1}^8\epsilon_i.$
Then we represent $E_7$ as the orthogonal in $E_8$ of $v$, and $E_6$ as the orthogonal of $\langle v,w\rangle \simeq A_2$ in $E_8$ where $w=\epsilon_1+\epsilon_8$.  

\noindent
\begin{minipage}[t]{0.44\textwidth}
$\mathbf{A_n,\,\, G_{A_n}}$

Set 

$[1]_{A_n}=a_n^*=-\frac {1}{n+1} \sum_{j=1}^{n}(j)a_j$

then $A_n^*=\langle A_n,[1]_{A_n} \rangle$ and 

$ G_{A_n}=A_n^*/A_n  \simeq \mathbb Z /(n+1)\mathbb Z .$

\noindent
$q_{A_n}([1]_{A_n})=-\frac{n}{n+1}.$

Glue vectors $[i]_{A_n}=a_{n+1-i}^*$

Glue group $[i+j]=[i]+[j]$

\end{minipage}
\begin{minipage}[t]{0.52\textwidth}
\bigskip
\begin{center}
\begin{tikzpicture}[scale=0.35]%
\draw[fill=black](0.1,1.7039)circle (0.0865cm)node [below] {$a_1$};
\draw[fill=black](2.9,1.7039)circle (0.0865cm)node [above] {$a_2$};
\draw[fill=black](5.7,1.7039)circle (0.0865cm)node [above] {$a_3$};
\draw[fill=black](9.9,1.7039)circle (0.0865cm)node [below]{$a_n$};
\draw[thick](0.1,1.7039)--(6.81,1.7039);
\draw[dashed,thick](7.3,1.7039)--(8.6,1.7039);
\draw[thick](9.01,1.7039)--(9.9,1.7039);
\end{tikzpicture}
\end{center}
\end{minipage}

\bigskip
\noindent
\begin{minipage}[t]{0.48\textwidth}
$\mathbf{D_l, G_{D_l}}.$ 

Set

\noindent
$[1]_{D_l} =-d_{l-1}^*=\hfill$
$\frac {1}{2}\left( \sum_{i=1}^{l-2}id_i+\frac {1}{2}(l-2)d_{l}+\frac {1}{2}ld_{l-1} \right)$

\noindent
$[2]_{D_l}=d_1^*=  \hfill$

$\sum_{i=1}^{l-2} d_i+\frac {1}{2}(d_{l-1}+d_l)$

\noindent
$[3]_{D_l}=-d_{l-1}^*+d_1^*=\hfill$
$ \frac {1}{2}\left( \sum_{i=1}^{l-2}id_i+\frac {1}{2}ld_{l}+\frac {1}{2}(l-2)d_{l-1}  \right)$

\noindent
then $D_l^*= \langle D_l, [1]_{D_l}, [3]_{D_l} \rangle,$
\end{minipage}
\begin{minipage}[t]{0.48 \textwidth}
\smallskip
\begin{center}
\begin{tikzpicture}[scale=0.4]%
\draw[fill=black](0.1,1.7039)circle (0.0865cm)node [above] {$d_l$};
\draw[fill=black](2.9,1.7039)circle (0.0865cm)node [below] {$d_{l-2}$};
\draw[fill=black](5.7,1.7039)circle (0.0865cm)node [below] {$d_{l-3}$};
\draw[fill=black](9.9,1.7039)circle (0.0865cm)node [below]{$d_1$};
\draw[fill=black](2.9,3.10)circle (0.0865cm)node [above]{$d_{l-1}$};
\draw[thick](0.1,1.7039)--(6.81,1.7039);
\draw[dashed,thick](7.3,1.7039)--(8.6,1.7039);
\draw[thick](9.01,1.7039)--(9.9,1.7039);
\draw[thick](2.9,1.7039)--(2.9,3.10);
\end{tikzpicture}
\end{center}
\end{minipage}

\noindent
\begin{minipage}[t]{0.95\textwidth}
\noindent
$G_{D_l}=D_l^*/D_l=<[1]_{D_l}>\simeq \mathbb Z/4\mathbb Z \,\,\,\,\text{if } l \text{ is odd,}$

\noindent
$G_{D_l}=D_l^*/D_l=<[1]_{D_l}, [3]_{D_l}>\simeq \mathbb Z/2\mathbb Z \times \mathbb Z/2\mathbb Z \,\,\,\text{if }l \text{ is even}.$

\noindent
$q_{D_l}([1]_{D_l})=-\frac{l}{4},\, \,\,q_{D_l}([2]_{D_l})=-1,\,\,b_{D_l}([1],[2])=-\frac{1}{2}. $
\end{minipage}

\bigskip
\noindent
\begin{minipage}[t]{0.50\textwidth}
$\mathbf{E_6,\,G_{E_6}}\,\,$ 

Set

$[1]_{E_6}=\eta_6 =e_6^* =$

\noindent 
$-\frac {1}{3}(2e_1+3e_2+4e_3+6e_4+5e_5+4e_6)$,

then

$E_6^*=\langle E_6, \eta_6 \rangle$ and 

$G_{E_6}=E_6^*/E_6\simeq \mathbb Z /3 \mathbb{ Z}$

$[2]_{E_6}=-[1]_{E_6}.$

\noindent
$q_{E_6}(\eta_6)=-\frac{4}{3}.$ 


\end{minipage}
\begin{minipage}[t]{0.48\textwidth}
\smallskip
\begin{center}
 \begin{tikzpicture}[scale=0.4]%
\draw[fill=black](0.1,1.7039)circle (0.0765cm)node [above] {$e_1$};
\draw[fill=black](2.9,1.7039)circle (0.0765cm)node [below] {$e_{3}$};
\draw[fill=black](5.7,1.7039)circle (0.0765cm)node [below] {$e_{4}$};
\draw[fill=black](11.3,1.7039)circle (0.0765cm)node [below]{$e_6$};
\draw[fill=black](8.5,1.7039)circle (0.0765cm)node [below]{$e_5$};
\draw[fill=black](5.7,3.10)circle (0.0765cm)node [above]{$e_{2}$};
\draw[thick](0.1,1.7039)--(11.3,1.7039);
\draw[thick](5.7,1.7039)--(5.7,3.10);
\end{tikzpicture}
\end{center}
\end{minipage}

\bigskip
\noindent
\begin{minipage}[t]{0.48\textwidth}

$\mathbf{E_7,\,G_{E_7}}\,\,$

Set

\noindent
$[1]_{E_7}=\eta_7 =e_7^*=\hfill$

\noindent
$-\frac {1}{2}(2e_1+3e_2+4e_3+6e_4+5e_5+4e_6+3e_7)$,

then

$E_7^*=\langle E_7, \eta_7 \rangle$ and

\noindent
$G_{E_7}=E_7^*/E_7 \simeq \mathbb Z /2 \mathbb Z,$  

\noindent
$q_{E_7}(\eta_7)=-\frac{3}{2}.$ 
\end{minipage}
\begin{minipage}[t]{0.50\textwidth}
\smallskip
\begin{center}
\begin{tikzpicture}[scale=0.4]%
\draw[fill=black](0.1,1.7039)circle (0.0765cm)node [below] {$e_1$};
\draw[fill=black](2.9,1.7039)circle (0.0765cm)node [below] {$e_{3}$};
\draw[fill=black](5.7,1.7039)circle (0.0765cm)node [below] {$e_{4}$};
\draw[fill=black](9.9,1.7039)circle (0.0765cm)node [below]{$e_7$};
\draw[fill=black](5.7,3.10)circle (0.0765cm)node [above]{$e_{2}$};
\draw[thick](0.1,1.7039)--(6.81,1.7039);
\draw[dashed,thick](7.3,1.7039)--(8.6,1.7039);
\draw[thick](9.01,1.7039)--(9.9,1.7039);
\draw[thick](5.7,1.7039)--(5.7,3.10);
\end{tikzpicture}
\end{center}
\end{minipage}

\bigskip
\noindent
\begin{minipage}[t]{0.44\textwidth}
\smallskip
$\mathbf{E_8,\,G_{E_8}}\,\,$

$E_8^*= E_8. $  
 
\end{minipage}
\begin{minipage}[t]{0.52\textwidth}
\smallskip
\begin{center}
\begin{tikzpicture}[scale=0.4]%
\draw[fill=black](0.1,1.7039)circle (0.0765cm)node [below] {$e_1$};
\draw[fill=black](2.9,1.7039)circle (0.0765cm)node [below] {$e_{3}$};
\draw[fill=black](5.7,1.7039)circle (0.0765cm)node [below] {$e_{4}$};
\draw[fill=black](9.9,1.7039)circle (0.0765cm)node [below]{$e_8$};
\draw[fill=black](5.7,3.10)circle (0.0765cm)node [above]{$e_{2}$};
\draw[thick](0.1,1.7039)--(6.81,1.7039);
\draw[dashed,thick](7.3,1.7039)--(8.6,1.7039);
\draw[thick](9.01,1.7039)--(9.9,1.7039);
\draw[thick](5.7,1.7039)--(5.7,3.10);
\end{tikzpicture}
\end{center}
\end{minipage}

\subsection{Niemeier lattices}

An even unimodular lattice $Ni(L_{\text{root}})$  in dimension $24$ is called a Niemeier lattice and is obtained by gluing certain component lattices of $L_{\text{root}}$ by means of glue vectors.

If $L_{\text{root}}$ has components $L_1$, ...,$L_k$, the glue vectors have the form $y=(y_1, ...,y_k)$ where each $y_i$ can be regarded as a coset representative (or glue vector) for $L_i^*$ modulo $L_i$. These coset representatives, labeled $[0]$, $[1]$, ... $[d-1]$ for a component of determinant $d$ are listed in the previous subsection. 

The set of glue vectors for $Ni(L_{\text{root}})$ forms an additive group called the glue code. The Table \ref{Ta:Nie} below gives generators for the glue code. If a glue vector contains parentheses, this indicates that all vectors obtained by cyclically shifting the part of the vector inside the parentheses are also glue vectors. For example, the glue vectors for the Niemeier lattice $Ni(D_8^3)$ is described by $[(1 2 2)]$, that is the glue words are spanned by 
\[[1 2 2]=([1],[2],[2])=((1/2)^8,(0^7 1),(0^7 1))\]
\[[2 1 2]=([2],[1],[2])=((0^7 1),(1/2)^8,(0^7 1))\]
\[ [2 2 1]=([2],[2],[1])=((0^7 1),(0^7 1),(1/2)^8)\]

The full glue code for this example contains the eight vectors $[0 0 0]$, $[1 2 2]$, $[2 1 2]$, $[2 2 1]$, $[0 3 3 ]$, $[3 0 3 ]$, $[3 3 0]$, $[1 1 1]$.
\subsection{Elliptic fibrations}

We recall that all the elliptic fibrations of a $K3$ surface come from primitive embeddings of a specific rank $6$ lattice into a Niemeier lattice denoted $Ni(L_{\text{root}})$.

\subsection{Nikulin and Niemeier's results}

\begin{lemma}[Nikulin \cite{Nik}, Proposition 1.4.1]\label{L:nik}
Let $L$ be an even lattice. Then, for an even overlattice $M$ of $L$, we have a subgroup $M/L$ of $G_L=L^*/L$ such that $q_L$ is trivial on $M/L$. This determines a bijective correspondence between even overlattices of $L$ and subgroups $G$ of $G_L$ such that $q_L\mid_G=0$.
\end{lemma}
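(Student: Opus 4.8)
The strategy is to exhibit explicitly the correspondence $M \mapsto M/L$ in both directions and check it is a bijection. First I would set up the ambient space: since $L$ is non-degenerate (even, and we may assume of finite determinant as in the applications), $L \subseteq L^* \subseteq L \otimes \mathbb{Q}$, and any overlattice $M$ with $[M:L]$ finite sits inside $L \otimes \mathbb{Q}$ and satisfies $L \subseteq M \subseteq M^* \subseteq L^*$, the middle inclusion because $M$ is assumed even (hence integral). Thus $M/L$ is a well-defined subgroup of $L^*/L = G_L$. The key point to extract is that the $\mathbb{Q}/2\mathbb{Z}$-valued quadratic form $q_L$ on $G_L$ restricts to $0$ on the image $M/L$: indeed for $x \in M$ we have $x^2 \in 2\mathbb{Z}$ precisely because $M$ is even, so $q_L(x + L) = x^2 + 2\mathbb{Z} = 0$. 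This gives the forward map and the constraint $q_L|_{M/L} = 0$.

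Next I would construct the inverse. Given a subgroup $G \le G_L$ with $q_L|_G = 0$, define $M_G := \{ x \in L^* : x + L \in G \}$, the preimage of $G$ under $L^* \to G_L$. Then $M_G$ is a subgroup of $L^*$ containing $L$ with $M_G/L = G$ finite, so $[M_G : L] = |G| < \infty$. The content is that $M_G$ is an \emph{even lattice}, i.e. that the $\mathbb{Q}$-valued form on $L^*$ takes integer (resp. even) values on $M_G$. For $x, y \in M_G$ write their classes $\bar x, \bar y \in G$; then $b_L(\bar x, \bar y) = \tfrac12(q_L(\bar x + \bar y) - q_L(\bar x) - q_L(\bar y)) = 0$ in $\mathbb{Q}/\mathbb{Z}$ since $q_L$ vanishes on the subgroup $G$ (and $\bar x + \bar y \in G$ too), so $b(x,y) \in \mathbb{Z}$; and $q_L(\bar x) = 0$ in $\mathbb{Q}/2\mathbb{Z}$ gives $x^2 \in 2\mathbb{Z}$. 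Hence $M_G$ is an even overlattice of $L$.

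Finally I would verify the two maps are mutually inverse: $M \mapsto M/L \mapsto M_{M/L}$ returns $M$ because $M_{M/L}$ is by definition the full preimage in $L^*$ of $M/L$, and since $M \subseteq L^*$ this preimage is exactly $M$; conversely $G \mapsto M_G \mapsto M_G/L = G$ is immediate from the construction. This establishes the bijection. The only mild subtlety — the step I would flag as needing the most care — is the bilinear-form computation showing $M_G$ is integral: one must be careful that the identity $b_L(a,a') = \tfrac12(q_L(a+a') - q_L(a) - q_L(a'))$ used here is exactly the one recalled in the subsection on discriminant forms, with $\tfrac12 : \mathbb{Q}/2\mathbb{Z} \to \mathbb{Q}/\mathbb{Z}$ the natural isomorphism, so that vanishing of $q_L$ on $G$ genuinely forces $b$ to be $\mathbb{Z}$-valued and not merely $\tfrac12\mathbb{Z}$-valued on $M_G$. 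Everything else is bookkeeping with the chain $L \subseteq M \subseteq M^* \subseteq L^*$.
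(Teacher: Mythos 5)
Your proof is correct and is the standard argument for Nikulin's Proposition 1.4.1: the forward map $M\mapsto M/L$ with $q_L$ vanishing because $M$ is even, the inverse given by taking the preimage $M_G\subseteq L^*$ of an isotropic subgroup $G$, with integrality of the form on $M_G$ extracted from the polarization identity $b_L(a,a')=\tfrac12\bigl(q_L(a+a')-q_L(a)-q_L(a')\bigr)$, and the routine check that the two maps are mutually inverse. The paper states this lemma as an imported result of Nikulin and gives no proof of its own, so there is nothing to compare against; your argument is exactly the one in Nikulin's paper, and the one subtlety you flag (that vanishing of $q_L$ on the subgroup $G$, not just on its generators pointwise, is what forces $b$ to be $\mathbb Z$-valued on $M_G$) is indeed the only non-trivial step.
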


\begin{lemma}[Nikulin \cite{Nik}, Proposition 1.6.1]\label{L:nik0}
Let $L$ be an even unimodular lattice and $T$ a primitive sublattice. Then we have
$$G_T\simeq G_{T^{\perp}} \simeq L/(T\oplus T^{\perp}),\,\,\,\,\,\,\,q_{T^{\perp}}=-q_T.$$
In particular, $ \det T =\det  T^{\perp} =[L:T\oplus T^{\perp}]$.
\end{lemma}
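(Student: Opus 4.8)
The plan is to exploit that primitivity of $T$ makes it a $\mathbb{Z}$-module direct summand of $L$, so that $T^{\perp}$ is again primitive with $(T^{\perp})^{\perp}=T$, and that unimodularity of $L$ lets every $\mathbb{Z}$-linear functional on $T$ be realized by pairing against an element of $L$. Concretely, I would introduce the homomorphism
\[\varphi\colon L\longrightarrow T^{*},\qquad \varphi(x)=b(x,-)\big|_{T},\]
whose kernel is, by definition, $T^{\perp}$. The key point is that $\varphi$ is surjective: since $L/T$ is torsion-free, $T$ is a direct summand of $L$ as an abelian group, so any $f\in T^{*}=\mathrm{Hom}(T,\mathbb{Z})$ extends to some $g\in\mathrm{Hom}(L,\mathbb{Z})=L^{*}$, and because $L$ is unimodular we have $g=b(x,-)$ for a unique $x\in L$, whence $\varphi(x)=f$. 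Thus $\varphi$ induces an isomorphism $L/T^{\perp}\cong T^{*}$.

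Next I would note that $\varphi$ restricted to $T$ is exactly the canonical map $T\to T^{*}$ attached to the form of $T$, so that the image of $(T\oplus T^{\perp})/T^{\perp}$ under the induced isomorphism $L/T^{\perp}\cong T^{*}$ is the canonical copy of $T$ inside $T^{*}$. Passing to quotients then gives
\[L/(T\oplus T^{\perp})\ \cong\ T^{*}/T\ =\ G_{T}.\]
Running the same argument with the roles of $T$ and $T^{\perp}$ exchanged — legitimate because $T^{\perp}$ is primitive and $(T^{\perp})^{\perp}=T$ — yields $L/(T\oplus T^{\perp})\cong G_{T^{\perp}}$ as well. Together with the standard index formula $[L:T\oplus T^{\perp}]^{2}=|\det T|\cdot|\det T^{\perp}|$ (valid since $\det L=\pm 1$) and $|G_{T}|=|\det T|$, a count of orders then gives the equalities $\det T=\det T^{\perp}=[L:T\oplus T^{\perp}]$ of the last assertion.

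It remains to match the quadratic forms. Here I would realize everything inside one ambient group: from $T\oplus T^{\perp}\subseteq L\subseteq L^{*}\subseteq(T\oplus T^{\perp})^{*}=T^{*}\oplus(T^{\perp})^{*}$ the finite group $H:=L/(T\oplus T^{\perp})$ sits naturally inside $G_{T}\oplus G_{T^{\perp}}$, and the two isomorphisms built above are precisely the two coordinate projections $H\to G_{T}$ and $H\to G_{T^{\perp}}$; injectivity of each uses primitivity (if the $T$-component of $z\in L$ already lies in $T$, then $z$ minus that component is an element of $L$ lying in $T^{\perp}\otimes\mathbb{Q}$, hence in $T^{\perp}$ by primitivity of $T^{\perp}$). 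Since $L$ is an even overlattice of $T\oplus T^{\perp}$, Lemma~\ref{L:nik} says that $q_{T\oplus T^{\perp}}=q_{T}\oplus q_{T^{\perp}}$ vanishes on $H$; writing an element of $H$ as a pair $(a,a')$ with $a\in G_{T}$ corresponding to $a'\in G_{T^{\perp}}$ under $G_{T}\cong G_{T^{\perp}}$, this reads $q_{T}(a)+q_{T^{\perp}}(a')=0$ in $\mathbb{Q}/2\mathbb{Z}$, i.e. $q_{T^{\perp}}=-q_{T}$.

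The one genuinely substantive step is the surjectivity of $\varphi$ — the passage from a functional on $T$ to pairing with an element of $L$ — since that is exactly where unimodularity of $L$ and primitivity of $T$ are both indispensable. Everything after that is diagram chasing with discriminant groups, and the concluding numerical equalities are a bookkeeping of orders.
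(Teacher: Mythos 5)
Your proof is correct. Note that the paper offers no argument of its own for this lemma --- it is quoted directly from Nikulin's Proposition 1.6.1 with a citation --- and what you have written is essentially Nikulin's standard proof: surjectivity of $x\mapsto b(x,-)|_{T}$ (primitivity of $T$ to split it off as a $\mathbb{Z}$-direct summand, unimodularity to identify $L$ with $\mathrm{Hom}(L,\mathbb{Z})$) yields $L/(T\oplus T^{\perp})\cong G_{T}$, the symmetric argument for the primitive sublattice $T^{\perp}$ yields $G_{T^{\perp}}$, and the vanishing of $q_{T}\oplus q_{T^{\perp}}$ on the isotropic subgroup $H\subseteq G_{T}\oplus G_{T^{\perp}}$ (Lemma~\ref{L:nik}) yields $q_{T^{\perp}}=-q_{T}$. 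Two small caveats, neither of which is a gap in your reasoning: the statement tacitly assumes $T$ non-degenerate (otherwise $T\cap T^{\perp}\neq 0$, $G_{T}$ is infinite, and the decomposition $(T\oplus T^{\perp})^{*}=T^{*}\oplus(T^{\perp})^{*}$ you use would fail), and the concluding equality $\det T=\det T^{\perp}=[L:T\oplus T^{\perp}]$ is really an equality of absolute values, which is exactly what your order count produces.
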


\begin{theorem} [Nikulin \cite{Nik} Corollary 1.6.2]\label{T:nik1}
Let $L$ and $M$ be non-degenerate even integral lattices such that
$$G_L \simeq G_M,\,\,\,\,\,\,\,q_L=-q_M.$$
Then there exists an unimodular overlattice $N$ of $L\oplus M$ such that

1) the embeddings of $L$ and $M$ in $N$ are primitive

2) $L_N^{\perp}=M$ and $M_N^{\perp}=L$.
\end{theorem}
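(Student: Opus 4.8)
The plan is to build $N$ explicitly as an overlattice of $L \oplus M$ inside $(L \oplus M) \otimes \mathbb{Q}$, using the given isometry $\gamma \colon (G_L, q_L) \xrightarrow{\sim} (G_M, -q_M)$ to prescribe which rational vectors to adjoin. First I would record the direct-sum formula for the discriminant form: $G_{L \oplus M} = G_L \oplus G_M$ with $q_{L\oplus M} = q_L \oplus q_M$. Inside this group consider the subgroup
\[
\Gamma_\gamma = \{\, (a, \gamma(a)) : a \in G_L \,\} \subset G_L \oplus G_M,
\]
the ``graph'' of $\gamma$. For $(a,\gamma(a)) \in \Gamma_\gamma$ one computes $q_{L\oplus M}(a,\gamma(a)) = q_L(a) + q_M(\gamma(a)) = q_L(a) - q_L(a) = 0$, so $q_{L\oplus M}$ vanishes identically on $\Gamma_\gamma$. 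By Lemma~\ref{L:nik} (Nikulin 1.4.1), $\Gamma_\gamma$ corresponds to a well-defined even overlattice $N$ of $L \oplus M$, namely the preimage of $\Gamma_\gamma$ under $(L\oplus M)^* \to G_{L\oplus M}$; this is the candidate for $N$.

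Next I would check that $N$ is unimodular. Since $[\,(L\oplus M)^* : (L \oplus M)\,] = |G_L|\,|G_M| = |G_L|^2$ and $\Gamma_\gamma$ is the graph of an isomorphism, $|\Gamma_\gamma| = |G_L|$, so $[\,N : L\oplus M\,] = |G_L|$. A standard index computation then gives $[\,(L\oplus M)^* : N\,] = |G_L|$ as well, and since $N \subseteq N^* \subseteq (L\oplus M)^*$ with the two outer inclusions of equal index $|G_L|$, one concludes $N = N^*$, i.e.\ $\det N = \pm 1$; as $N$ is even this forces $N$ to be unimodular. (Equivalently: $G_N = N^*/N$ has order $[\,(L\oplus M)^* : N\,] / [\,N : L\oplus M\,] = 1$.)

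For part (1), primitivity of $L \hookrightarrow N$: if $x \in N$ with $mx \in L$ for some positive integer $m$, write $x = y + z$ with $y \in L\otimes\mathbb{Q}$, $z \in M\otimes\mathbb{Q}$ its components; then $mz = 0$ so $z = 0$, hence $x \in L \otimes \mathbb{Q}$, and $x \in (L\oplus M)^*$ means $x \in L^*$. But the image of $x$ in $G_{L\oplus M}$ lies in $\Gamma_\gamma \cap (G_L \oplus 0)$, which is trivial since $\gamma$ is injective; so $x \in L$. The same argument with the roles of $L$ and $M$ swapped (using injectivity of $\gamma^{-1}$) gives primitivity of $M \hookrightarrow N$. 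For part (2), $L_N^\perp = M$: clearly $M \subseteq L_N^\perp$ since $L \perp M$ in $L \oplus M$ and the pairing on $N$ extends it; conversely any $w \in L_N^\perp \subseteq (L\oplus M)^* = L^* \oplus M^*$ has vanishing $L^*$-component because it pairs trivially with all of $L \supseteq$ a finite-index subgroup of $L^*\!\otimes\mathbb{Q}$'s... more carefully, $w \perp L$ forces the $L$-component of $w$ to be $0$, so $w \in M^* \cap N$, and one checks $M^* \cap N = M$ using that $\Gamma_\gamma \cap (0 \oplus G_M) = 0$. Symmetrically $M_N^\perp = L$.

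The only genuinely delicate point is the index bookkeeping in the unimodularity step — making sure that adjoining a subgroup of order $|G_L|$ to $L \oplus M$ (whose discriminant group has order $|G_L|^2$) lands exactly on a unimodular lattice rather than merely shrinking the discriminant. This is where the hypothesis that $\gamma$ is an \emph{isomorphism} (not just an isometric embedding) is essential: it guarantees $|\Gamma_\gamma|^2 = |G_{L\oplus M}|$, i.e.\ $\Gamma_\gamma$ is a maximal isotropic subgroup, which is precisely the condition for the associated overlattice to be unimodular. Everything else is the routine ``graph of a glueing isometry'' construction, and the primitivity and orthogonal-complement claims then follow formally from injectivity of $\gamma$ and $\gamma^{-1}$ as sketched above.
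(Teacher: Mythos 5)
Your construction is correct: forming the graph $\Gamma_\gamma=\{(a,\gamma(a))\}$ of the glueing isomorphism inside $G_{L\oplus M}=G_L\oplus G_M$, checking it is isotropic because $q_L(a)+q_M(\gamma(a))=0$ in $\mathbb Q/2\mathbb Z$, invoking Lemma~\ref{L:nik} to get the overlattice $N$, and then using the index identity $|\det N|=|\det(L\oplus M)|/[N:L\oplus M]^2=|G_L|^2/|G_L|^2=1$ together with the triviality of $\Gamma_\gamma\cap(G_L\oplus 0)$ and $\Gamma_\gamma\cap(0\oplus G_M)$ for primitivity and the orthogonal-complement claims is exactly the standard argument. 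The paper itself offers no proof of this statement --- it is quoted directly from Nikulin as Corollary 1.6.2 --- so there is nothing to compare against; your write-up is essentially Nikulin's own proof and can stand as a self-contained justification of the cited result.
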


\begin{theorem}[Niemeier \cite{Nie}]\label{T:Nie}
A negative-definite even unimodular lattice $L$ of rank $24$ is determined by its root lattice $L_{\hbox{root}}$ up to isometries. There are $24$ possibilities for $L$ and $L/L_{\hbox{root}}$ listed in Table \ref{Ta:Nie}.
\end{theorem}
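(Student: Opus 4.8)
I would prove Niemeier's classification following Venkov's theta-series refinement of Niemeier's original argument, which separates the problem into pinning down the possible root systems and then reconstructing $L$ from its root system. Write $\Delta(L)$ for the (finite) set of roots, $Q:=L_{\text{root}}$ for the root lattice they span, and $r:=|\Delta(L)|$. Since $L$ is negative-definite even unimodular of rank $24$, its theta series $\theta_L(\tau)=\sum_{v\in L}q^{-v^2/2}$ (with $q=e^{2\pi i\tau}$, so $-v^2\ge 0$) is a modular form of weight $12$ for $SL_2(\mathbb Z)$; as $M_{12}(SL_2(\mathbb Z))$ is two-dimensional, $\theta_L$ is determined by its constant term $1$ and by $r$. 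Sharper information comes from twisting by a harmonic polynomial: for a fixed $r_0\in L\otimes\mathbb R$ with $r_0^2=-2$, the degree-$2$ polynomial $P(v)=(v,r_0)^2+\tfrac{1}{12}(v,v)$ is harmonic, so $\sum_{v\in L}P(v)\,q^{-v^2/2}$ is a cusp form of weight $14$ on $SL_2(\mathbb Z)$; since $S_{14}(SL_2(\mathbb Z))=0$ it vanishes identically, and comparing coefficients of $q^{1}$ gives $\sum_{\alpha\in\Delta(L)}(\alpha,r_0)^2=\tfrac{r}{6}$ for \emph{every} such $r_0$. Two consequences follow. Taking $r_0$ in the negative-definite orthogonal complement of $Q$ in $L\otimes\mathbb R$ --- which contains a vector of norm $-2$ unless it is zero --- forces $r=0$ or $\operatorname{rk}Q=24$. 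Taking $r_0=\alpha_0$ a root, only the component $R_i\ni\alpha_0$ contributes, so by the classical identity $\sum_{\alpha\in R_i}(\alpha,x)^2=-2h_i(x,x)$ (with $h_i$ the Coxeter number of $R_i$) evaluated at $(\alpha_0,\alpha_0)=-2$ we get $4h_i=\tfrac{r}{6}$, independently of $i$. Hence either $\Delta(L)=\varnothing$, or $\operatorname{rk}Q=24$ and all components of $Q$ share one Coxeter number $h$, with $r=24h$.

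Next I would run the combinatorial enumeration: using $h(A_n)=n+1$, $h(D_l)=2l-2$, $h(E_6)=12$, $h(E_7)=18$, $h(E_8)=30$, list all rank-$24$ root lattices whose irreducible summands all have a common Coxeter number. A finite check yields exactly $23$ such lattices $Q$, from $A_1^{24}$ ($h=2$) and $A_2^{12}$ ($h=3$) through $A_3^8$, $A_4^6$, $D_4^6$, $E_6^4$, $A_{17}E_7$, $\dots$ up to $E_8^3$, $D_{16}E_8$ ($h=30$) and $D_{24}$ ($h=46$). For each such $Q$, Lemma \ref{L:nik} describes the even unimodular overlattices $L$ with $L_{\text{root}}=Q$: they correspond to subgroups $H\le G_Q=Q^*/Q$ on which $q_Q$ vanishes; such an $L=Q+H$ is unimodular exactly when $|H|^2=|G_Q|=\det Q$ (so $\det Q$ must be a perfect square, verified on the list), and satisfies $L_{\text{root}}=Q$ exactly when no nonzero coset in $H$ has a representative of norm $-2$ (otherwise $L$ would acquire new roots and reappear elsewhere on the list). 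So existence reduces to producing one admissible maximal isotropic $H$ --- the glue code of Table \ref{Ta:Nie} --- and uniqueness up to isometry reduces to showing all admissible maximal isotropic subgroups of $(G_Q,q_Q)$ lie in one orbit under the natural image in $O(G_Q,q_Q)$ of the group of Dynkin-diagram automorphisms of $Q$ (the Weyl group acting trivially on $G_Q$). In the rootless case $\Delta(L)=\varnothing$ one has $r=0$, so $\theta_L$ is the unique element of $M_{12}(SL_2(\mathbb Z))$ with constant term $1$ and vanishing $q^1$-coefficient --- the theta series of the Leech lattice --- and uniqueness of such $L$ I would obtain from the Minkowski--Siegel mass formula: the total mass $\sum_{[L]}1/|O(L)|$ over rank-$24$ even unimodular lattices is a known rational number, the $23$ lattices above account for all of it but $1/|\mathrm{Co}_0|$, and the residual class must be rootless. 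This gives $23+1=24$ lattices, and tabulating the glue codes produces the pairs $(L,L/L_{\text{root}})$ of Table \ref{Ta:Nie}.

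The main obstacle is the uniqueness of the gluing: the case-by-case analysis, for each of the $23$ root lattices, of the admissible maximal isotropic subgroups of its discriminant form, and the proof that they form a single orbit --- equivalently, that every inadmissible gluing of $Q$ is the admissible gluing of some strictly larger root lattice already on the list. For low-rank components this is governed by classifications of self-dual codes (e.g. in the $A_1^{24}$ case it amounts to the extended binary Golay code being the unique length-$24$ doubly-even self-dual binary code of minimum weight $\ge 12$); for larger components it is an explicit but finite computation in the discriminant groups. Isolating the Leech lattice in the rootless case is the other delicate point, best handled through the mass formula. By comparison, Venkov's lemma (via $S_{14}=0$) and the enumeration of the candidate root systems are comparatively routine.
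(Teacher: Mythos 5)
The paper does not prove this statement at all: it is quoted as Niemeier's classification with a citation to \cite{Nie}, and nothing in the paper depends on how it is proved. So there is no ``paper's proof'' to match; what you have written is a self-contained strategy, and it is the standard modern one --- Venkov's argument as presented in Conway--Sloane, Chapter 18 --- rather than Niemeier's original proof by Kneser's neighbour method, which is what the citation actually points to. Your outline is correct: the harmonic-polynomial computation is right (with the paper's negative-definite convention, $P(v)=(v,r_0)^2+\tfrac{1}{12}(v,v)$ is indeed harmonic, $S_{14}=0$ gives $\sum_{\alpha}(\alpha,r_0)^2=r/6$, the two specializations of $r_0$ give ``rootless or $\operatorname{rk}Q=24$ with all Coxeter numbers equal and $r=24h$''), the enumeration does yield exactly the $23$ root systems of Table \ref{Ta:Nie}, and the reduction of existence and uniqueness of the overlattice to isotropic subgroups of $(G_Q,q_Q)$ via Lemma \ref{L:nik} is exactly right, including the criterion $|H|^2=\det Q$ and the single-orbit criterion under graph automorphisms. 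What each approach buys: the citation buys brevity (appropriate here, since the theorem is only used as an input to the Kneser--Nishiyama method); your route buys a conceptual explanation of \emph{why} the table looks the way it does, at the cost of two genuinely heavy steps that you correctly flag but do not carry out --- the $23$ case-by-case verifications that the admissible maximal isotropic subgroups form one orbit, and the uniqueness of the rootless lattice (for which the mass-formula argument additionally requires computing $|\mathrm{Aut}|$ for all $23$ glued lattices and knowing $|\mathrm{Aut}(\Lambda_{24})|=|\mathrm{Co}_0|$). As a proof sketch this is sound and complete in structure; as a proof it is not finished, but you have identified precisely where the remaining work lies.
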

\begin{table}
\begin{center}
\begin{tabular}{|l|r|l|r|}
\hline

$L_{\hbox{root}}$ & $L/L_{\hbox{root}}$ & \text{Generators for the glue code} \\ \hline
  $E_8^3$    &  (0) & (000)\\ \hline
$ D_{16}E_8 $ & $\mathbb Z /2 \mathbb Z $ & [10] \\ \hline
$ D_{10}E_7^2$ & $(\mathbb Z/2 \mathbb Z)^2$ & [110],[301]\\\hline 
$ A_{17}E_7$ & $\mathbb Z /6 \mathbb Z$ & [31] \\ \hline
$D_{24}$ & $\mathbb Z /2 \mathbb Z $ & [1]\\ \hline
$D_{12}^{ 2}$ & $(\mathbb Z /2 \mathbb Z)^2$ & [12],[21]\\ \hline
$D_8^{ 3 }$ & $(\mathbb Z /2 \mathbb Z)^3$ & [(122)] \\ \hline
$ A_{15}D_9 $ & $\mathbb Z /8 \mathbb Z $ & [21]\\ \hline
$E_6^{ 4}$ &  $(\mathbb Z /3 \mathbb Z)^2$ & [1(012)] \\ \hline

$ A_{11}D_7E_6$ & $\mathbb Z /12 \mathbb Z $ & [111] \\ \hline
$D_6^{ 4}$ & $(\mathbb Z /2)^4$ & [\text{even perms of} (0123)]\\ \hline
$A_9^{ 2}D_6$ & $\mathbb Z /2 \mathbb Z \oplus \mathbb Z /10 \mathbb Z$ & [240],[501],[053]\\
\hline
$A_7^2D_5^2$& $\mathbb Z/4 \times \mathbb Z/8$ & [1112],[1721] \\ \hline
$A_8^3$ & $\mathbb Z/3 \times \mathbb Z/9$ & [(114)] \\ \hline
$A_{24}$ & $\mathbb Z/5$ & [5] \\ \hline
$A_{12}^2$ & $\mathbb Z/13 \mathbb Z$ & [2(11211122212)]\\ \hline
$D_4^6$& $(\mathbb Z/2)^6$ & [111111],[0,(02332)] \\ \hline
$A_5^4D_4$ & $\mathbb Z/2 \times (\mathbb Z/6)^2$ & [2(024)0],[33001],[30302],[30033]\\ \hline
$A_6^4$ & $(\mathbb Z/7)^2$ & [1(216)] \\ \hline
$A_4^6$& $(\mathbb Z/5)^3$ & [1(01441)]\\ \hline
$A_3^8$ & $(\mathbb Z/4)^4$ & [3(2001011)]\\ \hline
$A_2^{12}$ & $(\mathbb Z/3)^6$& [2(11211122212)] \\ \hline
$A_1^{24}$& $(\mathbb Z/2)^{12}$ &[1(00000101001100110101111)]\\ \hline
\end{tabular}
\end{center}
\caption{Niemeier lattices}
\label{Ta:Nie}
\end{table}
The lattices $L$ defined in Table \ref{Ta:Nie} are called \textbf{Niemeier lattices}.

\subsection{The Kneser-Nishiyama's method}

Recall that a $K3$ surface may admit more than one elliptic fibration, but up to isomorphism, there is only a finite number of elliptic fibrations \cite{St}.
To establish a complete classification of the elliptic fibrations on the $K3$ surface $Y_2$, we use Nishiyama's method based on lattice theoretic ideas \cite{Nis}. The technique builds on a converse of Nikulin's results.

Given an elliptic $K3$ surface $X$, Nishiyama aims at embedding the frames of all elliptic fibrations into a negative-definite lattice, more precisely into a Niemeier lattice of rank $24$. For this purpose, he first determines an even negative-definite lattice $M$ such that
$$q_M=-q_{NS(X)}, \,\,\,\,\,\,\hbox{rank}(M)+\rho(X)=26.$$
By theorem \ref{T:nik1}, $M\oplus W(X)$ has a Niemeier lattice as an overlattice for each frame $W(X)$ of an elliptic fibration on $X$. Thus one is bound to determine the (inequivalent) primitive embeddings of $M$ into Niemeier lattices $L$. To achieve this, it is essential to consider the root lattices involved. In each case, the orthogonal complement of $M$ into $L$ gives the corresponding frame $W(X)$.

\subsubsection{The transcendental lattice and argument from Nishiyama paper}
Denote by $ T(X)$ the transcendental lattice of $X$, i.e. the orthogonal complement of $NS(X)$ in $H^2(X,\mathbb Z)$ with respect to the cup-product,
$$T(X)=\hbox{NS}(X)^{\perp} \subset H^2(X,\mathbb Z).$$
In general, $ T(X)$ is an even lattice of rank $r=22-\rho(X)$ and signature $(2,20-\rho(X))$. Let $t=r-2$. By Nikulin's theorem \cite{Nik}, $T(X)[-1]$ admits a primitive embedding into the following indefinite unimodular lattice:
$$T(X)[-1] \hookrightarrow U^t \oplus E_8.$$
Then define $M$ as the orthogonal complement of $ T(X)[-1]$ in $ U^t \oplus E_8$. By construction, $M$ is a negative-definite lattice of rank $2t+8-r=r+4=26-\rho(X)$. 

\noindent By lemma \ref{L:nik0} the discriminant form satisfies
$$q_M=-q_{ T(X)[-1]}=q_{ T(X)}=-q_{\hbox{NS}(X)}.$$
Hence $M$ takes exactly the shape required for Nishiyama's technique.

\section{Elliptic fibrations of $Y_{10}$}
\noindent To prove the primitivity of the embeddings we shall use the following.

\begin{lemma}\label{prim}
A lattice embedding is primitive if and only if the greatest common divisor of the maximal minors of the embedding matrix with respect to any choice of basis is $1$.
\end{lemma}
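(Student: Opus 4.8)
The plan is to relate primitivity of an embedding $\iota\colon S\hookrightarrow S'$ to the cokernel $S'/\iota(S)$ having no torsion, and then to translate that into an elementary-divisor statement about the integer matrix $B$ representing $\iota$ in fixed bases. First I would fix a $\mathbb Z$-basis $e_1,\dots,e_r$ of $S$ and a $\mathbb Z$-basis $f_1,\dots,f_n$ of $S'$, and write $\iota(e_j)=\sum_i B_{ij} f_i$, so that $B=(B_{ij})$ is the $n\times r$ embedding matrix (necessarily of rank $r$, since $\iota$ is injective). The quotient $S'/\iota(S)$ is the cokernel of the map $\mathbb Z^r \xrightarrow{B} \mathbb Z^n$, and $\iota$ is primitive exactly when this cokernel is torsion-free, i.e. free.

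Next I would invoke the Smith normal form: there exist $U\in GL_n(\mathbb Z)$ and $V\in GL_r(\mathbb Z)$ with $UBV=\operatorname{diag}(d_1,\dots,d_r)$ padded by zero rows, where $d_1\mid d_2\mid\cdots\mid d_r$ are the elementary divisors (all nonzero by the rank hypothesis). In these adapted bases $S'/\iota(S)\cong \mathbb Z^{\,n-r}\oplus \bigoplus_{j=1}^r \mathbb Z/d_j\mathbb Z$, so the cokernel is torsion-free if and only if $d_1=\cdots=d_r=1$. Hence $\iota$ is primitive $\iff$ all elementary divisors of $B$ equal $1$. The final step is to identify $\prod_{j=1}^r d_j$ with the gcd of the maximal ($r\times r$) minors of $B$: this is the classical fact that the product $d_1\cdots d_k$ of the first $k$ elementary divisors equals the gcd of all $k\times k$ minors (it is $GL$-invariant, and one checks it on the Smith normal form). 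Taking $k=r$, the gcd of the maximal minors is $d_1\cdots d_r$; since the $d_j$ are positive integers with $d_j\mid d_{j+1}$, this product is $1$ precisely when every $d_j=1$. Combining the two equivalences gives the claim, and independence of the choice of bases is automatic because changing bases multiplies $B$ on either side by a unimodular matrix, which multiplies each maximal minor by $\pm1$ (or permutes and recombines them), leaving the gcd unchanged.

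I do not expect a serious obstacle here; the statement is essentially a packaging of the structure theorem for finitely generated modules over $\mathbb Z$. The one point requiring a little care is the minor-gcd lemma $\gcd(\text{$r\times r$ minors of }B)=d_1\cdots d_r$: I would either cite it as standard (e.g. the theory of Fitting ideals / determinantal divisors over a PID) or give the two-line argument that the ideal generated by the $r\times r$ minors is invariant under $B\mapsto UBV$ with $U,V$ unimodular, so it may be computed from the Smith normal form, where it is visibly $(d_1\cdots d_r)$. With that in hand the proof is immediate.
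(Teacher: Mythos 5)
Your proof is correct: the paper states this lemma without proof, but its accompanying remark (that in the primitive case the Smith form of the embedding matrix has only $1$'s on the diagonal) shows the authors have exactly your argument in mind, namely primitivity $\iff$ torsion-free cokernel $\iff$ all elementary divisors equal $1$ $\iff$ the gcd of the maximal minors is $1$. The one step worth making explicit, as you note, is the determinantal-divisor identity $\gcd(r\times r\text{ minors})=d_1\cdots d_r$, which is standard and which you justify adequately via $GL$-invariance of the ideal of maximal minors.
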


\begin{remark}
In that case Smith form of the embedding matrix has only $1$ on the diagonal.
\end{remark}

\subsection{Kneser-Nishiyama technique applied to $Y_{10}$}


By the Kneser-Nishiyama method we determine elliptic fibrations of $Y_{10}$. For further details we refer to \cite{Nis}, \cite{SS}, \cite{BL}, \cite {BGL}. In  \cite{Nis}, \cite{BL}, \cite {BGL} only singular $K3$ (i.e. of Picard number $20$) are considered. In this paper we follow \cite{SS} we briefly recall.



Let us describe how to determine  $M$ in the case of the  $K3$ surface $Y_{10}$.

Let $T(Y_{10})$ be the transcendental lattice of $Y_{10}$, that is the orthogonal complement of $NS(Y_{10})$ in $H^2(Y_{10},\mathbb Z)$. The lattice $T(Y_{10})$ is an even lattice of rank $r=22-\rho(Y_{10})=2$ and signature $(1,\
1)$. Let  $t:=r-2=0$. By Nikulin's theorem \cite{Nik} (Theorem $1.12.4$), $T(Y_{10})[-1]$ admits a primitive embedding into the following indefinite unimodular lattice:
\[T(Y_{10})[-1] \hookrightarrow E_8[-1], \]
where $E_8$ denotes the unique  positive definite  even  unimodular  lattice of rank $8$.  Since 
$$ T(Y_{10})=\left ( \begin{matrix}
                 6 &  0 \\
                 0  & 12
                    \end{matrix}
                   \right )
                   $$
by Bertin's results \cite{Ber}, we must determine a primitive embedding of $T_{Y_{10}}[-1]$ into $E_8[-1]$. If ${u,v}$ denotes a basis of  $T_{Y_{10}}[-1]$, define
$$
\begin{matrix}
\Phi (u)= & 2e_1+e_3\\
\Phi(v) = & 3e_2+2e_1+4e_3+6e_4+5e_5+4e_6.
\end{matrix}
$$

By lemma \ref{prim}, this is a primitive embedding since the greatest common divisor of the maximal minors of the embedding matrix is $1$.

Then
$$M=\langle \Phi(u),\Phi(v) \rangle _{E_8}^{\perp}=\langle e_2,e_1+3e_4,e_3,e_5,4e_6+3e_7,e_8 \rangle.$$
With a reduced basis using LLL algorithm, we get

$$M=\langle e_3 \rangle \oplus \langle e_8,x \rangle \oplus \langle e_2,e_5,y \rangle $$
with
$$x=2e_1+3e_2+4e_3+6e_4+5e_5+4e_6+3e_7+e_8$$
and
$$y=e_2+e_1+2e_3+3e_4+e_5.$$
Moreover
$$\langle e_8,x \rangle =A_2$$
and the Gram matrix of $N=\langle e_2,e_5,y \rangle$ is
$$
\left (
\begin{matrix}
-2 & 0 & 1 \\
0  &  -2 & 1 \\
1  &  1  & -4
\end{matrix}
\right )
$$
with determinant $-12$.
So we get $M=A_1\oplus A_2 \oplus N$ and all the elliptic fibrations are given by the primitive embeddings of $M$ into the Niemeier lattices  $Ni(L_{\text{root}})$.

\begin{remark}
Since $M$  is not a root lattice, the primitive embeddings of $M$ into a Niemeier lattice $Ni(L_{\text{root}})$ are not necessarily given by the primitive embeddings of $M$ into its root lattice  $L_{\text{root}}$. This is the crucial difference with the situation encountered in the previous papers \cite{BL}, \cite{BGL}.
\end{remark}

\subsection{Types of primitive embeddings of $N$ into the Niemeier lattices}\label{l}

There are essentially three types of primitive embeddings of $N$ into a Niemeier lattice $Ni(L_{\text{root}})$ .

\begin{enumerate}[leftmargin=*]
\item
The embeddings of $N$ into the root lattices $A_n$, $D_n$ and $E_l$ of $L_{\text{root}}$.

\item \label{(2)}

The embeddings of $N$ into a direct sum of two root lattices of $L_{\text{root}}$, following from the fact that a vector or norm $4$ is the sum of two roots belonging to two different root lattices.

\item
The embeddings of $N$ into $Ni(L_{\text{root}})$.

\end{enumerate}

The type $(2)$ subdivises in two cases denoted as follows:
\begin{enumerate}[label=\alph*), leftmargin=*]
\item Type $(A_2,A_2).$

\noindent
That means $(-4)=(r_1,r_2)$ and a root $v_1$ (resp.$v_2$) is in the same root lattice as $r_1$ (resp. $r_2$) and satisfies $r_1.v_1=1$ (resp. $r_2.v_2=1$). Thus in the first root lattice, the roots $r_1$ and $v
_1$ (resp. in the second root lattice $r_2$ and $v_2$) realizes a root lattice $A_2$. (The 3-isogenous of fibration 21-c is on $Y_{10}$ and has rank $1$ and fibration with singular fibers of type $6A_2 A_5$: it is an embedding of type $(A_2,A_2)$ into $E_6^4$.)

\noindent Another example of embedding of type $(A_2,A_2)$ is the rank $7$ fibration $2D_4 3A_1$ obtained with the following primitive embedding into $D_4^6$: $A_2$ in $D_4$ three times and $A_1$ in $D_4$.

\item Type $(A_1,A_3)$ \label{l}

\noindent That means $(-4)=(r_1,r_2)$ and the roots $v_1$ and $v_2$ are in the same root lattice as $r_2$  and satisfies $r_2.v_1=1$, $r_2.v_2=1$ and $v_1.v_2=0$. Thus in the first root lattice, the root $r_1$ is viewed \
as $A_1$ and in the second root lattice, the roots $v_1$, $r_2$, $v_2$ are viewed as $A_3$. All the fibrations presented in theorem 7.1, except fibrations $(5)$ and $(6)$ can be obtained with this type of embedding.

\noindent Thus, case a) and b) are reduced to find primitive embeddings of direct sums of root lattices, that is of $A_1\oplus A_2 \oplus A_2 \oplus A_2$, two of the $A_2$ being embedded in different root lattices, in case a) and  $A_1\oplus A_2 \oplus A_1 \oplus A_3$, the two $A_1$ and $A_3$ being embedded in different root lattices, in case b).
\end{enumerate}

As for type $(2)$ the type $(1)$ can be divided in two cases.

1.a)

 From a primitive embedding of $A_2 \oplus A_2$ into a root lattice we get a primitive embedding of $N$ in the same root lattice as follows.

Denoting $A_2^{(1)}=\langle a_1,a_2 \rangle$ and $A_2^{(2)}=\langle b_1,b_2 \rangle$ we can take, for example, one of the four possible embeddings $(-4)=a_1+b_1$, $v_1=a_2$, $v_2=b_2$.

1.b)

 From a primitive embedding of $A_3 \oplus A_1$ into a root lattice we get a primitive embedding of $N$ in the same root lattice as follows.

Denoting $A_3=\langle a_1,a_2,a_3 \rangle$ and $A_1=\langle b_1 \rangle$ with $a_i.a_i=-2$, $i=1,2,3$, $a_1.a_2=a_2.a_3=1$ and $a_1.a_3=0$, we get the primitive embedding of $N$ as $(-4)=a_2+b_1$, $v_1=a_1$, $v_2=a_3$.

\begin{remark}
In an irreducible root lattice, contrary to case $2,$ the decomposition of a
$\left(  -4\right)  $ vector in two orthogonal roots is not unique. Thus the
same embedding can be viewed either of type \ $\left(  A_{2},A_{2}\right)  $ or
of type $\left(  A_{3},A_{1}\right)  .$
\end{remark}

The type $(3)$ needs to find representatives of glue vectors with norm $-4$.

As we can see there is a lot of glue vectors of norm $-4$, hence added to the other types of embeddings, provide a huge amount of elliptic fibrations for $Y_{10}$. We are not intending to give all of them but only those useful for the main results of our paper.

For computations in case $(3)$ we use Conway and Sloane notations \cite{CS}.

For the two first types we use notations and results of Nishiyama \cite{Nis} expressed in Bourbaki's notations \cite{Bo}.
 
 We proceed in the following way. Once is found a primitive embedding of $N$ into a Niemeier lattice (eventually its root lattice), we compute its orthogonal. If the orthogonal contains $A_1$ or $A_2$ or both we test if the previous embedding of $N$ plus $A_1$ (resp. $A_2$) (resp.$A_1 \oplus A_2$) is primitive to insure the existence of an elliptic fibration of $Y_{10}$.

\subsection{Some remarks}
\begin{lemma}\label{lem:3.2}

\begin{enumerate}[leftmargin=*]
\item The lattice $N$ embeds primitively in $D_5$ by $N=\langle d_5,d_4,d_3+d_1 \rangle$ and
\[(N)_{D_5}^{\perp}=\langle d_1+d_2,d_5+d_4+2d_3+d_2 \rangle \simeq A_2.\]
\item This defines an embedding of $A_2 \oplus N$ in $D_5$, by
\[N\oplus A_2=<d_5,d_4,d_3+d_1> \oplus <d_1+d_2,d_5+d_4+2d_3+d_2>\]
 which is not primitive, since $N\oplus A_2$ is a sublattice of index $3$ into $D_5$.
 \end{enumerate}
\end{lemma}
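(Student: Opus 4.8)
The plan is to verify the two assertions of Lemma~\ref{lem:3.2} by direct computation inside $D_5$, using the explicit model of $D_5$ as the set of integer vectors in $\mathbb{R}^5$ of even coordinate sum, with simple roots $d_i$ given by Bourbaki's numbering (so $d_1=\epsilon_1-\epsilon_2,\ d_2=\epsilon_2-\epsilon_3,\ d_3=\epsilon_3-\epsilon_4,\ d_4=\epsilon_4-\epsilon_5,\ d_5=\epsilon_4+\epsilon_5$, with the $D_5$ Dynkin diagram having $d_4,d_5$ as the fork attached to $d_3$). First I would compute the Gram matrix of the three proposed generators $v_1=d_5$, $v_2=d_4$, $v_3=d_3+d_1$: one has $v_1^2=v_2^2=-2$, $v_3^2=(d_3+d_1)^2=d_3^2+d_1^2+2d_1\cdot d_3=-2-2+0=-4$, while $v_1\cdot v_2=d_5\cdot d_4=0$, $v_1\cdot v_3=d_5\cdot(d_3+d_1)=d_5\cdot d_3=1$, and $v_2\cdot v_3=d_4\cdot(d_3+d_1)=d_4\cdot d_3=1$. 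This reproduces exactly the Gram matrix $\left(\begin{smallmatrix}-2&0&1\\0&-2&1\\1&1&-4\end{smallmatrix}\right)$ defining $N$, so $\langle d_5,d_4,d_3+d_1\rangle$ is a copy of $N$ inside $D_5$. Primitivity then follows from Lemma~\ref{prim}: I would write the $3\times 5$ embedding matrix expressing $v_1,v_2,v_3$ in the basis $d_1,\dots,d_5$ (rows $(0,0,0,0,1),(0,0,0,1,0),(1,0,1,0,0)$) and observe that the $3\times 3$ minor on columns $1,4,5$ equals $\pm1$, so the gcd of maximal minors is $1$.

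Next I would compute the orthogonal complement. Since $\operatorname{rank} D_5=5$ and $\operatorname{rank} N=3$, the complement has rank $2$; I would check that $w_1=d_1+d_2$ and $w_2=d_5+d_4+2d_3+d_2$ are orthogonal to each of $d_5,d_4,d_3+d_1$ (a handful of inner-product checks using the Dynkin diagram incidences) and compute $w_1^2=(d_1+d_2)^2=-2-2+2=-2$, $w_2^2=-2$, $w_1\cdot w_2=1$, giving the Gram matrix of $A_2$. That $\langle w_1,w_2\rangle$ is the full orthogonal complement (not merely a finite-index sublattice of it) can be seen either by a determinant comparison — $\det D_5 = -4$, and by Lemma~\ref{L:nik0}-type reasoning on the sublattice $N\oplus \langle w_1,w_2\rangle\subseteq D_5$ one has $|{\det N}|\cdot|{\det A_2}| = 12\cdot 3 = 36 = [D_5 : N\oplus A_2]^2\cdot|{\det D_5}| = [D_5:N\oplus A_2]^2\cdot 4$, forcing $[D_5:N\oplus A_2]=3$ — or, more cheaply, by directly exhibiting that any vector orthogonal to $N$ lies in $\mathbb{Q}w_1+\mathbb{Q}w_2$ and has integral coordinates only in $\mathbb{Z}w_1+\mathbb{Z}w_2$. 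This determinant identity simultaneously proves part~(2): the sum $N\oplus A_2$ has index $3$ in $D_5$, hence is an overlattice of itself inside $D_5$ and in particular the embedding $N\oplus A_2\hookrightarrow D_5$ is not primitive.

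The computations are all elementary; the only point requiring a little care is the choice of model and numbering for $D_5$ so that the stated generators genuinely have the claimed Gram matrices — in particular one must use a presentation in which $d_1$ is an endpoint of the diagram not adjacent to $d_3$, so that $d_1\cdot d_3=0$ and $v_3=d_1+d_3$ has norm $-4$. I expect the main (very mild) obstacle to be bookkeeping: confirming that $w_2=d_5+d_4+2d_3+d_2$, rather than some other integral combination, is the correct second generator of the complement and that together with $w_1$ it generates the complement over $\mathbb{Z}$ and not just over $\mathbb{Q}$; this is settled by the index-$3$ computation above. Once the index is pinned down, both parts of the lemma follow immediately, and part~(2) also records the useful fact that the non-primitive overlattice relation here is exactly the one appearing in Nikulin's correspondence (Lemma~\ref{L:nik}) between the subgroup of order $3$ in $G_{N\oplus A_2}$ on which the discriminant form vanishes and the overlattice $D_5$.
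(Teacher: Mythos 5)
Your proposal is correct and follows essentially the same route as the paper: part (1) is the same direct Gram-matrix computation (the paper dismisses it as "a simple computation") together with the minor criterion of Lemma \ref{prim}, and part (2) is exactly the paper's determinant-and-rank comparison $\det(A_2\oplus N)=4\times 3^2$ versus $\det(D_5)=4$, giving index $3$. Your extra remark that the same identity pins down $\langle w_1,w_2\rangle$ as the full orthogonal complement is a worthwhile detail the paper leaves implicit.
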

\begin{proof}
The first assertion is a simple computation and the second follows from the fact that the two lattices $N\oplus A_2$ and $D_5$ have the same rank and satisfy the relations $\det(A_2 \oplus N)=4\times 3^2$ and $\det(D_5)=4$.
\end{proof}

\begin{lemma}
\begin{enumerate}[leftmargin=*]
\item Apart from itself, $A_2 \oplus N$ has a unique overlattice $S=D_5$ such that $S/(A_2\oplus N) \simeq \mathbb Z /3\mathbb Z$.

\item The unique overlattice of $A_1 \oplus N$ is $A_1 \oplus N$ itself.

\item Apart from itself, $A_1 \oplus A_2 \oplus N$ has a unique overlattice $S=A_1 \oplus D_5$ such that $S/(A_1 \oplus A_2\oplus N) \simeq \mathbb Z /3\mathbb Z$.
\end{enumerate}

\end{lemma}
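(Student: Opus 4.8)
The plan is to invoke Nikulin's Lemma~\ref{L:nik}, which turns each of the three parts into a finite combinatorial problem: the even overlattices of an even lattice $L$ are in bijection with the subgroups $H\le G_L$ on which $q_L$ vanishes, an overlattice of index $m$ corresponding to an isotropic subgroup of order $m$. So everything reduces to writing down the discriminant forms of $A_1\oplus N$, $A_2\oplus N$ and $A_1\oplus A_2\oplus N$ and listing their isotropic subgroups. The only real computation is the discriminant form of $N$: Lemma~\ref{lem:gram} applied to the Gram matrix of $N$ gives Smith normal form $\mathrm{diag}(1,1,12)$, so $G_N\simeq\mathbb Z/12\mathbb Z$; being cyclic, $q_N$ is determined by its value on one generator, and taking the class $g$ of the first dual basis vector, $q_N(g)$ equals the $(1,1)$-entry of the inverse Gram matrix, namely $-7/12\bmod 2\mathbb Z$. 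Together with $q_{A_1}([1])=-1/2$ and $q_{A_2}([1])=-2/3$ from Section~2, this is the complete input.

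Next I would search for isotropic subgroups prime by prime, an isotropic subgroup being the orthogonal direct sum of its Sylow parts. The $2$-part never contributes: in all three cases every element of order $2$ has nonzero $q$ --- the values occurring are $-1/2$, $q_N(6g)=36\cdot(-7/12)\equiv 1$, and their sum $1/2$ --- so there is no isotropic subgroup of even order, hence no proper overlattice of even index. Since $|G_{A_1\oplus N}|=24$, this already forces index $1$ and proves part~(2). For the $3$-part, when an $A_2$ summand is present the relevant group is $\langle[1]_{A_2}\rangle\oplus\langle 4g\rangle\simeq(\mathbb Z/3)^2$ with orthogonal summands, $q$ taking the value $-2/3$ on the first generator and $q_N(4g)=16\cdot(-7/12)\equiv 2/3$ on the second; writing an element as $x[1]_{A_2}+4yg$ this is $q=\tfrac23(y^2-x^2)\bmod 2\mathbb Z$, whose zero locus consists of precisely the two lines $y=x$ and $y=-x$, giving two isotropic subgroups of order $3$. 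When no $A_2$ is present the $3$-part is $\langle 4g\rangle$ with $q(4g)=2/3\neq 0$, so there is no index-$3$ overlattice. Since $|G_{A_2\oplus N}|=36$ and $|G_{A_1\oplus A_2\oplus N}|=72$, a proper overlattice has index $2$, $3$ or $6$, and indices $2$ and $6$ are excluded as they require an isotropic subgroup of order $2$; hence $A_2\oplus N$ and $A_1\oplus A_2\oplus N$ each have exactly two proper overlattices, both of index $3$.

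Finally I would identify these overlattices. The two isotropic lines $L_{\pm}=\langle[1]_{A_2}\pm 4g\rangle$ are interchanged by the isometry $\mathrm{id}_N\oplus(-\mathrm{id}_{A_2})$ of $A_2\oplus N$ (which acts as $-1$ on $G_{A_2}$ and trivially on $G_N$), so the two corresponding overlattices are isometric; Lemma~\ref{lem:3.2}(2) realizes $A_2\oplus N$ as an index-$3$ sublattice of $D_5$, so their common isometry class is $D_5$, which is part~(1). Part~(3) is the same picture with an $A_1$ summand carried along: the glue vector has trivial $A_1$-component, so each proper overlattice is $A_1\oplus S$ with $S$ an index-$3$ overlattice of $A_2\oplus N$, hence $\simeq A_1\oplus D_5$, the two being swapped by $\mathrm{id}_{A_1}\oplus(-\mathrm{id}_{A_2})\oplus\mathrm{id}_N$. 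I do not expect any real obstacle beyond this bookkeeping; the only point to flag in the write-up is that ``unique'' is meant up to isometry --- there are literally two such overlattices inside $(A_2\oplus N)\otimes\mathbb Q$ --- so I would add one sentence to that effect.
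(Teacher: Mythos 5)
Your proof is correct and follows the same strategy as the paper: reduce to Nikulin's correspondence (Lemma~\ref{L:nik}), compute $G_N\simeq\mathbb Z/12\mathbb Z$ with $q_N(a)=-7/12$ via Lemma~\ref{lem:gram}, locate the isotropic subgroups, and identify the index-$3$ overlattice with $D_5$ through the explicit embedding of Lemma~\ref{lem:3.2}. One substantive point where you improve on the paper: the paper asserts that $\langle[1]_{A_2}+4a\rangle$ is the \emph{unique} isotropic subgroup of order $3$, but your computation is the right one --- the zero locus of $q=\tfrac23(y^2-x^2)$ on the $3$-part $(\mathbb Z/3)^2$ consists of the two lines $y=\pm x$, so there are two such subgroups, $\langle[1]_{A_2}+4a\rangle$ and $\langle[1]_{A_2}+8a\rangle$, and one must observe (as you do, via $(-\mathrm{id}_{A_2})\oplus\mathrm{id}_N$) that they yield isometric overlattices before concluding that the overlattice is unique up to isometry and equal to $D_5$. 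You also make explicit two steps the paper leaves implicit: the exclusion of overlattices of even index via the $2$-part of the discriminant form (which is what part (2) really rests on, the paper's displayed generator $\langle 2[1]+12a\rangle$ being trivial), and the reduction of part (3) to part (1) by noting the glue vectors have trivial $A_1$-component. So: same route, with the bookkeeping done more carefully.
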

\begin{proof}
The Gram matrix of $N$ with respect to a basis $(e_i)$ being 
\[A=\begin{pmatrix}
	-2 & 0 & 1\\
	0 & -2 & 1\\
	1 & 1 & -4\\
\end{pmatrix},\]
we find easily its Smith normal form $S$  
\[S=\begin{pmatrix}
	1& 0 & 0\\
	0 & 1 & 0\\
	0 & 0 & 12\\
\end{pmatrix}
\]
with $S=UAV$ and $V^{-1}=\begin{pmatrix}
	1 & 1 & -4\\
	0& -2 & 1\\
	-1 & 0 & 3\\
\end{pmatrix}.$

Applying lemma \ref{lem:gram}, we deduce a generator $a$ of the cyclic group $G_N \simeq \mathbb Z/12 \mathbb Z$
\[a=-e_1^*+3e_3^*\]
Since the Gram matrix of the $e_i^*$ is the matrix inverse of the Gram matrix of the $e_i$, that is $N^{-1}$
\[N^{-1}=\begin{pmatrix}
	-7/12 & -1/12 & -1/6\\
	-1/12 & -7/12 & -1/6\\
	-1/6 & -1/6 & -1/3\\
\end{pmatrix}\]

we find $q_N=q(a) \operatorname{mod} 2=-7/12$.

\begin{enumerate}[leftmargin=*]

\item
Now
$$(A_2 \oplus N)^*/A_2 \oplus N \simeq A_2^*/A_2 \oplus N^*/N.$$
\noindent and $q_{A_2}([1]_{A_2})=\left( - \frac {2}{3}\right)$.

\noindent 
By Nikulin's lemma \ref{L:nik}, for an overlattice $M$ of $A_2\oplus N$ we have a subgroup $M/A_2\oplus N$ of $G_{A_2\oplus N}= A_2^*/A_2 \oplus N^*/N$ such that $q_{A_2\oplus N}$ is trivial on $M/A_2\oplus N$. This determines a bijective correspondance between even overlattices of $A_2 \oplus N$ and isotropic subgroups $G$ of $G_{A_2\oplus N}$.

\noindent In our situation, there is a unique isotropic index $3$ subgroup $G=\langle [1]_{A_2}+4a \rangle \simeq \mathbb Z/3 \mathbb Z$ of $G_{A_2\oplus N}$ such that

$$q_{A_2\oplus N}([1]_{A_2}+4a)=-\frac{2}{3}-\frac{28}{3}=-10=0\,\,\, \operatorname{mod} 2$$

\noindent Thus $A_2\oplus N$ has a unique overlattice $S$ such that $S/(N\oplus A_2)\simeq \mathbb Z/3 \mathbb Z$. By lemma \ref{lem:3.2}, $A_2\oplus N$ embeds in $D_5$ with index $3$; thus this unique overlattice is $D_5$.

\item Since $ A_1^*/A_1=<[1]_{A_1}> $ with $q_{A_1}([1]_{A_1})=-\frac{1}{2}$, we deduce that the unique isotropic subgroup $G$ of $G_{A_1 \oplus N}$ is $G=<2 [1]_{A_2} +12 a>$.

\item The assertion follows from $q_{A_1 \oplus A_2 \oplus N}(2 [1]_{A_1}+[1]_{A_2}+4a)=-2-10=0 \quad \operatorname{mod} 2.$
\end{enumerate}
\end{proof}
So we deduce the following corollary.

\begin{definition}
An embedding of $M$ into $L_{\text{root}}$ is said to be extremal if its orthogonal in $L_{\text{root}}$ is a root lattice.
\end{definition}

\begin{corollary}
There is no primitive extremal embedding of $M$ into $Ni(D_5^2A_7^2)$.
\end{corollary}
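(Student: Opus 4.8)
The plan is to argue by contradiction. Suppose $M$ admits a primitive embedding $\iota\colon M\hookrightarrow L_{\mathrm{root}}=D_5\oplus D_5\oplus A_7\oplus A_7$ whose orthogonal complement $W:=\iota(M)^{\perp}$ is a root lattice, necessarily of rank $18$. The first move is to exploit that $W$ is generated by its roots: since each component $R_i\in\{D_5,A_7\}$ is even, $L_{\mathrm{root}}$ has no vectors of norm $-1$, so every norm $-2$ vector of $L_{\mathrm{root}}$ lies entirely in one component; hence $W=W_1\oplus W_2\oplus W_3\oplus W_4$ with each $W_i$ a primitive root sublattice of $R_i$. Taking orthogonal complements componentwise gives $M=\iota(M)=W^{\perp}=P_1\oplus P_2\oplus P_3\oplus P_4$ with $P_i:=(W_i)^{\perp}_{R_i}$, and each $R_i$ is an even overlattice of $P_i\oplus W_i$, so that
\[ \det(P_i)\,\det(W_i)/\det(R_i)=[R_i:P_i\oplus W_i]^{2} \]
is forced to be a perfect square.

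Next I would invoke the uniqueness of the decomposition of a definite lattice into indecomposable summands: $N$ is indecomposable (its roots span $\langle e_1,e_2\rangle\cong A_1^2$, but $N$ itself is not a root lattice and a short inspection of its Gram matrix shows that no rank-one orthogonal summand splits off), and $A_1,A_2,N$ are pairwise non-isomorphic, so each $P_i$ is the orthogonal sum of a sub-multiset of $\{A_1,A_2,N\}$, and these sub-multisets partition $\{A_1,A_2,N\}$. I would then locate the summand $N$: it lies in some $P_{i_0}$, with $P_{i_0}\in\{N,\ A_1\oplus N,\ A_2\oplus N,\ M\}$ and $R_{i_0}\in\{D_5,A_7\}$, and $W_{i_0}$ a root sublattice of $R_{i_0}$ of complementary rank. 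The determinant of such a $W_{i_0}$ lies in a short explicit list (root sublattices of $D_5$ are sums of copies of $A_k$ with $k\le 4$ and of $D_k$; those of $A_7$ are sums of $A_{k_j}$, which moreover embed only if $\sum(k_j+1)\le 8$). Running the perfect-square test across all these possibilities, together with $\det N=12$, $\det A_1=2$, $\det A_2=3$, $\det D_5=4$, $\det A_7=8$ and Lemma \ref{lem:3.2}, should leave exactly one surviving configuration, namely $P_{i_0}=N$, $R_{i_0}=D_5$, $W_{i_0}=A_2$, $[D_5:N\oplus A_2]=3$ — precisely the embedding of Lemma \ref{lem:3.2}. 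In particular $N$ occupies the singleton block $\{N\}$, so the $A_2$-summand of $M$ lies in another component.

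Finally I would locate the $A_2$-summand: it lies in some $P_{j_0}$ with $j_0\neq i_0$, so $P_{j_0}\in\{A_2,\ A_1\oplus A_2\}$ and $R_{j_0}$ is one of the three remaining components. Repeating the bookkeeping — determinants plus the embeddability restriction on root sublattices of $A_7$ — should leave a single surviving candidate, $P_{j_0}=A_1\oplus A_2$, $R_{j_0}=A_7$, $W_{j_0}=A_2\oplus A_1\oplus A_1$, with $[A_7:(A_1\oplus A_2)\oplus(A_2\oplus A_1\oplus A_1)]=3$. To dispose of it I would note that $(A_1\oplus A_2)\oplus(A_2\oplus A_1\oplus A_1)=A_1^{3}\oplus A_2^{2}$, and invoke Lemma \ref{L:nik}: an index-$3$ even overlattice of $A_1^3\oplus A_2^2$ corresponds to an order-$3$ isotropic element of its discriminant form, whose $3$-part is $q_{A_2}\oplus q_{A_2}$; since $q_{A_2}$ equals $-\tfrac23$ on each nonzero element, this form takes only the values $0,-\tfrac23,\tfrac23$ and is anisotropic, so no such overlattice exists — a contradiction. (Equivalently, one checks directly that the orthogonal complement of $A_2\oplus A_1^2$ in $A_7$ has determinant $96\neq 6$.)

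The step I expect to be the main obstacle is the casework in the two "locating" steps: there are enough cases that one wants the perfect-square test to do most of the work, but a few survive it — most importantly the configuration $N\oplus A_2\subset D_5$, which genuinely occurs and so can only be excluded after passing to the $A_2$-summand, and the final configuration $A_1^3\oplus A_2^2\subset A_7$, which needs the discriminant-form (anisotropy) argument rather than a crude numerical bound. Care is also needed in the componentwise splitting step to check that $W_i$ is primitive in $R_i$ and that the orthogonal complement of $W$ is exactly $M$ rather than a proper overlattice (which is where primitivity of $\iota$ is used).
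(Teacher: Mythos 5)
Your proof is correct, and its strategy --- split the rank-$18$ root-lattice orthogonal complement $W$ component by component, use unique decomposition of definite lattices to see where the indecomposable summands $A_1$, $A_2$, $N$ of $M$ can land, and eliminate candidates by the perfect-square/determinant test --- is the same one the paper's argument rests on, but carried out in full where the paper only asserts the conclusion. The paper's entire proof is the claim that such an embedding ``would be realized by embedding $N$ into $D_5$ and $A_1\oplus A_2$ into $D_5$,'' with no justification of why $N$ cannot be placed extremally in $A_7$ and no mention of the possibility $A_1\oplus A_2\subset A_7$; your write-up supplies exactly these missing steps. I checked the casework and it comes out as you predict: the unique survivor of the first stage is $N\subset D_5$ with $W_{i_0}=A_2$, and the one delicate case at the second stage is $P_{j_0}=A_1\oplus A_2\subset A_7$ with $W_{j_0}=A_2\oplus 2A_1$, which passes the square test ($6\cdot 12/8=9$) and is correctly disposed of by the anisotropy of $q_{A_2}^{\oplus 2}$ (equivalently $\det\bigl((A_2\oplus 2A_1)^{\perp}_{A_7}\bigr)=96\neq 6$). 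Two small remarks: the configuration $A_2\oplus N\subset D_5$ is already excluded at the ``locate $N$'' stage, since $W_{i_0}=0$ would force $P_{i_0}=(W_{i_0})^{\perp}=D_5\neq A_2\oplus N$ (equivalently, $A_2\oplus N$ is not primitive in $D_5$ by Lemma \ref{lem:3.2}), so it need not wait for the $A_2$-step as your closing paragraph suggests; and, like the paper's own proof, you work with $L_{\mathrm{root}}=D_5^{\oplus 2}\oplus A_7^{\oplus 2}$ rather than with the full Niemeier lattice and its glue vectors, which matches the paper's definition of ``extremal embedding'' and is therefore a shared convention rather than a gap in your argument.
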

\begin{proof}
Such an embedding would be realized by embedding $N$ into $D_5$ and $A_1 \oplus A_2$ into $D_5$ and in that case it cannot be extremal.
\end{proof}

\subsection{Primitive embeddings into root lattices}

 We give only the examples useful for explaining the Weierstrass equations of the elliptic fibrations given in the paper.

\subsubsection{Embedding of $N\oplus A_1$ into $D_5$}
The primitive embedding of $N\oplus A_1=\langle d_5,d_4,d_3+d_1,d_1+d_2 \rangle$ into $D_5$ satisfies 
$$(N\oplus A_1)_{D_5}^{\perp}=\langle d_1+2d_2+4d_3+2d_5+2d_4 \rangle=(-6).$$

\subsubsection{Embeddings into $D_6$}
Exactly as in the case \ref{(2)} we get essentially two types of embeddings which are not isomorphic.

\begin{lemma}\label{D_6}
a) The following primitive embedding of $N$ into $D_6$ given by
\[\langle d_6,d_5,d_4+d_2\rangle \hookrightarrow D_6\]
has its orthogonal in $D_6$ isomorphic to $A_2 \oplus (-4)$.

The embedding of $N\oplus A_1$ into $D_6$ given by
\[\langle d_6,d_5,d_4+d_2,d_3+d_2\rangle \hookrightarrow D_6\]
is primitive.

b) But the primitive embedding
\[\langle d_6,d_4+d_2,d_1\rangle \hookrightarrow D_6\]
has its orthogonal in $D_6$ isomorphic to $A_1\oplus (-4) \oplus (-6)$.
\end{lemma}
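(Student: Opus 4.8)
The strategy is to verify directly that the listed maps are isometric embeddings, confirm primitivity via Lemma \ref{prim}, and then compute the orthogonal complements by a finite linear-algebra calculation inside $D_6$. First I would fix the standard model $D_6=\{x\in\mathbb Z^6:\sum x_i\in 2\mathbb Z\}$ with $d_i=\epsilon_i-\epsilon_{i+1}$ for $i=1,\dots,5$ and $d_6=\epsilon_5+\epsilon_6$, so that all intersection numbers are read off from the Dynkin diagram. For part a), set $u_1=d_6$, $u_2=d_5$, $u_3=d_4+d_2$; one checks $u_1^2=u_2^2=-2$, $u_3^2=(d_4+d_2)^2=-2-2+2(d_4\cdot d_2)=-4$ since $d_2\cdot d_4=0$, and $u_1\cdot u_3=d_6\cdot d_4=0$, $u_2\cdot u_3=d_5\cdot d_4=1$ (here I am using $d_6\cdot d_4=d_6\cdot d_2=d_5\cdot d_2=0$), while $u_1\cdot u_2=d_6\cdot d_5=1$. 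Wait — that last value should be checked against the Dynkin diagram: $d_5$ and $d_6$ are the two ``fork'' ends of $D_6$, both joined to $d_4$, so $d_5\cdot d_6=0$. Correcting, $u_1\cdot u_2=0$, and the Gram matrix of $(u_1,u_2,u_3)$ is exactly $A$. Primitivity follows from Lemma \ref{prim}: writing the $3\times 6$ embedding matrix in the basis $(d_i)$ and computing the gcd of its $3\times 3$ minors gives $1$.

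Next I would compute $(N)_{D_6}^\perp$. The orthogonal complement is a rank-$3$ negative-definite lattice; I would find an explicit basis by solving the three linear conditions $x\cdot u_i=0$ over the $d$-coordinates, then reduce (LLL or by hand) to exhibit it as $A_2\oplus(-4)$ — the determinant check $\det(N)_{D_6}^\perp=\det D_6\cdot(\det N)/[\,\cdot\,]^2$ together with $|\det D_6|=4$, $|\det N|=12$ pins down the discriminant to $12$ in the primitive case, and matching the discriminant form against $q_{A_2}\oplus q_{\langle-4\rangle}$ identifies the lattice. For the extension to $N\oplus A_1$, set $u_4=d_3+d_2$; then $u_4^2=-2$, $u_4\cdot u_1=u_4\cdot u_2=0$, and $u_4\cdot u_3=(d_3+d_2)\cdot(d_4+d_2)=d_3\cdot d_4+d_2^2=1-2=-1\ne0$ — so I would instead take the $A_1$ generator to be a suitable root inside $(N)_{D_6}^\perp\cong A_2\oplus(-4)$, e.g. a generator of one $A_2$-root direction, and check primitivity of the combined $4\times6$ matrix by Lemma \ref{prim}. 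For part b), the same recipe applies with $v_1=d_6$, $v_2=d_4+d_2$, $v_3=d_1$: verify the Gram matrix is $A$, check primitivity, and compute $(N)_{D_6}^\perp$, which now has rank $3$ and discriminant $2\cdot4\cdot6$, matching $A_1\oplus\langle-4\rangle\oplus\langle-6\rangle$.

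The conceptual content — and the only place requiring care — is the claim that these two embeddings are \emph{not isomorphic}, which is precisely the statement that the $(-4)$-vector $u_3=d_4+d_2$ admits two genuinely different completions to a decomposition, reflecting the non-uniqueness noted in the Remark preceding \S2.6.1. I would settle non-isomorphism by the orthogonal-complement invariant: in part a) the complement is $A_2\oplus\langle-4\rangle$ (discriminant $12$, discriminant group $\mathbb Z/3\oplus\mathbb Z/4$), whereas in part b) it is $A_1\oplus\langle-4\rangle\oplus\langle-6\rangle$ (discriminant $48$, discriminant group $\mathbb Z/2\oplus\mathbb Z/4\oplus\mathbb Z/6$); since an isometry of $D_6$ carrying one copy of $N$ to the other would carry orthogonal complement to orthogonal complement, and these are non-isometric (different discriminants), the two embeddings are inequivalent. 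The main obstacle, modest as it is, is bookkeeping: getting the fork structure of the $D_6$ Dynkin diagram right so that all the dot products $d_i\cdot d_j$ are correct, and then reducing the explicitly-solved orthogonal complements to recognizable normal forms; everything else is a determinant count plus an appeal to Lemma \ref{prim}.
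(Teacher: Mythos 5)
Your overall strategy (write down the Gram matrices, check primitivity via Lemma \ref{prim}, solve the linear conditions for the orthogonal complement and reduce) is exactly the paper's, but the execution contains arithmetic errors that matter. The decisive one is in your treatment of the $N\oplus A_1$ embedding in part a): you compute $(d_3+d_2)\cdot(d_4+d_2)=d_3\cdot d_4+d_2^2=1-2=-1$, but you have dropped the cross term $d_3\cdot d_2=1$ (in the $D_6$ diagram the chain is $d_4-d_3-d_2-d_1$, so $d_2$ and $d_3$ are adjacent). The correct value is $1+1+0-2=0$, so $d_3+d_2$ \emph{is} a root orthogonal to $\langle d_6,d_5,d_4+d_2\rangle$ — indeed it is one of the $A_2$-roots of $(N)_{D_6}^{\perp}\cong A_2\oplus(-4)$ that the paper exhibits. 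On the basis of your miscalculation you declare the stated generator invalid and propose to replace it by ``a suitable root inside the complement''; this means you have not proved the lemma as stated, and your proposed substitute is, ironically, the very vector you rejected. A related slip occurs earlier: you assert $d_6\cdot d_4=0$ in the parenthetical, which contradicts your own (correct) description of $d_5,d_6$ as the two fork ends attached to $d_4$; with $d_6\cdot d_4=0$ the Gram matrix of $(d_6,d_5,d_4+d_2)$ would \emph{not} be $N$, so your verification as written is internally inconsistent (the correct values $d_6\cdot d_4=d_5\cdot d_4=1$, $d_6\cdot d_5=0$ do give $N$).

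A secondary conceptual error: you claim the determinant formula together with primitivity ``pins down the discriminant to $12$.'' It does not — the discriminant of the complement also depends on the index $[D_6:N\oplus N^{\perp}]$, which is not determined by primitivity alone. The lemma itself is the counterexample: both a) and b) are primitive embeddings of the same $N$, yet the complements have discriminants $12$ and $48$ respectively (index $2$ versus $1$). So you cannot shortcut the explicit computation of the complement this way; you must actually solve for it, as the paper does. On the positive side, your closing argument for non-isomorphism of the two embeddings via non-isometric orthogonal complements is correct and slightly more explicit than the paper's, which merely asserts the two types are not isomorphic.
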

\begin{proof}
a) The orthogonal is $\langle d_6+d_5+2d_4+2d_1,d_3-2d_1,d_2+2d_1\rangle$. With LLL, we find
\[N_{D_6}^{\perp}=\langle d_6+d_5+2d_4+d_3,d_3+d_2,d_6+d_5+2d_4+2d_3+2d_2+2d_1 \rangle =A_2 \oplus (-4).\]
The matrix of the embedding of $N\oplus A_1$ being
\begin{center}
$\begin{pmatrix}                                                                                                                                                                                                   
        1 & 0 & 0 & 0 & 0 & 0\\                                                                                                                                                                                    
        0 & 1 & 0 & 0 & 0 & 0\\                                                                                                                                                                                    
        0& 0 & 1 & 0 & 1 & 0\\                                                                                                                                                                                     
        0 & 0 & 0 & 1 & 1 & 0\\                                                                                                                                                                                    
                                                                                                                                                                                                                   
\end{pmatrix}$

\end{center}
it follows that this embedding is primitive since we can extract a matrix of dimension $4$ and determinant $1$.
Its orthogonal is $\langle d_6+d_5+2d_4+2d_3+2d_2+2d_1,d_3+3d_2+d_1\rangle $, with Gram matrix of determinant $24$ and no roots.

b) In the second case the orthogonal in $D_6$ of the embedding
\[\langle d_6,d_4+d_2,d_1\rangle \hookrightarrow D_6\]
is
\[\langle d_6+3d_5+2d_4,-2d_5+d_3,3d_5+2d_2+d_1\rangle \]

With LLL , we get
\[\langle A_1=d_6+d_5+2d_4+d_3\oplus (-4) \oplus (-6) \rangle .\]

As previously we can prove that the embedding of $N\oplus A_1$

\[\langle d_6,d_4+d_2,d_1, d_6+d_5+2d_4+d_3\rangle \hookrightarrow D_6\]
is primitive.

\end{proof}
\begin{lemma} \label{lem:D_n}
For all $n\geq 7$, there exists a primitive embedding of $N$ into $D_n$, such that
\[((N)_{D_n}^{\perp})_{\text{root}} \simeq A_2 \oplus D_{n-5}.\] 
 
For $n=6$ 
\[((N)_{D_6}^{\perp})_{\text{root}}\simeq A_2.\]

\end{lemma}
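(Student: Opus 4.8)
The plan is to prove both statements by exhibiting an explicit primitive embedding and then computing the root part of the orthogonal complement, extending the case $n=6$ already treated in Lemma \ref{D_6}. First I would set up notation: represent $D_n$ as the set of vectors in $\mathbb{Z}^n$ with even coordinate sum, with simple roots $d_i = \epsilon_i - \epsilon_{i+1}$ for $1 \le i \le n-1$ and $d_n = \epsilon_{n-1} + \epsilon_n$ in Bourbaki's numbering. The key idea, mirroring part a) of Lemma \ref{D_6}, is to embed $N$ using the three vectors $d_n$, $d_{n-1}$, and $d_{n-2} + d_{n-4}$ (the analogue of $d_6, d_5, d_4 + d_2$ for $n=6$); one checks directly from the Gram matrix that these have norms $-2, -2, -4$ and the correct intersection pattern, so they span a copy of $N$. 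Primitivity follows from Lemma \ref{prim} by extracting from the embedding matrix a maximal minor equal to $1$, which is immediate since the three vectors involve the "tail" simple roots $d_n, d_{n-1}, d_{n-2}$ almost diagonally.

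Next I would compute the orthogonal complement $(N)_{D_n}^\perp$. Writing a general vector of $D_n \otimes \mathbb{Q}$ and imposing orthogonality to the three generators of $N$ pins down the last few coordinates in terms of the first ones; the resulting sublattice splits, for $n \ge 7$, into a part supported on $\epsilon_1, \dots, \epsilon_{n-5}$ which is visibly a copy of $D_{n-5}$ (same description: integer vectors of even sum), plus a rank-$2$ piece. For that rank-$2$ piece I would run the same reduction as in Lemma \ref{D_6}: produce two explicit vectors, show by LLL-type reduction that the root part is exactly $A_2$ (one expects a leftover $(-4)$-vector orthogonal to it, so the full complement is $A_2 \oplus (-4) \oplus D_{n-5}$, but only the root part $A_2 \oplus D_{n-5}$ is claimed). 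The case $n = 6$ then degenerates: $D_{n-5} = D_1 = \langle -2 \rangle$ is not a root lattice in our convention, so it disappears from the root part and one recovers $((N)_{D_6}^\perp)_{\text{root}} \simeq A_2$, consistent with Lemma \ref{D_6} a).

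The main obstacle I anticipate is the bookkeeping in the rank-$2$ summand: identifying its root part requires checking that the only norm-$(-2)$ vectors it contains are those generating $A_2$, and that the $(-4)$-direction genuinely contributes no roots — this is exactly the kind of LLL reduction and root-counting done by hand in the proof of Lemma \ref{D_6}, and for general $n$ one must be careful that the coordinates $\epsilon_{n-4}, \dots, \epsilon_{n-1}$ used by the rank-$2$ piece do not interact with the $D_{n-5}$ block. A clean way to organize this is to choose the embedding so that $N$ lives in the sublattice $D_5 \subset D_n$ spanned by $d_{n-4}, \dots, d_n$ (using Lemma \ref{lem:3.2}(1), which already gives $(N)_{D_5}^\perp \simeq A_2$ inside that $D_5$), and then observe that $(D_5)_{D_n}^\perp$ contains a $D_{n-5}$; the orthogonal of $N$ in all of $D_n$ is then the orthogonal sum of $(N)_{D_5}^\perp = A_2$ (plus possibly a $(-4)$ glue correction along the interface) with $(D_5)_{D_n}^\perp$, whose root part is $D_{n-5}$ for $n \ge 7$ and trivial for $n = 6$. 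This reduces everything to the already-established $D_5$ and $D_6$ computations and a standard fact about orthogonal complements of $D_5$ inside $D_n$.
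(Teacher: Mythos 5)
Your proposal is correct and follows essentially the same route as the paper: the same embedding $\langle d_n,d_{n-1},d_{n-2}+d_{n-4}\rangle$, primitivity via Lemma \ref{prim}, and the exhibition of explicit roots spanning $A_2\oplus D_{n-5}$ (the paper's generators $d_1,\dots,d_{n-6}$, $x$, $y$, $z$ are, in $\epsilon$-coordinates, exactly your description of a coordinate $D_{n-5}$ on $\epsilon_1,\dots,\epsilon_{n-5}$ plus an $A_2$ inside the tail $D_5$). One small slip worth correcting: since $(N)_{D_n}^{\perp}$ has rank $n-3$, which already equals $\mathrm{rank}(A_2\oplus D_{n-5})$, there is no room for an additional $(-4)$ summand when $n\geq 7$ — the $(-4)$ appearing in the $n=6$ case is absorbed into the $D_{n-5}$ block for larger $n$ — but this parenthetical expectation does not affect your argument for the root part.
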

\begin{proof}
Suppose first $n>8$ and consider the following embedding
\[N=\langle d_n,d_{n-1},d_{n-2}+d_{n-4}\rangle \hookrightarrow D_n.\]
The following roots of $D_n$ are orthogonal to $N$:
\[d_1,d_2,...,d_{n-6},x=d_n+d_{n-1}+2(d_{n-2}+d_{n-3}+d_{n-4}+d_{n-5})+d_{n-6}.\]
These roots satisfy the relations:
\[d_{n-6}.d_{n-7}=1,\quad  d_{n-7}.d_{n-8}=1,\quad ...,\quad d_2.d_1=1\]
\[x.d_{n-6}=x.d_{n-8}=...=x.d_1=0\]
\[x.d_{n-7}=1.\]
We deduce $\langle d_{n-6},x,d_{n-7},d_{n-8}, ...,d_1\rangle \simeq D_{n-5}.$
Consider also the following roots $y$ and $z$ of $D_n$:
\[y=d_n+d_{n-1}+2(d_{n-2}+d_{n-3})+d_{n-4},\qquad z=-(d_{n-3}+d_{n-4}).\]
They satisfy $y.z=1$, hence $\langle y,z \rangle \simeq A_2$. They are orthogonal to $N$, $d_{n-6}$, $x$, $d_{n-8}$, ..., $d_1$. Finally $(N_{D_n}^{\perp})_{\text{root}} \simeq A_2 \oplus D_{n-5}$.

Notice, if $n=8$, that $D_{n-5}\simeq A_3$.

If $n=7$, 
\[(N_{D_7}^{\perp})_{\text{root}} \simeq  \langle y,z\rangle \oplus \langle x,d_1 \rangle \simeq A_2 \oplus 2A_1 \simeq A_2 \oplus D_2.\]
If $n=6$, a direct computation, see lemma \ref{D_6} a), gives $N_{D_6}^{\perp}\simeq A_2 \oplus (-4) \simeq A_2 \oplus D_1$.

\end{proof}

\begin{lemma}\label{lem:D_7}
\begin{enumerate}
\item There is a primitive embedding of $N\oplus A_1$ into $D_7$ with orthogonal $A_1\oplus A_2$.

\item There is no  primitive embedding of $N\oplus A_2$ into $D_7$ with orthogonal $A_1\oplus A_1$.
\end{enumerate}
\end{lemma}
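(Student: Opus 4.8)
The plan is to treat the two statements separately, both by explicit construction and by a discriminant-form obstruction respectively. For part (1), I would produce an explicit primitive embedding of $N\oplus A_1$ into $D_7$ and compute its orthogonal complement. Guided by Lemma \ref{lem:D_n} (which gives an embedding of $N$ into $D_7$ with $(N_{D_7}^\perp)_{\text{root}}\simeq A_2\oplus 2A_1$), the natural move is to take the embedding $N=\langle d_7,d_6,d_5+d_3\rangle\hookrightarrow D_7$ of that lemma, then adjoin a suitable root of $D_7$ orthogonal to $N$ to realize the extra $A_1$ summand of $N\oplus A_1$; one then checks primitivity by extracting a $4\times 4$ minor of determinant $1$ from the embedding matrix (using Lemma \ref{prim}), and checks that the orthogonal complement of $N\oplus A_1$ in $D_7$ has root part exactly $A_1\oplus A_2$ — the $A_1$ being consumed by the new root from $2A_1$, leaving $A_2\oplus A_1$. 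This is a finite, mechanical computation analogous to the proof of Lemma \ref{D_6}a); I would state the basis vectors and the resulting Gram matrix and leave the arithmetic to the reader.

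For part (2), the statement is a non-existence assertion, so an explicit construction cannot help; the argument must be a lattice-theoretic obstruction. The key observation is a rank and determinant count combined with Nikulin's gluing results (Lemma \ref{L:nik0} and Theorem \ref{T:nik1}). Suppose $N\oplus A_2$ embeds primitively in $D_7$ with orthogonal complement $A_1\oplus A_1$. Then $\operatorname{rank}(N\oplus A_2)+\operatorname{rank}(2A_1)=3+2+2=7=\operatorname{rank}(D_7)$, consistent, and by Lemma \ref{L:nik0} applied inside a unimodular overlattice — here $D_7$ is not unimodular, so one works with the primitive closure or directly with discriminant forms — we would need $G_{N\oplus A_2}\big/(\text{glue})\cong G_{2A_1}\big/(\text{glue})$ up to sign of the quadratic form, together with the constraint coming from $\det D_7=4$. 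Concretely, $\det(N\oplus A_2)=12\cdot 3=36$ and $\det(2A_1)=4$, while $\det D_7=4$; the index $[D_7:(N\oplus A_2)\oplus 2A_1]$ must then satisfy $36\cdot 4 = 4\cdot[D_7:\cdot]^2$, forcing $[D_7:\cdot]^2=36$, i.e. index $6$. So one must exhibit an isotropic subgroup of order $6$ in $G_{N\oplus A_2}\oplus G_{2A_1}$ projecting injectively to each factor with the glue condition, and show it is impossible. Since $G_{2A_1}\cong(\mathbb Z/2)^2$ has order $4<6$, no subgroup of the glued discriminant group can project injectively onto a subgroup of order $6$ inside it — indeed the projection to $G_{2A_1}$ of an order-$6$ isotropic subgroup cannot be injective, while primitivity of the embedding of $2A_1$ forces that projection to be injective (the kernel would give an overlattice of $2A_1$, and by the second bullet of the previous lemma / a direct check $2A_1$ has overlattices only via $\mathbb Z/2$, not compatible with an order-$3$ kernel). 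That contradiction is the heart of the argument.

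The main obstacle I anticipate is organizing part (2) cleanly: $D_7$ is not unimodular, so one cannot invoke Lemma \ref{L:nik0} verbatim; instead I would pass to the primitive closure $\overline{N\oplus A_2}$ of $N\oplus A_2$ in $D_7$ and to $\overline{2A_1}$, note these are orthogonal complements of each other in $D_7$, and then run the discriminant-form bookkeeping for the pair $(\overline{N\oplus A_2},\overline{2A_1})$ using Lemma \ref{L:nik}. Equivalently — and this is probably the slickest route — embed $D_7$ into a Niemeier lattice containing it and apply Theorem \ref{T:nik1} there, reducing everything to the statement that $G_{N\oplus A_2}=\mathbb Z/12\oplus\mathbb Z/3$ admits no isotropic subgroup whose quotient has the discriminant form $-q_{2A_1}$ after accounting for the $D_7$-glue; the $3$-torsion in $G_{N\oplus A_2}$ (which must be killed to match $G_{2A_1}$, a $2$-group) is exactly where the contradiction bites, mirroring the overlattice analysis already carried out for $A_2\oplus N$ versus $D_5$. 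I would write part (2) as: assume such an embedding; then the order-$6$ gluing subgroup must kill all $3$-torsion of $G_{N\oplus A_2}$; but the only overlattice of $N\oplus A_2$ realizing that is $D_5$ (previous lemma), which does not embed as needed into $D_7$ with the right complement; contradiction.
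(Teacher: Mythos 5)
Your part (1) is essentially the paper's proof: the paper takes $N\oplus A_1=\langle d_7,d_6,d_5+d_3,d_1\rangle\hookrightarrow D_7$, i.e.\ the embedding of Lemma \ref{lem:D_n} augmented by the root $d_1$ from the $2A_1$ part of $N_{D_7}^{\perp}$, checks primitivity directly (and via Lemma \ref{prim}), and computes the orthogonal to be $A_1\oplus A_2$ --- exactly your plan.

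For part (2) you take a genuinely different route. The paper writes down the single candidate embedding $N\oplus A_2=\langle d_7,d_6,d_5+d_3,d_4+d_3,-d_7-d_6-2d_5-2d_4-d_3\rangle$ (the $A_2$ being the copy found inside $(N\oplus A_1)^{\perp}$ in part (1)) and observes that its matrix has maximal minors with gcd $3$, hence that embedding is imprimitive; the universal negative in the statement is thus implicitly reduced to this one configuration. Your argument is an index/discriminant obstruction that rules out every embedding at once: if $S=N\oplus A_2$ were primitive in $D_7$ with $S^{\perp}=2A_1$, then $[D_7:S\oplus 2A_1]^2=\det(S)\det(2A_1)/\det(D_7)=36\cdot 4/4$, so the glue group $D_7/(S\oplus 2A_1)$ has order $6$, yet it must inject into $G_{2A_1}\simeq(\mathbb Z/2)^2$ of order $4$ --- impossible. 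This is correct and arguably more robust than the paper's proof, since it needs no uniqueness-up-to-Weyl-group claim for the configuration, and none of the detours you hedge about (primitive closure, Niemeier overlattice, isotropy) are actually needed: only the order count matters. One attribution to fix: the injectivity of the glue group into $G_T$ for $T=S^{\perp}$ follows from the primitivity of $S$ (an element of $D_7$ whose $T^*$-component lies in $T$ has its $S^*$-component in the primitive closure of $S$), not from the primitivity of $2A_1$ as you wrote; since primitivity of $N\oplus A_2$ is precisely the hypothesis being contradicted, your argument still closes.
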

\begin{proof}
\begin{enumerate}[leftmargin=*]
\item Consider the embedding:
\[N\oplus A_1=\langle d_7,d_6,d_5+d_3,d_1\rangle \hookrightarrow D_7\]
Take $d=a_1d_1+..+a_7d_7$ satisfying $nd=\lambda_7 d_7+\lambda_6d_6+\lambda_5 (d_5+d_3)+\lambda_1d_1$. From the relations $na_1=\lambda_1$, $a_2=0$, $na_3=na_5=\lambda_5$, $a_4=0$, etc..we find $d=a_1d_1+a_3(d_
3+d_5)+a_6d_6+a_7d_7 $, that is  $d\in N\oplus A_1$, proving the primitivity of the embedding.

\noindent We can also prove the primitivity using the lemma \ref{prim}.

\noindent The orthogonal of the embedding is
\[\langle d_6+d_7+2d_5+2d_4+2d_3+2\                                                                                                                                                                    d_2+d_1 \rangle \oplus \langle d_4+d_3, -d_6-d_7-2d_5-2d_4-d_3 \rangle\simeq A_1\oplus A_2\]

\item The embedding
\[N\oplus A_2=\langle d_7,d_6,d_5+d_3,d_4+d_3,-d_7-d_6-2d_5-2d_4-d_3\rangle \hookrightarrow D_7\]
is not primitive
since its matrix
\begin{center}
$\begin{pmatrix}                                                                                                                                                                                                   
        1& 0 & 0 & 0 & 0& 0 & 0 \\                                                                                                                                                                                 
        0 & 1 & 0 & 0 & 0 & 0 & 0\\                                                                                                                                                                                
        0 & 0 & 1 & 0 & 1 & 0 & 0\\                                                                                                                                                                                
        0 & 0 & 0 & 1 & 1 & 0 & 0\\                                                                                                                                                                                
        -1 & -1 & -2 & -2 & -1 & 0 & 0\\                                                                                                                                                                           
                                                                                                                                                                                                                   
\end{pmatrix}$
\end{center}
has its extremal minor of determinant $3$.

\end{enumerate}

\end{proof}


\subsubsection{Embedding of $N$ into $E_8$}


\begin{lemma}\label{lem:unimod}

\begin{enumerate}[leftmargin=*]
\item There is a primitive embedding of $N$ into $E_8$ whose orthogonal in $E_8$ is $A_2 \oplus A_3$.

\item There is a primitive embedding of $N\oplus A_1$ into $E_8$ whose  orthogonal in $E_8$ is $ A_3\oplus (-6)$.

\item There is a primitive embedding of $N\oplus A_1 \oplus A_2$ into $E_8$ whose orthogonal in $E_8$ is $(-6) \oplus (-12)$.

\end{enumerate}
\end{lemma}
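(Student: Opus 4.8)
The plan is to obtain the three embeddings as a nested chain, starting from the embedding $N\hookrightarrow D_8$ already produced in Lemma~\ref{lem:D_n} and, at each successive stage, adjoining a root lying inside the current orthogonal complement so as to cut it down. Primitivity will be controlled in two ways: either directly by Lemma~\ref{prim} (computing the gcd of the maximal minors of the explicit embedding matrix written in the $e_i$-basis of $E_8$), or more conceptually by noting that the primitive closure of a sublattice of $E_8$ is an even overlattice of it, so that the overlattice computations carried out just above forbid it. (One could also argue purely abstractly: for each case it suffices to check $q_K=-q_L$, where $L$ is the lattice being embedded and $K$ the claimed complement, and then to invoke Theorem~\ref{T:nik1} together with the uniqueness of the even unimodular lattice of rank $8$; but we prefer the constructive route since the explicit embeddings are needed for the Weierstrass equations below.)

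For part (1): specialize Lemma~\ref{lem:D_n} to $n=8$, where $D_{n-5}=D_3\simeq A_3$, to get a primitive embedding $N\hookrightarrow D_8$ with $((N)^{\perp}_{D_8})_{\mathrm{root}}\simeq A_2\oplus A_3$, and compose it with $D_8\subset E_8=D_8^{+}$ to obtain $N\hookrightarrow E_8$ with explicit generators. The discriminant form $q_N$ on $G_N\simeq\mathbb Z/12\mathbb Z$, equal to $-\tfrac{7}{12}$ on the generator $a=-e_1^{*}+3e_3^{*}$ as computed above, is anisotropic (for $1\le k\le 11$ one has $k^2\cdot\tfrac{7}{12}\not\equiv 0\bmod 2$), so $N$ has no proper even overlattice; hence the embedding into $E_8$ is primitive, and $(N)^{\perp}_{E_8}$ has rank $8-3=5$. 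It contains the rank-$5$ lattice $A_2\oplus A_3$ (the root part of $(N)^{\perp}_{D_8}\subseteq(N)^{\perp}_{E_8}$), and by Lemma~\ref{L:nik0} $|\det (N)^{\perp}_{E_8}|=|\det N|=12=|\det(A_2\oplus A_3)|$, so the inclusion has index $1$: $(N)^{\perp}_{E_8}=A_2\oplus A_3$.

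For part (2): take $A_1=\langle a\rangle$ with $a$ a simple root of the $A_2$-summand of $(N)^{\perp}_{E_8}$; since $a\perp N$ this gives $N\oplus A_1\hookrightarrow E_8$, which is primitive because $A_1\oplus N$ has no proper even overlattice (the overlattice lemma above). Intersecting with $a^{\perp}$ inside $A_2\oplus A_3$ gives $(N\oplus A_1)^{\perp}_{E_8}=\langle a\rangle^{\perp}_{A_2}\oplus A_3$, and a one-line computation in $A_2=\langle a,a'\rangle$ yields $\langle a\rangle^{\perp}_{A_2}=\mathbb Z(a+2a')$ with $(a+2a')^2=-6$, so $(N\oplus A_1)^{\perp}_{E_8}=(-6)\oplus A_3$. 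For part (3): take $A_2=\langle a_1,a_2\rangle$, two adjacent simple roots of the $A_3$-summand $\langle a_1,a_2,a_3\rangle$ of the previous complement, giving $N\oplus A_1\oplus A_2\hookrightarrow E_8$; the same linear algebra gives $(N\oplus A_1\oplus A_2)^{\perp}_{E_8}=(-6)\oplus\langle a_1,a_2\rangle^{\perp}_{A_3}=(-6)\oplus\mathbb Z(a_1+2a_2+3a_3)$ with $(a_1+2a_2+3a_3)^2=-12$, i.e. $(-6)\oplus(-12)$. Here primitivity is not automatic (indeed $A_1\oplus A_2\oplus N$ does admit the overlattice $A_1\oplus D_5$), so instead one observes that the primitive closure $P$ of $N\oplus A_1\oplus A_2$ has $|\det P|=|\det P^{\perp}_{E_8}|=|\det((-6)\oplus(-12))|=72=|\det(N\oplus A_1\oplus A_2)|$ by Lemma~\ref{L:nik0}, forcing $P=N\oplus A_1\oplus A_2$; alternatively, apply Lemma~\ref{prim} to the $6\times 8$ embedding matrix.

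Everything is explicit, so there is no genuine obstacle; the only points needing care are (a) pinning each orthogonal complement down as a lattice and not merely identifying its root part — handled by the determinant count of Lemma~\ref{L:nik0} in part (1) and by the direct intersection inside $A_2\oplus A_3$ in parts (2)–(3) — and (b) propagating the $D_8$-coordinates through the chain if one wants the fully explicit generators used later for the Weierstrass equations. Both are routine bookkeeping.
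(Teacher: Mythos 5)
Your proof is correct, and it takes a genuinely different route from the paper's. The paper proceeds by direct computation inside $E_8$: it exhibits the explicit embedding $N=\langle e_2,e_7,e_4+e_6\rangle$, computes the orthogonal complement, LLL-reduces it, and reads off the Gram matrices $A_3\oplus A_2$, then adjoins the explicit root $-e_1-e_2-2e_3-2e_4-e_5-e_6-e_7-e_8$ (a generator of the $A_2$ summand just found) for part (2) and an $A_2$ inside the $A_3$ for part (3). You instead import part (1) from Lemma \ref{lem:D_n} with $n=8$ via $D_8\subset E_8$, and replace the explicit verifications by lattice-theoretic bookkeeping: primitivity from the anisotropy of $q_N$ (resp. the triviality of the overlattices of $A_1\oplus N$) in parts (1)--(2), the determinant identity of Lemma \ref{L:nik0} to upgrade the containment $A_2\oplus A_3\subseteq (N)^{\perp}_{E_8}$ to an equality, and the index computation $[\overline{S}:S]^2=\det S/\det\overline{S}=1$ to rescue primitivity in part (3), where the overlattice argument genuinely fails because $A_1\oplus A_2\oplus N$ does admit $A_1\oplus D_5$ as an overlattice --- a distinction you correctly flag. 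All the individual steps check out ($q_N(ka)=-7k^2/12$ is nonzero mod $2$ for $1\le k\le 11$ since $12\nmid k$; $(a+2a')^2=-6$; $(a_1+2a_2+3a_3)^2=-12$; $\det(N\oplus A_1\oplus A_2)=72=\det((-6)\oplus(-12))$). What the paper's approach buys is the explicit $e_i$-coordinates that are reused verbatim later (fibrations 292 and 302); what yours buys is a computation-free identification of the complements as lattices (not just root parts) and a transparent account of exactly where primitivity is automatic and where it must be earned.
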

\begin{proof}
\begin{enumerate}[leftmargin=*]
\item Embed primitively $N$ in $E_8$ as $N=\langle e_2,e_7,e_4+e_6 \rangle$, we get
$$(N)_{E_8}^{\perp}=\langle e_1,e_2+3e_3+2e_4,e_5-2e_3,2e_3+e_6-e_8,-e_3+e_7+2e_8 \rangle$$
With LLL we obtain

 \begin{tabular}{ll}                                                                                                                                                                     
$(N)_{E_8}^{\perp}=$ &$\langle e_1,e_2+e_3+2e_4+e_5,2e_1+2e_2+3e_3+4e_4+3e_5+2e_6+e_7 \rangle  $\\
 & $\oplus \langle -e_1-e_2-2e_3-2e_4-e_5-e_6-e_7-e_8, $\\
  & $2e_1+3e_2+4e_3+6e_4+5e_5+4e_6+3e_7+2e_8 \rangle$
\end{tabular}

with Gram matrix

$$\left (
\begin{matrix}
-2 & 1 & -1 \\
1  & -2 & 0 \\
-1 & 0 & -2
\end{matrix}
\right) \oplus \left (
\begin{matrix}
-2 & 1 \\
1 & -2
\end{matrix}
\right ).
$$
Thus
$$(N)_{E_8}^{\perp}=A_3 \oplus A_2.$$
\item The following embedding
\[N \oplus A_1=\langle e_2,e_7,e_4+e_6,-e_1-e_2-2e_3-2e_4-e_5-e_6-e_7-e_8 \rangle  \hookrightarrow E_8\]
has for orthogonal in $E_8$, the lattice $A_3\oplus (-6)$.

\item If moreover we embed $A_2$ into the previous $A_3$ we find $(-6)\oplus (-12)$ as orthogonal of $N\oplus A_1 \oplus A_2$ into $E_8$. 
\end{enumerate}
\end{proof}

\subsubsection{Embedding of $N\oplus A_1$ into $E_7$}\label{3.4.10}

The following primitive embedding of $N\oplus A_1$ into $E_7$ given by
\[\langle e_1,e_3+e_5,e_6,e_2 \rangle \hookrightarrow E_7\]
satisfies
\[((N\oplus A_1)_{E_7}^{\perp})_{\text{root}}=\langle e_7\rangle =A_1.\]

\subsubsection{Embeddings of $N$ into $E_6$}
\begin{lemma}\label{lem:E_6}
There are at least two types of non isomorphic primitive embeddings of $N$ into $E_6$:
\begin{enumerate}[leftmargin=*]
\item
\[\phi_0(N)=\langle e_1,e_1+e_2+2e_3+2e_4+e_5,e_2+e_3+2e_4+2e_5+2e_6 \rangle.\]
with 
\[(\phi_0(N))_{E_6}^{\perp}=\langle e_2,e_4,e_5 \rangle \simeq A_3.\]
\item
\[\phi_1(N)=\langle e_1,e_3+e_5,e_6 \rangle,\]
with 
\[((\phi_1(N))_{E_6}^{\perp})_{\text{root}}\simeq A_2 .\]

\end{enumerate}

\end{lemma}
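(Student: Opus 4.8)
The plan is to treat the two candidate embeddings $\phi_0$ and $\phi_1$ separately, and for each one to carry out three checks: that the listed generators span a sublattice isometric to $N$, that the embedding is primitive, and the computation of its orthogonal complement (the full isometry type for $\phi_0$, only the root part for $\phi_1$). Throughout I would work with the Gram matrix of $E_6$ in Bourbaki's numbering, $e_i^2=-2$ and $e_i\cdot e_j=1$ exactly when the nodes $e_i,e_j$ are joined (the chain $e_1-e_3-e_4-e_5-e_6$ with $e_2$ attached to $e_4$), all other products $0$; since $E_6=\langle e_1,\dots,e_6\rangle_{\mathbb Z}$, every element is an integral combination of the $e_i$ and all inner products follow by bilinearity.

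For the isometry checks I would compute the $3\times 3$ Gram matrix of the three given vectors and compare it with $\bigl(\begin{smallmatrix}-2&0&1\\0&-2&1\\1&1&-4\end{smallmatrix}\bigr)$, the Gram matrix of $N$: for $\phi_0$ the generators $e_1,\ e_1+e_2+2e_3+2e_4+e_5,\ e_2+e_3+2e_4+2e_5+2e_6$, and for $\phi_1$ the generators $e_1,\ e_6,\ e_3+e_5$ (in this order $e_1,e_6$ are the norm-$(-2)$ generators and $e_3+e_5$ the norm-$(-4)$ one, so $\langle e_1,e_3+e_5,e_6\rangle\cong N$). Primitivity follows from Lemma~\ref{prim}: the coordinate matrix of $\phi_1(N)$ in the basis $(e_i)$ has rows $(1,0,0,0,0,0),(0,0,1,0,1,0),(0,0,0,0,0,1)$, whose minor on columns $1,3,6$ equals $1$; the coordinate matrix of $\phi_0(N)$ likewise has a $3\times3$ minor equal to $\pm1$ (e.g.\ on columns $1,2,3$). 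For $\phi_0$, writing $x=\sum a_ie_i\in E_6$ and imposing $x\cdot u_j=0$ on the three generators gives in turn $a_3=2a_1$, $a_6=2a_1$ and $-3a_1=0$, hence $a_1=a_3=a_6=0$ and $x\in\langle e_2,e_4,e_5\rangle$; conversely $e_2,e_4,e_5$ are orthogonal to $\phi_0(N)$, so $(\phi_0(N))_{E_6}^{\perp}=\langle e_2,e_4,e_5\rangle$, whose Gram matrix is the $A_3$ Cartan matrix with central node $e_4$. This proves (1).

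The substantive computation is (2). Solving $x\cdot y=0$ for $y\in\{e_1,e_6,e_3+e_5\}$ and $x=\sum a_ie_i\in E_6$ yields $a_3=2a_1$, $a_5=2a_6$ and $2a_4=3(a_1+a_6)$; the last equation forces the parity constraint $a_1\equiv a_6\pmod 2$, so the solution lattice is \emph{not} freely parametrised by $(a_1,a_6,a_2)$ but is the index-$2$ sublattice cut out by this congruence. A $\mathbb Z$-basis is $v_1=e_1+2e_3+3e_4+2e_5+e_6$, $v_2=e_1+2e_3-2e_5-e_6$, $v_3=e_2$, and a direct computation gives $v_1^2=-6$, $v_2^2=-12$, $v_3^2=-2$, $v_1\cdot v_2=0$, $v_2\cdot v_3=0$, $v_1\cdot v_3=3$. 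Setting $w:=v_1+v_3$ one finds $w^2=-2$, $v_3\cdot w=1$ and $v_2\cdot w=0$, so $(v_3,w,v_2)$ is a basis of $(\phi_1(N))_{E_6}^{\perp}$ with Gram matrix $\bigl(\begin{smallmatrix}-2&1&0\\1&-2&0\\0&0&-12\end{smallmatrix}\bigr)$, i.e.\ $(\phi_1(N))_{E_6}^{\perp}\cong A_2\oplus\langle-12\rangle$. Since $\langle-12\rangle$ has no root and any element with nonzero $\langle-12\rangle$-component has norm $\le-12$, the root sublattice is exactly the $A_2=\langle v_3,w\rangle$ summand, which is (2).

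That the two embeddings are non-isomorphic is then immediate: an isomorphism of embeddings would induce an isometry of the orthogonal complements, but $(\phi_0(N))_{E_6}^{\perp}\cong A_3$ has determinant $\pm4$ while $(\phi_1(N))_{E_6}^{\perp}\cong A_2\oplus\langle-12\rangle$ has determinant $\pm36$ (already the root sublattices $A_3$ and $A_2$ are non-isometric). The hard part will be the orthogonal-complement computation for $\phi_1$: one must solve the linear system over $\mathbb Z$ and not just over $\mathbb Q$, which is precisely where the parity condition and the correct saturated basis $(v_1,v_2,v_3)$ enter, and then reorganise it (replacing $v_1$ by $v_1+v_3$) so that the splitting $A_2\oplus\langle-12\rangle$, hence the root part, becomes transparent; everything else is a routine Gram-matrix and minor calculation.
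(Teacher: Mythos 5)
Your proof is correct and follows the same route as the paper: a direct computation in the simple-root basis of $E_6$, checking the Gram matrices against $N$, verifying primitivity via maximal minors, and identifying $(\phi_1(N))_{E_6}^{\perp}$ as a rank-$3$ lattice of determinant $-36$ whose root part is $A_2$ (the paper merely cites Nishiyama for $\phi_0$ and writes down a generating set for $(\phi_1(N))_{E_6}^{\perp}$ without derivation). Your explicit handling of the integrality (parity) constraint on $a_4$ and the resulting splitting $A_2\oplus\langle-12\rangle$ supplies precisely the details the paper omits, your basis spans the same lattice as the paper's first displayed generating set, and your closing determinant comparison also justifies the "non isomorphic" claim, which the paper leaves implicit.
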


\begin{proof}
\begin{enumerate}[leftmargin=*]
\item The embedding $\phi_0$ is given in Nishiyama \cite{Nis}.
\item  We get
\[\begin{matrix}
(\phi_1(N))_{A_6}^{\perp}= &
\langle e_1+2e_3-2e_5-e_6,3e_4+4e_5+2e_6,e_2\rangle\\
  = &\langle e_1+2e_2+2e_3+2e_5+e_6,e_2,3e_4+4e_5+2e_6
\rangle,
\end{matrix}\]
hence the result.

\end{enumerate}
\end{proof}
\subsubsection{Fibrations involving embeddings $\phi_0$ and $\phi_1$}

a) The embedding $\phi_0$ of $N$ into $E_6$  leads to the rank $0$ and $3$-torsion fibration $A_3 2A_2 A_5 E_6$ with Weierstrass equation $H_j$.

b) With the same embeddings but replacing the embedding $\phi_0$ by $\phi_1$ we get the rank $1$ fibration $3A_2$ $A_5$ $E_6$.


\begin{remark}
To illustrate the complexity of the determination of elliptic fibrations of $Y_{10}$, notice the following fibrations all with two fibers of type $A_5$ coming from primitive embeddings into various Niemeier lattices:

$2A_1 2A_5 $ $(r=6)$ resulting from an embedding in $A_5^4D_4$ (type $(A_3,A_1)$ into $A_5^2$),

$2A_12A_22A_5$  $(r=2)$ resulting from an embedding into $E_6^4$,

$A_12A_2A_32A_5$ $(r=0)$ resulting from an embedding into $Ni(A_{11}D_7E_6)                                                                                                                                           
$.
\end{remark}

Besides, we recall a result obtained by Nishiyama.
\begin{proposition}\label{PNis}
Up to the action of the Weyl group, the unique primitive embeddings of $A_1$ and $A_2$ in the following root lattices together with their orthogonals are given in the following list

\begin{itemize}[leftmargin=*]

\item $A_1= \langle d_l \rangle  \subset D_l, \,\, l\geq 4,\,\,\,\,\,(A_1)_{D_l}^{\perp}=A_1\oplus D_{l-2}$
\item $A_1= \langle d_4 \rangle  \subset D_4, \,\,\,\,\,\,\,(A_1)_{D_4}^{\perp}=A_1^{\oplus 3}$

\item $A_1=\langle e_1 \rangle \subset E_p, \, \, p=6,7,8$
$$\begin{matrix}
(A_1)_{E_6}^{\perp}    &   =   & A_5 \\
(A_1)_{E_7}^{\perp}    &   =   & D_6 \\
(A_1)_{E_8}^{\perp}    &   =   & E_7 
\end{matrix}
$$
\item $A_2=\langle e_1, e_3 \rangle \subset E_p, \, \, p=6,7,8$
$$
\begin{matrix}
(A_2)_{E_6}^{\perp}    &   =   & A_2^{\oplus 2} \\
(A_2)_{E_7}^{\perp}    &   =   & A_5 \\
(A_2)_{E_8}^{\perp}    &   =   & E_6 
\end{matrix}
$$
\end{itemize}

\end{proposition}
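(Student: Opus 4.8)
This is a result of Nishiyama \cite{Nis}; here is how I would recover it. For each line of the list there are three points to settle: that the displayed vectors define an embedding, that any primitive embedding is $W(R)$-equivalent to it (so the list is exhaustive up to the Weyl group), and that the orthogonal complement is the one stated. The plan is to dispose of the first two points by transitivity arguments valid for any irreducible simply laced root lattice, and to compute the complements by a short coordinate calculation for $D_l$ and by the extended Dynkin diagram (Borel--de Siebenthal) together with a discriminant-form check for $E_6,E_7,E_8$.

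For the embeddings and their uniqueness: an embedding $A_1=\mathbb Z e\hookrightarrow R$ must send $e$ to a vector of square $-2$, i.e. a root, and conversely any root works; such an embedding is automatically primitive because a root is a primitive vector of a root lattice (if $\gamma=k\delta$ with $k\ge 2$ then $\delta^2=-2/k^2\notin\mathbb Z$). An embedding $A_2=\langle a_1,a_2\rangle\hookrightarrow R$ must send $a_1,a_2$ to roots $\beta_1,\beta_2$ with $\beta_1\cdot\beta_2=1$ (such a pair exists: take two adjacent simple roots), and I would note that $\langle\beta_1,\beta_2\rangle$ is then automatically primitive, since its primitive closure in $R$ is an even overlattice of $A_2$ inside $\mathbb Q\langle\beta_1,\beta_2\rangle$ and by Lemma \ref{L:nik} such overlattices correspond to isotropic subgroups of $(G_{A_2},q_{A_2})$, of which there are none because $q_{A_2}([1]_{A_2})=-\tfrac23$ and $q_{A_2}(2[1]_{A_2})=-\tfrac83$ are both nonzero in $\mathbb Q/2\mathbb Z$. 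Uniqueness up to $W(R)$ then follows from the fact that the Weyl group of an irreducible simply laced root system acts transitively on roots and, more strongly, on ordered pairs of roots with a prescribed inner product, hence on its sub-root-systems of type $A_2$ (see \cite{Bo}). I expect this last transitivity statement to be the only ingredient that is not pure bookkeeping.

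The complements in $D_l$ are immediate from coordinates: realizing $D_l=\{x\in\mathbb Z^l:\sum x_i\in 2\mathbb Z\}$ one has $d_l=\epsilon_{l-1}+\epsilon_l$, and $x\in D_l$ is orthogonal to $d_l$ precisely when $x_{l-1}=-x_l$; decomposing such an $x$ as a multiple of $\epsilon_l-\epsilon_{l-1}$ plus a vector supported on the first $l-2$ coordinates shows $(A_1)_{D_l}^{\perp}=\langle\epsilon_l-\epsilon_{l-1}\rangle\oplus D_{l-2}\simeq A_1\oplus D_{l-2}$ exactly (no overlattice to worry about), with the convention $D_2=A_1^{\oplus 2}$, so that the case $l=4$ gives $A_1^{\oplus 3}$. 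For $E_p$ I would first read off from the extended Dynkin diagrams the maximal-rank subsystems
\[A_1\oplus E_7,\ A_2\oplus E_6\subset E_8,\qquad A_1\oplus D_6,\ A_2\oplus A_5\subset E_7,\qquad A_1\oplus A_5,\ A_2\oplus A_2\oplus A_2\subset E_6,\]
so that in each case the asserted complement sits as a full-rank sublattice of the true complement $(A_i)_{E_p}^{\perp}$, which is therefore an even overlattice of it.

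It then remains to check that none of $A_5$, $D_6$, $E_6$, $E_7$, $A_2\oplus A_2$ admits a proper even overlattice of the same rank; with the discriminant data of Section 2.2 this is a finite computation. One finds $q_{A_5}(k[1]_{A_5})=-\tfrac{5k^2}{6}\not\equiv 0\pmod{2\mathbb Z}$ for $1\le k\le5$; $q_{D_6}([1]_{D_6})=-\tfrac32$, $q_{D_6}([2]_{D_6})=-1$, $q_{D_6}([3]_{D_6})=-\tfrac72$ are all $\not\equiv 0$; $q_{E_6}(k[1]_{E_6})=-\tfrac{4k^2}{3}\not\equiv 0$ for $k=1,2$; $q_{E_7}(\eta_7)=-\tfrac32\not\equiv 0$; and on $A_2\oplus A_2$ an element $(a[1]_{A_2},b[1]_{A_2})$ has $q=-\tfrac23(a^2+b^2)$, which is $\equiv 0$ only when $3\mid a$ and $3\mid b$, since $-1$ is a non-square mod $3$. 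By Lemma \ref{L:nik} again this forces equality throughout, giving the complements $A_5,D_6,A_2\oplus A_2,A_5,E_6,E_6$ respectively. (For the two $E_8$ entries there is an even shorter route: the construction recalled in Section 2.2 realizes $E_7=v^{\perp}$ and $E_6=\langle v,w\rangle^{\perp}$ inside $E_8$ with $\langle v\rangle\simeq A_1$ and $\langle v,w\rangle\simeq A_2$, so $(A_1)_{E_8}^{\perp}=E_7$ and $(A_2)_{E_8}^{\perp}=E_6$ for those particular sublattices and, by the uniqueness above, for all of them.)
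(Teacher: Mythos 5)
Your proof is correct, but note that the paper itself offers no proof of Proposition \ref{PNis}: it is stated as a recalled result of Nishiyama \cite{Nis}, and the justification in the source is the citation alone. So what you have written is a self-contained reconstruction rather than a variant of the paper's argument. Your route also differs in flavour from how the paper (and Nishiyama) handle the analogous computations elsewhere: in Lemmas such as \ref{D_6}, \ref{lem:D_n} and \ref{lem:unimod} the orthogonal complements are found by exhibiting explicit roots and reducing Gram matrices (LLL), i.e.\ by direct coordinate work in each case. Your argument instead gets the complements in $E_6,E_7,E_8$ uniformly: Borel--de Siebenthal supplies a full-rank root sublattice inside the true complement, and Lemma \ref{L:nik} plus the discriminant-form data of Section 2.2 rules out any proper even overlattice, forcing equality. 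That is cleaner and reuses machinery the paper has already set up; the trade-off is that it leans on the transitivity of the Weyl group on $A_2$-subsystems, which is the one input you correctly flag as non-trivial. All the numerical checks ($q_{A_5}$, $q_{D_6}$, $q_{E_6}$, $q_{E_7}$, $q_{A_2\oplus A_2}$ never vanishing on nonzero elements, except on the zero subgroup) are right, as is the exact coordinate computation for $D_l$ with the convention $D_2=A_1^{\oplus 2}$.

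One small caution: the blanket assertion that the Weyl group of an irreducible simply laced root system acts transitively on ordered pairs of roots with \emph{any} prescribed inner product is false for inner product $0$ (in $D_l$ there are non-conjugate orthogonal pairs of roots, which is precisely why Nishiyama's tables list several embeddings of $A_1^{\oplus 2}$ into $D_l$). What you actually use --- transitivity on pairs with inner product $\pm 1$, equivalently on $A_2$-subsystems --- is true and suffices here, so the proof stands; just restrict the claim to that case.
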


\section{Specialized Weierstrass equations of $Y_{10}$ }
\subsection{Embeddings}

\begin{proposition}
The specialized elliptic fibrations of $Y_{10}$ have the same singular fibers and torsion as the generic ones. Their rank is equal to the corresponding generic one plus one, hence is bounded by $3$.

All the embeddings giving such fibrations can be derived from embedding $N\oplus A_1 \oplus A_2$ into the root lattices of the Niemeier lattices.

\end{proposition}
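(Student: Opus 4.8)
The plan is to specialize, one at a time, the generic elliptic fibrations of the pencil classified in \cite{B-L}, keeping simultaneous track of the reducible fibers, of the torsion, and of the embedding of the frame into a Niemeier lattice. For the generic member $\rho=19$, and the lattice produced by the Kneser--Nishiyama procedure in the form of \cite{SS} is the rank-$7$ root lattice $M_{\mathrm{gen}}=A_2\oplus D_5$; in \cite{B-L} every generic fibration is realized by a primitive embedding $M_{\mathrm{gen}}\hookrightarrow Ni(L_{\text{root}})$, with generic frame $W_{\mathrm{gen}}:=(M_{\mathrm{gen}})^{\perp}_{Ni}$, and in each case the Mordell--Weil rank is at most $2$. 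Since any lattice embedding sends roots to roots and $M_{\mathrm{gen}}$ is spanned by its roots, every such embedding automatically lies inside $L_{\text{root}}$; in particular no generic fibration requires glue vectors. The plan is then: realize $M=A_1\oplus A_2\oplus N$ as a corank-one sublattice of $M_{\mathrm{gen}}$, push each generic embedding down to a primitive embedding $M\hookrightarrow Ni$, and check against the explicit Weierstrass models of \cite{B-L} that this recovers the restriction to $Y_{10}$.

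For the lattice bookkeeping, recall the primitive embedding $A_1\oplus N\hookrightarrow D_5$ with orthogonal complement $\langle -6\rangle$ exhibited above (equivalently, by Lemma~\ref{lem:3.2}, $D_5$ is the unique index-$3$ overlattice of $A_2\oplus N$): leaving the $A_2$-summand untouched it gives a primitive embedding $M=A_2\oplus(A_1\oplus N)\hookrightarrow A_2\oplus D_5=M_{\mathrm{gen}}$ with $(M)^{\perp}_{M_{\mathrm{gen}}}=\langle -6\rangle$. Composing with a generic embedding $M_{\mathrm{gen}}\hookrightarrow Ni$ yields a primitive embedding $M\hookrightarrow Ni$ (a composition of primitive embeddings), hence an elliptic fibration of $Y_{10}$; its frame $W:=(M)^{\perp}_{Ni}$ contains $W_{\mathrm{gen}}\oplus\langle -6\rangle$, and since $|\det W|=|\det M|=72=6\cdot 12=6\,|\det W_{\mathrm{gen}}|$ this inclusion is an equality, $W=W_{\mathrm{gen}}\oplus\langle -6\rangle$. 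Because $\langle -6\rangle$ is negative-definite of minimum $6$, $W$ and $W_{\mathrm{gen}}$ have exactly the same roots, so the same reducible fibers; the orthogonal summand $\langle -6\rangle$ contributes one new generator of infinite order to the Mordell--Weil group, so the rank is the generic one plus one, hence $\le 3$; and the torsion is unchanged, being determined by the (unchanged) root part together with the overlattice inside $Ni$ --- alternatively, the specialization map $\operatorname{NS}(Y_k)\hookrightarrow\operatorname{NS}(Y_{10})$ is injective and carries the trivial lattice onto the trivial lattice, so no section can acquire or lose torsion. Moreover $M\hookrightarrow Ni$ factors through $M_{\mathrm{gen}}=A_2\oplus D_5\subset L_{\text{root}}$, so it lies in the root lattices of the Niemeier lattices, with $N$ and one copy of $A_2$ together spanning a $D_5$; this is the last assertion.

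The step that is not formal, and which I expect to be the main obstacle, is the identification of the fibration just constructed with the genuine restriction $\phi_{10}$ of the generic fibration: a priori, at the special value $k=10$ a reducible fiber of a generic fibration could gain a component, in which case the true frame $W_{10}$ would contain an extra root, the Picard-number jump would be absorbed there instead of in the Mordell--Weil group, and all three conclusions would fail. I would rule this out by going through the finitely many explicit Weierstrass equations of \cite{B-L}: for each of them the set of $k$ at which the discriminant changes its factorization type is an explicit proper closed subset of the affine $k$-line, and one verifies that $k=10$ lies outside every one of them. Once this equisingularity at $k=10$ is checked, each $\phi_{10}$ coincides with the corresponding constructed fibration, so $W_{10}=W_{\mathrm{gen}}\oplus\langle -6\rangle$ and the previous paragraph applies verbatim.
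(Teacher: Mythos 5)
Your argument is correct and reaches all three conclusions, but the lattice-theoretic mechanism is genuinely different from the paper's. The paper works root lattice by root lattice: it proves a lemma asserting that for each ambient $R\in\{D_n\ (n\ge 5),E_6,E_7,E_8\}$ there is a primitive embedding $N\oplus A_1\hookrightarrow R$ with $((N\oplus A_1)_R^{\perp})_{\text{root}}=((D_5)_R^{\perp})_{\text{root}}$, built case by case from Lemmas~\ref{D_6}, \ref{lem:D_n}, \ref{lem:unimod} and \ref{lem:E_6}; the rank statement is then read off from the jump of the Picard number from $19$ to $20$ with unchanged trivial lattice. You instead use the single primitive embedding $N\oplus A_1\hookrightarrow D_5$ with orthogonal $\langle -6\rangle$, compose with the generic primitive embeddings of $A_2\oplus D_5$, and determine the new frame globally by the determinant count $72=6\cdot 12$ inside the (unimodular) Niemeier lattice, concluding $W=W_{\mathrm{gen}}\oplus\langle -6\rangle$ and hence unchanged roots, unchanged torsion (via the unchanged primitive closure of the root part) and rank increased by exactly one. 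Your route is more uniform and makes the mechanism of the rank jump transparent (the extra Mordell--Weil generator is literally the $\langle -6\rangle$ summand); what it does not produce is the explicit embedding data recorded in Table~\ref{Ta:Fib2}, which the paper's case-by-case lemma supplies. The step you flag as the main obstacle --- that the fibration you construct really is the specialization, i.e.\ that no fiber degenerates further at $k=10$ --- is handled in the paper exactly as you propose: by specializing the explicit Weierstrass equations of \cite{B-L} and checking the discriminant factorization, so there is no gap there.

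Two small cautions. Your parenthetical alternative for the torsion (``the specialization map carries the trivial lattice onto the trivial lattice, so no section can acquire or lose torsion'') is not airtight on its own, since the torsion subgroup is the quotient $\overline{T}/T$ with the primitive closure taken in the \emph{larger} N\'eron--Severi lattice of $Y_{10}$ and could a priori grow; your primary argument via $W=W_{\mathrm{gen}}\oplus\langle -6\rangle$ does close this, so keep that one. Also note that the generator of $(N\oplus A_1)^{\perp}_{D_5}$ printed in the paper has a typo (the coefficient of $d_2$ should be $3$, giving $d_1+3d_2+4d_3+2d_4+2d_5$); the conclusion $\langle -6\rangle$ that you rely on is nonetheless correct.
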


\begin{proof}
We get first the specialized Weierstrass equations of $Y_{10}$ from the generic ones given in Bertin-Lecacheux \cite{B-L}, Tables 3 and 4.

Then we deduce the primitive embeddings of $N\oplus A_1 \oplus A_2$ into the Niemeier lattices giving the corresponding elliptic fibration in Table 2. Recalling the elliptic fibrations in the generic case \cite{B-L} Table 2, we derive some observations.

\begin{table}\footnotesize

\begin{center}
\begin{tabular}{|c|c|c|c|c|c|c|}
\hline
$L_{\text{root}}$ & $L/L_{\text{root}}$ &  & & \text{type of Fibers} & \text{Rk} & \text{Tors.} \\ \hline
  $E_8^3$    &  $(0)$ & & & & &\\ \hline
& \#1  & $A_2\subset E_8$ & $(A_1 \oplus N) \subset E_8$  & $E_6 A_3 E_8$ & $1$ & $(0)$\\ \hline

& \#2  & $A_2 \oplus (A_1\oplus N) \subset E_8$ &   & $ E_8  E_8$ & $2$ & $(0)$\\ \hline
  $ D_{16}E_8$    &  $\mathbb Z /{2 \mathbb Z}$ & & & & &\\ \hline
&\#3   & $A_2\subset E_8$ & $(A_1 \oplus N) \subset D_{16}$  & $E_6 D_{11}$ & $1$ & $(0)$\\ \hline
& \#4  & $A_2 \oplus (A_1\oplus N) \subset E_8$ &   & $ D_{16}$ & $2$ & $\mathbb Z /{2 \mathbb Z}$\\ \hline
& \#5  & $(A_1 \oplus N)\subset E_8$ & $A_2 \subset D_{16}$  & $A_3  D_{13}$ & $2$ & $(0)$\\ \hline
& \#6  & $A_2 \oplus (A_1\oplus N) \subset D_{16}$ &   & $E_8  D_{8}$ & $2$ & $(0)$\\ \hline
  $ D_{10}E_7^2$    &  $(\mathbb Z /{2 \mathbb Z})^2$ & & & & &\\ \hline
& \#7  & $A_2\subset E_7$ & $(A_1 \oplus N) \subset D_{10}$  & $E_7 A_5 D_5$ & $1$ & $\mathbb Z /{2 \mathbb Z}$\
\\ \hline
&  \#8 & $A_2\subset E_7$ & $(A_1 \oplus N)\subset E_7$  & $ A_5 A_1 D_{10}$ &$2$ & $\mathbb Z /{2 \mathbb Z}$\\
 \hline
                                                                                                                
& \#9  & $A_2\oplus (A_1 \oplus N)\subset D_{10}$ &   & $E_7 E_7 A_1 A_1$ & $2$ & $\mathbb Z /{2 \mathbb Z}$\\ 
\hline
& \#10  & $(A_1 \oplus N)\subset E_7$ & $A_2 \subset D_{10}$  & $A_1 D_7 E_7$ & $3$ & $(0)$ \\ \hline
 $ A_{17}E_7$    &  $\mathbb Z /{6 \mathbb Z}$ & & & & &\\ \hline
&\#11   & $(A_1 \oplus N)\subset E_7$ & $A_2 \subset A_{17}$  & $A_1 A_{14}$ & $3$ & $(0)$\\ \hline
 $D_{24}$    &  $\mathbb Z /{2 \mathbb Z}$ & & & & &\\ \hline
& \#12  & $A_2\oplus (A_1 \oplus N) \subset D_{24}$ &   & $D_{16}$ & $2$ & $(0)$\\ \hline

 $D_{12}^2$    &  $(\mathbb Z /{2 \mathbb Z})^2$ & & & & &\\ \hline
&\#13   & $A_2 \subset D_{12}$ & $A_1 \oplus N\subset D_{12}$  & $D_{9} D_{7}$ & $2$ & $(0)$ \\ \hline
&\#14   & $A_2\oplus (A_1 \oplus N) \subset D_{12}$ &   & $D_4 D_{12}$ & $2$ & $\mathbb Z /{2 \mathbb Z}$\\ \hline
 $D_8^3$    &  $(\mathbb Z /{2 \mathbb Z})^3$ & & & & &\\ \hline
&\#15   & $A_2 \subset D_{8}$ & $(A_1 \oplus N) \subset D_{8}$  & $ D_{5}A_3 D_{8}$ & $2$ &$\mathbb Z/{2\mathbb\
 Z} $\\ \hline
& \#16  & $A_2\oplus (A_1 \oplus N)\subset D_{8}$ &   & $ D_8 D_{8}$ & $2$ & $\mathbb Z /{2 \mathbb Z}$\\ \hline
 $ A_{15}D_9$    &  $\mathbb Z /{8 \mathbb Z}$ & & & & &\\ \hline
& \#17  & $A_2\oplus (A_1 \oplus N) \subset D_{9}$ &   & $ A_{15}$ & $3$ &$\mathbb Z /{2 \mathbb Z}$ \\ \hline
& \#18  & $(A_1 \oplus N) \subset D_{9}$ & $A_2 \subset A_{15}$  & $D_4 A_{12}$ & $2$ & $(0)$\\ \hline
 $E_6^4$    &  $(\mathbb Z /{3 \mathbb Z)^2}$ & & & & &\\ \hline
& \#19  & $A_2 \subset E_6$ & $(A_1 \oplus N)\subset E_6$  & $A_2 A_2 E_6 E_6$ & $2$ & $\mathbb Z /{3 \mathbb Z\
}$\\ \hline

 $ A_{11} D_7E_6$    &  $\mathbb Z /{12 \mathbb Z}$ & & & & &\\ \hline
&\#20   & $A_2 \subset E_6$ & $(A_1 \oplus N)\subset D_7$  & $A_2 A_2 A_1 A_1 A_{11}$ & $1$ & $\mathbb Z /{6 \mathbb Z}$\\ \hline
& \#21  & $A_2 \subset A_{11}$ & $(A_1 \oplus N) \subset D_7$  & $A_8 A_1 A_1 E_6$ & $2$ & $(0)$\\ \hline
                                                                                                                
& \#22  & $A_2 \subset A_{11}$ & $(A_1 \oplus N)\subset E_6$  & $ A_8 D_7$ & $3$ & $(0)$\\ \hline
&\#23   & $(A_1 \oplus N)\subset E_6$ & $A_2 \subset D_7$  & $ A_{11}  D_4$ & $3$ &$\mathbb Z /{2 \mathbb Z}$ \\
 \hline
 $D_6^4$    &  $(\mathbb Z /{2 \mathbb Z})^4$ & & & & &\\ \hline
& \#24  & $A_2 \subset D_6$ & $(A_1 \oplus N) \subset D_6$  & $ A_3 D_6 D_6$ & $3$ &$\mathbb Z /{2 \mathbb Z}$ \
\\ \hline
 $ A_9^2D_6$    &  $\mathbb Z /{2}\times \mathbb Z /{10} $ & & & & &\\ \hline
& \#25  & $(A_1 \oplus N) \subset D_6$ & $A_2 \subset A_9$  & $ A_{6} A_9$ & $3$ & $(0)$\\ \hline

 $A_7^2D_5^2$    &  $\mathbb Z /{4}\times \mathbb Z /{8} $ & & & & &\\ \hline
&\#26   & $(A_1 \oplus N)\subset D_5$ & $A_2 \subset D_5$  & $A_1  A_{1} A_7 A_7$ & $2$  &$\mathbb Z /{4 \mathbb Z}$ \\ \hline
&\#27   & $(A_1 \oplus N)\subset D_5$ & $A_2 \subset A_7$  & $D_5 A_4 A_7 $ & $2$ & $(0)$\\ \hline

\end{tabular}
\end{center}
\caption{The specialized elliptic fibrations of $Y_{10}$}\label{Ta:Fib2}
\end{table}

 The specialized fibrations are all obtained by replacing a primitive embedding of $D_5$ in the generic case by a primitive embedding of $A_1 \oplus N$ in the same corresponding root lattice in the $Y_{10}$ case. Moreover the trivial lattices of the elliptic fibrations are the same. Since the Picard number is $19$ in the generic case and $20$ for $Y_{10}$, this explains why the rank of the specialized $Y_{10}$ always increases by $1$. This is a consequence of the following lemma.

\begin{lemma}
\begin{enumerate}[leftmargin=*]
\item Denote $R$ any root lattice $D_n$, $n\geq 5$, $E_6$, $E_7$ or $E_8$. There is a primitive embedding of $N\oplus A_1$ into $R$ such that $((N\oplus A_1)_{R}^{\perp})_{\text{root\
}}=((D_5)_{R}^{\perp})_{\text{root}}$.
\item There is a primitive embedding of $N\oplus A_1\oplus A_2$ into $E_8$ such that $((N\oplus A_1 \oplus A_2)_{E_8}^{\perp})_{\text{root\
}}=((A_2 \oplus D_5)_{E_8}^{\perp})_{\text{root}}=0$.

\end{enumerate}

\end{lemma}
\begin{proof}
\begin{enumerate}[leftmargin=*]
\item If $R=D_n$, we embed $N$ into $D_n$ as in lemma \ref{lem:D_n} and $A_1$ into the $A_2$ part of its orthogonal in $D_n$. That is precisely if $n\geq 7$, the primitive embedding
\[N\oplus A_1 =\langle d_n,d_{n-1}, d_{n-2}+d_{n-4},d_{n-3}+d_{n-4} \rangle \hookrightarrow D_n.\]
The following roots of $D_n$,
\[d_{n-6}, d_{n-7}, ..., d_1, d_n+d_{n-1}+2(d_{n-2}+d_{n-3}+...+d_{n-5})+d_{n-6}\]
are orthogonal to $N\oplus A_1$,
and generate $D_{n-5}$.
If $n=6$, take the embedding of lemma \ref{D_6}a).

\noindent For example, the rank $0$ elliptic fibration $\#20$ in \cite{B-L} Table $2$ ($2A_12A_2A_{11}$) comes from the following primitive embedding
\[D_5=\langle d_7,d_6,d_5,d_4,d_3\rangle \hookrightarrow D_7,\]
whose orthogonal into $D_7$ is $D_2\simeq 2A_1$ by \cite{Nis} p. 309, 310, 311.

\noindent And the rank $1$ specialized fibration $\#20$ in Table $2$ ($2A_12A_2A_{11}$) comes from the following primitive embedding
\[N\oplus A_1=\langle d_7,d_6 d_5+d_3,d_4+d_3 \rangle \hookrightarrow D_7,\]
whose orthogonal into $D_7$ is $\simeq 2A_1$ by lemma \ref{lem:D_7}(1).

\noindent If $R=E_6$, we embed $N$ as in lemma \ref{lem:E_6} (2). If $R=E_7$ we embed $N\oplus A_1$ as in \ref{3.4.10}. If $R=E_8$, we embed $N\oplus A_1$ as in lemma \ref{lem:unimod} (2).

\item It follows from the embedding given in lemma \ref{lem:unimod} (3). 
\end{enumerate}

 \end{proof}                                                                                                   \

\end{proof}

\subsection{Generators for specialization of \#16 fibration }

The rank of the  specialization for $k=10$  increases by one, so we have to
determine one more generator for the Mordell-Weil group. We give an example where
the computation is easy. 
We consider the Kummer surface 
$K_{10}=\text{Kum}(E1,E2)$ associated to $Y_{10}$, where 
$E1,E2$ have complex multiplication. 
Then using the method developped in \cite{Sh2} and \cite{K-U}, we determine a section on a $K_{10}$ fibration.
From \cite{B-L} Corollary 4.1, using a two-isogeny, we recover a section for  fibration \#16 of $Y_{10}.$  
From \cite{B-L} Corollary 4.1, the two elliptic curves $E_{1}$
and $E_{2}$ have respective  invariants $j_{1}=8000$ and $j_{2}=188837384000\pm
77092288000\sqrt{6}.$ Take
\[
E_{1}:Y^{2}=X\left(  X^{2}+4X+2\right)
\]
as a model of the first curve. The $2$-torsion sections have $X$-coordinates
$0,$ $-2\pm\sqrt{2},$ the $3-$ torsion sections have $X$-coordinates
$\frac{1}{3}\left(  1\pm i\sqrt{2}\right)  $ and $-1\pm\sqrt{6}$ that are roots of
$(3X^{2}-2X+1)(X^{2}+2X-5).$ The elliptic curve $E_{1}$ has complex
multiplication by $m_{2}=\sqrt{-2}$ defined by
\[
\left(  X,Y\right)  \overset{m_{2}}{\mapsto}\left(  -\frac{1}{2}\frac
{X^{2}+4X+2}{X},\frac{i\sqrt{2}}{4}\frac{Y\left(  X^{2}-2\right)  }{X^{2}%
}\right)
.\]
Let $C_{3}$ and $\widetilde{C_{3}}$ the two groups of order $3$ generated by the points of respective $X$-coordinates $\frac{1}{3}\left(  -2+i\sqrt
{2}\right)  $ and $\frac{1}{3}\left(  -2-i\sqrt{2}\right)  .$ These groups are fixed
by $m_{2}$ while the two order $3$ groups $\Gamma_{3}$ and $\widetilde{\Gamma_{3}}$  generated by the points of respective $X$-coordinates $-2+\sqrt
{6}$ and $-2-\sqrt{6}$ are exchanged by $m_{2}.$

If $M=\left(  X,Y\right)  $ is a general point on $E_{1}$, the $3$-isogenous
curve to $E_{1}$ of kernel $\Gamma_{3}$ is thus obtained with $X_{2}%
=X_{M}+\sum_{S\in B}X_{M+S}+k$ and $Y_{2}=Y_{M}+\sum_{S\in B}Y_{M+S}$ where
$k$ can be chosen so that the image of $\left(  0,0\right)  $ is $X_{2}=0.$
It follows the $3$-isogeny%

\[
w_{3}:X_{2}=\frac{X\left(  X-2-\sqrt{6}\right)  ^{2}}{\left(  X+2-\sqrt
{6}\right)  ^{2}},\quad Y_{2}=-\frac{Y\left(  X^{2}+\left(  8-2\sqrt
{6}\right)  X+2\right)  \left(  X-2-\sqrt{6}\right)  }{\left(  X+2-\sqrt
{6}\right)  ^{3}}%
\]
and and its $3$-isogenous curve $E_2$%

\begin{align*}
E_{2}  &  :Y_{2}^{2}=X_{2}^{3}+28X_{2}^{2}+\left(  98+40\sqrt{6}\right)
X_{2}\\
j\left(  E_{2}\right)   &  =188837384000-77092288000\sqrt{6}.%
\end{align*}

An equation for the Kummer surface $K_{10}$ is therefore%
\[
K_{10}:X\left(  X^{2}+4X+2\right)  =y^{2}X_{2}\left(  X_{2}^{2}+28X_{2}%
+98+40\sqrt{6}\right)
.\]

\subsubsection{Elliptic fibrations of $K_{10}$ and $Y_{10}$}

We consider the elliptic fibration of $K_{10}$%
\begin{align*}
K_{10}  &  \rightarrow\mathbb{P}^{1}\\
\left(  X,X_{2},y\right)   &  \mapsto t=\frac{X_{2}}{X}.%
\end{align*}
We use the following units of $\mathbb{Q}\left(  \sqrt{2},\sqrt{3}\right)  $%
\begin{align*}
r_{1}  &  =1+\sqrt{2}+\sqrt{6},\quad r_{1}^{\prime}=1-\sqrt{2}+\sqrt{6},\quad
r_{1}r_{1}^{\prime}=s=\left(  \sqrt{2}+\sqrt{3}\right)  ^{2}\\
r_{2}  &  =1+2\sqrt{3}+\sqrt{6},\quad r_{2}^{\prime}=1+2\sqrt{3}-\sqrt{6}.%
\end{align*}

Notice that $X_{2}=tX$ and a Weierstrass equation for this fibration is
obtained with the transformation%

\[
X=-\sqrt{2}\left(  1+\sqrt{2}\right)  \frac{X_{1}-2t\left(  t-r_{1}%
^{2}\right)  \left(  t-r_{2}^{2}\right)  }{X_{1}\left(  3+2\sqrt{2}\right)
-2t\left(  t-r_{1}^{2}\right)  \left(  t-r_{2}^{2}\right)  },\qquad
y=2\sqrt{2}\frac{X_{1}}{Y_{1}}%
\]%

\begin{equation}
K_{t}:Y_{1}^{2}=X_{1}\left(  X_{1}-2t\left(  t-r_{1}^{2}\right)  \left(
t-r_{1}^{\prime2}\right)  \right)  \left(  X_{1}-2t\left(  t-r_{2}^{2}\right)
\left(  t-r_{2}^{\prime2}\right)  \right)  \label{A}%
\end{equation}

where the singular fibers are in $t=0$ and $\infty$ of type $I_{2}^{\ast}$ and
at $t=r_{1}^{2},r_{2}^{2},r_{1}^{\prime2},r_{2}^{\prime2}$ of type $I_{2}.$
The rank is $2.$

\subsubsection{Sections on the Kummer fibration}

In many papers (\cite{Sh} \cite{Sh1} \cite{Sh2} Th 1.2. and \cite{Ku-Ku},
\cite{K-U}) results on the Mordell-Weil lattice of the Inose fibration are
given. We follow the same idea here, with the previous fibration of parameter
$t$.

We find a section on the previous fibration using $w_{3}\in \hom\left(
E_{1},E_{2}\right)  .$ The graph of $w_{3}$ on $E_{1}\times E_{2}$ and the 
image on $K_{10}=E_{1}\times E_{2}/\pm1$ correspond to $X_{2}=\frac{X\left(
X-2-\sqrt{6}\right)  ^{2}}{\left(  X+2-\sqrt{6}\right)  ^{2}}$ or
$t=\frac{\left(  X-2-\sqrt{6}\right)  ^{2}}{\left(  X+2-\sqrt{6}\right)  ^{2}%
}.$ If we consider the base-change of the fibration $u^{2}=t$ we obtain a
section defined by $u=\frac{\left(  X-2-\sqrt{6}\right)  }{\left(
X+2-\sqrt{6}\right)  }$ or $X=\frac{\left(  -2+\sqrt{6}\right)  \left(
u-s\right)  }{u-1},$ that is $P_{u}=\left(  X_{1}\left(  u\right)
,Y_{1}\left(  u\right)  \right)  $ on the Weierstrass equation $K_{u^{2}}$
\begin{align*}
X_{1}\left(  u\right)   &  =\frac{-1}{s_{2}^{2}s_{3}^{2}}u^{2}\left(
u^{2}-r_{2}^{2}\right)  \left(  u+r_{1}\right)  \left(  u-r_{1}^{\prime
}\right) \\
Y_{1}\left(  u\right)   &  =2uX_{1}\left(  u\right)  \left(  \left(  \sqrt
{3}-\sqrt{2}\right)  u^{2}+\left(  -2\sqrt{3}+\sqrt{2}\right)  u+\sqrt
{3}+\sqrt{2}\right),
\end{align*}
where $s_{2}=\frac{\sqrt{2}}{2}\left(  \sqrt{3}-1\right)  ,s_{3}=\sqrt{2}-1$ .
If $\widetilde{P_{u}}=\left(  X_{1}\left(  -u\right)  ,Y_{1}\left(  -u\right)
\right)  ,$ then $\widetilde{P_{u}}\in K_{u^{2}}$ and $P=\widetilde{P_{u}%
}+P_{u}\in K_{t}$, thus%
\begin{align*}
P  &  =\left(  x_{P},y_{P}\right) \\
x_{P}  &  =\frac{1}{s}\left(  t+s\right)  ^{2}\left(  t-r_{1}^{2}\right)
\left(  t-r_{1}^{\prime2}\right)  , y_{P}=x_{P}\frac{2-\sqrt{6}}%
{2}\left(  \frac{t-s}{t+s}\right)  \left(  t^{2}-14t-4\sqrt{6}t+s^{2}\right)
\end{align*}
so we recover a section $P$ on the fibration of the Kummer surface. \ \ 

\subsubsection{Sections on the fibration \#16 of $Y_{10}$}

The $2$-isogenous elliptic curve to (\ref{A}) in the isogeny of kernel
$\left(  0,0\right)  $ has a Weierstrass equation
\begin{align}
Y_{3}^{2}  &  =X_{3}\left(  X_{3}^{2}+8\,t\left(  {t}^{2}-28\,t+{s}%
^{2}\right)  X_{3}+64\,{\frac{{t}^{4}}{{s}^{2}}}\right) \label{B}\\
X_{3}  &  =\left(  \frac{Y_{1}}{X_{1}}\right)  ^{2},\qquad Y_{3}=\frac
{Y_{1}(B-X_{1}^{2})}{{X_{1}}}\nonumber
\end{align}
where $B$ is the coefficient of $X_{1}$ in (\ref{A}). Singular fibers are in
$t=0$ and $\infty$ of type $I_{4}^{\ast}$ and of type $I_{1}$ at $t=r_{1}%
^{2},r_{2}^{2},r_{1}^{\prime2},r_{2}^{\prime2}$ $.$

We can show that it is a fibration of $Y_{10}$ . More precisely, the
specialization of the fibration $\#16$ (\cite{B-L} Table 4) for $k=10$ has a
Weierstrass equation
\[
y^{2}=x^{3}+t_{0}\left(  4\left(  t_{0}^{2}+s^{2}\right)  +t_{0}\left(
s^{4}+14s^{2}+1\right)  x^{2}\right)  +16s^{6}t_{0}^{4}x
\]
with parameter $t_{0}$ . If $t=-t_{0}s^{2}$ and after scaling ($x=-s^{6}%
X_{3}/2,y=is^{9}Y_{3}/2^{3/2}$) this gives the equation (\ref{B}). The image of
$P$ by the isogeny, in the Weierstrass equation (\ref{B})\ is $Q=\left(
\xi,\eta\right)  $ with
\begin{align*}
\xi & =\frac{1}{2s}\frac{\left(  t^{2}-14t-4\sqrt{6}t+s^{2}\right)
^{2}\left(  t-s\right)  ^{2}}{(t+s)^{2}}\\
\eta & =-\frac{\left(  -2+\sqrt{6}\right)  }{4s}\frac{\left(  t^{2}%
-14t-4\sqrt{6}t+s^{2}\right)  \left(  t-s\right)  L_{t}}{(t+s)^{3}}\\
\text{where} \quad L_{t}  & =  t^{6}+2\left(  1+2\sqrt{6}\right)  t^{5}-\left(
993+404\sqrt{6}\right)  t^{4}+\left(  17820+7272\sqrt{6}\right)  t^{3}\\
 &-\left(
97137+39656\sqrt{6}\right)  t^{2}+\left(  56642+23124\sqrt{6}\right)
t+s^{6}.
\end{align*}

Recall that by specialization we have also a point $P^{\prime}=\left(
\xi^{\prime},\eta^{\prime}\right)  $ of $X_{3\text{ }}$-coordinate%
\begin{align*}
\xi^{\prime}  & =-8\frac{t^{3}\left(  t-1\right)  ^{2}}{\left(  t-s^{2}%
\right)  ^{2}}\\
\eta^{\prime}  & =\frac{i32}{19}\frac{\left(  5\sqrt{2}+2\sqrt{3}\right)
t^{4}\left(  19t^{2}-(326+140\sqrt{6})t+931+380\sqrt{6}\right)  }{\left(
t-s^{2}\right)  ^{3}}.%
\end{align*}
Finally the point $P^{\prime\prime}=P^{\prime}+\left(  0,0\right)  =\left(
\xi^{\prime\prime},\eta^{\prime\prime}\right)  $ satisfies%
\[
\xi^{\prime\prime}=\frac{-8t\left(  t-s^{2}\right)  ^{2}}{s^{2}\left(
t-1\right)  ^{2}}.%
\]
If $Q'=Q+(0,0),$ we verify that $ht(P^{\prime\prime}+Q^{\prime})=ht\left(
P^{^{\prime\prime}}-Q^{\prime}\right)  $ so $<P^{\prime},Q^{\prime}>=0$ and
$ht\left(  P^{\prime}\right)  .ht\left(  Q^{\prime}\right)  =18$. So by Shioda-Tate 
formula (\cite{Shio} \cite{Shio1}) $P'$ and $Q'$ generate the Mordell-Weil lattice.

\section{The extremal elliptic fibrations of $Y_{10}$}
\subsection{Embeddings}
The list of the extremal elliptic fibrations of $Y_{10}$ can be found in Shimada-Zangh paper \cite{Shim}. We shall keep Shimada-Zangh numbering and give the corresponding primitive embeddings of $M=N\oplus A_2\oplus A_1$ into Niemeier lattices in the following theorem.
\begin{theorem}

The extremal elliptic fibrations of $Y_{10}$ come from possible primitive embeddings of $M=N\oplus A_2\oplus A_1$ into the following Niemeier lattices with $L_{\text{root}}$
$$E_8^3, \,\, D_{16}E_8,\,\, D_{10}E_7^2,\,\,\, E_6^4,\,\, A_{11}D_7E_6.$$
Eight of them, namely fibrations number 80, 153, 200, 224, 252, 262, 292, 302 are obtained from primitive embeddings of $N$ into a root lattice while the three remaining come from an embedding of $N$ into $Ni(D_{16}E_8)$, namely number 87 and 241 or into $Ni(A_{11}D_7E_6)$, namely number 8.
They are listed below with their Mordell-Weil groups:
$$
\begin{matrix}
\text{Number }& \text{Singular fibers} & \text{Torsion} & \text{from} \\
292 & A_2+A_3+E_6+E_7 &  (0)&  E_8^3\\
302 & A_2 +A_3+A_5+E_8 & (0)&  E_8^3\\
87 & A_1+A_1+A_5+A_{11} &  \mathbb Z/(2)& Ni(D_{16}E_8)\\
241 & A_1+A_{11}+E_6 &   (0) & Ni(D_{16}E_8)\\
200 & A_2+A_5+D_{11}  &  (0) &  D_{16}E_8\\
252 & A_1+A_2+D_9+E_6 &  (0)&  D_{16}E_8 \\
153 & A_2+A_5+D_5+D_6 &  \mathbb Z/(2)&  D_{10}E_7^2\\
262 & A_1+A_2+A_3+A_5+E_7 &  \mathbb Z/(2) &D_{10}E_7^2\\
224 & A_2+A_2+A_3+A_5+E_6 &  \mathbb Z/(3)& E_6^4 \\
80 & A_1+A_2+A_2+A_2+A_{11} &  \mathbb Z/(3)& A_{11}D_7E_6\\

8 & A_1+A_2+A_2+A_3+A_5+A_5 & \mathbb Z/(6) & Ni(A_{11}D_7E_6)
\end{matrix}
$$
\end{theorem}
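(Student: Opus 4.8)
The plan is to verify the theorem by producing, for each of the eleven extremal fibrations in Shimada--Zhang's list, an explicit primitive embedding of $M=N\oplus A_2\oplus A_1$ into the appropriate Niemeier lattice and checking that its orthogonal complement is the prescribed root lattice with the prescribed torsion. Since $M$ has rank $6$ and the Niemeier lattices have rank $24$, the orthogonal complement $W=(M)_{Ni(L_{\text{root}})}^{\perp}$ has rank $18$; because $Y_{10}$ has Picard number $20$ and the fibration is extremal, the frame $W$ must be a root lattice of rank $18$ (no Mordell--Weil part), and the torsion of the fibration is read off from the primitive closure $\overline{W}/W$ inside the Niemeier lattice. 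So for each line of the table the task is: exhibit the embedding, compute $W_{\text{root}}$, and identify $\det W$ and the glue to match the claimed singular fibers and torsion.

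First I would dispose of the eight fibrations coming from an embedding of $N$ into a root lattice of $L_{\text{root}}$. These reduce to the computations already assembled in Section 3: for $292$ and $302$ I embed $N$ into $E_8$ with $(N)_{E_8}^{\perp}=A_2\oplus A_3$ (Lemma~\ref{lem:unimod}(1)), then place the remaining $A_2\oplus A_1$ of $M$ into one of the other two $E_8$ factors, using Proposition~\ref{PNis} to get $(A_2)_{E_8}^{\perp}=E_6$ (giving $292$) or $(A_1)_{E_8}^{\perp}=E_7$ together with... wait, one checks which of $A_1\subset E_8\Rightarrow E_7$ versus $A_2\subset E_8\Rightarrow E_6$ in a second factor plus $A_5$ or $E_8$ in the third reproduces $A_2+A_3+A_5+E_8$ and $A_2+A_3+E_6+E_7$ respectively. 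For $200$ and $252$ I use the embeddings of $N\oplus A_1$ and of $N$ into $D_{16}$ or $E_6$ from Lemmas~\ref{lem:D_n}, \ref{lem:E_6}, placing the complementary root lattice pieces of $M$ in the $E_8$ or $D_{16}$ factor; here $((N)_{D_n}^{\perp})_{\text{root}}\simeq A_2\oplus D_{n-5}$ immediately gives the $D_{11}$ and $D_9$ summands. For $153$ and $262$ I embed $N$ (resp.\ $N\oplus A_1$) into $D_{10}$ or $E_7$ and the rest into the $E_7$ factors, reading the $D_5$, $D_6$, $A_5$, $A_3$ summands from Proposition~\ref{PNis} and Lemma~\ref{D_6}; for $224$ and $80$ I use the embeddings $\phi_0$, $\phi_1$ of $N$ into $E_6$ (Lemma~\ref{lem:E_6}) inside $E_6^4$ and $A_{11}D_7E_6$, exactly as recorded in the ``fibrations involving $\phi_0$ and $\phi_1$'' paragraph — $\phi_0$ gives the $A_3\,2A_2\,A_5\,E_6$ type, and after adjoining $A_1$ one gets fibration $224$; the $3$-torsion comes from the index-$3$ overlattice computed in the lemma on overlattices of $A_1\oplus A_2\oplus N$. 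In each case primitivity of the full embedding of $M$ is checked via Lemma~\ref{prim} (gcd of maximal minors $=1$), and the torsion subgroup is the isotropic part of the discriminant group $G_W\simeq G_M$ carried by the glue code of the Niemeier lattice.

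The harder part is the three fibrations $87$, $241$, and $8$, which genuinely require embedding $N$ not into a root lattice but into the full Niemeier lattice, i.e.\ using glue vectors of norm $-4$ (type (3) in Section~\ref{l}). For $87$ and $241$ inside $Ni(D_{16}E_8)$, whose glue code is generated by $[10]$, I would write a norm $-4$ vector as a combination of a glue vector with a vector of $D_{16}^*$ (using the $[1]_{D_{16}}$ and $[3]_{D_{16}}$ coset representatives from Section~2.2) and complete it to a copy of $N$; for $8$ inside $Ni(A_{11}D_7E_6)$, whose glue code is generated by $[111]$, I would use the Conway--Sloane glue vectors for $A_{11}$, $D_7$ and $E_6$ to build the norm $-4$ vector and the two auxiliary roots $v_1,v_2$ of the $(A_1,A_3)$ or $(A_2,A_2)$ construction. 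The main obstacle is organizational and computational rather than conceptual: one must (i) search the finite but sizeable set of glue-vector combinations for those of norm $-4$ that extend to a primitive copy of $M$, (ii) compute the $18$-dimensional orthogonal complements and reduce them (LLL) to identify the root system, and (iii) confirm that the resulting eleven fibrations are pairwise non-isomorphic and exhaust Shimada--Zhang's list — the last point being guaranteed once the singular-fibre configurations and torsion groups all match the entries of \cite{Shim}. I would present only the representative embeddings and defer the routine minor-gcd and LLL verifications.
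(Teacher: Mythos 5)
Your overall strategy is the same as the paper's: realize each of the eleven Shimada--Zhang configurations by an explicit primitive embedding of $M=N\oplus A_2\oplus A_1$ into the indicated Niemeier lattice, compute the root part of the orthogonal complement, and obtain the torsion by comparing $\det\bigl((M)^{\perp}\bigr)_{\text{root}}$ with $72$. For most of the eight ``root-lattice'' cases your choice of auxiliary lemmas (Lemma \ref{lem:unimod} for $E_8$, Lemma \ref{lem:D_n} for $D_n$, Lemma \ref{lem:E_6} for $E_6$, Proposition \ref{PNis} for the residual $A_1$ and $A_2$) matches the paper's proof. However, your recipe for fibration $80$ is wrong: you propose embedding $N$ into the $E_6$ factor of $A_{11}D_7E_6$ via $\phi_0$ or $\phi_1$, but this consumes $E_6$ and cannot yield the configuration $A_1+3A_2+A_{11}$. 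With $\phi_0$ the orthogonal in $E_6$ contributes an $A_3$ summand that does not occur in fibration $80$; with $\phi_1$ the remaining $A_1\oplus A_2$ must go into $D_7$, whose orthogonal there has rank only $4$, so the total root rank is at most $11+2+4=17<18$ and the fibration is not extremal. The correct embedding is $N\oplus A_1\hookrightarrow D_7$ with orthogonal $A_1\oplus A_2$ (Lemma \ref{lem:D_7}(1)) together with $A_2\hookrightarrow E_6$ (orthogonal $2A_2$), leaving $A_{11}$ untouched.

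Two further points. Your opening claim that extremality forces the frame $W=(M)^{\perp}$ to \emph{be} a root lattice (``no Mordell--Weil part'') is false: extremality only forces $W_{\text{root}}$ to have finite index in $W$, and that index is exactly the torsion order (fibration $8$ has $W/W_{\text{root}}\simeq\mathbb Z/6$); likewise the $3$-torsion of fibration $224$ does not come from the overlattice lemma for $A_1\oplus A_2\oplus N$ (that lemma controls primitivity of the embedding of $M$, not the glue of the frame) but from $\det(W_{\text{root}})=9\times 72=72\cdot 3^2$. Finally, for fibrations $8$, $87$ and $241$ --- the genuinely non-routine content of the theorem --- you only outline a search over norm $-4$ glue-vector combinations. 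A proof must actually exhibit the vectors (the paper uses $v=([3],0)$, $v_1=(d_{15},0)$, $v_2=(d_1,0)$ in $Ni(D_{16}E_8)$, and $v=(a_6^{*},d_1^{*},0)$ in the class of $6g$ in $Ni(A_{11}D_7E_6)$), verify primitivity via Lemma \ref{prim}, and compute the orthogonal root systems; as written, this part of your argument is a plan rather than a proof.
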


\begin{proof}

\begin{itemize}[font=\bf, leftmargin=*] 
\item [Fibration 292]  ($A_2A_3E_6E_7$)
Embed primitively
$N$ into $E_8^{(1)}$ as in lemma \ref{lem:unimod} $(1)$, $A_1$ into $E_8^{(2)}$ and $A_2$ into $E_8^{(3)}$ as in Nishiyama \cite{Nis} Prop. 3.2. Since for these embeddings, $\det (N\oplus A_1 \oplus A_2)_{E_8^3}^{\perp}=\det( A_2 \oplus A_3 \oplus E_6 \oplus E_7)=72$, the Mordell-Weil group is equal to $(0)$. 
\item [Fibration 302] ($A_2A_3A_5E_8$)
Embed primitively
$N$ into $E_8^{(1)}$ as in lemma \ref{lem:unimod} $(1)$,  and $A_1 \oplus A_2$ into $E_8^{(2)}$ as in Nishiyama \cite{Nis} p. 332. Since there is a fiber of type $E_8$, the Mordell-Weil group is equal to $(0)$.
\begin{remark}
There is no other possible extremal primitive embedding into $E_8^3$ by lemma \ref{L:nik0}.
\end{remark}
\item [Fibration 87 and 241]
They follow from a primitive embedding of $N$ into $Ni(D_{16}E_8)$ given in the lemma below.
\begin{lemma}\label{L:D16}
There is a primitive embedding of $N$ into $Ni(D_{16}E_8)$ whose root part of its orthogonal in $D_{16}$ contains $A_{11}\oplus 2A_1$.
\end{lemma}
\begin{proof}
The glue code of $Ni(D_{16}E_8)$ is generated by $([1],0)$ cf. Table 1. The norm 4 vector $v=([3],0)$ and the norm 2 vectors $v_1=(d_{15},0)$, $v_2=(d_1,0)$ define a primitive embedding of $N$ into $Ni(D_{16}E_8)$. Indeed, $[3]=-d_{15}^*+d_1^*$. The primitivity follows from the fact that in the basis 
$[3],d_1, d_i,3 \leq i \leq 16$ of $D_{16}^*$ the matrix of the embedding has for maximal minor the matrix identity.
Moreover $(\langle -d_{15}^*+d_1^*,d_{15},d_1 \rangle_{D_{16}}^{\perp})_{\text{root}}$ contains $\langle d_3,d_4,d_5,d_6,d_7,d_8,d_9,d_{10},d_{11},d_{12},d_{13}\rangle \oplus \langle d_{16} \rangle\oplus \langle d_{16}+d_{15}+2d_{14}+...+2d_2+d_1 \rangle= A_{11}\oplus A_1 \oplus A_1$.
\end{proof}
\item [Fibration 87 ] ($2A_1A_5A_{11}$)
It is obtained from the embedding of $N$ as in the previous lemma and $A_1\oplus A_2$ into $E_8$ as in Nishiyama \cite{Nis} p. 332.
 Since for these embeddings, $\det (N\oplus A_1 \oplus A_2)_{E_8\oplus D_{16}}^{\perp}=\det (A_1 \oplus A_1\oplus A_{11}\oplus A_5)=72\times 4$, the Mordell-Weil group is equal to $\mathbb Z/(2)$.
\item [Fibration 241] ($A_1A_{11}E_6$)
The lattice $N$ being embedded as in lemma \ref{L:D16}, we embed $A_1=d_{16}$ in $D_{16}$ and $A_2$ in $E_8$ as in Proposition \ref{PNis}.

 Since for these embeddings, $\det (N\oplus A_1 \oplus A_2)_{E_8\oplus D_{16}}^{\perp}=\det( A_1 \oplus A_{11}\oplus E_6)=72$, the Mordell-Weil group is equal to $(0)$.
\item [Fibration 200] ($A_2A_5D_{11}$)
We embed $N$ into $D_{16}$ as in lemma \ref{lem:D_n} and  $A_1\oplus A_2$ into $E_8$ as in Nishiyama \cite{Nis} p. 332.

Since for these embeddings, $\det (N\oplus A_1 \oplus A_2)_{E_8\oplus D_{16}}^{\perp}=\det A_2 \oplus A_5 \oplus D_{11}=72$, the Mordell-Weil group is equal to $(0)$.
\item [Fibration 252] ($A_1A_2D_9E_6$)
We embed $N\oplus A_1$ into $D_{16}$ as $\langle d_{16},d_{15},d_{14}+d_{12},d_{10}\rangle$. A direct computation gives the orthogonal $A_1 \oplus D_9\oplus A_2$. We complete by embedding $A_2$ into $E_8$ with orthogonal $E_6$ as in Proposition \ref{PNis}.
Since for these embeddings, $\det (N\oplus A_1 \oplus A_2)_{E_8\oplus D_{16}}^{\perp}=\det (A_1 \oplus A_2 \oplus D_9 \oplus E_6)=72$, the Mordell-Weil group is equal to $(0)$.

\item [Fibration 153] ($A_2 A_5 D_5 D_6$ )
We embed $N$ in $D_{10}$ as in lemma \ref{lem:D_n}, $A_1$ in $E_7^{(1)}$ and $A_2$ in $E_7^{(2)}$ as in Proposition \ref{PNis}.
 Since for these embeddings, $\det (N\oplus A_1 \oplus A_2)_{E_7^2\oplus D_{10}}^{\perp}=\det (A_2 \oplus A_5 \oplus D_5 \oplus D_6)=4 \times 72$, the Mordell-Weil group is equal to $\mathbb Z/(2)$.

\item [Fibration 262] ($A_1A_2A_3 A_5 E_7$ )
We embed  $N\oplus A_1$ in $D_{10}$ as $\langle d_{10},d_9,d_8+d_6,d_4 \rangle$ and  $A_2$ in $E_7^{(1)}$. Since for these embeddings, $\det (N\oplus A_1 \oplus A_2)_{E_8\oplus D_{16}}^{\perp}=\det (A_1 \oplus A_2 \oplus A_3 \oplus A_5 \oplus E_7)=4 \times 72$, the Mordell-Weil group is equal to $\mathbb Z/(2)$.
\item [Fibration 224] ($2A_2A_3 A_5 E_6$ )
We embed $N$ into $E_6^{(1)}$ as in lemma \ref{lem:E_6} (1), $A_1$ into $E_6^{(2)}$, $A_2$ into $E_6^{(3)}$ as in Proposition \ref{PNis}.

Since for these embeddings, $\det (N\oplus A_1 \oplus A_2)_{E_6^4}^{\perp}=\det (A_2 \oplus A_2 \oplus A_3 \oplus A_5 \oplus E_6)=9 \times 72$, the Mordell-Weil group is equal to $\mathbb Z/(3)$.

\item [Fibration 80] ($A_13A_2A_{11}$ )
We embed $N\oplus A_1$ into $D_7$ as in lemma \ref{lem:D_7} (1) and $A_2$ into $E_6$ as in Proposition \ref{PNis}. 
Since for these embeddings, $\det (N\oplus A_1 \oplus A_2)_{A_{11} \oplus E_6 \oplus D_7}^{\perp}=\det (A_1 \oplus A_2^3 \oplus A_{11})=9 \times 72$, the Mordell-Weil group is equal to $\mathbb Z/(3)$.
\item [Fibration 8] ($A_12A_2A_{3}2A_5$ )
\begin{lemma}
There is a primitive embedding of $N\oplus A_2 \oplus A_1$ into $Ni(A_{11}\oplus D_7 \oplus E_6)$ giving the rank $0$
elliptic fibration $A_1 2A_2 A_3 2A_5 $.
\end{lemma}
\begin{proof}
For $Ni(A_{11}D_7E_6)$, $L/L_{\text{root}}$ is generated by the class of the glue vector $g=[[1],[1],[1]]$. In the class  of $6g=[[6],[2],0]$, take the vector
\[ v=(((1/2)^{6},(-1/2)^6),(0^6,1),0)=(a_6^*,d_1^*,0) \]
of norm $4$ and the vectors
\[v_1=(0,d_1,0)\]
\[v_2=(0,(0,0,0,0,0,-1,-1),0)\]
of norm $2$. These vectors realize a primitive embedding of $N$ into $Ni(A_{11}D_7E_7)$ whose root part of its orthogonal in $Ni(A_{11}D_7E_7)$ is $2A_5\oplus D_5 \oplus E_6$ with $D_5=\langle d_7,d_6,d_5,d_4,d_3 \rangle$. Moreover, we embed $A_1=d_7$ into $D_5$ and $A_2$ into $E_6$ as in Proposition \ref{PNis}. From Shimada-Zangh \cite{Shim}, an extremal fibration with singular fibers of type $A_12A_2A_{3}2A_5$ is the fibration $8$ with $6$-torsion.
\begin{remark}

Embedding $A_1$ into $E_6$ and $A_2$ into $D_5$ gives the rank $1$ fibration $3A_5 2A_1$. (obtained also as the 2-isogenous of the specialisation of $\# 20$, hence a fibration of $Y_{10}$ of rank $1$).

Embedding $A_1 \oplus A_2$ into $E_6$, we find the rank $1$ fibration $A_2 2A_5 D_5$ .

Embedding $A_1 \oplus A_2$ into $D_5$, we find the rank $1$ fibration $A_1 2A_5E_6$.

All these embeddings are primitive.
\end{remark}
\end{proof}
\end{itemize}
\end{proof}

\subsection{Weierstrass equations of the extremal fibrations of $Y_{10}$}
Recall the following computations.
Let $Y_{k}$ the surface defined by the Laurent polynomial
\[                                                                                                              
X+\frac{1}{X}+Y+\frac{1}{Y}+Z+\frac{1}{Z}=k                                                                     
\]
We consider the elliptic fibration defined with the parameter $t=X+Y+Z-\frac                                    
{k}{2}.$ We obtain a Weierstrass equation with the transformation
\[                                                                                                              
X=\frac{y-x^{2}}{y\left(  \frac{k}{2}-t\right)  },\quad Y=\frac{y+x}{x\left(                                    
\frac{k}{2}-t\right)  },\quad Z=-X-Y+t+\frac{k}{2}%
\]
of inverse on $Y_{k}$%
\[                                                                                                              
x=-\frac{\left(  Z+Y\right)  \left(  Z+X\right)  }{Z^{2}},\quad y=-\frac                                        
{Y\left(  Z+Y\right)  \left(  Z+X\right)  ^{2}}{XZ^{3}},\quad t=X+Y+Z-\frac                                     
{k}{2}%
\]%
\begin{align*}                                                                                                  
E_{\#20}  &  :y^{2}+\left(  t^{2}+3-\frac{k^{2}}{4}\right)  yx+\left(                                           
t^{2}+1-\frac{k^{2}}{4}\right)  y=x^{3}\\                                                                       
&  I_{12}\left(  \infty\right)  ,\quad2I_{3}\left(  t^{2}+1-\frac{k^{2}}%
{4}\right)  ,\quad2I_{2}\left(  \pm\frac{k}{2}\right)  ,\quad2I_{1}\left(                                       
t^{2}+9-\frac{k^{2}}{4}\right).                                                                                  
\end{align*}
The generic rank is $0$, the point $\left(  -1,1\right)  $ is of order $2,$
and $\left(  0,0\right)  $ of order $3.$

For $k=10$ the equation becomes
\[                                                                                                              
y^{2}+\left(  t^{2}-22\right)  yx+\left(  t^{2}-24\right)  y=x^{3},%
\]
with rank  $1$ and a generator of the Mordell-Weil group 
\begin{align*}                                                                                                  
P  &  =\left(  -1-\frac{1}{432}t^{2}\left(  t^{2}-9\right)  ^{2},\frac                                          
{-1}{15552}i\sqrt{3}\left(  t^{2}+i\sqrt{3}t-12\right)  ^{3}\left(                                              
t+i\sqrt{3}\right)  ^{3}\right) \\                                                                              
&  =\left(  -\frac{1}{432}\left(  t^{2}+3\right)  \left(  t^{4}-21t^{2}%
+144\right)  ,\frac{-1}{15552}i\sqrt{3}\left(  t^{2}+i\sqrt{3}t-12\right)                                       
^{3}\left(  t+i\sqrt{3}\right)  ^{3}\right).                                                                     
\end{align*}
\begin{remark}
Using method of section 8 we can recover from this fibration the result: $T(Y_{10})=T(Y_2)[3]$ \cite{B1}, \cite{B2}.
\end{remark}
 
\subsection{Fibrations using the section $P$}
From the previous Weierstrass equation and the section $P$ we construct two fibrations.

\subsubsection{Fibration $E_{m}$ with singular fibers $I_{10},IV,2I_{3},2I_{2}$                                    
and rank 1}

We use the parameter
\[                                                                                                              
m=-\frac{i\sqrt{3}}{9}\frac{Y-Y_{P}}{X-X_{P}}+\frac{11i\sqrt{3}}{9}-\frac                                       
{1}{108}t\left(  t-3\left(  2-i\sqrt{3}\right)  \right)  \left(  t+3\left(                                      
2+i\sqrt{3}\right)  \right)                                                                                     
\]
and obtain the equation
\begin{align*}                                                                                                  
E_{m}  &  :Y^{2}=X^{3}+\left(  3t^{4}-20t^{2}+16\right)  X^{2}+t^{4}\left(                                      
t^{2}-1\right)  \left(  3t^{2}+8\right)  X+t^{8}\left(  t^{2}-1\right)  ^{2}
\end{align*}                                  
with singular fibers $  IV\left(  \infty\right)  ,$ $ I_{10}\left(  0\right)  ,$ $2I_{3}\left(                                   
27t^{2}-32\right)$,  $2I_{2}\left(  \pm1\right) $.

\begin{itemize}[font=\bf, leftmargin=*]
\item[Fibration $80$]
We use the parameter $n$
\[                                                                                                              
i\sqrt{3}n=\frac{1}{\left(  t+5\right)  }\frac{Y-Y_{P}}{X-X_{P}}-\frac{\left(                                   
11-3i\sqrt{3}\right)  }{\left(  t+5\right)  }-\frac{i\sqrt{3}}{36}%
\frac{\left(  t-6+3i\sqrt{3}\right)  \left(  t+6+3i\sqrt{3}\right)  }{t+5}%
\]
and obtain the Weierstrass equation of the new elliptic fibration $80$ of rank $0$
with $3$-torsion.%

\begin{align*}                                                                                                  
E_{n}  &  :Y^{2}+\left(  9t^{2}+6t-9\right)  YX+9t^{4}\left(  3t^{2}%
+6t-5\right)  Y=X^{3}
\end{align*}                                                                                         
with singular fibers $  I_{3}\left(  \infty\right)  ,$ $I_{12}\left(  0\right)$,                                             
$2I_{3}\left(  3t^{2}+6t-5\right) $,  $I_{2}\left(  \frac{3}{5}\right)                                        
$ $I_{1}\left(  -\frac{3}{4}\right)$.


\item [Fibration $262$]
From the previous Weierstrass equation $E_{n}$ of fibration $80$ and with the
parameter $u=\frac{X}{t^{4}}$ we obtain
\begin{align*}                                                                                                  
E_{u}  &  :Y^{2}-2\left(  t+9\right)  YX=X^{3}+9\left(  t+3\right)  \left(                                      
t+5\right)  X^{2}-t^{3}\left(  t+5\right)  ^{2}X\\                                                              
&  III^{\ast}\left(  \infty\right)  ,\quad I_{6}\left(  0\right)  ,\quad                                        
I_{4}\left(  -5\right)  ,\quad I_{3}\left(  -9\right)  ,\quad I_{2}\left(                                       
-4\right)                                                                                                       
\end{align*}
with rank $0$ and a $2$-torsion section $\left(  0,0\right)  $.

\begin{remark}
This fibration can also be obtained from $E_{n}$ with the parameter $u_{1}%
=\frac{Y}{X\left(  3t^{2}+6t-5\right)  }$; the singular fiber $I_{6}$ is for
$u_{1}=\infty$ and $III^{\ast}$ for $u_{1}=0.$
\end{remark}

\item [Fibration $292$]

From the Weierstrass equation $E_{u}$ of fibration $262$ and with the
parameter $w=8\left(  \frac{X}{t^{3}\left(  t+4\right)  }-\frac{1}{8\left(                                      
t+4\right)  }\right)  $ we obtain
\begin{align*}                                                                                                  
E_{w}  &  :Y^{2}=X^{3}+2t^{2}\left(  3t+5\right)  X^{2}+t^{3}\left(                                             
12t+1\right)  \left(  t-1\right)  ^{2}X+8t^{5}\left(  t-1\right)  ^{4}
\end{align*}                                        
with singular fibers $ IV^{\ast}\left(  \infty\right)$,  $ III^{\ast}\left(  0\right)$, $                                    
I_{4}\left(  1\right)$, $ I_{3}\left(  -\frac{1}{27}\right)$.                                                 
\item [Fibration $252$ ]
From fibration $262$ with the Weierstrass equation $E_{u}$ and the parameter
$\frac{X}{t^{3}}$ we obtain
\begin{align*}                                                                  
E_{t}  &  :Y^{2}=X^{3}+t\left(  10t-1\right)  X^{2}+10t^{4}\left(               
9t-1\right)  X+t^{7}\left(  216t-25\right) 
\end{align*}                                  
with singular fibers $ IV^{\ast}\left(  \infty\right)$, $I_{5}^{\ast}\left(  0\right)$, $  
I_{3}\left(  \frac{4}{27}\right)$, $I_{2}\left(  \frac{1}{8}\right).$        

\item [Fibration $302$]
We start from the \ equation $E_{u}$ of fibration $262$ and parameter
$r=\frac{X-2t}{2\left(  t+4\right)  }$ and obtain the Weierstrass equation
\begin{align*}                                                                                                  
Y^{2}  &  =X^{3}+\left(  -14t^{2}+\frac{63}{2}t+\frac{27}{2}\right)                                             
X^{2}+14t^{3}\left(  3t-19\right)  X-2t^{6}\left(  t-7\right) 
\end{align*}                                                
with singular fibers $ II^{\ast}\left(  \infty\right)$, $I_{6}\left(  0\right),$                                          
$I_{4}\left(  -5\right)$, $I_{3}\left(  9\right)$, $I_{1}\left(                                        
\frac{-7}{27}\right).$                                                                                            
\item [Fibration $200$ ]
We start with the previous fibration $302$, do the translation
$X=250-\frac{175}{2}\left(  t+5\right)  +X_{1}$ and obtain the new Weierstrass
equation
\[                                                                                                              
E_{r}:Y^{2}=X_{1}^{3}+A\left(  t\right)  X_{1}^{2}+B\left(  t\right)  \left(                                    
t+5\right)  ^{2}X_{1}+C\left(  t\right)  \left(  t+5\right)  ^{4}%
\]
where $A,B,C$ are polynomials of degree respectively $2,2,3.$ Then the new
parameter $s=\frac{X_{1}}{\left(  t+5\right)  ^{2}}$ gives the fibration
$200.$ A \ Weierstrass equation is obtained with $s=\frac{15}{2}+t$%
\begin{align*}                                                                                                  
E_{s}&:Y^{2}=X^{3}+(  t^{3}+\frac{17}{2}t^{2}+\frac{3}{4}t+\frac                                       
{27}{8})  X^{2}-t\left(  10t+9\right)  \left(  2t-3\right)                                                
X+2t^{2}\left(  27+50t\right) 
\end{align*}
with singular fibers                                                                                 
$I_{7}^{\ast}\left(  \infty\right),$  $I_{6}\left(  0\right)$,                                       
 $I_{3}\left(  \frac{-9}{4}\right)$, $2I_{1}\left(  4t^{2}+44t-7\right).$                                       
\item [Fibration $153$ of rank $0$]
From fibration $252$ we take the parameter $j=-\frac{X+4t^{3}}{t^{2}\left(                                      
8t-1\right)  }$ and obtain the fibration $153$ with Weierstrass equation
\begin{align*}                                                                                                  
Y^{2} &  =X^{3}+t\left(  t^{2}+10t-2\right)  X^{2}+t^{2}\left(  2t+1\right)                                     
^{3}X
\end{align*}                                                                                                         
with singular fibers  $  I_{2}^{\ast}\left(  \infty\right)$, $ I_{1}^{\ast}\left(  0\right)$                                       
, $I_{6}\left(  \frac{1}{2}\right)  ,$ $I_{3}\left(  4\right).$                                            
\item [Fibrations $8$ and $224$]
We consider the two elliptic fibrations of rank $0$ of $Y_{2}$ with Weierstrass
equations
\begin{align*}                                                                                                  
E_{w}  & :Y^{2}-\left(  t^{2}+2\right)  YX-t^{2}Y=X^{3}\\                                                       
E_{j}  & :Y^{2}-t\left(  t+4\right)  YX+t^{2}Y=X^{3}.%
\end{align*}
In these two cases the section $\left(  0,0\right)  $ is a $3$-torsion section.

Using  results of section $8$, the two curves $H_{w}$ and $H_{j}$ respectively $3$-isogenous
to $E_{w}$ and $E_{j}$ have have the following Weierstrass equations and
singular fibers %

\begin{align*}                                                                                                  
\text{Fibration }8\quad H_{w} &  :Y^{2}+3(t^{2}+2)YX+\left(  t^{2}+8\right)                                     
\left(  t-1\right)  ^{2}\left(  t+1\right)  ^{2}Y=X^{3}\\                                                       
&  2I_{6}\left(  1,-1\right)  ,\quad I_{4}\left(  \infty\right)  ,\quad                                         
2I_{3}\left(  t^{2}+8\right)  ,\quad I_{2}\left(  0\right)  .\\                                                 
\text{Fibration }224\quad H_{j} &  :Y^{2}+3t\left(  t+4\right)  YX+\allowbreak                                  
t^{2}\left(  t^{2}+10t+27\right)  \left(  t+1\right)  ^{2}Y=X^{3}\\                                             
&  VI^{\ast}\left(  0\right)  ,\quad I_{6}\left(  -1\right)  ,\quad                                             
I_{3}\left(  t^{2}+10t+27\right)  ,\quad I_{4}\left(  \infty\right)  .                                          
\end{align*}

Since they have also rank $0$, the discriminants of their transcendental
lattice, computed with the singular fibers and torsion sections are equal to
$72.$ From Shimada-Zhang table \cite{Shim}, only one fibration corresponds to the two
previous types of singular fibers and torsion and comes from $Y_{10}.$

\begin{remark}
These two results can be also recovered with the following method.

For the fibration $8$, we put $Y=-(8+t^{2})X+Z$ in the equation $H_{w}$ and
have the following Weierstrass equation %
\begin{align*}                                                                                                              
Z^{2}+\left(  t^{2}-10\right)  YX+\left(  t^{2}-1\right)  ^{2}\left(                                            
t^{2}+8\right)  ^{2}Y\\
=X^{3}+2\left(  t^{2}-1\right)  \left(  t^{2}+8\right)                                     
X^{2}+\left(  t^{2}-1\right)  ^{2}\left(  t^{2}+8\right)  ^{2}X .                                                
\end{align*}
Using the parameter $m=\frac{Z}{\left(  t-1\right)  ^{2}\left(  t^{2}%
+8\right)  ^{2}}$ and by the $2$-neighbor method we recover the fibration
$252.$ So the fibration $H_{w}$ is a fibration of $Y_{10.}$

For the fibration $224$, we put $Y=Y^{\prime}+\frac{1}{2}\left(  t^{2}%
+10t+27\right)  X$ in the equation $H_{j}$ and \ obtain the equation
\begin{align*}                                                                                                  
&  Y^{\prime2}+\left(  4t^{2}+22t+27\right)  Y^{\prime}X+t^{2}\left(                                            
t+1\right)  ^{2}+\left(  t^{2}+10t+27\right)  Y\\                                                               
&  =X^{3}-\frac{1}{4}\left(  7t+27\right)  \left(  t+1\right)  \left(                                           
t^{2}+10t+27\right)  X^{2}-\frac{1}{2}t^{2}\left(  t+1\right)  ^{2}\left(                                       
t^{2}+10t+27\right)  ^{2}X.                                                                                      
\end{align*}
Then we take $m=\frac{Y^{\prime}}{27\left(  t+1\right)  ^{2}\left(                                              
t^{2}+10t+27\right)  ^{2}}$ as new parameter, and obtain the fibration $292.$
\end{remark}
\item [Fibration $87$]
We consider the Weierstrass equation
\[                                                                                                              
Y^{2}=x^{3}+\left(  -wt^{4}-bt^{3}-ct^{2}-et+h\right)  x^{2}+\left(                                             
-rt^{2}-tm-a\right)  x                                                                                          
\]
with singular fibers $I_{12}\left(  \infty\right)  ,2I_{2}\left(  \left(                                        
rt^{2}+tm+a\right)  \right)  ,8I_{1}\left(  P\left(  t\right)  \right)  .$ The
polynomial $P\left(  t\right)  $ is of degree $8$ depending of $w,b,c,e,h...$ We
choose these coefficients in such a way to get a singular fiber of type
$I_{6}$ at $t=0,$ that is the $i-$coefficient of $P$ equal to $0$ for $i$ from
$0$ to $5.$ There is only two solutions, one of them is the fibration $7-w$ of
$Y_{2},$ the other corresponds to the Weierstrass equation%
\[                                                                                                              
E:Y^{2}=X^{3}-(9t^{4}+9t^{3}+6t^{2}-6t+4)X^{2}+(+21t^{2}-12t+4)X .                                               
\]
The discriminant of the transcendental lattice of the surface with this
elliptic fibration is $72$ or $8.$ The second case occurs if there is a $3$-torsion section and therefore this is an elliptic fibration of $Y_{2}.$ But,
using the equation of $E$ and the parameter   $m=\frac{Y}{3Xt^{2}}$, with
the change $X=2-3t+t^{2}W$, we obtain an equation in $W$ and $t$ of bidegree
$2$, then another elliptic fibration with a Weierstrass equation
\begin{align*}                                                                                                  
Y^{2}+3YX-9\left(  t^{2}+1\right)  ^{2}Y &  =X\left(  X^{2}-6\left(                                             
t^{2}+1\right)  X-9\left(  2t^{2}-1\right)  \left(  t^{2}+1\right)                                              
^{2}\right) 
\end{align*}
and singular fibers                                                                                                  
$ I_{0}^{\ast}\left(  \infty\right)$, $2I_{6}\left(  \pm I\right)$, $                                  
I_{2}\left(  0\right)$, $4I_{1}.$%

Since the rank is $3$, this elliptic fibration cannot come from $Y_{2},$ so the
discriminant of the transcendental lattice is $72.$ From the singular fibers
and results of Shimada-Zhang \cite{Shim}, this fibration comes from $Y_{10}$ and is fibration $87.$
\begin{remark}
We can also start from $E$ and with the parameter $m=3\frac{X-\left(                                            
2-3t+3t^{2}\right)  }{t^{3}}$ construct another elliptic fibration
with Weierstrass equation
\begin{align*}                                                                                                  
F &  :Y^{2}=X^{3}-\left(  3t^{4}-18t^{3}+15t^{2}+27t+9\right)  X^{2}\\%
+3t^{4}\left(  t^{2}-3t-2\right)  \left(  t^{2}-9t+21\right)  X
 & -t^{8}\left(                                     
t^{2}-9t+21\right)  ^{2}
\end{align*}                                                                                      
with singular fibers $  IV^{\ast}\left(  \infty\right)$, $ I_{10}\left(  0\right)$,                                         
$2I_{2}\left(  t^{2}-9t+21\right)$, $2I_{1}.$ %
                                                                 
From $F$ and the parameter $m=\frac{X}{t^{4}\text{ }}$ we have another
elliptic fibration with Weierstrass equation                                                                  
\begin{tabular}{l}                                                                                                  
$Y^{2}+18(2-t)XY+486t^{2}(t+2)Y  =X^{3}+6\left(  7t^{2}+76t-56\right)                                          
X^{2}$\\
$+9t^{2}\left(  t+8\right)  (64t^{2}+241t-224)X$
\end{tabular}                                                         
and singular fibers  $IV^{\ast}\left(  1\right),$ $III^{\ast}\left(  \infty\right),$                                     
$I_{4}\left(  0\right)  ,$  $I_{2}\left(  -7\right)$, $I_{1}\left(                                        
\frac{13}{256}\right) $.                                                                                          
This elliptic fibration can also be obtained from fibration $262$ with the
previous Weierstrass equation and parameter $\frac{X}{t^{3}\left(  t+9\right)                                    
}-\frac{4}{27}\frac{1}{t+9}.$
\end{remark}

\item [Fibration $241$ ]
We start with a general equation
\[                                                                                                              
Y^{2}=X^{3}+p\left(  t\right)  t^{2}X^{2}+q\left(  t\right)  t^{3}X+r\left(                                     
t\right)  t^{4}%
\]
where $p,q,r$ are general polynomials of degree $2.$ So there is singular fiber
at $t=0$ of type $IV^{\ast}.$ We choose the coefficients of $p,q,r$ in such a
way to get a singular fiber of type $I_{12}$ at $t=\infty$ and $I_{2}$ at
$t=-1.$ We find                                                                                                               
\begin{align*}                                                                                                  
Y^{2}  &  =X^{3}+2\left(  128t^{2}+8\left(  11+i\sqrt{3}\right)  t+\left(                                       
17-i\sqrt{3}\right)  \right)  t^{2}X^{2}\\                                                                      
&  +\frac{1}{3}\left(  -3+i\sqrt{3}\right)  \left(  384t^{2}+8\left(                                            
30+4i\sqrt{3}\right)  t+33-i\sqrt{3}\right)  t^{3}X\\                                                           
&  -2\left(  -1+i\sqrt{3}\right)  \left(  48t^{2}+\left(  27+5i\sqrt                                            
{3}\right)  t+2\right)  t^{4}.%
\end{align*}
An extremal fibration of discriminant $72$ comes from a fibration of $Y_{10}$ without 
torsion or a fibration of $Y_2$ with a $3$-torsion section. To conclude, we use a $2$-neighbor fibration. 
From the previous Weierstrass equation we have the following fibration with parameter $\frac{X}{t}$ and 
singular fibers $2I_{6}\left(  0,t_{1}\right)  ,IV^{\ast}\left(  \infty\right)                                     
,I_{2},2I_{1}$ .                                                         
After scaling, a Weierstrass equation of the  fibration  take the
following shape%
\begin{align*}                                                                                                  
Y^{2} &  =X^{3}-\frac{1}{73}\left(  -17+i\sqrt{3}\right)  \left(                                                
146t^{2}+\left(  -145+43i\sqrt{3}\right)  t+20-16i\sqrt{3}\right)  X^{2}\\                                      
&  +\frac{1}{31}\left(  11+i\sqrt{3}\right)  \left(  4t-3+i\sqrt{3}\right)                                      
\left(  124t-85+19i\sqrt{3}\right)  t^{3}X\\                                                                    
&  +16\left(  4t-3+i\sqrt{3}\right)  ^{2}t^{6}.%
\end{align*}
We have the section  $P=\left(  0,4t^{3}\left(  4t-3+i\sqrt{3}\right)                                    
\right)  .$ Computing the height of $P$ \ gives the discriminant of the
transcendental lattice equal to $72$ if $P\neq3S$ generates the Mordell-Weil
group. If $P=3S$, then the discriminant of the transcendental lattice should be equal to
$8$ and this fibration should be a fibration of $Y_{2},$ which is impossible since not in the list of all the elliptic fibrations of $Y_2$ \cite{BL}. So
this is a fibration of $Y_{10}$ since it is the only one in Shimada-Zhang's table\cite{Shim}.
\end{itemize}
\section{Fibrations of high rank and their corresponding embeddings}

A great difference between $Y_2$ and $Y_{10}$ is revealed in the following theorem.
\begin{theorem}
Contrary to $Y_2$ whose fibrations have rank $\leq 2$, the $K3$-surface $Y_{10}$ has elliptic fibrations of high rank, meaning a rank greater than the rank $3$ of some specialized fibrations. We exhibit, in the proof below, examples of fibrations of rank $4$, $5$, $6$ and $7$ together with their singular fibers and possible primitive embeddings. 

The rank $7$ elliptic fibration has been discovered in section 8.3. 
\end{theorem}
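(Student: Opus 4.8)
The plan is to prove the theorem by explicit example: for each target rank $r\in\{4,5,6,7\}$ I would exhibit one elliptic fibration of $Y_{10}$ realizing it. By the Kneser--Nishiyama correspondence recalled in Section~3, an elliptic fibration of $Y_{10}$ is the same datum as a primitive embedding $M=N\oplus A_2\oplus A_1\hookrightarrow Ni(L_{\text{root}})$ into one of the $24$ Niemeier lattices, the frame of the fibration being the orthogonal complement $W=(M)_{Ni(L_{\text{root}})}^{\perp}$. Since $\operatorname{rank}W=24-6=18$ and $\rho(Y_{10})=20$, the Shioda--Tate formula gives the Mordell--Weil rank of the associated fibration as $18-\operatorname{rank}(W_{\text{root}})$. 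Hence it suffices, for each $r$, to produce a primitive embedding of $M$ whose orthogonal complement has root part of rank exactly $18-r$, and then to read off from that embedding the singular fibre configuration (and, where convenient, a Weierstrass model).

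I would start with rank $7$, which requires $\operatorname{rank}(W_{\text{root}})=11$. The shortest route is the embedding into $Ni(D_4^{6})$ already pointed out in Section~3: embed $N$ by a type $(A_2,A_2)$ embedding, writing $(-4)=a_1+b_1$, $v_1=a_2$, $v_2=b_2$ with $\langle a_1,a_2\rangle\simeq A_2$ inside $D_4^{(1)}$ and $\langle b_1,b_2\rangle\simeq A_2$ inside $D_4^{(2)}$, then embed the remaining summand $A_2$ into $D_4^{(3)}$ and $A_1=\langle d_4\rangle$ into $D_4^{(4)}$. Primitivity of $M\hookrightarrow Ni(D_4^{6})$ follows from Lemma~\ref{prim}, and a direct computation shows that the orthogonal complement in $D_4$ of a primitively embedded $A_2$ is rootless of rank $2$; combined with $(A_1)_{D_4}^{\perp}\simeq A_1^{\oplus 3}$ from Proposition~\ref{PNis} and the two untouched copies $D_4^{(5)},D_4^{(6)}$, this yields $W_{\text{root}}\simeq 2D_4\oplus 3A_1$, hence Mordell--Weil rank $7$ and singular fibres $2D_4+3A_1$. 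A second, independent realization of a rank $7$ fibration is the one announced in the statement and built in Section~8.3 from the base change $(Y_2)^{(3)}_{h}$, where an explicit Weierstrass equation and its singular fibres are displayed.

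For ranks $4$, $5$ and $6$ I would search among the Niemeier lattices with many small root components --- typically $Ni(D_6^{4})$, $Ni(A_5^{4}D_4)$, $Ni(E_6^{4})$ and again $Ni(D_4^{6})$ --- for primitive embeddings of $M$ whose orthogonal complement has root part of rank $14$, $13$ and $12$, respectively; the embeddings of $N$ of types $(1)$, $(2)$ and $(3)$ described in Section~3, together with the embedding lemmas for $N$, $N\oplus A_1$ and $N\oplus A_1\oplus A_2$ into $D_n$, $E_6$, $E_7$, $E_8$, supply the building blocks. For each candidate one (i) verifies primitivity of $M\hookrightarrow Ni(L_{\text{root}})$ by Lemma~\ref{prim}, (ii) computes the root part of the orthogonal complement by LLL-reducing the orthogonal and invoking Proposition~\ref{PNis}, then reads off the Mordell--Weil rank from the Shioda--Tate formula and the torsion from the glue code, and (iii) exhibits a Weierstrass model, either by specializing a generic fibration of \cite{B-L} and performing $2$- or $3$-neighbour steps as in Section~5, or --- as for the rank $7$ fibration --- by starting from $(Y_2)^{(3)}_{h}$.

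The main obstacle is step (ii) combined with the primitivity check in step (i): because $M$ is not a root lattice, its embeddings are not confined to $L_{\text{root}}$, so one must genuinely use type $(2)$ embeddings (a norm $-4$ vector split between two root components) and type $(3)$ embeddings (involving glue vectors of norm $-4$), and then sift through the resulting profusion of embeddings to isolate those whose orthogonal complement has the prescribed small root part --- all the while making sure that adjoining $A_1$ and $A_2$ to the embedding of $N$ keeps it primitive. Producing the explicit Weierstrass equations, rather than only the lattice-theoretic existence, is the other laborious ingredient; the neighbour-step computations of Sections~5 and~8 are the template to follow.
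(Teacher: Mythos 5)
Your overall strategy is sound but runs in the opposite direction from the paper's. The paper works equations-first: it starts from the explicit rank-$0$ fibration $252$, passes by a change of elliptic parameter to a rank-$1$ fibration $E_3$ with generator $P_0$, and then uses the multiples $P_i=iP_0$ to define the new parameters $m_i$; each choice yields an explicit Weierstrass equation of a fibration that is automatically on $Y_{10}$, whose singular fibres are read off from the equation, and whose Mordell--Weil rank then follows from Shioda--Tate since $\rho(Y_{10})=20$. The primitive embeddings in the paper's table are supplied afterwards as corroborating lattice data. You propose to go lattice-first, which is legitimate: a primitive embedding of $M$ into a Niemeier lattice whose orthogonal complement has root part of rank $18-r$ does certify a rank-$r$ fibration, by exactly the Shioda--Tate count you give. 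Your rank-$7$ example is correct and coincides with the one the paper itself indicates (type $(A_2,A_2)$ into $D_4^6$, giving $W_{\text{root}}\simeq 2D_4\oplus 3A_1$, hence the fibration $E_u$ of section 8.3 with fibres $2I_0^*+3I_2+6I_1$); your observations that $(A_2)_{D_4}^{\perp}$ is rootless and that any root of $D_4^6$ orthogonal to $N$ must lie in a single $D_4$ factor are the right verifications.

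The gap is that for ranks $4$, $5$ and $6$ you do not actually produce the examples: you describe a search space and a verification procedure, but the theorem's content is precisely the exhibition of concrete fibrations, so the proof is incomplete until specific primitive embeddings (or Weierstrass models) with orthogonal root part of rank $14$, $13$ and $12$ are written down and checked. For comparison, the paper's examples live in $Ni(A_{15}D_9)$, $Ni(A_{11}D_7E_6)$, $Ni(A_5^4D_4)$, $Ni(A_9^2D_6)$, $Ni(A_7^2D_5^2)$ and $Ni(E_6^4)$ --- only partially overlapping your candidate list --- with root parts such as $A_{11}\oplus 2A_1$ for rank $5$ and $2A_5\oplus 2A_1$ or $A_7\oplus A_3\oplus 2A_1$ for rank $6$; the rank-$6$ example in $Ni(A_9^2D_6)$ genuinely requires a glue vector, so the type $(3)$ embeddings you flag as the main difficulty cannot be sidestepped. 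Note also that your plan, as stated, delivers only lattice-theoretic existence; the explicit Weierstrass equations that the paper records (and reuses in sections 7 and 8) require the neighbour-step computations you defer to your step (iii).
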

\begin{proof}
The elliptic fibration $252$  of rank $0$ has the following Weierstrass equation

\begin{tabular}{l}                                                                                                  
$Y^{2}$ $  =X^{3}+\left(  10t^{2}+90t+216\right)  X^{2}-t^{3}\left(  t+5\right)                                   
^{2}$\\                                                                                                          
$ III^{\ast}\left(  \infty\right)  ,\quad I_{6}\left(  0\right)  ,\quad                                        
I_{4}\left(  -5\right)  ,\quad I_{3}\left(  -9\right)  ,\quad I_{2}\left(                                       
-4\right)$.                                                                                                       
\end{tabular}

Taking as new parameter $m=\frac{Y}{X\left(  t+4\right)  }$ we obtain the
fibration
\begin{align*}                                                                                                  
E_{3} &  :Y^{2}=X^{3}+\left(  \frac{1}{4}t^{4}-5t^{2}+27\right)  X^{2}+\left(                                   
t^{2}-9\right)  \left(  2t^{2}-27\right)  X+\left(  2t^{2}-27\right)  ^{2}\\                                    
&  I_{10}\left(  \infty\right)  ,\quad IV\left(  0\right)  ,\quad I_{3}\left(                                   
2t^{2}-27\right)  ,\quad2I_{2}\left(  \pm4\right)  \qquad\text{rank                                             
1-generator }P_{0}.%
\end{align*}
Consider the $6$ points $P_{i}=\left(  X_{i},Y_{i}\right)  $

\begin{tabular}{ll}                                                                               
$P_{0}$ & $ =\left(  27-2t^{2},-\frac{1}{2}\left(  t^{2}-16\right)  \left(                                         
2t^{2}-27\right)  \right)$  \\                                                                                 
$P$ &  $=2P_{0}=\left(  0,2t^{2}-27\right)  \qquad P_{1}=3P_{0}=\left(                                             
-5,\frac{16-t^{2}}{2}\right)  \qquad$\\                                                                          
$P_{2}$ & $ =4P_{0}=\left(  \frac{2t^{2}-27}{4},-\frac{1}{8}\left(                                                 
t^{2}-1\right)  \left(  2t^{2}-27\right)  \right) $ \\                                                           
$P_{3}$ & $ =5P_{0}=\left(  \frac{-1}{25}\left(  2t^{2}-27\right)  \left(                                   
t^{2}-7\right)  ,\frac{1}{250}\left(  t^{2}-16\right)  \left(  6t^{2}%
-1\right)  \left(  2t^{2}-27\right)  \right) $ \\                                                                
$P_{4}$ & $ =6P_{0}=\left(  8\left(  t^{2}-1\right)  ,4t^{4}+22t^{2}-1\right) $                                     
\end{tabular}

and the following parameters
$                                                                                                              
\frac{Y-Y_{P_{i}}}{t\left(  X-X_{P_{i}}\right)  }+\frac{a_{i}t^{2}+b_{i}}%
{t}=m_{i}%
$.

For each example we give in the first column of the table the elliptic
parameter where $P_{i}\left(  a_{i},b_{i}\text{ }\right)  $ means $m_i$ given by
the previous formula, the Niemeier root lattice if the embedding is in it, or
the Niemeier lattice if we need glue vectors. We give in the second column a
Weierstrass equation, embeddings in the third line, singular fibers and in the
same second line the roots of orthogonal of the embeddings with the following
pattern: the first component corresponds to the root lattice of the orthogonal
of the first component of the embedding and so on. In the third column is the
rank and torsion and indications for the previous computations: here $\alpha$
refers to \cite{Nis} p. $308,310,322,$ $\delta$ refers to \cite{Nis} p.$309,311,323,\eta$
refers to \cite{Nis} p. $326, 327$ and $\ast$ to lemmas \ref{lem:D_n} or \ref{lem:D_7}, a sign $-$ meaning
that it is obvious for the corresponding factors. Notice that the Weierstrass
equation in the second column shows sometimes the $x$ coordinate of one or
three sections on the line $y=0.$%
 \newpage%
\[%
\begin{array}
[c]{|c|c|c|}\hline%
\begin{array}
[c]{c}%
\text{Param}\\
\text{Niemeier}\\
\text{lattice}%
\end{array}
&
\begin{array}
[c]{c}%
\text{Weierstrass Equations }\\
\text{Singular fibers }\\
\text{Embeddings}%
\end{array}
& \text{Rk%
$\vert$%
Tor}\\\hline%
\begin{array}
[c]{c}%
\left(  1\right) \\
P_{0}\left(  \frac{1}{2},6\right)
\end{array}
&
\begin{array}
[c]{c}%
\rule{0cm}{0.6cm}y^{2}-2t\left(  7+6t^{2}\right)  yx-72t\left(  21+t^{2}%
\right)  y=\\
\left(  x^{2}+432\left(  t^{2}+8\right)  \right)  \left(  x-\left(
36t^{6}+87t^{4}+223t^{2}+87\right)  \right)
\end{array}
& 5|0\\
& I_{12}\left(  \infty\right)  ,2I_{2}\left(  6t^{2}-49\right)  ,8I_{1}\qquad
A_{11}\left(  2A_{1}\right)  & \left(  \alpha\delta\right) \\
A_{15}D_{9} &
\begin{array}
[c]{c}%
N=\left(
\begin{array}
[c]{c}%
v=\left(  a_{1},d_{7}\right) \\
v_{1}=\left(  0,d_{9}\right) \\
v_{2}=\left(  0,d_{6}\right)
\end{array}
\right)  \qquad A_{2}=\left(
\begin{array}
[c]{c}%
\left(  0,d_{1}\right) \\
\left(  0,d_{2}\right)
\end{array}
\right) \\
A_{1}=\left(  \left(  a_{3},0\right)  \right)
\end{array}
& \\\hline
\left(  2\right)  & \rule{0cm}{0.6cm}y^{2}-\left(  t^{2}+8\right)
yx-t^{2}\left(  3t^{2}+11\right)  y=\left(  x^{2}+12t^{2}\right)  \left(
x-4+3t^{2}\right)  & 4|0\\
P\left(  \frac{1}{2},3\right)  & I_{10}\left(  \infty\right)  ,5I_{2}\left(
0,\pm3,3t^{2}-32\right)  ,4I_{1}\qquad A_{9}\left(  3A_{1}\right)  \left(
2A_{1}\right)  & \left(  \alpha\delta\eta\right) \\
A_{11}D_{7}E_{6} &
\begin{array}
[c]{c}%
N=\left(
\begin{array}
[c]{c}%
v=\left(  0,d_{7},e_{4}\right) \\
v_{1}=\left(  0,0,e_{2}\right) \\
v_{2}=\left(  0,0,e_{5}\right)
\end{array}
\right)  \qquad A_{2}=\left(
\begin{array}
[c]{c}%
\left(  0,d_{1},0\right) \\
\left(  0,d_{2},0\right)
\end{array}
\right) \\
A_{1}=\left(  a_{1},0,0\right)
\end{array}
& \\\hline
\left(  3\right)  & \rule{0cm}{0.6cm}y^{2}-\left(  3+t^{2}\right)
yx+3t^{2}y=\left(  x+1\right)  \left(  x^{2}+4t^{2}x+4t^{4}+27t^{2}\right)  &
4|0\\
P_{1}\left(  \frac{1}{2},-2\right)  & I_{10}\left(  \infty\right)
,2I_{3}\left(  2t^{2}-3\right)  ,I_{2}\left(  0\right)  ,6I_{1}\qquad
A_{9}A_{1}\left(  2A_{2}\right)  & \left(  \alpha\delta\delta\right) \\
A_{11}D_{7}E_{6} &
\begin{array}
[c]{c}%
N=\left(
\begin{array}
[c]{c}%
v=\left(  a_{1},d_{5},0\right) \\
v_{1}=\left(  0,d_{7},0\right) \\
v_{2}=\left(  0,d_{4},0\right)
\end{array}
\right)  \qquad A_{2}=\left(
\begin{array}
[c]{c}%
\left(  0,0,e_{1}\right) \\
\left(  0,0,e_{3}\right)
\end{array}
\right) \\
A_{1}=\left(  0,d_{1},0\right)
\end{array}
& \\\hline
\left(  4\right)  & \rule{0cm}{0.6cm}y^{2}-\left(  3+2t^{2}\right)
yx-t^{2}\left(  3t^{2}-1\right)  y=\left(  x+t^{2}\right)  \left(
x^{2}+12t^{2}\right)  & 4|0\\
P_{2}\left(  \frac{1}{2},\frac{3}{2}\right)  & 2I_{6}\left(  \infty,0\right)
,4I_{2}\left(  \pm1,3t^{2}-2\right)  ,4I_{1}\qquad A_{5}A_{5}0A_{1}\left(
3A_{1}\right)  & \left(  -\alpha\alpha\delta\right) \\
A_{5}^{4}D_{4} &
\begin{array}
[c]{c}%
N=\left(
\begin{array}
[c]{c}%
v=\left(  0,0,0,a_{2},d_{4}\right) \\
v_{1}=\left(  0,0,0,a_{1},0\right) \\
v_{2}=\left(  0,0,0,a_{3},0\right)
\end{array}
\right)  A_{2}=\left(
\begin{array}
[c]{c}%
\left(  0,0,a_{1},0,0\right) \\
\left(  0,0,a_{2},0,0\right)
\end{array}
\right) \\
A_{1}=\left(  0,0,a_{5},0,0\right)
\end{array}
& \\\hline
\left(  5\right)  & \rule{0cm}{0.6cm}y^{2}-\left(  3t^{2}+1\right)
yx-t^{2}\left(  6t^{2}-5\right)  y=\left(  x+4t^{4}-2t^{2}\right)  \left(
x^{2}+3t^{2}\right)  & 6|0\\
P_{3}\left(  \frac{3}{10},\frac{6}{5}\right)  & I_{4}\left(  \infty\right)
,I_{8}\left(  0\right)  ,I_{2}\left(  6t^{2}-1\right)  ,8I_{1}\qquad\left(
A_{7}A_{1}\right)  A_{3}A_{1} & \left(  --\delta\right) \\
L\left(  A_{9}^{2}D_{6}\right)  &
\begin{array}
[c]{c}%
N=\left(
\begin{array}
[c]{c}%
v=\left(  a_{8}^{\ast},a_{6}^{\ast},0\right) \\
v_{1}=\left(  0,a_{5}+a_{6},0\right) \\
v_{2}=\left(  0,a_{6}+a_{7}+a_{8},0\right)
\end{array}
\right)  \qquad A_{2}=\left(
\begin{array}
[c]{c}%
\left(  0,0,d_{1}\right) \\
\left(  0,0,d_{2}\right)
\end{array}
\right) \\
A_{1}=\left(  0,0,d_{6}\right)
\end{array}
& \\\hline
\left(  6\right)  & \rule{0cm}{0.6cm}y^{2}=x^{3}+\left(  t^{4}-44t^{2}%
+472\right)  x^{2}-16\left(  t^{2}-25\right)  x & 1|6\\
P_{4}\left(  \frac{1}{2},1\right)  & I_{12}\left(  \infty\right)
,2I_{3}\left(  t^{2}-24\right)  ,2I_{2}\left(  \pm5\right)  ,2I_{1}\qquad
A_{11}\left(  2A_{1}A_{2}\right)  A_{2}\qquad & \left(  -\ast\eta\right) \\
A_{11}D_{7}E_{6} &
\begin{array}
[c]{c}%
N=\left(
\begin{array}
[c]{c}%
v=\left(  0,d_{5}+d_{3},0\right) \\
v_{1}=\left(  0,d_{6},0\right) \\
v_{2}=\left(  0,d_{7},0\right)
\end{array}
\right)  \qquad A_{2}=\left(
\begin{array}
[c]{c}%
\left(  0,0,e_{1}\right) \\
\left(  0,0,e_{3}\right)
\end{array}
\right) \\
A_{1}=\left(  0,0,e_{2}\right)
\end{array}
& \\\hline
\end{array}
\]

\[%
\begin{array}
[c]{|c|c|c|}\hline%
\begin{array}
[c]{c}%
\text{Param}\\
\text{Niemeier}\\
\text{lattice}%
\end{array}
&
\begin{array}
[c]{c}%
\text{Weierstrass Equation }\\
\text{Singular fibers}\\
\text{Embeddings}%
\end{array}
&
\begin{array}
[c]{c}%
\text{Rk}\\
\text{Tor}%
\end{array}
\\\hline
\left(  7\right)  & \rule{0cm}{0.6cm}y^{2}+\left(  1+t^{2}\right)
yx-t^{2}\left(  6t^{2}+7\right)  y=\left(  x-4t^{2}\right)  \left(
x^{2}+3t^{2}\right)  & 6|0\\
P_{0}\left(  -\frac{1}{2},6\right)  & 2I_{6}\left(  \infty,0\right)
,2I_{2}\left(  3t^{2}-2\right)  ,8I_{1}\qquad A_{5}A_{5}A_{1}A_{1}0 & \left(
-\alpha\alpha\delta\right) \\
A_{5}^{4}D_{4} &
\begin{array}
[c]{c}%
N=\left(
\begin{array}
[c]{c}%
v=\left(  0,0,a_{1},a_{2},0\right) \\
v_{1}=\left(  0,0,0,a_{1},0\right) \\
v_{2}=\left(  0,0,0,a_{3},0\right)
\end{array}
\right)  A_{2}=\left(
\begin{array}
[c]{c}%
\left(  0,0,0,0,d_{4}\right) \\
\left(  0,0,0,0,d_{2}\right)
\end{array}
\right) \\
A_{1}=\left(  0,0,a_{3},0,0\right)
\end{array}
& \\\hline
\left(  8\right)  & \rule{0cm}{0.6cm}y^{2}+\left(  t^{2}+2\right)
yx-t^{2}\left(  3t^{2}+11\right)  y=\left(  x-3t^{2}\right)  \left(
x^{2}+12t^{2}\right)  & 4|2\\
P\left(  -\frac{1}{2},3\right)  & I_{8}\left(  \infty\right)  ,I_{4}\left(
0\right)  ,4I_{2}\left(  \pm1,6t^{2}-1\right)  ,4I_{1}\qquad A_{7}A_{3}\left(
2A_{1}\right)  \left(  2A_{1}\right)  & \left(  -\alpha\delta\delta\right) \\
A_{7}^{2}D_{5}^{2} &
\begin{array}
[c]{c}%
N=\left(
\begin{array}
[c]{c}%
v=\left(  0,a_{1},d_{3},0\right) \\
v_{1}=\left(  0,0,d_{5},0\right) \\
v_{2}=\left(  0,0,d_{4},0\right)
\end{array}
\right)  \qquad A_{2}=\left(
\begin{array}
[c]{c}%
\left(  0,0,0,d_{5}\right) \\
\left(  0,0,0,d_{3}\right)
\end{array}
\right) \\
A_{1}=\left(  0,a_{3},0,0\right)
\end{array}
& \\\hline
\left(  9\right)  & \rule{0cm}{0.6cm}y^{2}+\left(  t^{2}+7\right)
yx+t^{2}\left(  9+2t^{2}\right)  y=\left(  x+2t^{2}\right)  \left(
x^{2}+27t^{2}\right)  & 4|0\\
P_{1}\left(  -\frac{1}{2},-2\right)  & I_{8}\left(  \infty\right)
,I_{4}\left(  0\right)  ,2I_{3}\left(  t^{2}-6\right)  ,6I_{1}\qquad
A_{7}A_{3}\left(  2A_{2}\right)  & \left(  \alpha\delta\eta\right) \\
A_{11}D_{7}E_{6} &
\begin{array}
[c]{c}%
N=\left(
\begin{array}
[c]{c}%
v=\left(  a_{1},d_{5},0\right) \\
v_{1}=\left(  0,d_{7},0\right) \\
v_{2}=\left(  0,d_{4},0\right)
\end{array}
\right)  \qquad A_{2}=\left(
\begin{array}
[c]{c}%
\left(  0,0,e_{1}\right) \\
\left(  0,0,e_{3}\right)
\end{array}
\right) \\
A_{1}=\left(  a_{3},0,0\right)
\end{array}
& \\\hline
\left(  10\right)  & \rule{0cm}{0.6cm}y^{2}=x^{3}-\left(  3t^{4}%
+48t^{2}-264\right)  x^{2}-\left(  864t^{2}-3600\right)  x & 3|2\\
P_{2}\left(  -\frac{1}{2},\frac{3}{2}\right)  & I_{12}\left(  \infty\right)
,4I_{2}\left(  \pm2,6t^{2}-25\right)  ,4I_{1}\qquad A_{11}\left(  3A_{1}\right )%
A_{1}  & \left(  -\delta\eta\right) \\
A_{11}D_{7}E_{6} &
\begin{array}
[c]{c}%
N=\left(
\begin{array}
[c]{c}%
v=\left(  0,d_{1},e_{3}\right) \\
v_{1}=\left(  0,0,e_{1}\right) \\
v_{2}=\left(  0,0,e_{4}\right)
\end{array}
\right)  \qquad A_{2}=\left(
\begin{array}
[c]{c}%
\left(  0,d_{7},0\right) \\
\left(  0,d_{5},0\right)
\end{array}
\right) \\
A_{1}=\left(  0,0,e_{6}\right)
\end{array}
& \\\hline
\left(  11\right)  \, & \rule{0cm}{0.6cm}y^{2}+\left(  t^{2}-4\right)
yx-t^{2}\left(  t^{2}-63\right)  y=\left(  x-9t^{2}\right)  \left(
x^{2}+108t^{2}\right)  & 2|3\\
P_{4}\,\left(  -\frac{1}{2},1\right)  & 2I_{6}\left(  \infty,0\right)
,2I_{3}\left(  2t^{2}-3\right)  ,2I_{2}\left(  \pm1\right)  ,2I_{1}\qquad
A_{5}\left(  2A_{1}\right)  A_{5}\left(  2A_{2}\right)  & \left(  \eta
\delta\eta\eta\right) \\
E_{6}^{4} &
\begin{array}
[c]{c}%
N=\left(
\begin{array}
[c]{c}%
v=\left(  e_{1},e_{3},0,0\right) \\
v_{1}=\left(  0,e_{1},0,0\right) \\
v_{2}=\left(  0,e_{4},0,0\right)
\end{array}
\right)  \qquad A_{2}=\left(
\begin{array}
[c]{c}%
\left(  0,0,0,e_{1}\right) \\
\left(  0,0,0,e_{3}\right)
\end{array}
\right) \\
A_{1}=\left(  0,0,e_{1},0\right)
\end{array}
& \\\hline
\end{array}
\]

The rank $7$ elliptic fibration has a Weierstrass equation denoted $E_u$ in section 8.3 \ref{M} and \ref{P}
\[E_u \qquad Y^2=X^3-5u^2X^2+u^3(u^3+1)^2,\]
and singular fibers $2I_0^*(0,\infty)$, $3I_2(u^3+1)$, $6I_1$.

We can take as possible primitive embedding an embedding of type $(A_2,A_2)$ into $D_4^6$ (see section 3.2).

\end{proof}
\begin{remark}

 All the previous embeddings are of type $b)(A_1,A_3)$, except embedding $(5)$ which uses a glue vector of the corresponding Niemeier lattice and embedding $(6)$ which uses embedding \ref{lem:D_n}.

\end{remark}
\begin{remark}
To illustrate the complexity of the determination of elliptic fibrations of $Y_{10}$, notice the following fibrations:

$2A_1 2A_5 $ $(r=6)$ resulting from an embedding in $A_5^4D_4$ (type $(A_3,A_1)$ into $A_5^2$)

$2A_12A_22A_5$  $(r=2)$ resulting from an embedding in $E_6^4$

$A_12A_2A_32A_5$ $(r=0)$ resulting from an embedding in $Ni(A_{11}D_7E_6).                                                                                                                                           
$
\end{remark}

\section{$2$-isogenies of $Y_{10}$}

In \cite{B-L}, Bertin and Lecacheux classified all the $2$-isogenies of $Y_2$ in two sets, the first defining Morisson-Nikulin involutions, that is from $Y_2$ to its Kummer surface $K_2$ and the second giving van Geemen-Sarti involutions that is exchanging two elliptic fibrations (different or the same) of $Y_2$ named "self-isogenies".

Since we have no exhaustive list of elliptic fibrations of $Y_{10}$ with $2$-torsion sections, we cannot give such a classification. However we found on $Y_{10}$  Morisson-Nikulin involutions from $Y_{10}$
 to its Kummer surface $K_{10}$.
\subsection{Morisson-Nikulin involutions of $Y_{10}$} 

\subsubsection{Specialized Morisson-Nikulin involutions}
With the same argument as for specialization to $Y_2$, Morisson-Nikulin involutions specialized
 to $Y_{10}$ remain Morisson-Nikulin involutions of $Y_{10}$. 

We give in the following Table \ref{Ta:Iso8} the corresponding Weierstrass equations of such elliptic fibrations of the Kummer surface $K_{10}$ with transcendental lattice $\begin{pmatrix}
	12& 0\\
	0 & 24\\
\end{pmatrix}.$

\begin{table}[tp]
\[%
\begin{array}
[c]{|c|c|}

\hline
\text{No} & \text{Weierstrass Equation} \\ \hline
\#4 &
\begin{array}
[c]{c}%
                                                                                                                                                                                                      Y^{2}=X^{3}-2\left(  \frac{t^{3}}{8}-25\left(  49+12\sqrt                                                                                                                                              
{6}\right)  t-4\times7^{2}\left(  \frac{343}{27}+\frac{20\sqrt{6}}{3}\right)                                                                                                                                       
\right)  X^{2}\\                                                                                                                                                                                                   
 +\frac{1}{2^{6}}\left(  t-r_{1}\right)  \left(  t-r_{2}\right)  \left(                                                                                                                                          
t-r_{3}\right)  \left(  t-s_{1}\right)  \left(  t-s_{2}\right)  \left(                                                                                                                                             
t-s_{3}\right)  X\\                                                                                                                                                                                                
s_{1}   =-\frac{58}{3}-4\sqrt{6}+8\sqrt{3}-4\sqrt{2},s_{2}=-\frac{58}%
{3}-4\sqrt{6}-8\sqrt{3}+4\sqrt{2},s_{3}=\frac{116}{3}+8\sqrt{6}\\                                                                                                                                                  
r_{1}   =\frac{2}{3}+4\sqrt{6}+24\sqrt{2}+8\sqrt{3},r_{2}=\frac{2}{3}%
+4\sqrt{6}-24\sqrt{2}-8\sqrt{3},r_{3}=-\frac{4}{3}-8\sqrt{6}%
\end{array}
\\\hline\hline

\#8 &
\begin{array}
[c]{c}%

y^{2}=x^{3}+(2t^{3}-50t^{2}+100t-48)x^{2}+(t^{2}-24t+36)(t^{2}%
-24t+16)(t-1)^{2}x   \\
I_3(0), I_4(1), 4I_2(t^2-24t+16,t^2-24t+36), I_3^*(\infty) , (r=2)

\end{array}

\\\hline\hline
\#16 &
\begin{array}
[c]{c}%

y^{2}=x(656\sqrt{6}t^{2}+4t^{3}+80t\sqrt{6}+1608t^{2}+196t-x)\\
(3824\sqrt                                                                                                                                            
{6}t^{2}+4t^{3}+80\sqrt{6}t+9368t^{2}+196t-x)  \\


\text{or if } t=(5+2\sqrt{6})T \\

            Y=X(X+T(T^2+(238+96\sqrt{6})T+1)(X+T(T^2+(42+16\sqrt{6})T+1))\\
2I_2^*(0,\infty), 4I_2(T^2+16\sqrt{6} T+42T+1, T^2+96\sqrt{6}T+238T+1), (r=2)                                                                                                                                                                                                                
                                                                                                                                                                              
\end{array}
 \\\hline\hline
\#17 &
\begin{array}
[c]{c}%
y^2=x^3+(-2t^4+112t^2-784)x^2\\
+(t^2-48)(t^2-8)(t^2+4t+20)(t^2-4t-20)x

\\
=x(x-(t^2-48)(t^2-8))(x-(t^2+4t-20)(t^2-4t-20)))\\
I_8(\infty), 8I_2(t^2-4t-20,t^2+4t-20,t^2-8,t^2-48), (r=3) 

\end{array}\\\hline\hline
\#23 &
\begin{array}
[c]
{c}

y^2=x^3+(20t^3-50t^2+10t-\frac{1}{2})x^2\\
+\frac{1}{16}(8t-1)(12t-1)(4t^2-12t+1)(4t^2-8t+1)x\\
=x(x+\frac{1}{4}(4t^2-12t+1)(8t-1))(x+(12t-1)(4t^2-8t+1))
\\
I_0^*(\infty),I_6(0), 6I_2(\frac{1}{8}, \frac{1}{12}, 4t^2-8t+1, 4t^2-12t+1), (r=3)\\

\end{array}

\\\hline\hline

\#24 &
\begin{array}
[c]{c}%
y^{2}=x^3+2(49+20\sqrt{6})t(t^2-22t+1)x^2\\
+(49+20\sqrt{6})^2t^2(t^2-14t+1)(t^2-34t+1)x\\

2I_1^*(0,\infty), 5I_2(-1,t^2-14t+1,t^2-34t+1), (r=3)                                                                                                                                                     
\end{array}                                                                                                                                                                                                        
 \\\hline \hline 
 \#26 &
\begin{array}
[c]{c}%
 y^2=x^3+\frac{1}{4}(49+20\sqrt{6})(t^4-20t^3+118t^2-20t+1)x^2\\
 +(49+20\sqrt{6})^2t^2(t^2-10t+1)^2x\\
 =x(x-\frac{1}{4}(t^2-14t+1)(t^2-6t+1))(x-\frac{1}{4}(t^2-10t+1)^2)\\
 4I_4(0,\infty,t^2-10t+1), 4I_2(t^2-14t+1,t^2-6t+1), (r=2)
  
\end{array}                                                                                                                                                                                                    

\\\hline
\end{array}
\]

                                                                                                                                                                                                    \caption{Specialized fibrations of the Kummer $K_{10}$} \label{Ta:Iso8}
\end{table}

\subsubsection{A non specialized Morisson-Nikulin involution of $Y_{10}$}
 \begin{theorem}
 The rank $4$ elliptic fibration  of $Y_{10}$ (\ref{rk4}), with Weierstrass equation
 \[E_t \qquad y^2=x^3-(t^3+5t^2-2)x^2+(t^3+1)^2x\]
 and singular fibers $I_0^*(\infty)$, $3I_4(t^3+1)$, $I_2(0)$, $4I_1(1,-5/3,t^2-4t-4)$, has a $2$-torsion section defining a Morisson-Nikulin involution from $Y_{10}$ to $K_{10}$, that is $F_t=E_t/\langle (0,0) \rangle$ is a rank $4$ elliptic fibration of $K_{10}$ with Weierstrass equation
 \[ F_t \qquad Y^2=X^3+2(t^3+5t^2-2)X^2-(t^2-4t-4)(t-1)(3t+5)t^2X\]
 and singular fibers $I_0^*(\infty)$, $I_4(0)$, $7I_2(\pm 1,-5/3,t^2-t+1,t^2-4t-4)$.
 
\end{theorem}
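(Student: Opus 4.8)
The plan is to verify the statement in three stages: produce $F_t$ by an explicit $2$-isogeny, read off the two fibre configurations from the discriminants, and then pin down the isomorphism class of the quotient surface and thereby the Morrison--Nikulin property.

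First I would recall that $E_t$ is already known to be an elliptic fibration of $Y_{10}$ from the construction in Section~8.3 (\ref{rk4}), and observe that the Weierstrass form $y^{2}=x^{3}-(t^{3}+5t^{2}-2)x^{2}+(t^{3}+1)^{2}x$ has no constant term in $x$, so $(0,0)$ is a $2$-torsion section. Applying the standard formula for the quotient by $\langle(0,0)\rangle$ of a curve $y^{2}=x^{3}+a_{2}x^{2}+a_{4}x$, namely $Y^{2}=X^{3}-2a_{2}X^{2}+(a_{2}^{2}-4a_{4})X$, with $a_{2}=-(t^{3}+5t^{2}-2)$ and $a_{4}=(t^{3}+1)^{2}$, and using the factorisation $a_{2}^{2}-4a_{4}=-t^{2}(t-1)(3t+5)(t^{2}-4t-4)$, yields exactly $F_t$. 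The identity in the opposite direction, $(-2a_{2})^{2}-4(a_{2}^{2}-4a_{4})=16a_{4}=16(t^{3}+1)^{2}$ a perfect square, will be used in the fibre analysis and again at the end.

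Next I would determine the singular fibres. For $E_t$ the discriminant is proportional to $a_{4}^{2}(a_{2}^{2}-4a_{4})=(t^{3}+1)^{4}\,t^{2}(t-1)(3t+5)(t^{2}-4t-4)$; Tate's algorithm (equivalently, the vanishing orders together with the $2$-torsion section) gives $I_{4}$ at each root of $t^{3}+1$, $I_{2}$ at $t=0$, $I_{1}$ at the simple roots $t=1,-5/3$ and at $t^{2}-4t-4=0$, and since these Euler numbers sum to $18$ the fibre at $t=\infty$ is $I_{0}^{*}$. For $F_t$ the discriminant is proportional to $(a_{2}^{2}-4a_{4})^{2}\cdot16a_{4}=t^{4}(t-1)^{2}(3t+5)^{2}(t^{2}-4t-4)^{2}(t^{3}+1)^{2}$, so one obtains $I_{4}$ at $t=0$, $I_{2}$ at the seven points $t=1,-1$, $t^{2}-t+1=0$, $t=-5/3$ and $t^{2}-4t-4=0$ (the last two collisions forced by the double zero of $a_{2}'^{2}-4a_{4}'=16(t^{3}+1)^{2}$ at the cube roots of $-1$), and $I_{0}^{*}$ at $\infty$. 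Because a $2$-isogeny induces an isomorphism of transcendental lattices over $\mathbb{Q}$, $\rho(F_t)=\rho(Y_{10})=20$, so the Shioda--Tate formula gives Mordell--Weil rank $20-2-(4+3+7)=4$ for $F_t$ (and $20-2-(4+9+1)=4$ for $E_t$).

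Finally, the crux: identify the quotient surface with $K_{10}$. Fibrewise translation by $(0,0)$ is a symplectic involution of $Y_{10}$ with eight fixed points, i.e.\ a Nikulin involution, whose desingularised quotient is the K3 surface carrying $F_t$; as recalled in the introduction to Section~7 this involution is either a van Geemen--Sarti ("self") involution, in which case the quotient is again $Y_{10}$ and $\det T=72$, or a Morrison--Nikulin involution, in which case the quotient is $\mathrm{Kum}$ of the abelian surface underlying $Y_{10}$, with $T=T(Y_{10})[2]=\langle12\rangle\oplus\langle24\rangle=T(K_{10})$ and $\det T=288$. The isometric embeddings $\hat\phi^{*}\colon T(Y_{10})[2]\hookrightarrow T(F_t)$ and $\phi^{*}\colon T(F_t)[2]\hookrightarrow T(Y_{10})$ coming from the isogeny $\phi\colon Y_{10}\to F_t$ and its dual (projection formula, $\phi^{*}\phi_{*}=\deg\phi=2$) show by index bookkeeping that $\det T(F_t)\in\{18,72,288\}$, and the value $18$ is excluded since an even rank-$2$ form has determinant $\equiv0$ or $3\pmod{4}$. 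To decide between $72$ and $288$ I would compute $\det\mathrm{NS}(F_t)$ from Shioda--Tate: transporting the four independent sections of $E_t$ through $\phi$ (which multiplies canonical heights by $2$) and adjoining the further sections forced by the component groups of the seven $I_{2}$ fibres, one obtains the Mordell--Weil height matrix of $F_t$ and hence $|\det T(F_t)|=|\det\mathrm{NS}(F_t)|=288$. Therefore $T(F_t)\cong\langle12\rangle\oplus\langle24\rangle=T(K_{10})$, and since both are singular K3 surfaces the Shioda--Inose (Torelli) theorem gives $F_t\cong K_{10}$; as $K_{10}=\mathrm{Kum}(E_{1},E_{2})$ is a Kummer surface, the involution is Morrison--Nikulin. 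The main obstacle is precisely this last step — the explicit Mordell--Weil / transcendental-lattice computation, equivalently exhibiting a concrete isometry $T(F_t)\cong\langle12\rangle\oplus\langle24\rangle$ — since the fibre data alone leave the van Geemen--Sarti and Morrison--Nikulin cases open.
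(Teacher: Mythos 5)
Your first two stages are correct and match what the paper does implicitly: the $2$-isogeny formula with $a_2^2-4a_4=-t^2(t-1)(3t+5)(t^2-4t-4)$ does produce $F_t$, the fibre configurations and Euler numbers check out, and Shioda--Tate gives rank $4$ on both sides. Your framing of the third stage is also sound in principle: since $\pi_*T(Y_{10})\cong T(Y_{10})[2]=[12\ 0\ 24]$ embeds into $T(F_t)$ with finite index and $\pi^*$ gives the reverse embedding after rescaling, the index bookkeeping forces $\det T(F_t)\in\{18,72,288\}$, and $18$ is excluded by the parity of even binary forms. (Your ``van Geemen--Sarti or Morrison--Nikulin'' dichotomy as stated is not automatic --- a priori a determinant-$72$ quotient could be one of the other discriminant-$72$ surfaces the paper encounters --- but it can be repaired by noting that the only even overlattice of $[12\ 0\ 24]$ of index $2$ is $[6\ 0\ 12]$, and that a singular $K3$ is determined by the isometry class of its transcendental lattice.)

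The genuine gap is that the decisive computation is only described, never performed, and you say so yourself (``the main obstacle is precisely this last step''). Everything before it is compatible with \emph{both} outcomes $72$ and $288$; the theorem's content lives entirely in showing $\det\mathrm{NS}(F_t)=288$ and pinning down the isometry class. Transporting four independent sections through $\phi$ and assembling a $4\times4$ height matrix on a fibration with seven $I_2$ fibres is exactly the computation one wants to avoid, and you give no indication of how to produce or control those sections. The paper sidesteps this by a $2$-neighbour step: the parameter $p=X/\bigl((t^2-4t-4)(t-1)(3t+5)\bigr)$ turns $F_t$ into a \emph{rank-one} fibration $F_p$ with fibres $I_0^*,I_3^*,3I_3$ and a single explicit section $P$ of height $\tfrac23$, whence $\det\mathrm{NS}=4\cdot4\cdot27\cdot\tfrac23=288$ by one line of Shioda--Tate; it then writes down the full $20\times20$ Gram matrix of $\mathrm{NS}$ and applies Shimada's lemma to compute the discriminant form $\mathbb Z/12\oplus\mathbb Z/24$ with its quadratic values, matching $-q$ of $[12\ 0\ 24]$ and hence identifying $F_t$ as a fibration of $K_{10}$. (With your index argument the discriminant-form step could be shortened once $\det=288$ is known, since then $T(F_t)=\pi_*T(Y_{10})\cong[12\ 0\ 24]$ on the nose; but without an executed determinant computation the proof is not complete.)
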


\begin{proof}
Starting from $F_t$ and taking the new parameter $p=\frac{X}{(t^2-4t-4)(t-1)(3t+5)}$, we get a rank $1$ elliptic fibration with Weierstrass equation
\begin{align*}
F_{p} &  :Y^{2}=X^{3}+\frac{3}{4}p\left(  5p-1\right)  ^{2}X^{2}+\frac{1}%
{6}p^{2}\left(  2p-1\right)  \left(  5p-1\right)  \left(  49p^{2}%
-13p+1\right)  X\\
&  +\frac{1}{108}p^{3}\left(  2p-1\right)  ^{2}\left(  49p^{2}-13p+1\right)
^{2}
\end{align*}
and singular fibers $I_0^*(\infty)$, $I_3^*(0)$, $3I_3(\frac{1}{2}, 49p^2-13p+1)$.
The infinite section $P=(-\frac{1}{12} p(49p^2-13p+1), \frac{1}{8} p^2(49p^2-13p+1))$ is of height $h(P)=\frac{2}{3}$, is not equal to $2Q$ or $3Q$, hence the discriminant of the N\'eron-Severi lattice satisfies $\Delta=4 \times 4 \times 3^3 \times \frac{2}{3}=72 \times 4$.

Now we are going to compute its Gram matrix and deduce its transcendental lattice.

To compute the N\'eron-Severi lattice we order the generators as $(0)$ the zero section, $(f)$ the generic fiber, $\theta_i, 1 \leq i \leq 4$, $\eta_i, 1\leq i \leq 7$, $\gamma_i$, $\delta_i$, $\epsilon_i$, $1\leq i \leq 2$ the rational components of respectively $D_4$, $D_7$ and the three $A_2$ and finally the infinite section $(P)$.
\[NS=\left ( \begin{smallmatrix}
	-2 & 1 & 0 & 0 & 0 & 0 & 0 & 0 & 0 & 0 & 0 & 0 & 0 & 0 & 0 & 0 & 0 & 0 & 0 & 0\\
	1 & 0 & 0 & 0 & 0 & 0 & 0 & 0 & 0 & 0 & 0 & 0 & 0 & 0 & 0 & 0 & 0 & 0 & 0 & 1\\
	0 & 0 & -2 & 0 & 1 & 0 & 0 & 0 & 0 & 0 & 0 & 0 & 0 & 0 & 0 & 0 & 0 & 0 & 0 & 0\\
	0 & 0 & 0 & -2 & 1 & 0 & 0 & 0 & 0 & 0 & 0 & 0 & 0 & 0 & 0 & 0 & 0 & 0 & 0 & 0\\
	0 & 0 & 1 & 1 & -2 & 1 & 0 & 0 & 0 & 0 & 0 & 0 & 0 & 0 & 0 & 0 & 0 & 0 & 0 & 0\\
	0 & 0 & 0 & 0 & 1 & -2 & 0 & 0 & 0 & 0 & 0 & 0 & 0 & 0 & 0 & 0 & 0 & 0 & 0 & 1\\
	0 & 0 & 0 & 0 & 0 & 0 & -2 & 0 & 1 & 0 & 0 & 0 & 0 & 0 & 0 & 0 & 0 & 0 & 0 & 0\\
	0 & 0 & 0 & 0 & 0 & 0 & 0 & -2 & 1 & 0 & 0 & 0 & 0 & 0 & 0 & 0 & 0 & 0 & 0 & 0\\
	0 & 0 & 0 & 0 & 0 & 0 & 1 & 1 & -2 & 1 & 0 & 0 & 0 & 0 & 0 & 0 & 0 & 0 & 0 & 0\\
	0 & 0 & 0 & 0 & 0 & 0 & 0 & 0 & 1 & -2 & 1 & 0 & 0 & 0 & 0 & 0 & 0 & 0 & 0 & 0\\
	0 & 0 & 0 & 0 & 0 & 0 & 0 & 0 & 0 & 1 & -2 & 1 & 0 & 0 & 0 & 0 & 0 & 0 & 0 & 0\\
	0 & 0 & 0 & 0 & 0 & 0 & 0 & 0 & 0 & 0 & 1 & -2 & 1 & 0 & 0 & 0 & 0 & 0 & 0 & 0\\
	0 & 0 & 0 & 0 & 0 & 0 & 0 & 0 & 0 & 0 & 0 & 1 & -2 & 0 & 0 & 0 & 0 & 0 & 0 & 1\\
	0 & 0 & 0 & 0 & 0 & 0 & 0 & 0 & 0 & 0 & 0 & 0 & 0 & -2 & 1 & 0 & 0 & 0 & 0 & 0\\
	0 & 0 & 0 & 0 & 0 & 0 & 0 & 0 & 0 & 0 & 0 & 0 & 0 & 1 & -2 & 0 & 0 & 0 & 0 & 0\\
	0 & 0 & 0 & 0 & 0 & 0 & 0 & 0 & 0 & 0 & 0 & 0 & 0 & 0 & 0 & -2 & 1 & 0 & 0 & 1\\
	0 & 0 & 0 & 0 & 0 & 0 & 0 & 0 & 0 & 0 & 0 & 0 & 0 & 0 & 0 & 1 & -2 & 0 & 0 & 0\\
	0 & 0 & 0 & 0 & 0 & 0 & 0 & 0 & 0 & 0 & 0 & 0 & 0 & 0 & 0 & 0 & 0 & -2 & 1 & 1\\
	0 & 0 & 0 & 0 & 0 & 0 & 0 & 0 & 0 & 0 & 0 & 0 & 0 & 0 & 0 & 0 & 0 & 1 & -2 & 0\\
	0 & 1 & 0 & 0 & 0 & 1 & 0 & 0 & 0 & 0 & 0 & 0 & 1 & 0 & 0 & 1 & 0 & 1 & 0 & -2\\
\end{smallmatrix} \right ).
 \]

Fom lemma \ref{lem:gram} we deduce that $G_{NS}=\mathbb Z/12 \mathbb Z \oplus \mathbb Z/ 24 \mathbb Z$ and we get generators $f_1$ and $f_2$ with respective norms $q(f_1)=-\frac{41}{12}$, $q(f_2)=-\frac{59}{24}$ modulo $2$ and scalar product $f1.f2=\frac{7}{4}$ modulo $1$.

In order to prove that the transcendental lattice corresponds to the Gram matrix $\begin{pmatrix}
	12 & 0\\
	0 & 24\\
\end{pmatrix}$
we must find for the corresponding quadratic form generators $g_1$ and $g_2$ satisfying $q(g_1)=\frac{41}{12}$, $q(g_2)=\frac{59}{24}$ modulo $2$ and scalar product $g1.g2=-\frac{7}{4}$ modulo $1$.
This is obtained with $g_1=\begin{pmatrix}
	\frac{1}{4}\\
	\frac{1}{3}\\
\end{pmatrix}$ and $g_2=\begin{pmatrix}
	\frac{5}{12}\\
	\frac{1}{8}\\
\end{pmatrix}$.
Thus $F_p$, hence $F_t$, are elliptic fibrations of the Kummer surface $K_{10}$.

\end{proof}

In a previous paper, Bertin and Lecacheux \cite{B-L} explained that, in the Ap\'ery-Fermi family, only $Y_2$ and $Y_{10}$ may have "self-isogenies". A "self-isogeny" is a van Geemen-Sarti involution which either preserve an elliptic fibration(called "PF self-isogeny" for more precision) or exchanges two elliptic fibrations "EF self-isogeny".

Moreover, in the same paper, all the "self-isogenies" of $Y_2$ were listed. Since it is quite difficult to get all the elliptic fibrations of $Y_{10}$ with $2$-torsion sections, we shall give "self-isogenies" of $Y_{10}$ obtained either as specializations or from rank $0$ fibrations or from non specialized positive rank elliptic fibrations.

\subsection{Specialized "self-isogenies"}

In \cite{B-L} we characterized the surface $S_k$ obtained by $2$-isogeny deduced from van Geemen-Sarti involutions of $Y_k$ which are not Morrison-Nikulin. Let us recall the specialized Weierstrass equations of $S_{10}$. We denote $E{\#  n}$ (resp. $EE{\#  n}$) a Weierstrass equation of a fibration of $Y_{10}$ (resp. $S_{10}$).

The specialization of $S_k$ for $k=10$ has the following five elliptic  fibrations given on Table \ref{T:4}. The first Weiertrass equation concerns $Y_{10}$ and the second $S_{10}$ obtained as its $2$-isogenous curve.

\begin{table}[tp]
\[%
\begin{array}
[c]{|c|c|}

\hline
\text{No} & \text{Weierstrass Equation} \\ \hline
\#7 &
\begin{array}
[c]{c}%
 E7:{y}^{2}={x}^{3}+2{x}^{2}t\left (11t+1\right )-{t}^{2}\left (t-1                                              
\right )^{3}x \\ 

 III^*({\infty}), I_1^*(0), I_6(1), 2I_1(t^2+118t+25)\\

EE7:={Y}^{2}={X}^{3}-4{X}^{2}t\left (11t+1\right )+4{t}^{3}\left (118                                           
t+25+{t}^{2}\right )X \\
III^*                                                                               
                                                                                                     ({\infty}), I_2^*(0), I_3(1), 2I_2(t^2+118t+25)                                                                                                                                                                                                                                                                                                                                                                      
\end{array}
\\\hline\hline

\#9 &

\begin{array}
[c]{c}%
 E9:{y}^{2}={x}^{3}+28{t}^{2}{x}^{2}+{t}^{3}\left ({t}^{2}+98t+1                                                 
\right )x  \\         
2III^*(0,{\infty}), 2I_2(t^2+98t+1), 2I_1(t^2-98t+1)\\  
EE9:                                                                                                            
{Y}^{2}={X}^{3}-56{t}^{2}{X}^{2}-4{t}^{3}\left ({t}^{2}-98 t+1                                                  
\right )X    \\
2III^*(0,{\infty}), 2I_2(t^2-98t+1), 2I_1(t^2+98t+1)\\

\end{array}

\\\hline\hline
\#14 &
\begin{array}
[c]{c}%

E14:{y}^{2}={x}^{3}+t\left (98{t}^{2}+28t+1\right ){x}^{2}+{t}^{6}x

\\
 I_8^*(0), I_0^*(\infty), I_1(4t+1), I_1(24t+1), 2I_1(100t^2+28t+1)\\                                             
EE14:{Y}^{2}=X\left (X-96{t}^{3}-28{t}^{2}-t\right )\left (X-100{t}^{                                           
3}-28{t}^{2}-t\right)                                                                                           
\\

  I_4^*(0), I_0^*(\infty), I_2(4t+1), I_2(24t+1), 2I_2(100t^2+28t+1)\\

\end{array}
 \\\hline\hline
\#15 &
\begin{array}
[c]{c}%
E15:{y}^{2}={x}^{3}-t\left (2+{t}^{2}-22t\right ){x}^{2}+{t}^{2}\left (t                                        
+1\right )^{2}x                                                                                                 
\\

I_1^*(0), I_4^*(\infty), I_4(-1), I_1(24), 2I_1(t^2-20t+4)\\
                                                                                                             
EE15:{Y}^{2}=X\left (X+t^3-24t^2\right ) \left (X+t^3-20t^2+4t    \right )                                                                                          
\\

I_2^*(0), I_2^*(\infty), I_2(-1), I_2(24), 2I_2(t^2-20t+4)\\

\end{array}\\\hline\hline
\#20 &
\begin{array}
[c]
{c}
E20:{y}^{2}={x}^{3}+\left (\frac{1}{4}{t}^{4}-5{t}^{3}+{\frac {53}{2}}                                          
{t}^{2}-15t-\frac{3}{4}\right ){x}^{2}-t\left (t-10\right )x                                                    
\\

I_2(0), I_{12}(\infty), 2I_3(t^2-10t+1), I_2(10), I_1(1), I_1(9)\\

EE20:{Y}^{2}={X}^{3}+\left (-\frac{1}{2}{t}^{4}+10{t}^{3}-53{t}^{2}+30t+\frac{3}{2}                             
\right ){X}^{2}\\+\frac{1}{16}\left (t-1\right )\left (t-9\right )\left ({t}^{                                    
2}-10t+1\right )^{3}X \\

I_1(0), I_6(\infty), 2I_6(t^2-10t+1), I_2(1), I_2(9), I_1(10)\\

\end{array}

\\
\hline    
\end{array}
                                                                                                                                                                                             \]

 \caption{Fibrations $E{\#i}$ of $Y_{10}$ and $EE{\#i}$ of  $S_{10}$}\label{T:4}
 \end{table}

\begin{theorem}
\begin{enumerate}[leftmargin=*]
\item The previous $2$-isogenies are in fact "self-isogenies", 
the surface $S_{10}$ being equal to $Y_{10}$.

\item The possible corresponding primitive embeddings of $N\oplus A_1 \oplus A_2$ result for all elliptic fibrations but $\#7$\
 from embeddings of $N$ into a Niemeier lattice not in its root lattice.
\end{enumerate}

\end{theorem}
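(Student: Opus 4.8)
The plan is to establish the two assertions in turn, the first by identifying the surface $S_{10}$ through its transcendental lattice, the second by analysing the Kneser-Nishiyama embeddings of the five fibrations one at a time.

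\textbf{Assertion (1).} A van Geemen-Sarti involution is a Nikulin (symplectic) involution, so it acts trivially on the transcendental lattice; hence the minimal resolution $S_{10}$ of the corresponding quotient of $Y_{10}$ is again a $K3$ surface with $\rho(S_{10})=\rho(Y_{10})=20$, that is, a singular $K3$ surface with $\operatorname{rank}T(S_{10})=2$. By the Torelli theorem for singular $K3$ surfaces it suffices to prove $T(S_{10})\simeq T(Y_{10})=\langle 6\rangle\oplus\langle 12\rangle$, and the plan is to do this by a direct Shioda-Tate computation from one of the Weierstrass equations $EE\#i$ of Table~\ref{T:4}, chosen of smallest Mordell-Weil rank (for instance $EE\#9$ or $EE\#14$): read off the reducible fibres and the torsion, exhibit explicit generators of the Mordell-Weil group with their height pairing --- exactly as in the proof of the non-specialized Morisson-Nikulin theorem above --- assemble the Gram matrix of $NS(S_{10})$, and compute its determinant and discriminant form by lemma~\ref{lem:gram}. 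One checks that there are precisely three even binary forms of determinant $72$, namely $\langle 2\rangle\oplus\langle 36\rangle$, $\langle 4\rangle\oplus\langle 18\rangle$ and $\langle 6\rangle\oplus\langle 12\rangle$, and that these lie in three distinct genera, so the discriminant form alone singles out $T(S_{10})\simeq\langle 6\rangle\oplus\langle 12\rangle$; therefore $S_{10}\simeq Y_{10}$ and the five $2$-isogenies are self-isogenies. (Alternatively, one could specialize to $k=10$ the transcendental lattice of the generic surface $S_k$ obtained in \cite{B-L}.)

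\textbf{Assertion (2).} For each $i\in\{7,9,14,15,20\}$ the plan is to read off from Table~\ref{T:4} the reducible fibres of $EE\#i$, hence the root part $W_{\text{root}}$ of its frame, and then to look for primitive embeddings of $M=N\oplus A_1\oplus A_2$ into a Niemeier lattice $Ni(L_{\text{root}})$ whose orthogonal complement has that root part. The large fibre components determine $L_{\text{root}}$ almost uniquely: the pair of $III^{*}$ fibres of $EE\#9$ forces $D_{10}E_7^2$, the two $D$-type fibres of $EE\#14$ force $D_8^3$, those of $EE\#15$ force $D_6^4$, the three $I_6$'s of $EE\#20$ force $A_5^4D_4$, while for $EE\#7$ the configuration $E_7\oplus D_6$ again lives in $D_{10}E_7^2$. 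For $\#7$ I would exhibit an embedding in which $N$ (or $N\oplus A_1$) lies inside a single root lattice of $L_{\text{root}}$, using the embeddings of Section~3 (lemmas~\ref{lem:D_n},~\ref{D_6},~\ref{lem:E_6} and \S\ref{3.4.10}) together with the placements of $A_1$ and $A_2$ from Proposition~\ref{PNis}, and verify primitivity via lemma~\ref{prim}. For the remaining four, the ambient Niemeier lattice being forced and the untouched large components exhausting the available room, the $(-4)$-vector of $N$ cannot be realised inside one root lattice and must be taken to be a glue vector of norm $-4$ --- such vectors are available in each case, e.g. $[301]$ in $D_{10}E_7^2$, a $[0\,3\,3]$-type word in $D_8^3$, an even permutation of $(0123)$ in $D_6^4$, a suitable glue word of norm $4$ in $A_5^4D_4$ --- so the embedding of $N$ is of the third type, into $Ni(L_{\text{root}})$ and not into $L_{\text{root}}$, and again primitivity is checked by lemma~\ref{prim}.

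\textbf{Main obstacle.} The delicate point is the non-existence half of Assertion~(2): that for $\#9,\#14,\#15,\#20$ there really is no primitive embedding of $M$ with $N$ inside a root lattice realising the required fibres. This rests on the structural obstructions of Section~3 --- $A_2\oplus N$ is never primitive in $D_5$ (lemma~\ref{lem:3.2}), and an embedding of $N$ into a $D_n$ or an $E_l$ forces rigid root and discriminant data in its orthogonal complement (lemmas~\ref{lem:D_n},~\ref{lem:E_6},~\ref{lem:unimod}) which, after removing the untouched large components, are incompatible with these fibre configurations --- so one must run through the short list of Niemeier lattices compatible with each $W_{\text{root}}$ and rule out every root-lattice embedding; this case analysis is where most of the work lies. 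A secondary, purely computational, difficulty in Assertion~(1) is producing the Mordell-Weil generators needed to pin down $NS(S_{10})$, which is why the fibration $EE\#i$ of smallest Mordell-Weil rank should be used.
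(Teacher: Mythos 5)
For assertion (1) your route is workable but is not the paper's, and it is considerably heavier. The paper disposes of the identification $S_{10}=Y_{10}$ in one line: the fibrations $E9$ of $Y_{10}$ and $EE9$ of $S_{10}$ are isomorphic via the explicit change of variables $t=-T$, $x=-\frac{X}{2}$, $y=\frac{Y}{2\sqrt{-2}}$, so the two surfaces share an elliptic fibration and are equal. Your plan --- compute $NS(S_{10})$ by Shioda--Tate from some $EE\#i$, extract the discriminant form by lemma \ref{lem:gram}, and invoke the classification of even binary forms of determinant $72$ together with Torelli --- is the method the paper uses elsewhere (for the non-specialized Morrison--Nikulin involution and throughout section 8), and your count of three such forms in distinct genera is correct; but it forces you to produce and certify Mordell--Weil generators for a positive-rank fibration, which the paper's isomorphism argument avoids entirely. (If you do go this way, note that $EE7$ has rank $1$, lower than $EE9$ or $EE14$.)

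For assertion (2) there are genuine gaps. First, the premise that the reducible fibres ``force'' the ambient Niemeier lattice is unjustified --- the fibre configuration determines the root part of the frame, not the Niemeier lattice into which $M\oplus W$ glues --- and three of your four assignments disagree with the embeddings the paper actually constructs: the paper realizes $EE14$ ($D_8\oplus D_4\oplus 4A_1$) in $Ni(D_{12}^2)$ via the glue vector $[1\,2]$, not in $D_8^3$; $EE15$ ($2D_6\oplus 4A_1$) in $Ni(D_8^3)$ via $[1\,2\,2]$, not in $D_6^4$; and $EE20$ ($3A_5\oplus 2A_1$) in $Ni(A_{11}D_7E_6)$ via $[6\,2\,0]$, not in $A_5^4D_4$. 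Since you never write down the $(-4)$-vector and the two roots $v_1,v_2$ in your chosen lattices, nor check primitivity or compute the orthogonal, the existence half of the claim is not established. Second, the ``main obstacle'' you set yourself --- proving that no root-lattice embedding realizes $\#9,\#14,\#15,\#20$ --- is both more than the theorem asserts (the paper only exhibits \emph{possible} embeddings, one per fibration) and, for $\#9$, demonstrably false: Table \ref{Ta:Fib2} gives the root-lattice embedding $A_2\oplus(A_1\oplus N)\subset D_{10}$ inside $D_{10}E_7^2$ with orthogonal $2E_7\oplus 2A_1$, which is precisely the frame of $EE9\cong E9$. So that part of your programme cannot be carried out as stated.
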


\begin{proof}
\begin{enumerate}[leftmargin=*]
\item
We observe that $E9$ and $EE9$ have the same singular fibers. In fact these two fibrations are isomorphic, the isomorphism being defined by $t=-T$, $x=-\frac{X}{2}$, $y=\frac{Y}{2\sqrt{-2}}$.
This property is sufficient to identify $S_{10}$ to $Y_{10}$.

Among these "self-isogenies" only the number $\#9$ is "PF".

\item Now we give various primitive embeddings for $EE7$, $EE14$, $EE15$ and $EE20$, a primitive embedding for $EE9$ being given in Table \ref{Ta:Fib2}.

\begin{itemize}[font=\bf,leftmargin=*]
\item [EE7] ($2A_1 A_2 D_6 E_7$)

The fibration follows from an embedding into $D_{10}E_7^2$.

We embed $N$ into $D_{10}$.

$(N)_{D_{10}}^{\perp}=A_2 \oplus D_5$. Then we embed $A_2$ into $D_5$ with orthogonal $2A_1$. Finally the embedding of $A_1$ into a copy of $E_7$ has for orthogonal $D_6$.

\item [EE14] ($4A_1 D_4 D_8$)

The fibration follows from an embedding into $Ni(D_{12}^2)$.
The glue code is $[1,2]$ and realizes a vector $v$ of norm $4$, with
\[[1]=(1/2,...,1/2)\qquad \qquad [2]=(0,...,0,1).\]
The embedding of $N$, $A_1$ and $A_2$ is given by
\[N=\begin{pmatrix}
	[1] & [2] \\
	0 & (0^{10},1,-1) \\
	(0^2,1,0^2,1,0^6) & 0 \\
\end{pmatrix}\]
\[A_1=\begin{pmatrix}
(0^6,1,-1,0^4),0)
\end{pmatrix}\]
\[A_2=\begin{pmatrix}
0,(1,-1,0^{10})\\
0,(0,1,-1,0^9)
\end{pmatrix}\]

\item [EE15] ($4A_1 2 D_6 $)

The fibration follows from an embedding into $Ni(D_8^3)$.

The glue code being $[1,2,2]$, we embed $N$, $A_1$ and $A_2$ as 
\[N=\begin{pmatrix}
	[1] & [2] & [2]\\
	0 & (0^{6},1,-1)& 0 \\
	0 & 0 & (0^6,1,-1)\\
\end{pmatrix}\]
\[A_1=\begin{pmatrix}
(0^4,1,-1,0^2) & 0 & 0)
\end{pmatrix}\]
\[A_2=\begin{pmatrix}
(0^2,1,-1,0^4) & 0 & 0\\
(0^3,1,-1,0^3) & 0 & 0
\end{pmatrix}.\]

\item [EE20] ($2A_1 3A_5$)

The fibration follows from an embedding into $Ni(A_{11}D_7E_6)$.
We take the glue vector $[6,2,0]$ with $[6]=(1/2,1/2,1/2,1/2,1/2,1/2,-1/2,...,-1/2)$.

 We embed $N$, $A_1$ and $A_2$ as
 \[N=\begin{pmatrix}
	[6] & [2] & 0\\
	0 & (0^5,1,-1)& 0 \\
	0 & (0^5,1,1) & 0\\
\end{pmatrix}\]
\[A_1=\begin{pmatrix}
0 & 0 & (-1,1,0^6))
\end{pmatrix}\]
\[A_2=\begin{pmatrix}
0 & (1,-1,0^5) & 0\\
0 & (0,1,-1,0^4) & 0
\end{pmatrix}.\]

\end{itemize}

\end{enumerate}

\end{proof}


\subsection{Rank $0$ "self-isogenies"}

\begin{theorem}

 There are four $2$-isogenies from $Y_{10}$ to $Y_{10}$ defined by extremal elliptic fibrations with $2$-torsion sections, namely $8$, $87$, $153$, $262$. They are all "PF self-isogenies".

\end{theorem}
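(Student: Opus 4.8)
The plan is to treat the four fibrations uniformly. First one records that, reading off the Mordell--Weil torsion in the classification of extremal fibrations of $Y_{10}$ established above, exactly fibrations $8$ (torsion $\mathbb{Z}/6$) and $87,153,262$ (torsion $\mathbb{Z}/2$) among the eleven extremal fibrations of $Y_{10}$ carry a $2$-torsion section, so there are indeed precisely four such $2$-isogenies. For each of them the paper has already exhibited in Sections 4 and 5 a Weierstrass equation with a visible $2$-torsion section; after translating that section to $(0,0)$ one writes the generic fibre in the form $y^{2}=x\bigl(x^{2}+a(t)\,x+b(t)\bigr)$ over a field that is at most a quadratic extension of $\mathbb{Q}$. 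The first computation is to pass to the $2$-isogenous fibration by V\'elu's formulas for the subgroup $\{O,(0,0)\}$: the quotient surface $F$ has Weierstrass model
\[F:\qquad Y^{2}=X\bigl(X^{2}-2a(t)\,X+\bigl(a(t)^{2}-4b(t)\bigr)\bigr),\]
with the dual $2$-isogeny $F\dashrightarrow E$ given by the companion formula. Since each $E$ is extremal (Picard number $20$, Mordell--Weil rank $0$), so is $F$, and the $2$-isogeny is a van Geemen--Sarti involution; it will be a self-isogeny exactly when the surface underlying $F$ is again $Y_{10}$, and a ``PF'' one exactly when $F$ is moreover the same fibration as $E$.

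The second step is a finite computation in each of the four cases: factor $a(t)^{2}-4b(t)$ and the discriminant of $F$, read off the Kodaira types of the fibres of $F$ and its torsion subgroup. One checks that in every case $F$ has the same configuration of singular fibres and the same torsion group as $E$ (namely $\mathbb{Z}/6$ for fibration $8$ and $\mathbb{Z}/2$ for $87$, $153$, $262$), the isogeny merely permuting the fibres in a way that preserves the list of Kodaira types. Because $F$ is extremal, its N\'eron--Severi lattice is the unique overlattice of the trivial lattice $U\oplus\bigoplus_{v}R_{v}$ determined by the torsion embedding $T_{\mathrm{tors}}\hookrightarrow\bigoplus_{v}G_{R_{v}}$; hence $|\det NS(F)|=|\det NS(E)|=72$ and, the discriminant form of $NS(F)$ being completely determined by the data just computed, $q_{T(F)}=-q_{NS(F)}=q_{T(E)}$, so that $T(F)\cong T(E)\cong\langle 6\rangle\oplus\langle 12\rangle=T(Y_{10})$. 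A singular $K3$ surface being determined up to isomorphism by its transcendental lattice (Shioda--Inose), $F$ lies on $Y_{10}$; in particular $|\det T(F)|=72\neq 288=|\det T(K_{10})|$, so none of the four involutions is Morrison--Nikulin.

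It remains to see that the $2$-isogeny sends the chosen fibration to itself rather than to a different fibration of $Y_{10}$, i.e. that the involution is of ``PF'' and not of ``EF'' type. This is done exactly as for $EE9$ in Section 7.2: in each case one produces explicitly an isomorphism of elliptic surfaces $F\xrightarrow{\ \sim\ }E$ consisting of a M\"obius transformation of the base parameter $t$ together with a rescaling of $(x,y)$, defined over the same small field that occurred above, after which $F$ and $E$ have literally the same Weierstrass equation. Alternatively one may simply note that in Shimada--Zhang's table the eleven extremal fibrations of $Y_{10}$ have pairwise distinct singular-fibre configurations, so that $F$, having the invariants of $E$ and lying on $Y_{10}$, must equal $E$. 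Either way the composite $E\dashrightarrow F\cong E$ is a van Geemen--Sarti involution preserving both the surface $Y_{10}$ and the fibration, hence a ``PF self-isogeny''. The main obstacle is the second step together with the coordinate changes of the third: carrying out the four explicit $2$-isogeny computations, factoring $a(t)^{2}-4b(t)$, and absorbing the quadratic irrationalities (such as $\sqrt{-2}$, $i\sqrt 3$, $\sqrt 6$) that appear in the Weierstrass data of $Y_{10}$ into the normalizing change of variables; the rest of the argument is formal.
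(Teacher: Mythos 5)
Your proposal is correct and follows essentially the same route as the paper: the paper likewise identifies the four extremal fibrations with $2$-torsion, writes down each $2$-isogenous Weierstrass model $EE_n$ with its singular fibres, and exhibits an explicit isomorphism (a M\"obius change of the base parameter plus a rescaling of $(x,y)$ involving $\sqrt{-2}$) carrying $EE_n$ back to $E_n$, which immediately gives the ``PF'' conclusion. Your intermediate lattice-theoretic argument that the quotient surface is again $Y_{10}$ is harmless but redundant once the explicit isomorphism $F\cong E$ is in hand.
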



\begin{proof}

We write below Weierstrass equation $E_{n}$, its $2$-isogenous $EE_{n}$ and the corresponding isomorphism.

\begin{tabular}{l}
    $E_{262} \qquad y^2=x^3+x^2(9(t+5)(t+3)+(t+9)^2)-t^3(t+5)^2x$\\
	$III^*(\infty)$, $I_6(0)$, $I_4(-5)$, $I_3(-9)$, $I_2(-4)$\\
	$EE_{262} \qquad Y^2=X^3-2(9(T+5)(T+3)+(T+9)^2)X^2+4(T+4)^2(T+9)^3X$\\
	$III^*(\infty)$, $I_6(-9)$, $I_4(-4)$, $I_3(0)$, $I_2(-5)$\\
	Isomorphism: $t=-T-9$, $x=-\frac{X}{2}$, $y=\frac{Y}{2\sqrt{-2}}$.
\end{tabular}

\bigskip
\begin{tabular}{l}
 $E_{153}	\qquad y^2=x^3+t(t^2+10t-2)x^2+(2t+1)^3t^2x$\\
  $I_3(4)$, $I_6(-1/2)$, $I_1^*(0)$, $I_2^*(\infty)$\\
	$EE_{153} \qquad Y^2=X^3-2T(T^2+10T-2)X^2+T^3(T-4)^3X$\\
	$I_6(4)$, $I_3(-1/2)$, $I_2^*(0)$, $I_1^*(\infty)$\\
	 Isomorphism: $t=-\frac{2}{T}$, $x=-\frac{2X}{T^4}$, $y=-\frac{2\sqrt{-2}Y}{T^6}$.
\end{tabular}
\bigskip

\begin{tabular}{l}
	$E_{8} \qquad y^2=x^3-(3t^4-60t^2-24)x^2-144(t^2-1)^3x$\\
	$I_2(0)$, $2I_3(t^2+8)$, $I_4(\infty)$, $2I_6(t^2-1)$\\
	$EE_{8} \qquad y^2=x^3+2(3t^4-60t^2-24)x^2+9t^2(t^2+8)^3x$\\
	$I_2(\infty)$, $2I_3(t^2-1)$, $I_4(0)$, $2I_6(t^2+8)$\\
Isomorphism: 	$t=\frac{2\sqrt{-2}}{T}$, $x=\frac{4X}{T^4}$, $y=\frac{8Y}{T^6}$.\\
\end{tabular}

\bigskip

\begin{tabular}{l}
	$E_{87} \qquad y^2=x^3-(9t^4+9t^3+6t^2-6t+4)x^2+(21t^2-12t+4)x$\\
	$I_{12}(\infty)$, $I_6(0)$, $2I_2(21t^2-12t+4)$, $2I_1(3t^2+6t+7)$\\
	$EE_{87} \qquad Y^2=X^3-(9T^4+9T^3+6T^2-6T+4)X^2+(21T^2-12T+4)X$\\
	$I_{12}(0)$, $I_6(\infty)$, $2I_2(3t^2+6t+7)$, $2I_1(21t^2-12t+4)$\\
	Isomorphism: $t=-\frac{2}{T}$, $x=-\frac{2X}{9T^4}$, $y=\frac{2\sqrt{-2}}{27T^6}$.
\\
\end{tabular}

\end{proof}

\subsection{Positive rank non specialized "EF" and "PF self-isogenies"}

Using the $2$-neighbor method we found many examples of $2$-torsion elliptic fibrations of $Y_{10}$.

Denote $E$, $E_1$, $E_2$, $E_3$, $E_4$ the following elliptic fibrations of $Y_{10}$ obtained in the following way. 
Starting from $E_{ 153}$ and new parameter $\frac{x}{t(2t+1)^2}$ we get $E$. Starting from $EE15$ we get successively $E_2$, $E_3$, $E_4$ with the successive parameters $\frac{x}{t^2(t-24)}$, $\frac{x}{t^2(t+1)}$, $\frac{x}{t(t-4)(t-24)}$. And from $EE_2=E_2/\langle (0,0 )\rangle$ we get $E_1$ with the new parameter $\frac{x}{t(t-1)(t-4)}$.

\begin{theorem}
\begin{enumerate}[leftmargin=*]

\item The $2$-isogenies, from $E_3$ to $EE_3$, from $E_4$ to $EE_4$, from $E_1$ to $EE_1$ are "PF self-isogenies".

\item The $2$-isogenies from $E$ to $EE$, from $EE14$ to $EE14/\langle (100t^2+28t+1,0)\rangle $ and from $E_2$ to $EE_2$ are "EF self-isogenies".

\end{enumerate}
\end{theorem}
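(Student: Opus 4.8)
The plan is to handle all six $2$-isogenies by the same three-step scheme, using only the classical $2$-isogeny formula and the identification $S_{10}=Y_{10}$ proved above. Each of $E$, $E_{1}$, $E_{2}$, $E_{3}$, $E_{4}$ carries the $2$-torsion section $(0,0)$, so after writing it as $y^{2}=x(x^{2}+A(t)x+B(t))$ its van Geemen--Sarti quotient is
\[
EE:\qquad Y^{2}=X(X^{2}-2A(t)X+(A(t)^{2}-4B(t))),
\]
coming from the Vélu map $X=(y/x)^{2}$, $Y=y(B-x^{2})/x^{2}$, exactly as for $EE_{262}$, $EE_{153}$, $EE_{8}$, $EE_{87}$ and $EE14$. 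First I would compute this quotient explicitly for each of $E$, $E_{1}$, $E_{2}$, $E_{3}$, $E_{4}$ and for $EE14$ (with respect to the section $(100t^{2}+28t+1,0)$), and read off the Kodaira fibres of $EE$ from its discriminant, which up to a unit is $(A^{2}-4B)^{2}B$, together with the behaviour at $t=\infty$.

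Second, I would show that each target $EE$ is again an elliptic fibration of $Y_{10}$. This is the heart of the matter: a van Geemen--Sarti quotient of a $Y_{10}$-fibration is a fibration of the family member $S_{10}=Y_{10}$, whereas a Morrison--Nikulin quotient is a fibration of the Kummer surface $K_{10}$ (for which $|\det NS|=4\times 72$); so it suffices to exclude the Morrison--Nikulin alternative, i.e. to check that the target is not $K_{10}$. For the rank $0$ fibrations this is immediate from a Shioda--Tate count: with the singular fibres just found and the order of the $2$-torsion section, $|\det NS(EE)|$ comes out equal to $72$ times the square of the torsion order, the value for $Y_{10}$ and not for $K_{10}$. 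For the positive-rank fibrations I would instead pass to a convenient rank $0$ or rank $1$ $2$-neighbour of $EE$, write down its Néron--Severi Gram matrix, and apply Lemma~\ref{lem:gram} to check that the transcendental lattice is $\begin{pmatrix}6&0\\0&12\end{pmatrix}$, exactly as in the rank $4$ Morrison--Nikulin computation above. The source fibrations $E,E_{1},\dots,E_{4}$ are fibrations of $Y_{10}$ by construction, being obtained from $E_{153}$, $EE15$ or $EE_{2}$ by $2$-neighbour and $2$-isogeny steps (so that, in particular, the claim for $E_{2}\to EE_{2}$ is established before the one for $E_{1}\to EE_{1}$), all starting from fibrations of $Y_{10}=S_{10}$.

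Third, I would separate ``PF'' from ``EF'' by comparing singular fibres. For $E_{3}\to EE_{3}$, $E_{4}\to EE_{4}$ and $E_{1}\to EE_{1}$ the two members of each pair have the same configuration of singular fibres, and I would exhibit an explicit isomorphism of elliptic surfaces over $\mathbb{P}^{1}$ of the form $t\mapsto\lambda T+\mu$ or $t\mapsto\lambda/T$, together with $x=\alpha X/T^{2e}$ and $y=\beta Y/T^{3e}$ (with $\alpha,\beta$ possibly in $\mathbb{Q}(\sqrt{-2})$), just as for $E_{8},E_{87},E_{153},E_{262}$; such an isomorphism shows that the van Geemen--Sarti involution preserves the fibration, so the isogeny is ``PF''. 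For $E\to EE$, $EE14\to EE14/\langle(100t^{2}+28t+1,0)\rangle$ and $E_{2}\to EE_{2}$ the computed fibres of source and target differ (for instance the reducible fibre types, or the number of $I_{2}$ fibres, do not match), so no base-preserving isomorphism can identify them; since both are fibrations of $Y_{10}$ by the previous step, the isogeny relates two inequivalent elliptic fibrations of $Y_{10}$ and is therefore ``EF''.

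The main obstacle is the second step: certifying that each isogenous surface is $Y_{10}$ and not $K_{10}$ or some other $K3$ of discriminant $72$. For the rank $0$ cases the Shioda--Tate count settles it in one line, but for the positive-rank fibrations one must either track the van Geemen--Sarti lattice correspondence carefully or descend to an auxiliary low-rank $2$-neighbour and redo a Néron--Severi/transcendental-lattice computation of the type carried out in the rank $4$ example; organising these reductions so that each case reduces to a short verification is the delicate part, the remaining work — the Vélu computations and the explicit Möbius isomorphisms — being routine.
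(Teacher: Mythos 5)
Your overall scheme matches the paper's: compute the quotient curves, exhibit explicit M\"obius-plus-scaling isomorphisms over $\mathbb{P}^1$ for the three ``PF'' pairs, and for the three ``EF'' pairs certify that the quotient is again a fibration of $Y_{10}$ and observe that the fibre configurations differ. Where you diverge is in the certification step. The paper does not recompute any N\'eron--Severi or transcendental lattice here: it simply chains $2$-neighbour parameter changes until the quotient lands on the already-identified extremal fibration $E_{252}$ of $Y_{10}$ (e.g.\ $EE$ with parameter $X/(3T+1)^3$ gives $E_{252}$ directly; $EE_2$ reaches $E_{252}$ after two parameter changes; $EE14/\langle(100t^2+28t+1,0)\rangle$ is reduced to $EE_2$), which settles the identification at once. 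Your alternative --- descending to a low-rank $2$-neighbour and redoing a Gram-matrix/discriminant-form computation as in the rank $4$ Morrison--Nikulin example --- would also work and is the robust fallback when the neighbour does not happen to be a recognizable fibration, but it is considerably heavier.

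One step of your plan, as stated, would not go through: the claim that for a rank $0$ quotient a Shioda--Tate count giving $|\det NS|=72$ ``settles it in one line.'' Discriminant $72$ does not single out $Y_{10}$; the paper itself produces, via $3$-isogenies, fibrations of two \emph{other} singular $K3$ surfaces of discriminant $72$, with transcendental lattices $[4\ 0\ 18]$ and $[2\ 0\ 36]$. To conclude from numerical data alone one needs either the Shimada--Zhang table (as the paper does for the extremal fibrations) or the discriminant form, not just the discriminant. Likewise, ``it suffices to exclude the Morrison--Nikulin alternative'' is not justified in general: a priori the quotient by a $2$-torsion translation could be a third $K3$ surface, neither $Y_{10}$ nor $K_{10}$. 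Since all six fibrations in this theorem have positive rank, the faulty shortcut is never actually invoked, and your fallback (full transcendental-lattice computation, or reduction to a known $Y_{10}$ fibration) repairs it; but you should strike the one-line discriminant argument from the write-up.
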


\begin{proof}
\begin{enumerate}[leftmargin=*]
\item We only need to give the respective Weierstrass equations, singular fibers and isomorphisms concerning the $2$-isogenies from $E_i$ to $EE_i$.

\begin{tabular}{c}
	$E_3 \qquad y^2=x^3-2t(t^2-14t-2)x^2+t^4(t-4)(t-24)x$\\
$I_4^*(0)$, $2I_2(4,24)$, $2I_1(-1/2,-1/12)$, $I_2^*(\infty)$\\
$EE_3=E_3/\langle (0,0)\rangle \hfill$
\\$ Y^2=X^3+T(T^2-14T-2)X^2+T^2(2T+1)(12T+1)X$\\
 $I_2^*(0)$, $2I_1(4,24)$, $2I_2(-1/2,-1/12)$, $I_4^*(\infty)$
\\
Isomorphism: $t=-\frac{2}{T}, \qquad x=-\frac{8X}{T^4} \qquad y=\frac{16\sqrt{-2}Y}{T^6} $\\
\end{tabular}
\bigskip

\begin{tabular}{c}
$E_4 \qquad y^2=x^3-28t^2(t-1)x^2+4t^3(t-1)^2(24t+1)x$\\
$III^*(0)$, $I_0^*(1)$, $I_2(-1/24)$, $I_1(1/25)$, $I_0^*(\infty)$\\
$EE_4=E_4/\langle (0,0)\rangle \hfill $\\$Y^2=X^3+56T^2(T-1)X^2+16T^3(T-1)^2(25T-1)X$\\
$III^*(0)$, $I_0^*(1)$, $I_2(1/25)$, $I_1(-1/24)$, $I_0^*(\infty)$\\
Isomorphism: $t=\frac{T}{T-1}, \qquad x=-\frac{X}{2(T-1)^4} \qquad y=-\frac{\sqrt{-2}Y}{4(T-1)^6}$\\
\end{tabular}

\bigskip

\begin{tabular}{c}
$E_1 \qquad y^2=x^3-t(5t^2+56t+160)x^2+4t^2(t+6)^2(t+4)^2x$\\
$I_0^*(0)$, $2I_4(-4,-6)$, $2I_2(-8,-16/3)$, $I_0^*(\infty)$\\
$EE_1=E_1/\langle (0,0)\rangle \hfill$\\$ Y^2=X^3+2T(5T^2+56T+160)X^2+T^2(T+8)^2(3T+16)^2X$\\
$I_0^*(0)$, $2I_4(-8,-16/3)$, $2I_2(-6,-4)$, $I_0^*(\infty)$\\
Isomorphism: $t=\frac{32}{T}, \qquad x=-\frac{2^9X}{T^4} \qquad y=\frac{2^{13}\sqrt{-2}Y}{T^6} $\\

\end{tabular}

\item Let us give Weierstrass equations and singular fibers of $E$ and $EE$.

\begin{tabular}{c}
$E \qquad y^2=x^3+2t(2t^2+5t+1)x^2+t^3(4t+1)(t-1)^2x$	\\
	$I_2^*(0)$, $I_4(1)$, $I_3(-1/3)$, $I_2(-1/4)$, $I_1^*(\infty)$\\
	$EE=E/\langle (0,0)\rangle \hfill$\\$ Y^2=X^3-4T(2T^2+5T+1)X^2+4T^2(3T+1)^3X$\\
	$I_1^*(0)$, $I_6(-1/3)$, $I_2(1)$, $I_1(-1/4)$, $I_2^*(\infty) $\\
\end{tabular}

\noindent The fibration $EE$ is a fibration of $Y_{10}$, since with the new parameter $\frac{X}{(3T+1)^3}$, we get the rank $0$ elliptic fibration $E_{252}$.
\bigskip

\noindent We also obtain

\begin{tabular}{c}
$E_2 \qquad y^2=x^3-4t(t+1)(6t+5)x^2+4t^2(t+1)^3x$\\
$I_0^*(0)$, $I_2^*(-1)$, $2I_1(-8/9,-3/4)$\\
$EE_2=E_2/\langle (0,0)\rangle \hfill$\\$Y^2=X^3+8T(T+1)(6T+5)X^2$\\
$+16T^2(T+1)^2(9T+8)(4T+3)X$\\
$I_1^*(-1)$, $I_0^*(0)$, $2I_2(-3/4,-8/9)$, $I_1^*(\infty)$\\

\end{tabular}

\noindent To prove that $EE_2$ is an elliptic fibration of $Y_{10}$
first we change the parameter $T=1/u-1$ to get the new equation
$EE_2(1)$.
\[EE_2(1) \qquad y^2=x^3+8u(u-1)(u-6)x^2+16u^2(u-4)(u-9)(u-1)^2x\]
Now with the new parameter $\frac{x}{u(u-1)(u-4)(u-9)}$, we obtain
\[EE_2(2) \qquad y^2=x^3-t(59t^2-88t+32)x^2+32t^2(t-1))(3t-2)^3x\]
Again, from $EE_2(2)$, the parameter $\frac{x}{t^2(t-1)}$ leads to the rank $0$ fibration $E_{252}$ of $Y_{10}$.
\bigskip

\noindent Finally, the fibration $EE14/\langle(100t^2+28t+1,0)  \rangle $ with Weierstrass equation
\[ y^2=x^3-2t(104t^2+28t+1)x^2+t^2(4t+1)^2(24t+1)^2x\] 
is a fibration of $Y_{10}$, since with the new parameter $\frac{x}{t(4t+1)^2}$ we obtain $EE_2$.

\end{enumerate}

\end{proof}
In the previous theorem we gave "self-isogenies" of elliptic fibrations with rank less than $2$. However we found in section $8$ an interesting $2$-torsion rank $4$ fibration. We present it in the following theorem.

\begin{theorem} 
The rank $4$ elliptic fibration of $Y_{10}$ (\ref{rk42}) with singular fibers $3I_4$, $3I_2$, $2III$, Weierstrass equation 
\[F \qquad y^2=x^3+4t^2x^2+t(t^3+1)^2x\]
and its $2$-isogenous $F/\langle (0,0) \rangle$ are "PF self-isogenous".

\end{theorem}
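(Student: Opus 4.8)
The plan is to make the $2$-isogeny completely explicit and then to recognise the quotient curve as $F$ itself, after an affine change of the base parameter combined with a rescaling involving $\sqrt{-2}$, exactly in the style of the "self-isogeny" computations in the previous subsections. First I would write $F$ in the form $y^2 = x(x^2+Ax+B)$ with $A=4t^2$ and $B=t(t^3+1)^2$, and apply the classical degree-$2$ formula: the curve $F/\langle(0,0)\rangle$ has Weierstrass equation $Y^2 = X\bigl(X^2 - 2AX + (A^2-4B)\bigr)$. Expanding $A^2-4B = 16t^4 - 4t(t^3+1)^2 = -4t(t^3-1)^2$ gives
\[ F/\langle(0,0)\rangle:\qquad Y^2 = X^3 - 8t^2 X^2 - 4t(t^3-1)^2 X, \]
whose discriminant is $4096\,t^3(t^3-1)^4(t^3+1)^2$, so its fibres are of type $III$ at $t=0$ and $t=\infty$, $I_4$ at $t^3=1$, and $I_2$ at $t^3=-1$: the same collection $2III,3I_4,3I_2$ as $F$, as it must be.

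Next I would exhibit the isomorphism with $F$. The substitution $t=-T$ turns $(t^3-1)^2$ into $(T^3+1)^2$ and $-4t$ into $4T$, giving $Y^2 = X^3 - 8T^2 X^2 + 4T(T^3+1)^2 X$; then the rescaling $X=-2x$, $Y=2\sqrt{-2}\,y$ (compatible since $(2\sqrt{-2})^2=-8=(-2)^3$) transforms this into $y^2 = x^3 + 4T^2 x^2 + T(T^3+1)^2 x$, which is precisely $F$ with parameter $T$. Hence $F/\langle(0,0)\rangle$ and $F$ are isomorphic as elliptic surfaces over $\mathbb{P}^1$, the isomorphism covering the involution $t\mapsto -t$ of the base (which fixes $t=0,\infty$ and interchanges the loci $t^3=1$ and $t^3=-1$, in agreement with the fibre list above). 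Since $F$ is, by construction in Section 8, an elliptic fibration of $Y_{10}$, so is $F/\langle(0,0)\rangle$; therefore the van Geemen–Sarti involution on $Y_{10}$ determined by the $2$-torsion section $(0,0)$ preserves this very fibration, i.e. it is a "PF self-isogeny".

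I do not expect a genuine obstacle: the whole argument reduces to the explicit computation above together with the dual-isogeny remark that composing the $2$-isogeny with its dual is multiplication by $2$. The only point that is not purely mechanical is that the self-isomorphism is not defined over $\mathbb{Q}$ but over $\mathbb{Q}(\sqrt{-2})$, so one has to admit the scalars $-2$ and $2\sqrt{-2}$ on $x$ and $y$ — just as in the isomorphisms displayed earlier for $E_9$, $E_8$, $E_{87}$, $E_{153}$ and $E_{262}$; but the correct base substitution $t\mapsto -t$ and the matching scalars are forced by comparing the $x^2$-coefficients of the two equations, so finding them is essentially immediate.
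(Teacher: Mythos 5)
Your proposal is correct and follows essentially the same route as the paper: compute $F/\langle(0,0)\rangle:\;Y^2=X^3-8t^2X^2-4t(t^3-1)^2X$ explicitly via the standard degree-$2$ formula and then exhibit the isomorphism back to $F$ given by $t\mapsto -T$, $X=-2x$, $Y=\pm 2\sqrt{-2}\,y$ (the sign of $Y$ is immaterial). The only item in the paper's proof that you omit is the supplementary remark identifying a possible primitive embedding of $N\oplus A_1\oplus A_2$ into $Ni(A_3^8)$ realizing this fibration, which is not needed for the "PF self-isogeny" claim itself.
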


\begin{proof}
We get
\[F/\langle (0,0) \rangle \qquad Y^2=X^3-8T^2X^2-4T(T^3-1)^2X\]
with the same type of singular fibers. The isomorphism is given by
\[T=-t,\qquad Y=-2\sqrt{-2}y,\qquad X=-2x.\]

The possible primitive embedding giving the fibration may be obtained in $Ni(A_3^8)$ using the norm $4$ glue vector $(3,2,0,0,1,0,1,1)=(a_1^*,a_2^*,0,0,a_3^*,0,a_3^*,a_3^*)$ as
\[N=\begin{pmatrix}
	a_1^* & a_2^* & 0 & 0 &a_3^* & 0 & a_3^* & a_3^* \\
	0 & 0 & 0 & 0 &0 & 0 &a_3 & 0 \\
	0 & 0 & 0 & 0 & 0 & 0 & 0 & a_3\\
\end{pmatrix}\]
\[A_1=\begin{pmatrix}
0 & 0 & 0 & 0 & a_1 & 0 & 0 &0
\end{pmatrix}\]
\[A_2=\begin{pmatrix}
a_2 &0 & 0 & 0 &0 & 0 & 0 & 0\\
a_3 &0 & 0 & 0 &0 & 0 & 0 & 0

\end{pmatrix}.\]

\end{proof}
\section{$3$-isogenies from $Y_2$ and from $Y_{10}$}

\subsection{$3$-isogenous curves}

\subsubsection{Method}

Let $E$ be an elliptic curve with a $3$-torsion point $\omega=\left(  0,0\right)                                                                                                                                      
$
\[E:Y^{2}+AYX+BY=X^{3}                             
\]
and $\phi$ the isogeny of \ kernel $<\omega>.$

To determine a Weierstrass \ equation for the elliptic curve $E/<\omega>$ we
need two functions $x_{1}$ of degree $2$ and $y_{1}$ of degree $3$ invariant
by $M\rightarrow M+\omega$ where $M=\left(  X_{M},Y_{M}\right)  $ \ is a
general point on $E.$ We compute $M+\omega$ and $M+2\omega\left(                                                                                                                                                 =M-\omega\right)  $ and can choose%

\begin{align*}
x_{1}  &  =X_{M}+X_{M+\omega}+X_{M+2\omega}=\frac{X^{3}+ABX+B^{2}}{X^{2}}\\                                                                                                                                       
y_{1}  &  =Y_{M}+Y_{M+\omega}+Y_{M+2\omega}
\\ &=\frac{Y\left(  X^{3}%
-AXB-2B^{2}\right)  -B\left(  X^{3}+A^{2}X^{2}+2AXB+B^{2}\right)  }{X^{3}}.%
\end{align*}
The relation between $x_{1}$ and $y_{1}$ gives a Weierstrass equation for
$E/<\omega>$
\[                                                                                                                                                                                                                 
y_{1}^{2}+\left(  Ax_{1}+3B\right)  y_{1}=x_{1}^{3}-6ABx_{1}-B\left(                                                                                                                                               
A^{3}+9B\right).                                     
\]

Notice that the points with $x_{1}=-\frac{A^{2}}{3}$ are $3$-torsion points.
Taking one of these points to origin and after some transformation we can
obtain a Weierstrass equation $y^{2}+ayx+by=x^{3}$ with the following transformations.

\subsubsection{Formulae} \label{formulae}

If $j^{3}=1$ then we define
\begin{align*}                                                                                                                                            
S_{1}  &  =2\left(  j^{2}-1\right)  y+6Ax-2\left(  j-1\right)  \left(                                                                                                                                              
A^{3}-27B\right) \\                                                                                                                                                                                                
S_{2}  &  =2\left(  j-1\right)  y+6Ax-2\left(  j^{2}-1\right)  \left(                                                                                                                                              
A^{3}-27B\right)                                                                                                                                                                                                   
\end{align*}
and
\[                                                                                                                                                                                                                 
X=\frac{-1}{324}\frac{S_{1}S_{2}}{x^{2}},\qquad Y=\frac{1}{5832}\frac                                                                                                                                              
{S_{1}^{3}}{x^{3}}%
\]
then we have
\[                                                                                                                                                                                                                 
E/<\omega>:y^{2}+\left(  -3A\right)  yx+\left(  27B-A^{3}\right)  y=x^{3}.%
\]
If $A_{1}=-3A$, $B_{1}=27B-A^{3},$ then we define%

\begin{align*}                                                                                                                                                                                                     
\sigma_{1}  &  =2\left(  j^{2}-1\right)  3^{6}Y+6A_{1}3^{4}X-2\left(                                                                                                                                               
j-1\right)  \left(  A_{1}^{3}-27B_{1}\right) \\                                                                                                                                                                    
\sigma_{2}  &  =2\left(  j-1\right)  3^{6}Y+6A_{1}3^{4}X-2\left(                                                                                                                                                   
j^{2}-1\right)  \left(  A_{1}^{3}-27B_{1}\right)                                                                                                                                                                   
\end{align*}
and then%

\[                                                                                                                                                                                                                 
x=\frac{-1}{324}\frac{\sigma_{1}\sigma_{2}}{3^{8}X^{2}}=-{\frac{3\,{X}^{3}%
+{A}^{2}{X}^{2}+3\,BAX+3\,{B}^{2}}{{X}^{2}}},\qquad y=\frac{1}{5832}%
\frac{\sigma_{1}^{3}}{3^{12}X^{3}}.%
\]
\subsubsection{Other properties of isogenies}

The divisor of the function $Y$ is equal to $-3\left(  0\right)  +3\omega$ so
$Y=W^{3}$ where $W$ is a function on the curve $E/<\omega>$. If $X=WZ$ the
function field of $E/<\omega>$ is generated by $W$ and $Z.$ So replacing in
the equation of $E$ we obtain the relation between $Z$ and $W$                                                                                                                                            
\[                                  
W^{3}+AZW+B-Z^{3}=0.                                                                                                                                             \]      

This cubic equation, with a rational point at infinity with $W=Z$ can be
transformed to obtain a Weierstrass equation in the coordinates $X_2$ and $Y_2$:
\begin{align*}
W  &  =\frac{1}{9}\frac{(-243B-3X_{2}A+9A^{3}-Y_{2})}{X_{2}},\quad Z=-\frac
{1}{9}\frac{Y_{2}}{X_{2}}\\
\text{of inverse} \quad X_{2}  &  =3\frac{A^{3}-27B}{3\left(  W-Z\right)
+A},\quad Y_{2}=-27Z\frac{A^{3}-27B}{3\left(  W-Z\right)  +A}
\end{align*}
\begin{align*}
Y_{2}^{2}+3AY_{2}X_{2}+\left(  -9A^{3}+243B\right)  Y_{2}  =\\  X_{2}%
^{3}-9X_{2}^{2}A^{2}+27A\left(  A^{3}-27B\right)  X_{2}-27\left(
A^{3}-27B\right)  ^{2}.%
\end{align*}

The points of $X_{2}$-coordinate equal to $0$ are $3-$torsion points and
easily we recover the previous formulae.

\subsection{Generic 3-isogenies}

In \cite{B-L}, Bertin and Lecacheux exhibited all the elliptic fibrations of the Ap\'ery-Fermi pencil, called generic elliptic fibrations and found
two $3$-torsion elliptic fibrations defined by a Weierstrass equation, namely $E_{\#19}$ with rank $1$ and
$E_{\#20}$ with rank $0$.
We are giving their $3$-isogenous $K3$ surface.

\begin{theorem}
The $3$-isogenous elliptic fibrations of fibration $\#19$ (resp. $\#20$) defined by Weierstrass equations $H_{\#19}(k)$ (resp. $H_{\#20}(k)$) are elliptic fibrations of the same $K3$ surface $N_k$ with transcendental lattice
 $\big( \begin{smallmatrix}                                                                                                                                                                          
        0 & 0 & 3\\                                                                                                                                                                                                
        0 & 4 & 0\\                                                                                                                                                                                                
        3 & 0 & 0\\                                                                                                                                                                                                
\end{smallmatrix} \big )$
and discriminant form of its N\'eron-Severi lattice $G_{NS}=\mathbb Z/3\mathbb Z(-\frac{2}{3})\oplus \mathbb Z/12\mathbb Z(\frac{5}{12})$. 
\end{theorem}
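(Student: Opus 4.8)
The plan is to exhibit explicit Weierstrass equations for the two $3$-isogenous fibrations and then read off their Néron-Severi and transcendental lattices from the singular fibres and torsion, exactly as was done in the rank-$0$ cases of section 5. First I would take the generic Weierstrass equations $E_{\#19}(k)$ and $E_{\#20}(k)$ from \cite{B-L}; both have the $3$-torsion section $(0,0)$, so I can feed them through the explicit $3$-isogeny formulae of subsection \ref{formulae}. This produces $H_{\#19}(k)$ and $H_{\#20}(k)$ together with their lists of singular fibres. The key numerical point is that the $3$-isogeny multiplies the discriminant of the transcendental lattice in a controlled way: starting from fibrations whose transcendental lattice has discriminant $24$ (the generic Apéry-Fermi value, up to the torsion contribution), the $3$-isogeny changes the discriminant group by the expected factor, landing on $\det = 36$, compatible with the claimed Gram matrix $\bigl(\begin{smallmatrix} 0&0&3\\0&4&0\\3&0&0\end{smallmatrix}\bigr)$, whose determinant is $-36$.

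Next I would identify the resulting $K3$ surface. The essential step is to show $H_{\#19}(k)$ and $H_{\#20}(k)$ define fibrations of \emph{the same} surface $N_k$: this follows once one checks that the two $3$-isogenous fibrations share the same Néron-Severi lattice, which I would do by computing, from the Shioda-Tate formula and the fibre/torsion data, the discriminant form $G_{NS}$ in each case and matching it to $\mathbb{Z}/3\mathbb{Z}(-\tfrac{2}{3}) \oplus \mathbb{Z}/12\mathbb{Z}(\tfrac{5}{12})$. Because a $K3$ surface of Picard number $19$ is determined by its transcendental lattice (here of rank $3$), coincidence of the transcendental lattices forces the two surfaces to be isomorphic. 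To pin down the transcendental lattice itself, I would use Nikulin's lemma \ref{L:nik0}: $q_{T} = -q_{NS}$, and then verify that the even lattice with Gram matrix $\bigl(\begin{smallmatrix} 0&0&3\\0&4&0\\3&0&0\end{smallmatrix}\bigr)$ has discriminant form equal to $-G_{NS}$, i.e. $\mathbb{Z}/3\mathbb{Z}(\tfrac{2}{3}) \oplus \mathbb{Z}/12\mathbb{Z}(-\tfrac{5}{12})$ — a short computation with the inverse Gram matrix as in Lemma \ref{lem:gram}.

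For the rank-$1$ fibration $\#19$ there is one extra wrinkle: after the $3$-isogeny I must control the rank of the Mordell-Weil group of $H_{\#19}(k)$ and produce (or account for) a generating section, since the Shioda-Tate computation of $\det NS$ requires knowing the height pairing on the Mordell-Weil lattice, not just the trivial lattice. I expect this to be the main obstacle: the generic section on $E_{\#19}$ must be transported through the isogeny and its image's height recomputed, and one must check it is not divisible, so that the regulator contributes the correct factor to the discriminant. For $\#20$, which has rank $0$, the computation is purely combinatorial in the fibres and torsion, so that case is routine. Once both discriminant forms are shown to agree and to match the prescribed transcendental Gram matrix, the theorem follows.
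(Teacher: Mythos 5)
Your strategy is essentially the paper's: push $E_{\#19}(k)$ and $E_{\#20}(k)$ through the explicit $3$-isogeny formulae of subsection 8.1.2, read off the new singular fibres and torsion, transport the infinite section of $\#19$ through the isogeny and check its height and non-$3$-divisibility (the paper finds $h(Q)=4$ and $Q$ not $3$-divisible, giving $\det = 36$ in both cases), then compute the two N\'eron--Severi Gram matrices, extract their discriminant forms via Shimada's lemma, and match $-q_{NS}$ against the discriminant form of $U(3)\oplus\langle 4\rangle$, exactly as in the paper. One caveat: your assertion that a $K3$ surface of Picard number $19$ is determined by its rank-$3$ transcendental lattice is false in general (such a Torelli-type statement holds only at Picard number $20$), and the paper does not invoke it --- like you, it only exhibits the same transcendental lattice and the same $G_{NS}$ for both fibrations; also the generic Ap\'ery--Fermi transcendental lattice is $U\oplus\langle 12\rangle$ of determinant $-12$, not $24$, though this does not affect your final determinant $36$.
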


\begin{proof}
The $6$-torsion elliptic fibration $\#20$ has a Weierstrass equation
\[E_{\#20}(k) \qquad y^2-(t^2-tk+3)xy-(t^2-tk+1)y=x^3\]
with singular fibers $I_{12}(\infty)$, $2I_3(t^2-kt+1)$, $2I_2(0,k)$, $2I_1(t^2-kt+9)$ and $3$-torsion point $(0,0)$. Using \ref{formulae}, it follows the Weierstrass form of its $3$-isogenous fibration
\[H_{\#20}(k)=E_{\#20}(k)/\langle (0,0) \rangle  \qquad Y^2+3(t^2-tk+3)XY+t^2(t^2-tk+9)(t-k)^2Y=X^3\]
with singular fibers $2I_6(0,k)$, $I_4(\infty)$, $2I_3(t^2-kt+9)$, $2I_1(t^2-kt+1)$. Thus it is a rank $0$ and $6$-torsion elliptic fibration of a $K3$-surface with Picard number $19$ and discriminant $\frac{6\times 6 \times 3 \times 3 \times 4}{6\times 6}=12\times 3$.

Now we shall compute the Gram matrix $NS(20)$ of the N\'eron-Severi lattice of the $K3$ surface with elliptic fibration $H_{\#20}(k)$ in order to deduce its discriminant form.

Applying Shioda's result, we order the following elements as, $s_0$, $F$, $\theta_{0,i}$, $1\leq i \leq 4$, $s_3$, $\theta_{k,i}$, $1\leq i \leq 5$, $\theta_{\infty,i}$, $1\leq i \leq 3$, $\theta_{t_0,i}$, $1\leq i \leq 2$,  $\theta_{t_1,i}$, $1\leq i \leq 2$, where $s_0$ and $s_3$ denotes respectively the zero and $3$-torsion section, $F$ the generic section, $\theta_{k,i}$ the components of reducible singular fiber\
s, $t_0$ and $t_1$ being roots of $t^2-kt+9$. We obtain

\[ NS(20)=\left(
\begin{smallmatrix}
        -2&1&0&0&0&0&0&0&0&0&0&0&0&0
&0&0&0&0&0\\
        1&0&0&0&0&0&1&0&0&0&0&0&0&0&0&0&0&0&0  \\
        0&0&-2&1&0&0&1&0&0&0&0&0&0&0&0&0&0&0&0 \\
        0&0&1&-2&1&0&0&0&0&0&0&0&0&0&0&0&0&0&0  \\
        0&0&0&1&-2&1&0&0&0&0&0&0&0&0&0&0&0&0&0\\
        0&0&0&0&1&-2&0&0&0&0&0&0&0&0&0&0&0&0&0\\
        0&1&1&0&0&0&-2&1&0&0&0&0&0&1&0&0&1&0&1 \\
        0&0&0&0&0&0&1&-2&1&0&0&0&0&0&0&0&0&0&0 \\
        0&0&0&0&0&0&0&1&-2&1&0&0&0&0&0&0&0&0&0 \\                                                                                                                                                                  
        0&0&0&0&0&0&0&0&1&-2&1&0&0&0&0&0&0&0&0\\                                                                                                                                                                   
        0&0&0&0&0&0&0&0&0&1&-2&1&0&0&0&0&0&0&0 \\                                                                                                                                                                  
        0&0&0&0&0&0&0&0&0&0&1&-2&0&0&0&0&0&0&0     \\                                                                                                                                                              
        0&0&0&0&0&0&0&0&0&0&0&0&-2&1&0&0&0&0&0   \\                                                                                                                                                                
        0&0&0&0&0&0&1&0&0&0&0&0&1&-2&1&0&0&0&0 \\                                                                                                                                                                  
        0&0&0&0&0&0&0&0&0&0&0&0&0&1&-2&0&0&0&0    \\                                                                                                                                                               
        0&0&0&0&0&0&0&0&0&0&0&0&0&0&0&-2&1&0&0 \\                                                                                                                                                                  
        0&0&0&0&0&0&1&0&0&0&0&0&0&0&0&1&-2&0&0  \\                                                                                                                                                                 
        0&0&0&0&0&0&0&0&0&0&0&0&0&0&0&0&0&-2&1 \\                                                                                                                                                                  
        0&0&0&0&0&0&1&0&0&0&0&0&0&0&0&0&0&1&-2 \\                                                                                                                                                                  
\end{smallmatrix}                                                                                                                                                                                                  
\right ).\]

We get $\det(NS(20))=12\times 3$ and applying Shimada's lemma \ref{lem:gram}, the discriminant form $G_{NS(20)}\simeq \mathbb Z/3 \oplus \mathbb Z/12$ is generated by vectors $L_1$ and $L_2$ satisfying $q_{L_1}=0$, $q_{L_2}=-\frac{11}{12}$ and $b(L_1,L_2)=\frac{1}{3}$.
Denoting $M(20)$ the following Gram matrix of the lattice $U(3)\oplus \langle 4 \rangle$,
\[M(20)=\begin{pmatrix}                                                                                                                                                                                            
        0 & 0 & 3\\                                                                                                                                                                                                
        0 & 4 & 0\\                                                                                                                                                                                                
        3 & 0 & 0\\                                                                                                                                                                                                
\end{pmatrix},\]
we find for generators of its discriminant form the vectors
\[g_1=\begin{pmatrix}                                                                                                                                                                                              
        0\\                                                                                                                                                                                                        
        0\\                                                                                                                                                                                                        
        \frac{1}{3}\\                                                                                                                                                                                              
\end{pmatrix}              \qquad            g_2=\begin{pmatrix}                                                                                                                                                   
        \frac{1}{3}\\                                                                                                                                                                                              
        \frac{1}{4}\\                                                                                                                                                                                              
        \frac{1}{3}\\                                                                                                                                                                                              
\end{pmatrix}                                                                                                                                                                                                      
\]
satisfying $q_{g_1}=0$, $q_{g_2}=\frac{11}{12}$ and $b(g_1,g_2)=\frac{1}{3}$. We deduce that $M(20)$ is the transcendental lattice of the $K3$ surface with elliptic fibration $H_{\#20}(k)$.

A Weierstrass equation of the $3$-torsion, rank $1$, elliptic fibration $\#19$ can be written as
\[E_{\#19}(k) \qquad y^2+ktxy+t^2(t^2+kt+1)y=x^3\]
with singular fibers $2IV^*(0,\infty)$, $2I_3(t^2+kt+1)$, $2I_1(k^3t-27kt-27t^2-27)$ and infinite point $P=(-t^2,-t^2)$ of height $h(P)=\frac{4}{3}$.
Its $3$-isogenous elliptic fibration $E_{\#19}(k)/\langle (0,0) \rangle $ has a Weierstrass equation
\[H_{\#19}(k) \qquad Y^2-3ktXY-Yt^2(27t^2-k(k^2-27)t+27)=X^3.\]
It is a $3$-torsion, rank $1$, elliptic fibration of a $K3$-surface with Picard number $19$ and singular fibers $2IV^*(0,\infty)$, $2I_3(27t^2-k(k^2-27)t+27)$, $2I_1(t^2+kt+1)$ and infinite order point $Q$ with\
 $x$-coordinate $x_Q=-3-3kt-(k^2+3)t^2-3kt^3-3t^4$ and height $h(Q)=4$. This point $Q$ is the image of the point $P$ in the $3$-isogeny and non $3$-divisible, hence generator of the non torsion part of the Mordell-Weill lattice. We deduce the discriminant of this $K3$-surface $\frac{3\times 3 \times 3 \times 3\times 4}{3 \times 3}=12\times 3$.

Applying Shioda's result, we order the components of the singular fibers as, $s_0$, $F$, $\theta_{0,i}$, $1\leq i \leq 6$, $\theta_{\infty,i}$, $1\leq i \leq 6$, $\theta_{t_0,i}$, $1\leq i \leq 2$, $s_3$, $\theta_{t_1,2}$, $s_{\infty}$,where $s_0$, $s_3$, $s_{\infty}$ denotes respectively the zero, $3$-torsion and infinite section, $F$ the generic section, $t_0$ and $t_1$ being roots of $27t^2-k(k^2-27)t+27$. The numbering of components of $IV^*$ is done using Bourbaki's notations. It follows the Gram matrix $NS(19)$ of the corresponding $K3$ surface

\[ NS(19)=                                                                                                                                                                                                         
\left (                                                                                                                                                                                                            
\begin{smallmatrix}                                                                                                                                                                                                
-2&1&0&0&0&0&0&0&0&0&0&0&0&0                                                                                                                                                                                       
&0&0&0&0&0\\1&0&0&0&0&0&0&0&0&0&0&0&0&0&0&0&1&0&1                                                                                                                                                                  
\\0&0&-2&0&0&1&0&0&0&0&0&0&0&0&0&0&0&0&0                                                                                                                                                                           
\\0&0&0&-2&1&0&0&0&0&0&0&0&0&0&0&0&1&0&0                                                                                                                                                                           
\\0&0&0&1&-2&1&0&0&0&0&0&0&0&0&0&0&0&0&0                                                                                                                                                                           
\\0&0&1&0&1&-2&1&0&0&0&0&0&0&0&0&0&0&0&0                                                                                                                                                                           
\\0&0&0&0&0&1&-2&1&0&0&0&0&0&0&0&0&0&0&0                                                                                                                                                                           
\\0&0&0&0&0&0&1&-2&0&0&0&0&0&0&0&0&0&0&0                                                                                                                                                                           
\\0&0&0&0&0&0&0&0&-2&0&0&1&0&0&0&0&0&0&0                                                                                                                                                                           
\\0&0&0&0&0&0&0&0&0&-2&1&0&0&0&0&0&1&0&0                                                                                                                                                                           
\\0&0&0&0&0&0&0&0&0&1&-2&1&0&0&0&0&0&0&0                                                                                                                                                                           
\\0&0&0&0&0&0&0&0&1&0&1&-2&1&0&0&0&0&0&0                                                                                                                                                                           
\\0&0&0&0&0&0&0&0&0&0&0&1&-2&1&0&0&0&0&0                                                                                                                                                                           
\\0&0&0&0&0&0&0&0&0&0&0&0&1&-2&0&0&0&0&0                                                                                                                                                                           
\\0&0&0&0&0&0&0&0&0&0&0&0&0&0&-2&1&0&0&0                                                                                                                                                                           
\\0&0&0&0&0&0&0&0&0&0&0&0&0&0&1&-2&1&0&0                                                                                                                                                                           
\\0&1&0&1&0&0&0&0&0&1&0&0&0&0&0&1&-2&1&2                                                                                                                                                                           
\\0&0&0&0&0&0&0&0&0&0&0&0&0&0&0&0&1&-2&0                                                                                                                                                                           
\\0&1&0&0&0&0&0&0&0&0&0&0&0&0&0&0&2&0&-2\                                                                                                                                                                          
\end{smallmatrix}                                                                                                                                                                                                  
\right ) .                                                                                                                                                                                                          
\]
Its determinant satisfies
$\det(NS(19))=12\times 3$ and according to Shimada's lemma \ref{lem:gram}, $G_{NS(19)}\simeq \mathbb Z/3 \oplus \mathbb Z/12$ is generated by vectors $M_1$ and $M_2$ satisfying $q_{M_1}=-\frac{2}{3}$, $q_{M_2}=\frac{5}{12}$ an\
d $b(M_1,M_2)=0$. We find also generators for the transcendental discriminant form $M(20)$
\[h_1=\begin{pmatrix}                                                                                                                                                                                              
        \frac{1}{3}\\                                                                                                                                                                                              
        0\\                                                                                                                                                                                                        
        \frac{1}{3}\                                                                                                                                                                                               
\end{pmatrix}              \qquad            h_2=\begin{pmatrix}                                                                                                                                                   
        \frac{1}{3}\\                                                                                                                                                                                              
        \frac{1}{4}\\                                                                                                                                                                                              
        -\frac{1}{3}\\                                                                                                                                                                                             
\end{pmatrix}                                                                                                                                                                                                      
\]
satisfying $q_{h_1}=\frac{2}{3}$, $q_{h_2}=-\frac{5}{12}$ and $b(h_1,h_2)=0$. We deduce that $M(20)$ is also the transcendental lattice of the $K3$ surface with elliptic fibration $H_{\#19}(k)$.

It follows the discriminant form of its N\'eron-Severi lattice, 

$G_{NS(19)}=\mathbb Z/3\mathbb Z(-\frac{2}{3})\oplus \mathbb Z/12\mathbb Z(\frac{5}{12})$,
 which is also $G_{NS(20)}$ 
 since the generators $L'_1=15L_1+4L_2$, 
 $L'_2=L_1-L_2$ 
 satisfy $q_{L'_1}=-\frac{2}{3}$, 
 $q_{L'_2}=\frac{5}{12}$, $b(L'_1,L'_2)=0$.

\end{proof}

We can prove the following specializations for $k=2$ and $k=10$.
\begin{theorem}\label{theo8.2}
For $k=2$, the $K3$ surface $N_2$ is $Y_{10}$ with transcendental lattice $[6 \quad 0 \quad 12]=T(Y_2)[3]$.

For $k=10$ the $K3$ surface $N_{10}$ is the $K3$-surface with discriminant $72$ and transcendental lattice $[4 \quad 0 \quad 18\
]$.

\end{theorem}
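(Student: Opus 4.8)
The plan is to obtain each of the two statements by specializing the parameter in the Weierstrass equations $H_{\#19}(k)$ and $H_{\#20}(k)$ produced in the previous theorem, and then redoing the N\'eron--Severi computation. First I would observe that $N_2$ and $N_{10}$ are \emph{singular} $K3$ surfaces: a fiberwise $3$-isogeny does not change the transcendental rank, and for $k=2$ (resp.\ $k=10$) the fibrations $H_{\#19}(k)$, $H_{\#20}(k)$ are $3$-isogenous to the specializations of $E_{\#19}$, $E_{\#20}$, which are elliptic fibrations of $Y_2$ (resp.\ $Y_{10}$), of Picard number $20$. Hence, relative to the generic case treated above, the Picard number jumps from $19$ to $20$, the rank of each fibration increases by one, and the task reduces to finding the extra Mordell--Weil generator, writing down the enlarged Gram matrix of $NS$, and reading off its transcendental orthogonal complement.

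For $k=2$ I would substitute $k=2$ into $H_{\#20}(2)$ (now of rank $1$), exhibit an explicit additional section, compute its canonical height by the height pairing formula, and assemble the Gram matrix of $NS$ exactly as in the proof of the previous theorem but with this section adjoined; the determinant comes out $72$, and Shimada's Lemma~\ref{lem:gram} yields the discriminant quadratic form. Comparing it with the discriminant form of $[6\ 0\ 12]=T(Y_2)[3]$ --- a rank-$2$ analogue of the form computation already carried out above for $U(3)\oplus\langle4\rangle$ --- shows the two forms agree, so $T(N_2)=[6\ 0\ 12]$. Since a singular $K3$ surface is determined up to isomorphism by its transcendental lattice and $T(Y_{10})=[6\ 0\ 12]$, we conclude $N_2=Y_{10}$. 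As a cross-check I would also connect $H_{\#20}(2)$ by a short chain of $2$-neighbour steps to the fibration $(Y_2)_h^{(3)}$ of $Y_{10}$, which is the explicit route behind the relation $T(Y_{10})=T(Y_2)[3]$ alluded to in the Remark of Section~6.

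The case $k=10$ is handled in the same manner: substitute $k=10$ into $H_{\#19}(10)$ (or $H_{\#20}(10)$), locate the extra Mordell--Weil generator, build the N\'eron--Severi Gram matrix, obtain $\det NS=72$, and extract the discriminant form via Lemma~\ref{lem:gram}; matching it with the discriminant form of $[4\ 0\ 18]$ pins down $T(N_{10})$ and hence identifies $N_{10}$ as the $K3$ surface with that transcendental lattice.

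I expect the main obstacle to be the second step when $k=10$: the additional section is not visible on the Weierstrass model and must be produced by an indirect device --- either a well-chosen $2$-neighbour fibration or the Kummer/complex-multiplication construction used for fibration $\#16$ in Section~4. A second, more delicate point is that the three $K3$ surfaces arising in this circle of ideas --- those with transcendental lattices $[6\ 0\ 12]$, $[4\ 0\ 18]$, $[2\ 0\ 36]$ --- all have discriminant $72$, so merely computing $\det NS$ is insufficient; one must exhibit the generators of the discriminant group together with their norms and mutual pairing (as with $g_1,g_2$ in the previous theorem) and appeal to Nikulin's uniqueness to decide which surface actually occurs.
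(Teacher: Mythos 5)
Your overall strategy is sound and, for $k=10$, is exactly what the paper does: in the subsection ``Specialization of case \#19 and \#20 for $k=10$'' the authors exhibit the extra sections explicitly (e.g.\ $P_w=(4(t+5)^2(t-4),-(t+5)^4(t-4)^2)$ on $H_{\#20}(10)$, and for $H_{\#19}(10)$ the image $P'$ of $(-t^2,-t^2)$ under the $3$-isogeny together with a second section $P''$), write down the $20\times 20$ Gram matrix of $NS$, get determinant $\pm72$, and use Shimada's Lemma \ref{lem:gram} to match the discriminant form with that of $[4\ 0\ 18]$ rather than $[6\ 0\ 12]$ or $[2\ 0\ 36]$ --- precisely the point you correctly flag as the delicate one. (The sections turn out to be visible on the Weierstrass models, so the Kummer/CM detour you anticipate is not needed here.) For $k=2$, however, the paper takes a much shorter route that your proposal misses: since the previous theorem already shows $H_{\#19}(2)$ and $H_{\#20}(2)$ are fibrations of one and the same surface $N_2$, it suffices to recognize $H_{\#20}(2)$ as the extremal fibration number $8$ of $Y_{10}$ (the rank-$0$, $\mathbb Z/6$-torsion fibration $A_1 2A_2 A_3 2A_5$, already identified as a fibration of $Y_{10}$ in Section 5 via the Shimada--Zhang table); no new N\'eron--Severi computation is required.

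One concrete misstep in your plan for $k=2$: you assert that $H_{\#20}(2)$ has rank $1$ and propose to hunt for an extra Mordell--Weil generator. In fact at $k=2$ the two $I_3$ fibers of $E_{\#20}(k)$ at $t^2-kt+1=(t-1)^2$ collide, so the Picard jump from $19$ to $20$ is absorbed entirely by the degeneration of the trivial lattice and the rank stays $0$ (this is why fibration $8$ is extremal). Your general principle ``the rank of each specialized fibration increases by one'' holds for the specializations in Section 4 but fails here; as written, the search for an additional section would come up empty, and the Gram matrix must instead be assembled from the degenerate fiber configuration $2I_6,I_4,2I_3,I_2$ together with the $6$-torsion. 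With that correction your argument goes through, but the paper's table-lookup shortcut is both safer and shorter for this half of the statement.
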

\begin{proof}
To prove that $N_2=Y_{10}$ it is sufficient to prove that $H_{\#20}(2)$ is an elliptic fibration of $Y_{10}$ since, by the previous theorem, $H_{\#19}(2)$ is another fibration of the same $K3$-surface.

But we see easily that $H_{\#20}(2)$ is the $6$-torsion extremal fibration $8$ of $Y_{10}$.

Similarly, $N_{10}$ is the $K3$ surface with transcendental lattice $[4 \quad 0 \quad 18]$ since $H_{\#20}(10)$ is a fibration of that surface (see  section 8.10).

 \end{proof}

\begin{theorem}
Define  $Y_{k}^{\left(  3\right)  }$ the elliptic surface  obtained by the
base change $\tau$ of the elliptic fibration of $Y_{k}$ with two singular
fibers of type $II^{\ast},$ where $\tau$ is the morphism given by $u\mapsto
h=u^{3}.$ Then 
the $K3$ surface $Y_{k}^{\left(  3\right)  }$ has a genus one fibration
without section such that its Jacobian variety 
satisfies $J\left(  Y_{k}^{\left(  3\right)  }\right)  =N_{k}.$
\end{theorem}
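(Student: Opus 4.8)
The strategy is to understand the genus one fibration on $Y_k^{(3)}$ directly from its construction by base change, and to identify its Jacobian using the classical theory relating a genus one fibration to its Jacobian elliptic surface (same singular fibres, same functional invariant, hence same Néron model away from torsion data). First I would recall that the elliptic fibration of $Y_k$ with two $II^*$ fibres is the one with Weierstrass equation $(Y_k)_h : y^2 = x^3 + \alpha x + h + 1/h + \beta$ (notation as in the introduction), whose $II^*$ fibres sit at $h = 0$ and $h = \infty$. Pulling back by $\tau : u \mapsto h = u^3$ produces $(Y_k)_h^{(3)} : y^2 = x^3 + \alpha x + u^3 + u^{-3} + \beta$, a Weierstrass model whose associated elliptic surface is $Y_k^{(3)}$ — by the discussion in the introduction this is the $K3$ surface with transcendental lattice $T(Y_k)[3]$; for $k=2$ this is $Y_{10}$, and in general its transcendental lattice is $T(Y_k)[3]$, which by Theorem~\ref{theo8.2} (and its proof via $H_{\#20}$, $H_{\#19}$) is exactly the transcendental lattice of $N_k$. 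So $Y_k^{(3)}$ and $N_k$ are isogenous $K3$ surfaces with the same transcendental lattice.

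Next I would exhibit on $Y_k^{(3)}$ a genus one fibration \emph{without section}. The natural candidate comes from the $3$-torsion structure: the fibration $(Y_k)_h$ (or rather a companion fibration of $Y_k$ such as $E_{\#20}(k)$, $E_{\#19}(k)$) carries a $3$-torsion section, and after the degree-$3$ base change $\tau$ the deck transformation $\zeta : u \mapsto \omega u$ (with $\omega$ a primitive cube root of unity) acts on $Y_k^{(3)}$; composing $\zeta$ with translation by the $3$-torsion section gives an order-$3$ automorphism whose quotient — or equivalently an appropriate twist — is a genus one fibration with no section, the obstruction class living in the $3$-torsion of the Tate–Shafarevich group. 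Concretely, I would use the other base change picture: $Y_k^{(3)}$ fibred over the $u$-line has a section, but the relevant genus one fibration is obtained by a \emph{different} elliptic parameter on the same surface, namely one for which the $3$-isogeny from $E_{\#20}(k)$ (resp. $E_{\#19}(k)$) to $H_{\#20}(k) = E_{\#20}(k)/\langle(0,0)\rangle$ (resp. $H_{\#19}(k)$) manifests geometrically. The point is that $N_k$ is the $K3$ surface carrying the fibrations $H_{\#19}(k)$, $H_{\#20}(k)$, and these are precisely the \emph{quotients} by $3$-torsion of fibrations on a surface isogenous to $Y_k^{(3)}$; dualising the isogeny exhibits $Y_k^{(3)}$ as a torsor under $N_k$.

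Then I would invoke the standard fact that a genus one fibration $f : S \to \mathbb{P}^1$ and its Jacobian $J(f) \to \mathbb{P}^1$ have identical configurations of singular fibres and the same $j$-invariant map, and that $J(f)$ is characterised (among relatively minimal elliptic surfaces over $\mathbb{P}^1$ with section) by this data together with the condition that $f$ be a torsor under it. So it suffices to check that the singular fibres and functional invariant of the genus one fibration on $Y_k^{(3)}$ match those of one of the known elliptic fibrations of $N_k$ — e.g. compare with the $I_6$, $IV^*$, $I_3$, $I_1$ (or $IV^*$, $I_3$, $I_1$) configurations computed in the proof of the theorem on generic $3$-isogenies for $H_{\#20}(k)$, $H_{\#19}(k)$. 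Since the base change $\tau$ turns each $II^*$ fibre into... (one must track the Kodaira type under a degree-$3$ totally ramified base change: $II^* = $ fibre of order-$6$ monodromy base-changed by $3$ becomes a fibre of order-$2$ monodromy, i.e. $I_0^*$) — so $Y_k^{(3)}$ has two $I_0^*$ fibres at $u = 0, \infty$, consistent with the rank-$7$ fibration $E_u : Y^2 = X^3 - 5u^2 X^2 + u^3(u^3+1)^2$ with $2I_0^*$, $3I_2$, $6I_1$ found in Section~6/8.3 — and the three $I_2$ fibres over the three cube roots of the locus $h + 1/h + \beta + \alpha\cdot(\text{stuff}) = 0$ are the fibres whose Jacobian picks up the relevant $I_6$'s after the $3$-isogeny collapse. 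I would make this matching precise and then conclude $J(Y_k^{(3)}) = N_k$.

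\textbf{Main obstacle.} The delicate point is not the numerology of singular fibres (a routine Kodaira-type computation under base change plus Tate's algorithm) but rather establishing rigorously that the genus one fibration on $Y_k^{(3)}$ is genuinely \emph{without} a section and that its Tate–Shafarevich class is the right one, so that its Jacobian is $N_k$ and not $Y_k^{(3)}$ itself or some other surface with the same transcendental lattice. I expect to handle this by exhibiting the explicit $3$-to-$1$ correspondence: produce the genus one fibration as the image of the graph of the $3$-isogeny $E_{\#19}(k) \to H_{\#19}(k)$ (equivalently $E_{\#20}(k) \to H_{\#20}(k)$) inside the appropriate fibre product, show that a section would split the $3$-isogeny — impossible since the isogeny is non-trivial and the $3$-torsion section is non-divisible — and then identify the Jacobian with $H_{\#19}(k)$, hence with $N_k$ by the generic $3$-isogeny theorem. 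Keeping careful track of which surface is the torsor and which is the Jacobian (the roles of $E$ and $E/\langle\omega\rangle$ must not be swapped) is where the argument most easily goes wrong, so I would double-check it against the $k = 2$ specialisation, where $N_2 = Y_{10}$ and $Y_2^{(3)} = Y_{10}$ as well, forcing the torsor to be trivial in that case — a useful consistency check that also indicates the genus one fibration acquires a section precisely when $k = 2$.
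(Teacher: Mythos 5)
Your plan never actually produces the genus one fibration, and that is the heart of the theorem. The paper's proof is concrete: starting from the Weierstrass equation of fibration $\#19$, namely $Y^2+tkYX+t^2(t+s)(t+1/s)Y=X^3$ with $k=s+1/s$, the fibration with two $II^*$ fibres is cut out by the parameter $h=Y/(t+s)^2$, so after the substitution $h=u^3$ and $X=(t+s)uW$ the surface $Y_k^{(3)}$ acquires the equation $u^3s(t+s)+tkuWs+t^2(ts-1)-W^3s=0$; projecting to the $t$-line exhibits each fibre as a plane cubic in $(u,W)$, i.e.\ a genus one fibration, with no visible section. The Jacobian is then computed explicitly: the auxiliary base change $t+s=m^3$ gives the cubic a rational point, one passes to a Weierstrass form, locates the $3$-torsion point $\pi_3$, and upon substituting $m^3=t+s$ back one lands exactly on $H_{\#19}(k)$, which is a fibration of $N_k$. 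Your proposal replaces all of this by (i) an appeal to transcendental lattices ($T(Y_k^{(3)})=T(Y_k)[3]$ versus $T(N_k)$), which at best shows the two surfaces are isogenous and cannot identify the Jacobian of an unspecified fibration, and (ii) a vaguely described candidate fibration (``composing $\zeta$ with translation by the $3$-torsion section'', ``a different elliptic parameter for which the $3$-isogeny manifests geometrically'') that is never pinned down; without an explicit elliptic parameter you cannot compute the singular fibres or the $j$-map, so the matching step you propose cannot even be started. Note also that the two $I_0^*$ fibres you discuss belong to the fibration of $Y_k^{(3)}$ over the $u$-line, which \emph{does} have a section and is not the fibration of the theorem.

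A second, more local, error: your consistency check asserts that because $N_2=Y_{10}=Y_2^{(3)}$ the torsor must be trivial, so the fibration acquires a section precisely at $k=2$. That inference is false. Triviality of the torsor is a property of the specific genus one curve over $\mathbb{C}(t)$ (equivalently, of the multisection index of that fibration), not of the abstract isomorphism class of the total space; a K3 surface can carry a genus one fibration without section whose Jacobian is isomorphic to that same K3 surface, and nothing in the paper suggests the fibration degenerates to one with a section at $k=2$. You are right that the genuinely delicate point is proving the absence of a section (the paper itself only remarks that the fibration ``seems to have no section''), but your proposed remedy --- that a section would ``split the $3$-isogeny'' --- is not an argument as stated, since the link between sections of this particular plane-cubic fibration and splittings of $E_{\#19}(k)\to H_{\#19}(k)$ is exactly what would have to be established.
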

\begin{proof}
Recall a  Weierstrass equation for fibration $\#19$
\[
Y^{2}+tkYX+t^{2}\left(  t+s\right)  \left(  t+1/s\right)  Y=X^{3}%
\]
where $k=s+\frac{1}{s}.$ The fibration of $Y_{k}$ with two singular fibers
$II^{\ast}$ can be  obtained  with the parameter $h_{k}=\frac{Y}{\left(
t+s\right)  ^{2}}$ (\cite{B-L} Table 3). The surface  $Y_{k}^{\left(  3\right)  }$ is
  defined by $h=u^{3}$ and has then the following equation
\[
u^{3}s\left(  t+s\right)  +tkuWs+t^{2}\left(  ts-1\right)  -W^{3}s=0,
\]
where $X=\left(  t+s\right)  uW$.

We consider the fibration
\begin{align*}
Y_{k}^{\left(  3\right)  } &  \rightarrow\mathbb{P}^{1}\\
\left(  u,t,W\right)   &  \mapsto t;
\end{align*}
this is a genus one fibration since we have a cubic equation in $u,W.$

However, this fibration seems to have no section. Nevertheless, taking its
Jacobian fibration produces an elliptic fibration with section and the same
fiber type.

If we make a base change of this fibration: $\left(  t+s\right)  =m^{3}$ then
we obtain the following elliptic fibration with $U=um.$%
\[
U^{3}sm-\left(  s-m^{3}\right)  ksWU-W^{3}sm-\left(  s-m^{3}\right)
^{2}\left(  s^{2}-sm^{3}-1\right)  m=0.
\]
The transformation
\begin{align*}
W &  =\frac{-24y+12(s^{2}+1)(s-m^{3})x+\left(  s-m^{3}\right)  ^{2}Q}{18sm(4x-m^{2}(s^{2}+1)^{2})}\\
U &  =\frac{-24y-12(s^{2}+1)(s-m^{3})x+\left(  s-m^{3}\right)  ^{2}Q}{18sm(4x-m^{2}(s^{2}+1)^{2})}%
\end{align*}
where $Q=\left(
108m^{6}s^{3}+(s^{6}+105s^{2}-111s^{4}-1)m^{3}+s(s^{2}+1)^{3}\right)$
gives a Weierstrass equation, the point $\pi_{3}$ of $x$ coordinate $\frac
{1}{4}m^{2}(s^{2}+1)^{2}$ is a $3$-torsion point. Taking again $m^{3}=\left(
t+s\right)  ,$  and $\pi_{3}=\left(  X=0,Y=0\right)  ,$ we recover a
Weierstrass equation for the $3$-isogenous fibration  $\#19$%

\[
Y^{2}-3tkYX-t^{2}\left(  27t^{2}-k\left(  k^{2}-27\right)  t+27\right)
Y=X^{3}%
\]
hence a fibration of $N_{k}.$
Recall that the transcendental lattice of $Y_k^{(3)}$ is $T(Y_k)[3]$ \cite{Sh}. 
\end{proof}

\subsection{3-isogenies of $Y_2$}

\begin{theorem}
The $K3$ surface $Y_{2}$ has $4$ elliptic fibrations with $3$-torsion, two of them are specializations. The $3$-isogenies induce elliptic fibrations of $Y_{10}.$ 
\end{theorem}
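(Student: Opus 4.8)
The plan is to exhibit, for each of the four $3$-torsion elliptic fibrations of $Y_2$, a Weierstrass equation with a visible $3$-torsion section, compute the $3$-isogenous fibration using the explicit formulae of Section \ref{formulae}, and then identify the resulting $K3$ surface with $Y_{10}$. First I would recall from Bertin--Lecacheux \cite{B-L} the two generic $3$-torsion fibrations $E_{\#19}$ and $E_{\#20}$ of the whole Ap\'ery--Fermi pencil; specializing $k=2$ gives two of the four fibrations of $Y_2$. By Theorem \ref{theo8.2}, the $3$-isogenous surface $N_2$ attached to these is precisely $Y_{10}$, so these two cases are already settled. The remaining work is to locate the two \emph{non-generic} (genuinely specialized to $Y_2$) $3$-torsion fibrations; here I would use the extremal fibration list of $Y_2$ from \cite{BL}, pick out those whose configuration of singular fibers and Mordell--Weil torsion admit a $3$-torsion section, and write each in the normal form $y^2 + Ayx + By = x^3$ needed to apply the isogeny formulae.

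The key computational step is then purely mechanical: for each such $E$, form $E/\langle(0,0)\rangle$ via
\[
y_1^2 + (Ax_1 + 3B)y_1 = x_1^3 - 6ABx_1 - B(A^3 + 9B),
\]
read off the new singular fibers (each $I_n$ at the $3$-torsion value contributing an $I_{3n}$ or $I_{n/3}$ in the expected way, $IV^*\leftrightarrow IV$, etc.), compute the discriminant of the transcendental lattice from the Shioda--Tate formula, and check it equals $72$ with the correct discriminant form. To pin down that the resulting $K3$ is $Y_{10}$ and not merely \emph{some} surface of discriminant $72$, I would either match the fibration against the Shimada--Zhang table of extremal fibrations of $Y_{10}$ (when the isogenous fibration has rank $0$), or, following the pattern used repeatedly in Section 5 and Section 7, apply a $2$-neighbour or $3$-neighbour step to transform the isogenous fibration into one already known to lie on $Y_{10}$ (for instance fibration $\#8$, $252$, or $262$), thereby certifying the surface. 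The discriminant-form computation via Lemma \ref{lem:gram}, exactly as carried out for $NS(19)$ and $NS(20)$ in the preceding theorem, gives the definitive identification when the neighbour trick is not immediately available.

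The main obstacle I anticipate is \textbf{completeness}: proving that $Y_2$ has \emph{exactly} four elliptic fibrations carrying a $3$-torsion section, not more. This requires going through the full classification of elliptic fibrations of $Y_2$ from \cite{BL} and, for each, determining the torsion subgroup of its Mordell--Weil group --- equivalently, analysing which configurations of singular fibres force, permit, or forbid $3$-torsion (using the bound that the torsion embeds in $\prod (\text{component groups})$ together with height-pairing/specialization constraints). Secondarily, for the two specialized fibrations one must verify the $3$-torsion section genuinely appears only after specializing to $k=2$ (the generic fibration over the pencil having no such section), which is a direct check on the discriminant of the specialized Weierstrass equation. Once the count of four is secured and each isogenous target is identified with $Y_{10}$ by one of the two methods above, the statement follows; I expect no difficulty in the isogeny computations themselves, only bookkeeping.
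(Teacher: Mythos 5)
Your proposal is essentially sound, but it takes a genuinely different route on the key step of identifying the isogenous surface. The paper does not certify the four targets by discriminant-form or neighbour computations case by case; instead it exploits a structural fact established in subsection 8.1.3: for a fibration $Y^2+AYX+BY=X^3$ with $3$-torsion point $(0,0)$, the function $Y$ has divisor $-3(O)+3(\omega)$, so passing to $E/\langle(0,0)\rangle$ is \emph{literally} the same as adjoining a cube root $Y=W^3$. Since each of the four $3$-torsion fibrations of $Y_2$ admits an elliptic parameter $h$ (essentially $Y$ up to a factor) whose associated fibration is the one with two $II^*$ fibers, and since the degree-$3$ base change $h=u^3$ of that fibration produces the surface $Y_2^{(3)}$ with $T(Y_2^{(3)})=T(Y_2)[3]=T(Y_{10})$ by Shioda's theorem, all four $3$-isogenies land on $Y_{10}$ at once, with a conceptual explanation of why. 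Your route — Shimada--Zhang matching for the rank-$0$ quotients $H_w$ and $H_j$, and neighbour steps or Néron--Severi discriminant forms for the others — does work (the paper itself uses exactly these devices elsewhere: fibrations $8$ and $224$ are matched in Shimada--Zhang in Section 5, and $H_c$ is reached by a parameter change from the rank-$7$ fibration $E_u$ in Section 8.5), but be aware that the Shimada--Zhang table only covers extremal fibrations, so it is unavailable for the two rank-$1$ quotients $H_b$ and $H_c$; for those you must commit to the neighbour method or to finding explicit Mordell--Weil generators, which is the heaviest part of your plan and which the paper's base-change argument avoids entirely. Finally, the completeness issue you flag as the main obstacle is not treated as one in the paper: the full list of elliptic fibrations of $Y_2$ with their torsion groups is simply quoted from \cite{BL}, so the count of four is a citation rather than something to re-establish.
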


\begin{proof}
Recall the results, given in \cite{BL}, for the $4$ elliptic fibrations with $3$-torsion. %

\[%
\begin{array}
[c]{ccc}%
&
\begin{array}
[c]{c}%
\text{Weierstrass Equation}\\
\text{Singular Fibers}%
\end{array}
& \text{Rank}\\
\hline
\#20\left(  7-w\right)   &
\begin{array}
[c]{c}%
Y^{2}-\left(  w^{2}+2\right)  YX-w^{2}Y=X^{3}\\
I_{12}\left(  \infty\right)  ,\quad I_{6}\left(  0\right)  ,\quad2I_{2}\left(
\pm1\right)  ,\quad2I_{1}%
\end{array}
& 0\\
\hline
\#19\left(  8-b\right)   &
\begin{array}
[c]{c}%
Y^{2}+2bYX+b^{2}\left(  b+1\right)  ^{2}Y=X^{3}\\
2IV^{\ast}\left(  \infty,0\right)  ,\quad I_{6}\left(  -1\right)  ,\quad2I_{1}%
\end{array}
& 1\\
\hline
20-j &
\begin{array}
[c]{c}%
Y^{2}-4\left(  j^{2}-1\right)  YX+4\left(  j+1\right)  ^{2}Y=X^{3}\\
I_{12}\left(  \infty\right)  ,\quad IV^{\ast}\left(  -1\right)  ,\quad
I_{2}\left(  -\frac{1}{2}\right)  ,\quad2I_{1}%
\end{array}
& 0\\
\hline
21-c &
\begin{array}
[c]{c}%
Y^{2}+\left(  c^{2}+5\right)  YX+Y=X^{3}\\
I_{18}\left(  \infty\right)  ,\quad6I_{1}%
\end{array}
& 1
\\\hline
\end{array}
\]

From the previous paragraph we compute the $3$-isogenous elliptic fibrations
named $H_{w},H_{b}$ $,H_{j}$ and $H_{c}$ and given in the next table. To simplify we denote $H_w$ instead of $H_{\#20}(2)$ and $H_b$ instead of $H_{\#20}(2).$

We know from Theorem \ref{theo8.2} that $H_w$ and $H_b$ are elliptic fibrations of $Y_{10}$; we present here a proof for the two others 
and remarks using ideas from \cite{Kuw}, \cite{Ku-Ku}, \cite{Sh},\cite{Sh1}.  

\[%
\begin{array}
[c]{cc}
&
\begin{array}
[c]{c}%
\text{Weierstrass Equation}\\
\text{Singular Fibers}%
\end{array}
\\
\hline
H_{w} &
\begin{array}
[c]{c}%
Y^{2}+3\left(  w^{2}+2\right)  YX+\left(  w^{2}+8\right)  \left(
w^{2}-1\right)  ^{2}Y=X^{3}\\
I_{4}\left(  \infty\right)  ,\quad2I_{6}\left(  \pm1\right)  ,\quad
I_{2}\left(  0\right)  ,\quad2I_{3}%
\end{array}
\\
\hline
H_{b} &
\begin{array}
[c]{c}%
Y^{2}-6bYX+b^{2}\left(  27b^{2}+46b+27\right)  Y=X^{3}\\
2IV^{\ast}\left(  \infty,0\right)  ,\quad2I_{3},\quad I_{2}\left(  -1\right)
\end{array}
\\
\hline
H_{j} &
\begin{array}
[c]{c}%
Y^{2}+12\left(  j^{2}-1\right)  YX+4\left(  4j^{2}-12j+11\right)  \left(
j+1\right)  ^{2}\left(  2j+1\right)  ^{2}Y=X^{3}\\
I_{4}\left(  \infty\right)  ,\quad VI^{\ast}\left(  -1\right)  ,\quad
I_{6}\left(  \frac{-1}{2}\right)  ,\quad2I_{3}%
\end{array}
\\
\hline
H_{c} &
\begin{array}
[c]{c}%
Y^{2}-3\left(  c^{2}+5\right)  YX-\left(  c^{2}+2\right)  \left(
c^{2}+c+7\right)  \left(  c^{2}-c+7\right)  Y=X^{3}\\
I_{6}\left(  \infty\right)  ,\quad6I_{3}%
\end{array}
\\\hline
\end{array}
\]

Recall that the transcendental lattice of $Y_{10}$ is $T\left(  Y_{2}\right)[  3] =T\left(  Y_{10}\right)  .$ Notice that the surface $Y_{2}$
has an elliptic fibration with singular fibers $2II^{\ast}\left(
\infty,0\right)  ,I_{2},2I_{1}$ and Weierstrass equation $y^{2}=x^{3}%
-\frac{25}{3}x+h+\frac{1}{h}-\frac{196}{27}$ or $y^{2}=z^{3}-5z^{2}%
+\frac{\left(  h+1\right)  ^{2}}{h}$ with $x=z-\frac{5}{3}$. The base change of
degree $3,$ $h=u^{3}$ ramified at the two fibers $II^{\ast}$ induces an
elliptic fibration of the resulting $K3$ surface named $Y_{2}^{\left(
3\right)  }$ in \cite{Kuw}. As the transcendental lattice of the surface $Y_{2}^{\left(
3\right)  }$ is $T\left(  Y_{2}\right)[3]$  \cite{Sh}, this
surface $Y_{2}^{\left(  3\right)  }$ is $Y_{10.}$ Moreover we can precise the
fibration obtained: a Weierstrass equation is
\begin{align}
E_{u}  &  :Y^{2}=X^{3}-5u^{2}X^{2}+u^{3}(u^{3}+1)^{2}\label{M}\\
&  2I_{0}^{\ast}\left(  \infty,0\right)  ,3I_{2}\left(  u^{3}+1\right)
,6I_{1}\text{ rank }7.\nonumber
\end{align}

Now we are going to show that every elliptic fibration of $Y_2$ with $3$-torsion is
linked to the elliptic fibration of $Y_2$ with $2II^{\ast}\left(  \infty,0\right)  ,I_{2},2I_{1}.$

For the fibration $20-j$ we can obtain the elliptic fibration $2II^{\ast
}\left(  \infty,0\right)  ,I_{2},2I_{1}$ from the Weierstrass equation given
in the table and the  elliptic parameter $h=Y.$ So the fibration $20-j$
induces a fibration on $Y_{10}$ with parameter $j$ and an equation obtained after substitution of $Y$
by $u^{3}$. So, with the previous computations of 8.1.3 this is the
$3$-isogenous to $20-j.$

The same proof can be done for fibration $21-c.$ Moreover we can remark that
the $3$-isogenous to $21-c$ fibration has an equation
\[
W^{3}+\left(  c^{2}+5\right)  ZW+1-Z^{3}=0.
\]
Since the general elliptic surface with torsion $\left(  \mathbb{Z}%
/3\mathbb{Z}\right)  ^{2}$ is $x^{3}+y^{3}+t^{3}+3kxyt=0$, we deduce that the
torsion on the fibration induced on $Y_{10}$ is $\left(
\mathbb{Z}/3\mathbb{Z}\right)  ^{2}.$

For the fibration $\#19\left(  8-b\right)  $ we obtain the elliptic fibration
$2II^{\ast}\left(  \infty,0\right)  ,I_{2},2I_{1}$ from the Weierstrass
equation given in the table and the \ elliptic parameter $h=\frac{Y}{\left(
b+1\right)  ^{2}}.$ Substituing $h$ by $u^{3}$ \ and defining $W$ as
$X=\left(  b+1\right)  ^{2}uW$ we obtain a cubic equation in $u$ and $W$ with
a rational point $u=1,W=1,$ so an elliptic fibration of $Y_{10}$%
\begin{align*}
Y_{10}  &  \rightarrow\mathbb{P}_{1}\\
\left(  u,b,W\right)   &  \mapsto b.
\end{align*}
Computation gives the $3$-isogenous elliptic curve to $\#19\left(  8-b\right)  .$

For the last fibration $\#20\left(  7-w\right)$ we have previously shown the result in  Theorem 8.2. 
The relation with the fibration $2II^*,I_2,2I_1$ is less  direct. 
\end{proof}

Notice the two following results: 
with the previous method we can construct two elliptic fibrations of $Y_{10}$ of rank $4$.

First from Weierstrass equation $\#20\left(  7-w\right)$ and with the parameter $m=Y$ we
obtain the fibration $\#1\left(  11-f\right)  $ of $Y_{2}$ with singular fibers $II^{\ast
}\left(  \infty\right)  ,III^{\ast}\left(  0\right)  ,$ $I_{4}\left(  1\right)
,I_{1}\left(  \frac{32}{27}\right)  .$ A Weierstrass equation
\[
E_{m}:Y_{1}^{2}=X_{1}^{3}-m(2m-3)X_{1}^{2}+3m^{2}\left(  m-1\right)  ^{2}%
X_{1}+m^{3}\left(  m-1\right)  ^{4}%
\]
is obtained with the following transformations%
\begin{align*}
m  &  =Y,\quad X_{1}=-\frac{Y\left(  Y-1\right)  ^{2}}{X+1},\quad Y_{1}%
=w\frac{Y^{2}\left(  Y-1\right)  ^{2}}{X+1}\\
w  &  =\frac{-Y_{1}}{X_{1}m},\quad X=-\frac{X_{1}+m\left(  m-1\right)  ^{2}%
}{X_{1}},\quad Y=m.
\end{align*}

The base change $m=u^{\prime3}$  gives an elliptic
fibration of $Y_{10}$ with singular fibers $I_{0}^{\ast}\left(  \infty\right)
$,  $III\left(  0\right)  $,  $3I_{4}\left(  1,j,j^{2}\right)  $,  $3I_{1}$ , rank $4$,
a Weierstrass equation and sections
\begin{align*}
y^{\prime}  &  =x^{\prime3}+u^{\prime2}x^{\prime2}+2u^{\prime}\left(
u^{\prime3}-1\right)  x^{\prime}+u^{\prime3}\left(  u^{\prime3}-1\right)  ^{2}\\
P  &  =\left(  x_{P}\left(  u^{\prime}\right)
,y_{P}\left(  u^{\prime}\right)  \right)  =\left(  -\left(  u^{\prime
3}-1\right)  ,\left(  u^{\prime}-1\right)  ^{2}\left(  u^{\prime2}+u^{\prime
}+1\right)  \right) \\
Q  &  =\left(  x_{Q}\left(  u^{\prime}\right)  ,y_{Q}\left(  u^{\prime
}\right)  \right)  =\left(  -\left(  u^{\prime}+2\right)  \left(  u^{\prime
2}+u^{\prime}+1\right)  ,2i\sqrt{2}\left(  u^{\prime2}+u^{\prime}+1\right)
^{2}\right) . \label{rk41}
\end{align*}
Also we have the points $P^{\prime}$ with $x_{P^{\prime}}=jx_{P}\left(
ju^{\prime}\right)  $ and $Q^{\prime}$ with $x_{Q^{\prime}}=jx_{Q}\left(
ju^{\prime}\right)  $ (with $j^{3}=1$). As explained in the next paragraph we can
compute the height matrix and show that the Mordell-Weil  lattice is generated
by $P,P^{\prime},Q,Q^{\prime}$  and is equal to $A_2\left(  \frac{1}%
{4}\right)  \oplus A_2\left( \frac{1}{2}\right).$

The second exemple is obtained from $\#20\left(  7-w\right)$ with the parameter 
$n=\frac{Y}{t^{2}}$ we obtain  the fibration
$\#9\left(  12-g\right)  $%
\begin{align*}
E_{g}  & :y^{2}=x^{3}+4x^{2}n^{2}+n^{3}(n+1)^{2}x
\end{align*}
with the following transformation
\begin{align*}
X  & =\frac{x^{2}\left(  x+2n^{2}\right)  \left(  n-1\right)  }{y^{2}}%
,Y=\frac{1}{n}\frac{x^{2}\left(  x+2n^{2}\right)  ^{2}}{y^{2}},t=\frac
{x\left(  x+2n^{2}\right)  }{ny}\\
x  & =\frac{Y^{2}\left(  Y-2X-t^{2}\right)  }{Xt^{4}},y=\frac{Y^{3}\left(
Y-t^{2}\right)  \left(  Y-2X-t^{2}\right)  }{X^{2}t^{7}},n=\frac{Y}{t^{2}}.%
\end{align*}
Notice that if $n=\frac{Y}{t^{2}}=v^{3}$ in $E_{w}$ then we have the equation
of $H_{w}.$ More precisely if $X=tQv$, the equation  becomes%
\[
-tv^{3}+t^{2}Qv+2Qv+Q^{3}+t=0,
\]
a cubic equation in $Q$ and $v$ with a rational point $v=1,Q=0.$ Easely we
obtain $H_{w}.$
So in the Weierstrass equation $E_{g}$, if we replace the parameter $n$ by
$g^{3}$ we obtain the following fibration of $Y_{10}$%
\begin{align}
y^{2}  & =x^{3}+4g^{2}x^{2}+g\left(  g+1\right)  ^{2}\left(  g^{2}-g+1\right)
^{2}x  \label{rk42}
\end{align}
with singular fibers $2III\left(  0,\infty\right)  ,3I_{4}\left(
-1,g^{2}-g+1\right)  ,3I_{1}\left(  1,g^{2}+g+1\right)  $ and rank $4.$
Notice the two infinite sections with $x$ coordinates $(t+1)^2(t^2-t+1)$ and $-\frac{1}{3}(t-1)^2(t^2-t+1).$

\subsection{ Mordell-Weil group of $E_{u}$ \label{P}}

The aim of this paragraph is to construct generators of the Mordell-Weil
lattice of the previous fibration of rank $7$ with Weierstrass equation%

\begin{align*}
E_{u} &  :Y^{2}=X^{3}-5u^{2}X^{2}+u^{3}(u^{3}+1)^{2}\\
&  2I_{0}^{\ast}\left(  \infty,0\right)  ,3I_{2}\left(  u^{3}+1\right)
,6I_{1}.
\end{align*}

Notice that the $j$-invariant of $E_{u}$ is invariant by the two
transformations $u\mapsto\frac{1}{u}$ and $u\mapsto ju.$ These automorphisms
of the base $\mathbb{P}^{1}$ of the fibration $E_{u}$ can be extended to the
sections as explained below.

Let $S_{3}=<\gamma,\tau;\gamma^{3}=1,\tau^{2}=1>$ be the non abelian group of
order $6$ and define an action of $S_{3}$ on the sections of $E_{u}$ by%
\begin{align*}
&  \left(  X(u),Y(u)\right)  \overset{\tau}{\mapsto}\left(  u^{4}X\left(
\frac{1}{u}\right)  ,u^{6}Y\left(  \frac{1}{u}\right)  \right)  \\
&  \left(  X\left(  u\right)  ,Y\left(  u\right)  \right)  \overset{\gamma
}{\mapsto}\left(  jX\left(  ju\right)  ,Y\left(  ju\right)  \right).
\end{align*}

To obtain generators of $E_{u}$ following Shioda \cite{Sh} we use the rational
elliptic surface $X^{\left(  3\right)  +}$ with $\sigma=u+\frac{1}{u}$ and a
Weierstrass equation%
\begin{align*}
E_{\sigma}  &  :y^{2}=x^{3}-5x^{2}+\left(  \sigma-1\right)  ^{2}\left(
\sigma+2\right) \\
&  I_{0}^{\ast}\left(  \infty\right)  ,I_{2}\left(  -1\right)  ,4I_{1}%
\end{align*}
of rank $3.$

The Mordell-Weil lattice of a rational elliptic surface is generated by
sections of the form $(a+b\sigma+c\sigma^{2},d+e\sigma+f\sigma^{2}+g\sigma
^{3}).$ Moreover since we have a singular fiber of type $I_{0}^{\ast}$ at
$\infty$ the coefficients $c$ and $f,g$ are $0$ \cite{Ku-Ku}. So after an easy
computation we find the $3$ sections \ (with $j^{3}=1,i^{2}=-1).$%
\begin{align*}
q_{1}  & =\left(  -\left(  \sigma-1\right)  ,i\sqrt{2}\left(  \sigma-1\right)
\right)  \quad\\
q_{2}  & =\left(  -j\left(  \sigma-1\right)  ,\left(  3+j\right)  \left(
\sigma-1\right)  \right)  \quad q_{3}=\left(  -j^{2}\left(  \sigma-1\right)
,\left(  3+j^{2}\right)  \left(  \sigma-1\right)  \right).
\end{align*}
These sections give the sections $\pi_{1},\pi_{2},\pi_{3}$ on $E_{u}$ which
are fixed by $\tau$.
\[%
\begin{array}
[c]{c}%
\pi_{1}=\left(  -u\left(  u^{2}-u+1\right)  ,i\sqrt{2}u^{2}\left(
u^{2}-u+1\right)  \right)  \\
\pi_{2}=\left(  -ju\left(  u^{2}-u+1\right)  ,\left(  3+j\right)  u^{2}\left(
u^{2}-u+1\right)  \right)  \\
\pi_{3}=\left(  -j^{2}u\left(  u^{2}-u+1\right)  ,\left(  3+j^{2}\right)
u^{2}\left(  u^{2}-u+1\right)  \right).
\end{array}
\]

We notice $\rho_{i}=\gamma\left(  \pi_{i}\right)  $ and $\mu_{i}=\gamma
^{2}\left(  \pi_{i}\right)  $ for $1\leq i\leq3$ which give $9$ rational
sections with some relations.

Moreover we have another section from the fibration $E_{h}$ of rank
$1.$

The point of $x$ coordinate $\frac{1}{16}\left(  h^{2}+\frac{1}{h^{2}}\right)
-h-\frac{1}{h}+\frac{29}{24}$ is defined on $\mathbb{Q}\left(  h\right)  .$
Passing to $E_{u}$ we obtain $\omega=$
\[
\left(  \frac{1}{16}\frac{(  1-16u^{3}+46u^{6}-16u^{9}%
+u^{12})  }{u^{4}},-\frac{1}{64}\frac{(  u^{6}-1)  (
1-24u^{3}+126u^{6}-24u^{9}+1)  }{u^{6}}\right ).
\]

We hope to get a generator system with $\pi_{i},\rho_{i}$ and $\omega$ so we
have to compute the height-matrix. The absolute value of its determinant  is
$\frac{81}{16}.$ Since the discriminant of the surface is $72,$ we obtain a
subgroup of index $a$ with $\frac{81}{16}\times\frac{1}{a^{2}}\times2^{3}%
4^{2}=72$ so $a=3$.

After some specialization of $u$ $\in\mathbb{Z}$ (for example if $u=11$, $E_{u}$
has rank $3$ on $\mathbb{Q})$ we find other sections with $x$ coordinate
of the shape \ $\ \left(  au+b\right)  \left(  u^{2}-u+1\right)  $
\begin{align*}
\mu &  =\left(  -\left(  u-1\right)  \left(  u^{2}-u+1\right)  ,-\left(
u^{2}-u+1\right)  \left(  u^{2}+2u-1\right)  \right)  \\
\mu_{1} &  =\left(  -\left(  u-9\right)  \left(  u^{2}-u+1\right)  ,\left(
u^{2}-u+1\right)  \left(  5u^{2}-18u+27\right)  \right)  \\
\mu_{2} &  =\left(  -\left(  u+\frac{1}{3}\right)  \left(  u^{2}-u+1\right)
,\frac{i\sqrt{3}}{9}\left(  u^{2}-u+1\right)  \left(  9u^{2}+4u+1\right)
\right).
\end{align*}

We deduce the relations
\begin{align*}
3\mu &  =\omega+\pi_{2}-\gamma\left(  \pi_{3}\right)  +\pi_{3}-\gamma
^{2}\left(  \pi_{2}\right)  \\
&  =\omega+2\pi_{2}+\gamma\left(  \pi_{2}\right)  +\pi_{3}-\gamma\left(
\pi_{3}\right)
\end{align*}
so, the Mordell-Weil lattice is generated by $\pi_{j},\rho_{j}=\gamma\left(
\pi_{j}\right)  $ for $1\leq j\leq3$ and $\mu$ with Gram matrix%
\[
\left(
\begin{array}
[c]{ccccccc}%
1 & -\frac{1}{2} & 0 & 0 & 0 & 0 & 0\\
-\frac{1}{2} & 1 & 0 & 0 & 0 & 0 & 0\\
0 & 0 & 1 & -\frac{1}{2} & 0 & 0 & \frac{1}{2}\\
0 & 0 & -\frac{1}{2} & 1 & 0 & 0 & 0\\
0 & 0 & 0 & 0 & 1 & -\frac{1}{2} & \frac{1}{2}\\
0 & 0 & 0 & 0 & -\frac{1}{2} & 1 & \frac{-1}{2}\\
0 & 0 & \frac{1}{2} & 0 & \frac{1}{2} & \frac{-1}{2} & 2
\end{array}
\right)  .
\]

\subsection{A fibration for Theorem 7.1}
From the previous fibration $E_u$ we construct by a $2$-neigbour method a fibration with a $2$-torsion section used in Theorem 7.1.

We start from the Weierstrass equation (\ref{M})
\[
Y^{2}=X^{3}-5u^{2}X^{2}+u^{3}\left(  u^{3}+1\right)  ^{2}%
\]
and obtain another elliptic fibration with the parameter $m=\frac{X}{u\left(
u^{2}-u+1\right)  }$, which gives the Weierstrass equation
\begin{align}
E_{m}  &  :y^{2}=x^{3}-\left(  m^{3}+5m^{2}-2\right)  x^{2}+\left(
m^{3}+1\right)  ^{2}x \label{rk4}
\end{align}
with singular fibers 
$  I_{0}^{\ast}\left(  \infty\right) $, $3I_{4}\left(  m^{3}+1\right)
$,  $ I_{2}\left(  0\right)  $,  $4I_{1}\left(  1,-\frac{5}{3}%
,m^{2}-4m-4\right)$ 
 and rank $4.$ 
\begin{remark}
From this fibration with the parameter $q=\frac{y}{xm}$ we recover the
fibration $H_{c.}$
\end{remark}

\subsection{$3$-isogenies from $Y_{10}$}
\begin{theorem}
Consider the two $K3$ surfaces of discriminant $72$ and of transcendental lattice  $[4 \quad 0 \quad 18]$  or $[2 \quad 0 \quad 36]$. There exist elliptic fibrations of $Y_{10}$ with a $3$-torsion section which induce by $3$-isogeny an elliptic fibration of one of the surface or of the other.     
\end{theorem}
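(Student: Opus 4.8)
The plan is to proceed exactly as for the $3$-isogenies from $Y_2$: produce explicit elliptic fibrations of $Y_{10}$ carrying a $3$-torsion section, apply the isogeny formulae of \ref{formulae} to a Weierstrass equation $y^{2}+Ayx+By=x^{3}$ in order to write down a Weierstrass equation of the $3$-isogenous elliptic surface, and then identify the resulting $K3$ surface by computing the discriminant form of its Néron--Severi lattice. Since a $3$-isogeny preserves the Picard number, every surface obtained this way is again a singular $K3$, hence is determined by its transcendental lattice (Shioda--Inose), so it is enough to pin down $\det NS$ and $q_{NS}$. The three relevant candidates $[6\ 0\ 12]=T(Y_{10})$, $[4\ 0\ 18]$ and $[2\ 0\ 36]$ all have determinant $72$ but pairwise distinct minimal norms $6$, $4$, $2$, hence are pairwise non-isometric; thus once $q_{NS}$ is known the target is identified unambiguously, and in particular it is neither $Y_2$ (determinant $8$) nor $Y_{10}$ itself.

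For the surface with transcendental lattice $[4\ 0\ 18]$ almost nothing new is needed. The specialized fibrations $E_{\#19}(10)$ and $E_{\#20}(10)$ of Table \ref{Ta:Fib2} are elliptic fibrations of $Y_{10}$ with a $3$-torsion section, and by Theorem \ref{theo8.2} their $3$-isogenous fibrations $H_{\#19}(10)$ and $H_{\#20}(10)$ are elliptic fibrations of $N_{10}$, which is precisely the $K3$ surface of discriminant $72$ with transcendental lattice $[4\ 0\ 18]$; concretely one recovers $H_{\#20}(10)$ from $E_{\#20}(10)$ through the functions $x_{1},y_{1}$ of \ref{formulae} and reads off its singular fibers. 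This settles half of the statement.

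For the surface with transcendental lattice $[2\ 0\ 36]$ I would start from a fibration of $Y_{10}$ with larger $3$-primary torsion and quotient by an order-$3$ subgroup other than the one that recovers a known $Y_2$- or $Y_{10}$-fibration: for instance the fibration $H_c$, whose torsion on $Y_{10}$ is $(\mathbb{Z}/3\mathbb{Z})^{2}$, so that it admits four distinct $3$-isogenies only one of which is dual to $21$-$c$ on $Y_2$; or one of the extremal $\mathbb{Z}/3\mathbb{Z}$-fibrations $80$ and $224$; or the rank-$0$ form $E_n$ of fibration $80$. Applying \ref{formulae} (with $j^{3}=1$, over $\mathbb{Q}(\sqrt{-3})(t)$ if necessary) and rescaling yields a Weierstrass equation whose singular fibers and torsion one records. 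To identify the surface I would then write down the Gram matrix of $NS$ with respect to the zero section, the fiber class, the non-identity components of the reducible fibers and generators of the Mordell--Weil group, exactly as was done above for $NS(19)$ and $NS(20)$, verify that $\det NS=72$, and apply Lemma \ref{lem:gram} to compute $q_{NS}$; matching $-q_{NS}$ with the discriminant form of $[2\ 0\ 36]$ by exhibiting explicit generators of the latter (as in the treatment of $M(20)$ above) finishes the argument. Alternatively, by the $2$-neighbour method one can transform the isogenous fibration into one already known to live on the $[2\ 0\ 36]$-surface.

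The main obstacle will be the positive-rank cases. When the source fibration has positive rank (as $H_c$ does), so does its $3$-isogenous, and one must produce explicit generators of the Mordell--Weil lattice of the isogenous fibration --- as images under the isogeny of generators of the source, enriched by the action of the automorphisms of the base, the same device used in Section 8.4 for $E_u$ --- and then evaluate the height pairing. Only after this computation is the equality $\det NS=72$ (rather than a proper divisor or multiple) rigorous: a careless count of components or an overlooked extra section would spoil the identification. A secondary technical point is the bookkeeping needed to be sure that the chosen order-$3$ subgroup is not the dual of a known isogeny, so that the target is genuinely a new surface; this is taken care of by the discriminant computation itself, or, as a shortcut, by observing that the relevant fibration does not occur in the complete classification of the elliptic fibrations of $Y_2$ in \cite{BL}.
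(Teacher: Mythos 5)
Your proposal follows essentially the same route as the paper: it exhibits explicit $3$-torsion fibrations of $Y_{10}$ (the specializations $\#19$ and $\#20$ at $k=10$ for the surface $[4\;0\;18]$, and the $(\mathbb{Z}/3\mathbb{Z})^{2}$-fibration $H_{c}$ quotiented by an order-$3$ subgroup other than the kernel of the dual isogeny for $[2\;0\;36]$), applies the isogeny formulae of \ref{formulae}, and identifies the target surfaces by writing down the N\'eron--Severi Gram matrices and matching discriminant forms via Lemma \ref{lem:gram} --- these are precisely the paper's witnesses and its method. The only caveats are that invoking Theorem \ref{theo8.2} for $k=10$ is mildly circular (the paper defers its proof to exactly these computations), and that your hedged alternatives for $[2\;0\;36]$ would not work (the quotient of fibration $80$ by its $3$-torsion lands on $[4\;0\;18]$, and fibration $224$ is itself $3$-isogenous to a fibration of $Y_2$, so its unique order-$3$ torsion subgroup is the dual kernel); your primary witnesses, however, are the correct ones.
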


\begin{proof}
For the following elliptic fibrations of $Y_{10}$ with a $3$-torsion section, not specializations, given in the last paragraphs, we compute the N\'eron-Severi and
 transcendental lattices of the surface with the elliptic fibration induced by the $3$-isogeny. 
\end{proof}
\subsubsection{Fibration (11)}
In section $6$ we found the rank $4$ elliptic fibration $(11)$.

After a translation to put the $3$-torsion section in $\left(  0,0\right)  $
we obtain the following Weierstrass equation and also
$p_{1}$ and $p_{2}$ generators of the Mordell-Weil lattice
\begin{equation}%
\begin{array}
[c]{c}%
E_{11}:Y^{2}+\left(  t^{2}-4\right)  YX+t^{2}\left(  2t^{2}-3\right)
Y=X^{3}\\
p_{1}=\left(  6t^{2},27t^{2}\right)  ,p_{2}=\left(  6i\sqrt{3}t-3t^{2}%
,27t^{2}\right)  \\
2I_{6}\left(  \infty,0\right)  ,\quad2I_{3}\left(  2t^{2}-3\right)
,\quad2I_{2}\left(  \pm1\right)  ,\quad2I_{1}\left(  \pm8\right).
\end{array}
\end{equation}
We see that the $3$-isogenous elliptic fibration has a Weierstrass equation, generators of
Mordell-Weil lattice and singular fibers
\begin{align*}
H_{11}: &  Y^{2}-3\left(  t^{2}-4\right)  YX+\left(  t^{2}-1\right)
^{2}\left(  t^{2}-64\right)  Y=X^{3}\\
&  2I_{6}\left(  \pm1\right)  ,\quad2I_{3}\left(  \pm8\right)  ,\quad
2I_{2}\left(  \infty,0\right)  ,\quad2I_{1}\left(  2t^{2}-3\right)  \text{ of
rank }2.
\end{align*}
Notice the two sections
\begin{align*}
\pi_{1}  & =\left(  -\frac{1}{4}\left(  t^{2}-1\right)  \left(  t^{2}%
-64\right)  ,\frac{1}{8}\left(  t-8\right)  \left(  t-1\right)  \left(
t+1\right)  ^{2}\left(  t+8\right)  ^{2}\right)  \quad\\
\omega & =\left(  -7\left(  t^{2}-1\right)  ^{2},49\alpha\left(
t^{2}-1\right)  ^{3}\right)  \quad\text{where }49\alpha^{2}+20\alpha+7=0.
\end{align*}
So, computing the height matrix we see  the discriminant is $72.$ 

For each reducible fiber at $t=i$ we denote $\left(  X_{i},Y_{i}\right)  $ the
singular point of $H_{11}$
\[%
\begin{array}
[c]{ccc}%
t=\pm1 \quad t=\pm8 & t=0 & t=\infty\\
(  X_{\pm1}=0,Y_{\pm1}=0) & 
(  X_{0}=-16,Y_{0}=64)  & (  x_{\infty}=-1,y_{\infty
}=-1)\\
(  X_{\pm8}=0,Y\pm_{8}=0)& &
\end{array}
\]
where if $t=\infty$ we substitute $t =\frac{1}{T}$,
$x=T^{4}X,y=T^{6}Y.$ We notice also $\theta_{i,j}$ the $j$-th component
of the reducible fiber at $t=i.$ A section $M=\left(  X_{M},Y_{M}\right)  $
intersects the component $\theta_{i,0}$ if and only if $\left(  X_{M},Y_{M}\right)
\not \equiv\left(  X_{i},Y_{i}\right)  \operatorname{mod}\left(  t-i\right)
.$ Using the additivity on the component, we deduce that $\omega$ does not
intersect $\theta_{i,0},2\omega$ intersects $\theta_{i,0}$ and so $\omega$
intersects $\theta_{i,3}$ for $i=\pm1.$ Also $\omega$ intersects $\theta_{i,0}$
for $i=\pm8$ $,i=0$ and $\infty.$

For $\pi_{1}$ we compute $k\pi_{1}$ with $2\leq k\leq6.$ For $i=\pm1$, only
$6\pi_{1}$ intersects $\theta_{i,0}$ so $\pi_{1}$ intersects $\theta_{i,1}.$
(this choice 1, not 5, fixes the numbering of components). For $i=\pm8$, only
$3\pi_{1}$ intersects $\theta_{i,0},$ so $\pi_{i}$ intersects $\theta_{i,1}.$
Modulo $t$,\ we get $\pi_{1}=\left(  -16,64\right)  ,$ so $\pi_{1}$ intersects
$\theta_{0,1},$ and $\pi_{1}$ intersects $\theta_{\infty,0}$.

As for the $3$-torsion section $s_{3}=\left(  0,0\right)  $,  $s_{3}$
intersects $\theta_{i,2}$ or $\theta_{i,4}$ if $i=\pm1$. Computing $2\pi_{1}-s_{3}$, we see that $s_{3}$ intersects $\theta_{1,2}$ and $\theta_{-1,4}$.

For $i=\pm8$, we compute $\pi_{1}-s_{3},$ for $t=8$ and show that $s_{3}$
intersects $\theta_{8,2} $ and
$\theta_{-8,1}.$ For $t=0$ and $t=\infty\ s_{3}$ intersects \ the $0$ component.

So we can compute the relation between the section $s_{3}$ and the
$\theta_{i,j}$ and find  that $3s_{3}\approx -2\theta_{1,1}-4%
\theta_{1,2}-3\theta_{1,3}-2\theta_{1,4}-\theta
_{1,5}$. Thus, we can choose the following base of the N\'{e}ron-Severi lattice ordered as 
$s_{0},F,\theta_{1,j},$ with $1\leq j\leq4,$ $s_{3},\theta_{-1,k}$ with $1\leq
k\leq5,\theta_{8,k},k=1,2,$ $\theta_{-8,k},k=1,2$ and $\theta_{0,1},$
$\theta_{\infty,1},\omega,\pi_{1}.$

The last remark is that only the two sections $\omega$ and $\pi_{1}$
intersect. So we can write the Gram matrix $Ne$ of the N\'{e}ron-Severi
lattice,
\[
\left(
\begin{smallmatrix}
-2 & 1 & 0 & 0 & 0 & 0 & 0 & 0 & 0 & 0 & 0 & 0 & 0 & 0 & 0 & 0 & 0 & 0 & 0 &
0\\
1 & 0 & 0 & 0 & 0 & 0 & 1 & 0 & 0 & 0 & 0 & 0 & 0 & 0 & 0 & 0 & 0 & 0 & 1 &
1\\
0 & 0 & -2 & 1 & 0 & 0 & 0 & 0 & 0 & 0 & 0 & 0 & 0 & 0 & 0 & 0 & 0 & 0 & 0 &
1\\
0 & 0 & 1 & -2 & 1 & 0 & 1 & 0 & 0 & 0 & 0 & 0 & 0 & 0 & 0 & 0 & 0 & 0 & 0 &
0\\
0 & 0 & 0 & 1 & -2 & 1 & 0 & 0 & 0 & 0 & 0 & 0 & 0 & 0 & 0 & 0 & 0 & 0 & 1 &
0\\
0 & 0 & 0 & 0 & 1 & -2 & 0 & 0 & 0 & 0 & 0 & 0 & 0 & 0 & 0 & 0 & 0 & 0 & 0 &
0\\
0 & 1 & 0 & 1 & 0 & 0 & -2 & 0 & 0 & 0 & 1 & 0 & 0 & 1 & 1 & 0 & 0 & 0 & 0 &
0\\
0 & 0 & 0 & 0 & 0 & 0 & 0 & -2 & 1 & 0 & 0 & 0 & 0 & 0 & 0 & 0 & 0 & 0 & 0 &
1\\
0 & 0 & 0 & 0 & 0 & 0 & 0 & 1 & -2 & 1 & 0 & 0 & 0 & 0 & 0 & 0 & 0 & 0 & 0 &
0\\
0 & 0 & 0 & 0 & 0 & 0 & 0 & 0 & 1 & -2 & 1 & 0 & 0 & 0 & 0 & 0 & 0 & 0 & 1 &
0\\
0 & 0 & 0 & 0 & 0 & 0 & 1 & 0 & 0 & 1 & -2 & 1 & 0 & 0 & 0 & 0 & 0 & 0 & 0 &
0\\
0 & 0 & 0 & 0 & 0 & 0 & 0 & 0 & 0 & 0 & 1 & -2 & 0 & 0 & 0 & 0 & 0 & 0 & 0 &
0\\
0 & 0 & 0 & 0 & 0 & 0 & 0 & 0 & 0 & 0 & 0 & 0 & -2 & 1 & 0 & 0 & 0 & 0 & 0 &
1\\
0 & 0 & 0 & 0 & 0 & 0 & 1 & 0 & 0 & 0 & 0 & 0 & 1 & -2 & 0 & 0 & 0 & 0 & 0 &
0\\
0 & 0 & 0 & 0 & 0 & 0 & 1 & 0 & 0 & 0 & 0 & 0 & 0 & 0 & -2 & 1 & 0 & 0 & 0 &
1\\
0 & 0 & 0 & 0 & 0 & 0 & 0 & 0 & 0 & 0 & 0 & 0 & 0 & 0 & 1 & -2 & 0 & 0 & 0 &
0\\
0 & 0 & 0 & 0 & 0 & 0 & 0 & 0 & 0 & 0 & 0 & 0 & 0 & 0 & 0 & 0 & -2 & 0 & 0 &
1\\
0 & 0 & 0 & 0 & 0 & 0 & 0 & 0 & 0 & 0 & 0 & 0 & 0 & 0 & 0 & 0 & 0 & -2 & 0 &
0\\
0 & 1 & 0 & 0 & 1 & 0 & 0 & 0 & 0 & 1 & 0 & 0 & 0 & 0 & 0 & 0 & 0 & 0 & -2 &
1\\
0 & 1 & 1 & 0 & 0 & 0 & 0 & 1 & 0 & 0 & 0 & 0 & 1 & 0 & 1 & 0 & 1 & 0 & 1 & -2
\end{smallmatrix}
\right ).
\]

According to Shimada's lemma \ref{lem:gram},  $G_{Ne}\equiv \mathbb{Z/}2\mathbb{Z} \oplus\mathbb{Z/}36\mathbb{Z}$ is generated by the vectors $L_1$ and $L_2$ satisfying $q_{L_{1}}=-\frac{1}{2},$ $q_{L_{2}}=\frac{37}{36},$ and $b(L_1,L_2)=\frac{1}{2}.$

Moreover the following generators of the discriminant group of the lattice with Gram matrix 
$M_{18}=\left(
\begin{smallmatrix}
4 & 0\\
0 & 18
\end{smallmatrix}
\right)$ namely $f_1=(0,\frac{1}{2})$, $f_2=(\frac{1}{4},\frac{7}{18})$ verify $q_{f_1}=\frac{1}{2}$, $q_{f_2}=-\frac{37}{36}$ and $b(f_1,f_2)=-\frac{1}{2}.$

So the Gram matrix of the transcendental lattice of the surface is $M_{18}$.
\subsubsection{Fibration $H_c$}
We have shown along the proof of Theorem 8.3 that $H_c$ has a $(Z/3Z)^2$- torsion group 
and exhibited a 3-isogeny between some elliptic fibrations of $Y_{10}$ and $Y_2$. 
Notice that with the previous Weierstrass equation $H_{c}$ the point
$\sigma_{3}$ of $X$ coordinate $-\left(  c^{2}+c+7\right)  \left(
c^{2}-c+7\right)  $ defines a $3$-torsion section. After a translation to put this point in $(0,0)$ 
and scaling, we obtain a Weierstrass equation 
\[
Y'^2-(t^2+11)Y'X'-(t^2+t+7)(t^2-t+7)Y'=X'^3
.\]

The $3$-isogenous curve of
kernel $<\sigma_{3}>$ has a Weierstrass equation  %
\begin{align*}
y^{2}+3\left(  t^{2}+11\right)  xy+\left(  t^{2}+2\right)  ^{3}y &  =x^{3},
\end{align*}
with singular fibers $I_{2}\left(  \infty\right)  $, $2I_{9}\left(
t^{2}+2\right)  $, $4I_{1}\left(  t^{4}+13t^{2}+49\right)  .$ 

The section
$P_{c}=\left(  -\frac{1}{4}\left(  t^{4}+t^{2}+1\right)  ,-\frac{1}{8}\left(
t-j\right)  ^{3}\left(  t+j^{2}\right)  ^{3}\right)  $ where $j=\frac
{-1+i\sqrt{3}}{2},$ of infinite order, generates the Mordell-Weil lattice.

We consider the components of the reducible fibers in the following order
$\theta_{i\sqrt{2},j}$ $j\leq1\leq8,$ $\theta_{-i\sqrt{2},k}$ $1\leq k\leq8$,
$\theta_{\infty,1}$.

The $3$-torsion section $s_{3}=\left(  0,0\right)  $ and the previous
components are linked by the relation
\[
3s_{3}\approx -\theta_{i\sqrt{2},8}+\sum a_{i,j}\theta_{i,j}.%
\]
So we can replace, in the previous ordered sequence of components, the element
$\theta_{i\sqrt{2},8}$ by $s_{3}.$ We notice that $\left(  s_{3}.P_{c}\right)
=2,\left(  P_{c}.\theta_{\pm i\sqrt{2},0}\right)  =1$ and $\left(
P_{c}.\theta_{\infty,0}\right)  =1,$ so  the Gram matrix of the N\'{e}ron-Severi lattice is
\[
\left (\begin {smallmatrix} -2&1&0&0&0&0&0&0&0&0&0&0&0
&0&0&0&0&0&0&0\\1&0&0&0&0&0&0&0&0&1&0&0&0&0&0&0&0&0&0
&1\\0&0&-2&1&0&0&0&0&0&0&0&0&0&0&0&0&0&0&0&0
\\0&0&1&-2&1&0&0&0&0&0&0&0&0&0&0&0&0&0&0&0
\\0&0&0&1&-2&1&0&0&0&1&0&0&0&0&0&0&0&0&0&0
\\0&0&0&0&1&-2&1&0&0&0&0&0&0&0&0&0&0&0&0&0
\\0&0&0&0&0&1&-2&1&0&0&0&0&0&0&0&0&0&0&0&0
\\0&0&0&0&0&0&1&-2&1&0&0&0&0&0&0&0&0&0&0&0
\\0&0&0&0&0&0&0&1&-2&0&0&0&0&0&0&0&0&0&0&0
\\0&1&0&0&1&0&0&0&0&-2&0&0&1&0&0&0&0&0&0&2
\\0&0&0&0&0&0&0&0&0&0&-2&1&0&0&0&0&0&0&0&0
\\0&0&0&0&0&0&0&0&0&0&1&-2&1&0&0&0&0&0&0&0
\\0&0&0&0&0&0&0&0&0&1&0&1&-2&1&0&0&0&0&0&0
\\0&0&0&0&0&0&0&0&0&0&0&0&1&-2&1&0&0&0&0&0
\\0&0&0&0&0&0&0&0&0&0&0&0&0&1&-2&1&0&0&0&0
\\0&0&0&0&0&0&0&0&0&0&0&0&0&0&1&-2&1&0&0&0
\\0&0&0&0&0&0&0&0&0&0&0&0&0&0&0&1&-2&1&0&0
\\0&0&0&0&0&0&0&0&0&0&0&0&0&0&0&0&1&-2&0&0
\\0&0&0&0&0&0&0&0&0&0&0&0&0&0&0&0&0&0&-2&0
\\0&1&0&0&0&0&0&0&0&2&0&0&0&0&0&0&0&0&0&-2
\end {smallmatrix}\right).
\]
Its determinant is $-72.$ 
According to Shimada's lemma \ref{lem:gram}, $G_{NS}=\mathbb{Z}/2\mathbb{Z}%
\oplus\mathbb{Z}/36$ is generated by the vectors $L_{1}$ and $L_2$ satisfying 
$q_{L_1}=\frac{-1}{2},q_{L_2}=\frac{5}{36}$ and $b(L_1,L_2)=\frac{1}{2}.$

Moreover the following generators of the discriminant group of the lattice with Gram matrix
$M_{36}=\left(\begin{smallmatrix}
2&0\\
0&36
\end{smallmatrix}\right)$, namely $f_1=(\frac{1}{2},0)$, $f_2=(\frac{-1}{2},\frac{-7}{36})$ verify $q_{f_1}=\frac{1}{2}$, $q_{f_2}=-\frac{5}{36}$ and 
$b(f_1,f_2)=\frac{-1}{2}.$

So the transcendental lattice is $M_{36}=\left(
\begin{array}
[c]{cc}%
2 & 0\\
0 & 36
\end{array}
\right).  $

\subsubsection{Fibration $89$ of rank $0$}

The fibration $89$ of rank $0$ has a $3$-torsion section. A Weierstrass equation for the $3$-isogenous fibration is 
\begin{align*}
 Y^{2}+\left(  -27t^{2}-18t+27\right)  YX+27\left(  4t+3\right)  \left(
5t-3\right)  ^{2}Y=X^{3}
\end{align*}
with singular fibers $ I_{9}\left(  \infty\right) ,I_{6}\left(  \frac{3}{5}\right),I_{4},\left(  0\right) ,I_{3}\left(  -\frac{3}{4}\right),2I_1.$

From singular fibers, torsion and rank we see in Shimada-Zhang table \cite{Shim} that it is the  $n^\circ 48$ case. So the transcendental lattice of the surface is 
$\left(
\begin{smallmatrix}
4 & 0\\
0 & 18
\end{smallmatrix}
\right) . $ 

\subsubsection{Specialization of case \#19 and \#20 for $k=10$}
In the following paragraph, we prove that $H_{\#20}(10)$ is an elliptic fibration of a K3 surface 
with trancendental lattice [4 0  18], thus achieving the proof of Theorem 8.2.
 We give also a direct proof of the fact that $H_{\#19}(10)$ is another elliptic fibration of the same surface.
 
\subsubsection{case \#19}
We have the Weierstrass equation
\[
H_{\#19}\left(  10\right)  :Y^{2}-30tYX+t^{2}\left(  t-27\right)  \left(
27t-1\right)  Y=X^{3}%
\]
with the two sections%
\[
P^{\prime}=(-100t^{2}-3-3t(t+10)(1+t^{2}),\frac{1}{72}i\sqrt{3}(6t^{2}%
+10i\sqrt{3}t+30t+3i\sqrt{3}+3)^{3})%
\]
image of the point $\left(  -t^{2},-t^{2}\right)  $ on $E_{\#19}\left(  10\right)  ,$
and
\[
P^{\prime\prime}=\left(  -\left(  t+1\right)  \left(  27t-1\right)  ,-\left(
27t-1\right)  \left(  t+1\right)  ^{3}\right).
\]

The N\'eron-Severi lattice, with the following basis $(s_{0},F,\theta_{\infty
,i}$ $1\leq i\leq6$ , $\theta_{0},i,1\leq i\leq6$, $\theta_{t_{0},1}%
,\theta_{t_{0},2},$ $s_{3},\theta_{t_{1},2},P^{\prime},P^{\prime\prime})$ has
for Gram matrix 

\[
\left(
\begin{smallmatrix}
-2 & 1 & 0 & 0 & 0 & 0 & 0 & 0 & 0 & 0 & 0 & 0 & 0 & 0 & 0 & 0 & 0 & 0 & 0 &
0\\
1 & 0 & 0 & 0 & 0 & 0 & 0 & 0 & 0 & 0 & 0 & 0 & 0 & 0 & 0 & 0 & 1 & 0 & 1 &
1\\
0 & 0 & -2 & 0 & 0 & 1 & 0 & 0 & 0 & 0 & 0 & 0 & 0 & 0 & 0 & 0 & 0 & 0 & 0 &
0\\
0 & 0 & 0 & -2 & 1 & 0 & 0 & 0 & 0 & 0 & 0 & 0 & 0 & 0 & 0 & 0 & 1 & 0 & 0 &
0\\
0 & 0 & 0 & 1 & -2 & 1 & 0 & 0 & 0 & 0 & 0 & 0 & 0 & 0 & 0 & 0 & 0 & 0 & 0 &
0\\
0 & 0 & 1 & 0 & 1 & -2 & 1 & 0 & 0 & 0 & 0 & 0 & 0 & 0 & 0 & 0 & 0 & 0 & 0 &
0\\
0 & 0 & 0 & 0 & 0 & 1 & -2 & 1 & 0 & 0 & 0 & 0 & 0 & 0 & 0 & 0 & 0 & 0 & 0 &
0\\
0 & 0 & 0 & 0 & 0 & 0 & 1 & -2 & 0 & 0 & 0 & 0 & 0 & 0 & 0 & 0 & 0 & 0 & 0 &
1\\
0 & 0 & 0 & 0 & 0 & 0 & 0 & 0 & -2 & 0 & 0 & 1 & 0 & 0 & 0 & 0 & 0 & 0 & 0 &
0\\
0 & 0 & 0 & 0 & 0 & 0 & 0 & 0 & 0 & -2 & 1 & 0 & 0 & 0 & 0 & 0 & 1 & 0 & 0 &
0\\
0 & 0 & 0 & 0 & 0 & 0 & 0 & 0 & 0 & 1 & -2 & 1 & 0 & 0 & 0 & 0 & 0 & 0 & 0 &
0\\
0 & 0 & 0 & 0 & 0 & 0 & 0 & 0 & 1 & 0 & 1 & -2 & 1 & 0 & 0 & 0 & 0 & 0 & 0 &
0\\
0 & 0 & 0 & 0 & 0 & 0 & 0 & 0 & 0 & 0 & 0 & 1 & -2 & 1 & 0 & 0 & 0 & 0 & 0 &
0\\
0 & 0 & 0 & 0 & 0 & 0 & 0 & 0 & 0 & 0 & 0 & 0 & 1 & -2 & 0 & 0 & 0 & 0 & 0 &
0\\
0 & 0 & 0 & 0 & 0 & 0 & 0 & 0 & 0 & 0 & 0 & 0 & 0 & 0 & -2 & 1 & 0 & 0 & 0 &
1\\
0 & 0 & 0 & 0 & 0 & 0 & 0 & 0 & 0 & 0 & 0 & 0 & 0 & 0 & 1 & -2 & 1 & 0 & 0 &
0\\
0 & 1 & 0 & 1 & 0 & 0 & 0 & 0 & 0 & 1 & 0 & 0 & 0 & 0 & 0 & 1 & -2 & 1 & 2 &
1\\
0 & 0 & 0 & 0 & 0 & 0 & 0 & 0 & 0 & 0 & 0 & 0 & 0 & 0 & 0 & 0 & 1 & -2 & 0 &
0\\
0 & 1 & 0 & 0 & 0 & 0 & 0 & 0 & 0 & 0 & 0 & 0 & 0 & 0 & 0 & 0 & 2 & 0 & -2 &
2\\
0 & 1 & 0 & 0 & 0 & 0 & 0 & 1 & 0 & 0 & 0 & 0 & 0 & 0 & 1 & 0 & 1 & 0 & 2 & -2
\end{smallmatrix}
\right).
\]%
According to Shimada's lemma \ref{lem:gram}, $G_{NS}=\mathbb{Z}/2\mathbb{Z}%
\oplus\mathbb{Z}/36$ is generated by the vectors $L_1$ and $L_2$ satisfying 
$q_{L_1}=\frac{-1}{2},q_{L_2}=\frac{13}{36}$ and $b(L_1,L_2)=\frac{-1}{2}.$

Moreover the following generators of the discriminant group of the lattice with Gram matrix
$M_{18}=\left(\begin{smallmatrix}
4&0\\
0&18
\end{smallmatrix} \right)$ namely $f_1=(0,\frac{1}{2})$,$f_2=(\frac{1}{4},\frac{5}{18})$ verify $q_{f_1}=\frac{1}{2}$, $q_{f_2}=-\frac{13}{36}$ and 
$b(f_1,f_2)=\frac{1}{2}.$

So the transcendental lattice is $M_{18}=\left(
\begin{array}
[c]{cc}%
4 & 0\\
0 & 18
\end{array}
\right).  $

\begin{remark}
An alternative proof: From the equation $H_{\#19}\left(  10\right)  $ and with
the parameter $m=\frac{Y}{\left(  t-27\right)  ^{2}}$ we obtain another
elliptic fibration defined by the following cubic equation in $W$ and $t$ with
$X=W\left(  t-27\right)  $%
\[
W^{3}+30Wtm-m\left(  t^{2}\left(  27t-1\right)  +m\left(  t-27\right)
\right)  =0
\]
and the rational point%
\[
W=676\frac{m\left(  m-1\right)  }{m^{2}-648m+27},t=\frac{27m^{2}-648+1}%
{m^{2}-648m+27}.%
\]
This fibration has a Weierstrass equation of the form
\[
y^{2}=x^{3}-3ax+\left(  m+\frac{1}{m}-2b\right)
\]
with $a=38425/9,b=-7521598/27$. $\ $\ So the Kummer surface associated is the
product of two elliptic curves with $J,J^{\prime}=2950584125/27\pm
1204567000/27\sqrt{6}$ that is $j,j^{\prime}=188837384000\pm77092288000\sqrt
{6}.$ So the fibration $H_{\#19}\left(  10\right)  $ corresponds to a surface
with transcendental lattice of Gram matrix $\left(
\begin{array}
[c]{cc}%
4 & 0\\
0 & 18
\end{array}
\right)  .$
\end{remark}

\subsubsection{case \#20}
A Weierstrass equation for $H_{\#20}\left(  10\right)  $ has a Weierstrass
equation
\begin{align*}
&  Y^{2}+3\left(  t^{2}-22\right)  YX+\left(  t^{2}-25\right)  ^{2}\left(
t^{2}-16\right)  Y=X^{3}
\end{align*}
with singular fibers $I_{4}\left(  \infty\right)  ,2I_{6}\left(  \pm5\right)
,2I_{3}\left(  \pm4\right)  ,2I_{1}\left(  t^{2}-24\right)  .$ We have a
$2$-torsion section $s_2=\left(  -\left(  t^{2}-25\right)  ^{2},\left(
t^{2}-25\right)  ^{3}\right)  $ and a $6$-torsion section 

\noindent
$s_{6}=\left(  -\left(
t^{2}-25\right)  \left(  t^{2}-16\right)  ,-\left(  t^{2}-25\right)  \left(
t^{2}-16\right)  ^{2}\right)  $. 

\noindent
The section
$P_{w}=\left(  4\left(  t+5\right)  ^{2}\left(  t-4\right)  ,-\left(
t+5\right)  ^{4}\left(  t-4\right)  ^{2}\right)  $ is of infinite order and is
a generator of the Mordell Weil lattice. \ Moreover we have $\theta_{\pm
5,1}.s_{6}=1,\theta_{\infty,2}.s_{6}=1,\theta_{\pm3,1}.s_{6}=1$. So the
following divisor is $6$ divisible%
\[
\sum_{i=1}^{5}\left(  6-i\right)  \theta_{\pm5,i}+3\theta_{\infty,1}%
+3\theta_{\infty,3}+4\theta_{\pm,4,1}+2\theta_{\pm4,2}\approx 6s_{6}%
\]
So we can replace $\theta_{5,5}$ by $s_{6}.$

Moreover we can compute $s_{6}.P_{w}=1$ (for $t=-3).$ We have also
$\theta_{5,0}.P_{w}=1,\theta_{-5,4}.P_{w}=1$ and $\theta_{\infty,0}%
.P_{w}=1,\theta_{4,1}.P_{w}=1,\theta_{-4,0}.P_{w}=1.$ All these computations
give the Gram matrix of the N\'{e}ron-Severi lattice of discriminant $-72$

\[
\left (\begin {smallmatrix}
-2&1&0&0&0&0&0&0&0&0&0&0&0&0&0&0&0&0&0&0\\
1&0&0&0&0&0&0&0&0&0&0&1&0&0&0&0&0&0&0
&1\\0&0&-2&1&0&0&0&0&0&0&0&1&0&0&0&0&0&0&0&0
\\0&0&1&-2&1&0&0&0&0&0&0&0&0&0&0&0&0&0&0&0
\\0&0&0&1&-2&1&0&0&0&0&0&0&0&0&0&0&0&0&0&0
\\0&0&0&0&1&-2&1&0&0&0&0&0&0&0&0&0&0&0&0&0
\\0&0&0&0&0&1&-2&0&0&0&0&0&0&0&0&0&0&0&0&0
\\0&0&0&0&0&0&0&-2&1&0&0&1&0&0&0&0&0&0&0&0
\\0&0&0&0&0&0&0&1&-2&1&0&0&0&0&0&0&0&0&0&0
\\0&0&0&0&0&0&0&0&1&-2&1&0&0&0&0&0&0&0&0&0
\\0&0&0&0&0&0&0&0&0&1&-2&0&0&0&0&0&0&0&0&1
\\0&1&1&0&0&0&0&1&0&0&0&-2&0&1&0&1&0&1&0&1
\\0&0&0&0&0&0&0&0&0&0&0&0&-2&1&0&0&0&0&0&0
\\0&0&0&0&0&0&0&0&0&0&0&1&1&-2&1&0&0&0&0&0
\\0&0&0&0&0&0&0&0&0&0&0&0&0&1&-2&0&0&0&0&0
\\0&0&0&0&0&0&0&0&0&0&0&1&0&0&0&-2&1&0&0&1
\\0&0&0&0&0&0&0&0&0&0&0&0&0&0&0&1&-2&0&0&0
\\0&0&0&0&0&0&0&0&0&0&0&1&0&0&0&0&0&-2&1&0
\\0&0&0&0&0&0&0&0&0&0&0&0&0&0&0&0&0&1&-2&0
\\0&1&0&0&0&0&0&0&0&0&1&1&0&0&0&1&0&0&0&-2
\end {smallmatrix}\right ).
\]
According to Shimada's lemma \ref{lem:gram}, $G_{NS}=\mathbb{Z}/2\mathbb{Z}%
\oplus\mathbb{Z}/36$ is generated by the vectors $L_1$ and $L_2$ satisfying 
$q_{L_1}=\frac{-1}{2},q_{L_2}=\frac{-35}{36}$ and $b(L_1,L_2)=\frac{-1}{2}.$

Moreover the following generators of the discriminant group of the lattice with Gram matrix
$M_{18}=\left(\begin{smallmatrix}
4&0\\
0&18
\end{smallmatrix} \right)$, namely $f_1=(0,\frac{1}{2})$,$f_2=(\frac{1}{4},\frac{7}{18})$ verify $q_{f_1}=\frac{1}{2}$, $q_{f_2}=\frac{35}{36}$ and 
$b(f_1,f_2)=\frac{1}{2}.$

So the Gram matrix of the transcendental lattice is $\left(
\begin{array}
[c]{cc}%
4 & 0\\
0 & 18
\end{array}
\right)  .$

\end{document}